\documentclass[11pt, twoside, leqno]{article}

\usepackage{amssymb}
\usepackage{amsmath}
\usepackage{amsthm}
\usepackage{color}
\usepackage{mathrsfs}

\usepackage{indentfirst}

\usepackage{txfonts}

\allowdisplaybreaks

\pagestyle{myheadings}\markboth{\footnotesize\rm\sc
Yangyang Zhang, Dachun Yang, Wen Yuan and Songbai Wang}
{\footnotesize\rm\sc  Orlicz-Slice Hardy Spaces}

\textwidth=15cm
\textheight=21.08cm
\oddsidemargin 0.35cm
\evensidemargin 0.35cm

\parindent=13pt

\def\rr{{\mathbb R}}
\def\rn{{\mathbb{R}^n}}

\def\zz{{\mathbb Z}}

\def\nn{{\mathbb N}}

\def\cp{{\mathcal P}}

\def\fz{\infty }

\def\lz{\lambda}

\def\lf{\left}
\def\r{\right}
\def\lfz{{\lfloor}}
\def\rfz{{\rfloor}}
\def\la{\langle}
\def\ra{\rangle}

\def\gs{\gtrsim}

\def\noz{\nonumber}
\def\wz{\widetilde}

\def\fin{\mathop\mathrm{\,fin\,}}
\def\BMO{\mathop\mathrm{\,BMO\,}}
\def\bmo{\mathop\mathrm{\,bmo\,}}

\def\loc{{\mathop\mathrm{\,loc\,}}}
\def\supp{\mathop\mathrm{\,supp\,}}

\def\Xint#1{\mathchoice
{\XXint\displaystyle\textstyle{#1}}%
{\XXint\textstyle\scriptstyle{#1}}%
{\XXint\scriptstyle\scriptscriptstyle{#1}}%
{\XXint\scriptscriptstyle\scriptscriptstyle{#1}}%
\!\int}
\def\XXint#1#2#3{{\setbox0=\hbox{$#1{#2#3}{\int}$ }
\vcenter{\hbox{$#2#3$ }}\kern-.6\wd0}}

\def\dashint{\Xint-}

 \def\la{{\langle}}
 \def\ra{{\rangle}}
\def\({\left(}
\def \){ \right)}
%%%%%%%%%%%Greece letter%%%%%%%%%%

\def\lz{{\lambda}}

 %%%%function/operator%%%%%

%%%%%% spectial %%%%

%\newcommand{\summ}{\sum_{j=1}^d}

%\def\meas{\operatorname{meas}}

%%%%%%%%%%%%%%%%%
 %%%%function/operator%%%%%

 \def\supp{\operatorname{supp}}

\newtheorem{theorem}{Theorem}[section]
\newtheorem{lemma}[theorem]{Lemma}
\newtheorem{corollary}[theorem]{Corollary}
\newtheorem{proposition}[theorem]{Proposition}

\theoremstyle{definition}
\newtheorem{remark}[theorem]{Remark}
\newtheorem{definition}[theorem]{Definition}
\renewcommand{\appendix}{\par
   \setcounter{section}{0}%
   \setcounter{subsection}{0}%
   \setcounter{subsubsection}{0}%
   \gdef\thesection{\@Alph\c@section}%
   \gdef\thesubsection{\@Alph\c@section.\@arabic\c@subsection}%
   \gdef\theHsection{\@Alph\c@section.}%
   \gdef\theHsubsection{\@Alph\c@section.\@arabic\c@subsection}%
   \csname appendixmore\endcsname
 }

\numberwithin{equation}{section}

\begin{document}

\arraycolsep=1pt

\title{\bf\Large Real-Variable Characterizations of Orlicz-Slice Hardy Spaces
\footnotetext{\hspace{-0.35cm} 2010 {\it
Mathematics Subject Classification}. Primary 42B30;
Secondary 42B25, 42B35, 46E30.
\endgraf {\it Key words and phrases.} Hardy space, slice space, Orlicz space, atom, maximal function,
Calder\'on-Zygmund operator.
\endgraf This project is supported
by the National Natural Science Foundation of China (Grant Nos.
11726621, 11726622, 11761131002, 11571039 and 11471042).}}
\author{Yangyang Zhang, Dachun Yang\,\footnote{Corresponding author
/ March 25, 2018 / newest version.}, \  Wen Yuan and Songbai Wang}
\date{}
\maketitle

\vspace{-0.7cm}

\begin{center}
\begin{minipage}{13cm}
{\small {\bf Abstract}\quad In this article, the authors first introduce a class of Orlicz-slice
spaces which generalize the slice spaces recently studied by P. Auscher et al.
Based on these Orlicz-slice spaces, the authors introduce a new kind of Hardy type spaces,
the Orlicz-slice Hardy spaces, via the radial maximal functions. This new scale of Orlicz-slice Hardy spaces
contains the variant of the Orlicz-Hardy space of A. Bonami and J. Feuto
as well as the Hardy-amalgam space of Z. V. de P. Abl\'e and J. Feuto as special cases.
Their characterizations  via the atom, the molecule,
various maximal functions, the Poisson integral and the
Littlewood-Paley functions are also obtained. As an application of these
characterizations, the authors establish their finite atomic characterizations,
which further induce a description of their dual spaces and a criterion on the boundedness
of sublinear operators from these Orlicz-slice Hardy spaces into a quasi-Banach space.
Then, applying this criterion, the authors obtain the boundedness of $\delta$-type Calder\'on-Zygmund
operators on these Orlicz-slice Hardy spaces. All these results are new even for slice Hardy spaces
and, moreover, for Hardy-amalgam spaces, the Littlewood-Paley function characterizations, the dual spaces
and the boundedness of $\delta$-type Calder\'on-Zygmund
operators on these Hardy-type spaces are also new.
}
\end{minipage}
\end{center}

%\tableofcontents

\section{Introduction\label{s1}}

The Hardy spaces $H^p(\rn)$, with $p\in (0, 1]$, are known to be one of the most important
working spaces on $\rn$ in harmonic analysis and partial differential equations, which play key roles
in many branches of analysis; see, for example,
\cite{fs72,fs82,sto89,em,m94}.
In particular, $H^p(\rn)$, with $p\in (0, 1]$, are good substitutes of Lebesgue spaces $L^p(\rn)$
when studying the boundedness of Calder\'on-Zygmund operators.
In recent decades, in order to meet the requirements arising in the study of the boundedness of operators,
partial differential equations and some other fields, various variants of Hardy spaces have been introduced and
developed, such as weak Hardy spaces (see, for example, \cite{frs74,fs87}), Hardy-Lorentz spaces (see, for example, \cite{at07,abr17,lyy16,lyy17,lyy18}) and
Orlicz-Hardy spaces (see, for example, \cite{l66,l67,sto76,yll}).
Recently, in \cite{af1}, as a generalization of the classical Hardy space and the Lorentz-Hardy space,
Abl\'e and Feuto introduced the Hardy type space  $\mathcal{H}^{(p,q)}(\rn)$ with $p,\,q\in(0,\infty)$
based on the N. Weiner amalgam spaces $(L^p,\ell^q)(\rn)$ and
obtained an atomic decomposition of these Hardy-amalgam spaces when $q\in(0,\fz)$ and $p\in(0,\min\{1,q\})$.
In \cite{bf}, Bonami and Feuto introduced the Hardy type spaces $H_*^\Phi(\rn)$ and $h_*^\Phi(\rn)$
with respect to the amalgam space $(L^\Phi,\ell^1)(\rn)$, where
$\Phi(t):=\frac{t}{\log(e+t)}$ for any $t\in[0,\infty)$ is an Orlicz function,
and applied these Hardy-type spaces
to study the linear decomposition of the product of the
Hardy space $H^1(\rn)$ and its dual space $\BMO(\rn)$ as well as the
local Hardy space $h^1(\rn)$ and its dual space $\bmo(\rn)$. Moreover, very recently,
Cao et al. \cite{cky1} applied $h_*^\Phi(\rn)$ to study the bilinear
decomposition of the product of the
local Hardy space $h^1(\rn)$ and its dual space  $\bmo(\rn)$. Recall that both the Hardy type spaces
$H_*^\Phi(\rn)$ and $h_*^\Phi(\rn)$ were defined in \cite{bf} via the (local) radial maximal
functions, while $h_*^\Phi(\rn)$ in \cite{cky1} was defined via the local grand maximal
function. Moreover, \emph{no other real-variable characterizations} of both the Hardy type spaces
$H_*^\Phi(\rn)$ and $h_*^\Phi(\rn)$ are known so far.

On the other hand, recently, to study the classification of weak solutions in
the natural classes for the boundary value problems of a $t$-independent
elliptic system in the upper plane, Auscher and Mourgoglou \cite{am} introduced the slice spaces $E^q_t(\rn)$.
In \cite{ap}, Auscher and Prisuelos-Arribas further introduced a more general
slice space $(E^q_r)_t(\rn)$ and applied it to study the action of operators,
such as the Hardy-Littlewood maximal operator, Calder\'on-Zygmund operators and Riesz potentials, on tent spaces.

More precisely, recall that the \emph{tent space $T^q_r(\rr_+^{n+1})$}, with $q,\,r\in(0,\fz)$,
consists of all measurable functions $F$ on $\rr_+^{n+1}:=\rn\times(0,\fz)$ such that
$$\|F\|_{T^q_r(\rr_+^{n+1})}:=\lf\|\lf[\int_0^\fz\int_{B(\cdot,t)}|F(y,t)|^r
\,\frac{dy\,dt}{t^{n+1}}\r]^{1/r}\r\|_{L^q(\rn)}<\fz,$$
here and hereafter, for any $(x,t)\in\rr_+^{n+1}$, $B(x,t):=\{y\in\rn:\ |y-x|<t\}$.
It is known (see \cite{cms}) that $T^q_r(\rr_+^{n+1})$ can be represented as $\sum_{i=1}^\fz\lz_i A_i$
with $\{\lz_i\}_{i\in\nn}\in \ell^q$ and $\{A_i\}_{i\in\nn}$ being $T^q_r(\rr_+^{n+1})$-atoms,
that is, for any $i\in\nn$, there exists a
ball $B_i\subset \rn$ such that
$$\supp (A_i)\subset \widehat{B}_i:=\{(x,t)\in\rr_+^{n+1}:\ d(x, \rn\setminus B_i)\ge t\}$$
and $\iint_{\widehat{B}_i}|A_i(x,t)|^r\,\frac{dx\,dt}{t}\le |B_i|^{1-\frac{r}{q}}$,
where $d(x, \rn\setminus B_i):=\inf\{|x-y|:\ y\in\rn\setminus B_i\}$. As a subspace of
$T^q_r(\rr_+^{n+1})$, Auscher and Prisuelos-Arribas \cite{ap} introduced the space
$\mathfrak{T}^q_r(\rr_+^{n+1})$  consisting of all functions $F\in T^q_r(\rr_+^{n+1})$
which can be represented as
$\sum_{i=1}^\fz\lz_i A_i$ with $\{\lz_i\}_{i\in\nn}\in \ell^q$ and $T^q_r(\rr_+^{n+1})$-atoms $\{A_i\}_{i\in\mathbb{N}}$
satisfying the additional moment condition $\int_{\rr_+^{n+1}}A_i(x,t)\,dx=0$ for almost every $t\in (0,\fz)$
and any $i\in\mathbb{N}$. In \cite{ap},
Auscher and Prisuelos-Arribas studied the behaviors of the Hardy-Littlewood
maximal operator, Calder\'on-Zygmund operators and Riesz potentials
on $T^q_r(\rr_+^{n+1})$ and $\mathfrak{T}^q_r(\rr_+^{n+1})$. As Auscher and Prisuelos-Arribas
mentioned in \cite{ap},
``it would be interesting to explore further these spaces (interpolation, etc)
and their applications".

One key tool used in \cite{ap} is the slice space which is defined via slicing the classical
tent space norm at a fixed height. Recall that, for any $q,\ r,\ t\in(0,\fz)$,
the \emph{slice space $(E^q_r)_t(\rn)$} in \cite{ap} is  defined as the space of all locally $r$-integrable
functions $f$ on $\rn$ such that
\begin{equation}\label{slice}
\|f\|_{(E^q_r)_t(\rn)}:=\lf\{\int_\rn\left[t^{-n}\int_{B(x,t)}|f(y)|^r\,dy\right]^{q/r}
\,dx\right\}^{1/q}<\fz.
\end{equation}
In particular, $E^q_t(\rn):=(E^q_2)_t(\rn)$ was introduced in \cite{am}.
A subspace $(\mathfrak{C}_r^q)_t(\rn)$ of $(E^q_r)_t(\rn)$ was also introduced in \cite{ap}
in a way similar to $\mathfrak{T}^q_r(\rr_+^{n+1})$ (see also Definition \ref{auscher} below).
These slice spaces $(E^q_r)_t$ and $(\mathfrak{C}_r^q)_t$  were proved in \cite{ap}
to be the \emph{retracts} of the tent spaces $T^q_r(\rr_+^{n+1})$ and $\mathfrak{T}^q_r(\rr_+^{n+1})$, respectively.
They are also special cases of the  Wiener-amalgam spaces (see \cite{f})
which were first introduced by N. Wiener and
further developed in time-frequency analysis and sampling theory.
Properties of slice spaces such as the duality, the atomic decomposition
and the interpolation were also clarified in \cite{am,ap}. Observe that the Hardy type space
$(\mathfrak{C}_r^q)_t(\rn)$ [and also $\mathfrak{T}^q_r(\rr_+^{n+1})$] was
introduced in \cite{ap} via atoms and \emph{no other real-variable characterizations} of
these Hardy type spaces are known so far.

Let $\Phi$ be an Orlicz function on $[0,\fz)$ and $q,\ t\in (0,\fz)$.
Motivated by the aforementioned works, in this article, we first introduce a class of Orlicz-slice
spaces, $(E_\Phi^q)_t(\rn)$, which generalize the slice spaces [in this case,
$\Phi(\tau):=\tau^r$ for any $\tau\in [0,\fz)$ with $r\in(0,\infty)$] recently defined and studied by Auscher and Mourgoglou \cite{am} (the case $r=2$)
as well as by Auscher and Prisuelos-Arribas \cite{ap}.
Based on these Orlicz-slice spaces, we then introduce a new kind of Hardy-type spaces, the Orlicz-slice Hardy spaces
$(HE_\Phi^q)_t(\rn)$, via the radial maximal functions. This new scale of Orlicz-slice Hardy spaces
contains the variant of the Orlicz-Hardy space, $H_*^\Phi(\rn)$
[in this case, $q=t=1$], of Bonami and Feuto \cite{bf} as well as the Hardy-amalgam space [in this case, $t=1$ and $\Phi(\tau):=\tau^p$ for any $\tau\in [0,\fz)$ with $p\in (0,\fz)$] of Abl\'e and Feuto \cite{af1}
as special cases. Their characterizations  via the atom, the molecule,
various maximal functions, the Poisson integral and the
Littlewood-Paley functions are also obtained. As an application of these
characterizations, we then establish finite atomic characterizations
of Orlicz-slice Hardy spaces, which further induce a description of their dual spaces
and a criterion on the boundedness of sublinear operators from these Orlicz-slice Hardy spaces into a quasi-Banach space.
Then, applying this criterion, we obtain the boundedness of $\delta$-type Calder\'on-Zygmund
operators on these Orlicz-slice Hardy spaces. Moreover, the relations between
the Orlicz-slice space and the Orlicz-slice Hardy space,
or between the Hardy-type space $(\mathfrak{C}_r^q)_t(\rn)$,
with $t\in (0,\fz)$, $r\in (1,\fz)$ and $q\in (\frac n{n+1}, 1]$,
from \cite{ap} and $(HE_\Phi^q)_t(\rn)$ in the case when $\Phi(\tau):=\tau^s$
for any $\tau\in [0,\fz)$ with $s\in (\frac n{n+1},q]$ are also clarified.
All these results of this article are new even for slice Hardy spaces
and, moreover, for Hardy-amalgam spaces, the molecular characterization,
the Littlewood-Paley function characterizations, the dual spaces
and the boundedness of $\delta$-type Calder\'on-Zygmund
operators on these Hardy-type spaces are also new.
Thus, the results obtained in this article essentially complement and
generalize the real-variable theories of the
Hardy-amalgam space in  \cite{af1}
as well as the Hardy-type space $H_*^\Phi(\rn)$ in \cite{bf}.

To be more precise, in Section \ref{s2} of this article, we introduce the notion
of Orlicz-slice spaces $(E_\Phi^q)_t(\rn)$ and then present some basic properties of
$(E_\Phi^q)_t(\rn)$, such as their equivalence relation with the Orlicz-amalgam spaces
(see Proposition \ref{th2} below), the Fefferman-Stein vector-valued inequality
for the Hardy-Littlewood maximal operator on $(E_\Phi^q)_t(\rn)$ (see Theorem \ref{main} below),
the boundedness of the Hardy-Littlewood maximal operator on
$(E_\Phi^q)_t(\rn)$ (see Corollary \ref{main2} below),
and the dual spaces of $(E_\Phi^q)_t(\rn)$ (see Theorem \ref{dual} below).
The boundedness of the Hardy-Littlewood maximal operator on $(E_\Phi^q)_t(\rn)$
is a key tool in this article.
Recall that the boundedness of the  Hardy-Littlewood maximal operator
on the amalgam space $(L^p,\ell^q)(\rn)$ with $p,\,q\in(1,\infty)$
was obtained in \cite{chh}. However, the approach used in  \cite{chh}  for
$(L^p,\ell^q)(\rn)$ is no longer feasible for  $(E_\Phi^q)_t(\rn)$
because the quasi-norm $\|\cdot\|_{L^\Phi(\rn)}$ cannot be represented as an integral
and hence cannot apply the weighted boundedness of the Hardy-Littlewood maximal operator.
To overcome this obstacle, we employ a different method, namely, we first establish a generalization
of \cite[Lemma 4.1]{ap} via replacing the maximal function and $L^r(\rn)$ norm therein,
respectively, by the vector-valued maximal function and $L^\Phi(\rn)$ norm here (see Lemma
\ref{mm} below), which plays a key role in establishing the Fefferman-Stein
vector-valued inequality for the Hardy-Littlewood maximal operator on $(E_\Phi^q)_t(\rn)$
(see the proof of Theorem \ref{main}); from Theorem \ref{main}, we immediately induce
the desired boundedness of the Hardy-Littlewood maximal
operator on $(E_\Phi^q)_t(\rn)$. We also point out that the proof of Theorem \ref{dual}
strongly depends on Proposition \ref{th2} and the well-known dual spaces of Orlicz-amalgam spaces.
Moreover, in Lemma \ref{ballproof} below, we further prove that the Orlicz-slice spaces $(E_\Phi^q)_t(\rn)$
are ball quasi-Banach function spaces considered in \cite{ykds} and hence all results from
\cite{ykds} are applicable to $(E_\Phi^q)_t(\rn)$.

In Section \ref{s3}, based on the Orlicz-slice spaces $(E_\Phi^q)_t(\rn)$, we first introduce
the Orlicz-slice Hardy spaces,
$(HE_\Phi^q)_t(\rn)$, which are defined via the radial maximal functions
(see Definition \ref{dh} below) and then present some fundamental
properties of these Orlicz-slice Hardy spaces $(HE_\Phi^q)_t(\rn)$ including
characterizations via the grand and the non-tangential maximal functions (see Theorem \ref{mdj} below),
the poisson integral (see Theorem \ref{poisson} below), the atom (see Theorem \ref{atom ch} below), the molecule (see Theorem \ref{molecular} below),
the Littlewood-Paley functions (see Theorems \ref{lusin}, \ref{gfunction} and \ref{glamda} below)
and the finite atomic decomposition (see Theorem \ref{finite} below).
We also clarify the relations between $(E_\Phi^q)_t(\rn)$ and $(HE_\Phi^q)_t(\rn)$ in Theorem \ref{dayu1} below.

The proofs of  all main results in Section \ref{s3} are given
in Section \ref{s4}. Recall that a real-variable theory of Hardy spaces related to ball
quasi-Banach function spaces was recently developed in \cite{ykds}.
The results obtained in \cite{ykds} are of so wide generality that, in Section \ref{s4},
we can directly apply them to obtain the atomic and the
molecular characterizations as well as those characterizations via various maximal functions,
the Poisson integral and the Lusin area function of the Orlicz-slice Hardy space  $(HE_\Phi^q)_t(\rn)$
as well as the relation between $(HE_\Phi^q)_t(\rn)$
and $(E_\Phi^q)_t(\rn)$. Then, using the atomic characterization, we further establish
the Littlewood-Paley $g$-function and $g_\lz^\ast$-function characterizations
of $(HE_\Phi^q)_t(\rn)$ and also the finite atomic characterization.

We point out that, in \cite{af1}, Abl\'e and Feuto introduced the
Hardy-amalgam space $\mathcal{H}^{(p,q)}(\rn)$ and obtained the non-tangential
maximal function characterization, the Poisson integral characterization and
the atomic decomposition as well as the finite atomic decomposition of this space.
To the best of our knowledge, this might be the first article to deal with
the real-variable theory of Hardy spaces based on amalgam spaces.
Comparing with \cite{af1}, the approach used in this article via the general theory of \cite{ykds}
for the corresponding characterizations of $(HE_\Phi^q)_t(\rn)$ is much
simpler. Also, comparing with the atomic characterization of $\mathcal{H}^{(p,q)}(\rn)$
obtained in \cite{af1}, the atomic characterization of $(HE_\Phi^q)_t(\rn)$ obtained
in this article holds true on a wider range even when $(HE_\Phi^q)_t(\rn)$ is reduced
to $\mathcal{H}^{(p,q)}(\rn)$ [in this case, $t=1$ and $\Phi(\tau):=\tau^p$
for any $\tau\in [0,\fz)$ with $p\in (0,\fz)$], which improves the related result in \cite{af1}.

In Section \ref{s5},
as an application of both the atomic characterization (Theorem \ref{atom ch}) and
the finite atomic characterization (Theorem \ref{finite}) of
$(HE_\Phi^q)_t(\rn)$ obtained in Section \ref{s3},
we prove that the dual spaces of $(HE_\Phi^q)_t(\rn)$
can be described as certain Campanato spaces related to the Orlicz-slice spaces (see Theorem \ref{dual2} below).

The last section, Section \ref{s6}, is devoted to some
 further applications of the characterizations
obtained in Section \ref{s3}. We first establish a criterion on the boundedness
of sublinear operators from $(HE_\Phi^q)_t(\rn)$
into a quasi-Banach space (see Theorem \ref{suanzi1} and Corollary \ref{suanzi} below),
which are further used to obtain the boundedness of
the $\delta$-type Calder\'on-Zygmund operators
on $(HE_\Phi^q)_t(\rn)$ (see Theorems \ref{cz} and \ref{cz2}).
Moreover, in Proposition \ref{au} below, we clarify the relation between $(\mathfrak{C}_r^q)_t(\rn)$,
with $t\in (0,\fz)$, $r\in (1,\fz)$ and $q\in (\frac n{n+1}, 1]$,
from \cite{ap} and $(HE_\Phi^q)_t(\rn)$ in the case when $\Phi(\tau):=\tau^s$
for any $\tau\in [0,\fz)$ with $s\in (0,q]$.

Observe that a real-variable theory of local Hardy spaces based on the
Orlicz-slice spaces can also be developed. However, to limit the length
of this article, we will consider this local version in another article.

Finally, we make some convention on notation.
For any $x\in\rn$ and $r\in(0,\infty)$, let $B(x,r):=\{y\in\rn:|x-y|<r\}$ and $\bar{B}(x,r)$
be its \emph{closure} in $\rn$.
For any $r\in(0,\infty)$, $f\in L^1_\loc(\rn)$ and $x\in\rn$, let
$$
\dashint_{B(x,\,r)}f(y)\,dy:=\frac1{|B(x,r)|}\int_{B(x,r)}f(y)\,dy,
$$
here and hereafter, $L^1_\loc(\rn)$ denotes the space of all locally integrable functions.
For any set $E$, we use $\chi_{E}$ to denote its \emph{characteristic function} and $\# E$
its \emph{cardinality}. We also use $\vec{0}_n$ to denote the \emph{origin} of $\rn$.
Let $\mathcal{S}(\rn)$ denote the collection of all
\emph{Schwartz functions} on $\rn$, equipped
with the classical well-known topology, and $\mathcal{S}'(\rn)$ its \emph{topological dual}, namely,
the collection of all bounded linear functionals on $\mathcal{S}(\rn)$
equipped with the weak-$\ast$ topology.
Let $\mathbb{N}:=\{1,\,2,...\}$ and $\mathbb{Z}_+:=\mathbb{N}\bigcup\{0\}$.
Denote by the \emph{symbol} $\mathcal{Q}$ the set of all cubes having their edges parallel to the coordinate axes.
Also, for any $x\in\rn$ and $l\in(0,\fz)$, $Q(x,l)$ denotes the cube with the center $x$ and the side-length $l$. Furthermore, for any cube $Q\in\mathcal{Q}$
and $j\in\mathbb{Z}_+$, let $S_j(Q):=(2^{j+1}Q)\setminus(2^jQ)$ with $j\in\mathbb{N}$ and $S_0(Q):=2Q$. For any $\varphi\in\mathcal{S}(\rn)$ and $t\in(0,\infty)$,
let $\varphi_t(\cdot):=t^{-n}\varphi(t^{-1}\cdot)$. For any $s\in\mathbb{R}$, we denote by $\lfloor s\rfloor$ the \emph{largest integer not greater than} $s$.
For any $p\in[0,1]$, let $p'$ be its \emph{conjugate index}, that is, $p'$ satisfies  $1/p+1/p'=1$.
We always use $C$ to denote a \emph{positive constant}, which is independent of the main parameter,
but it may vary from line to line.
Moreover, we use $C_{(\gamma,\ \beta,\ \ldots)}$ to denote a positive constant depending on the indicated
parameters $\gamma,\ \beta,\ \ldots$. If, for any real functions $f$ and $g$, $f\leq Cg$, we then write
$f\lesssim g$ and, if $f\lesssim g\lesssim f$, we then write $f\sim g$.

\section{Orlicz-slice spaces\label{s2}}

In this section, we introduce the slice spaces related to Orlicz functions and
present some of their basic properties such as the boundedness of maximal operators,
which are used in the later sections.
We begin with the notions of both Orlicz functions and Orlicz spaces (see, for example, \cite{mmz}).

\begin{definition}\label{d1.1}
A function $\Phi:\ [0,\infty)\ \rightarrow\ [0,\infty)$ is called an \emph{Orlicz function} if it is
non-decreasing and satisfies $\Phi(0)= 0$, $\Phi(t)>0$ whenever $t\in(0,\infty)$ and $\lim_{t\rightarrow\infty}\Phi(t)=\infty$.
An Orlicz function $\Phi$ is said to be of \emph{lower} (resp., \emph{upper}) \emph{type} $p$ with
$p\in(-\infty,\infty)$ if
there exists a positive constant $C_{(p)}$, depending on $p$, such that, for any $t\in[0,\infty)$
and $s\in(0,1)$ [resp., $s\in [1,\infty)$],
\begin{equation*}
\Phi(st)\le C_{(p)}s^p \Phi(t).
\end{equation*}
A function $\Phi:\ [0,\infty)\ \rightarrow\ [0,\infty)$
is said to be of \emph{positive lower} (resp., \emph{upper}) \emph{type} if it is of lower (resp., upper) type $p$ for some $p\in(0,\infty)$.
\end{definition}

\begin{definition}\label{d1.2}
Let $\Phi$ be an Orlicz function with positive lower type $p_{\Phi}^-$ and positive upper type $p_{\Phi}^+$.
The \emph{Orlicz space $L^\Phi(\rn)$} is defined
to be the set of all measurable functions $f$ such that
 $$\|f\|_{L^\Phi(\rn)}:=\inf\lf\{\lambda\in(0,\infty):\ \int_{\rn}\Phi\lf(\frac{|f(x)|}{\lambda}\r)\,dx\le1\r\}<\infty.$$
\end{definition}

We now give some basic properties of Orlicz functions.

\begin{lemma}\label{lem1}
Let $\Phi$ be an Orlicz function with positive
upper type $p_{\Phi}^+$. Then there exists a positive constant $C$ such that
$$\Phi(t_1+t_2)\le C\lf[\Phi(t_1)+\Phi(t_2)\r],\quad\forall\ t_1,\ t_2\in[0,\infty).$$
\end{lemma}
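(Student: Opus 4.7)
The plan is to exploit the symmetry of the roles of $t_1$ and $t_2$ together with the monotonicity of $\Phi$ to reduce everything to a single application of the upper-type-$p_\Phi^+$ inequality with the dilation factor $s=2$. First I would observe that the statement is trivial when $t_1=t_2=0$ since $\Phi(0)=0$, so I may assume without loss of generality that at least one of $t_1,t_2$ is positive and, by symmetry, that $t_1\ge t_2$.

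Next I would write $t_1+t_2\le 2t_1$ and use the fact that $\Phi$ is non-decreasing to obtain
$$
\Phi(t_1+t_2)\le \Phi(2t_1).
$$
Since $\Phi$ is of positive upper type $p_\Phi^+$, Definition \ref{d1.1} applied with $s=2\in[1,\infty)$ yields a positive constant $C_0$, depending only on $p_\Phi^+$, such that
$$
\Phi(2t_1)\le C_0\, 2^{p_\Phi^+}\,\Phi(t_1).
$$
Because $\Phi$ takes values in $[0,\infty)$, the right-hand side is further bounded by $C_0\, 2^{p_\Phi^+}[\Phi(t_1)+\Phi(t_2)]$, which gives the desired conclusion with $C:=C_0\, 2^{p_\Phi^+}$.

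There is no real obstacle here; the only point requiring a moment's care is the observation that the upper-type inequality from Definition \ref{d1.1} is given for $s\in[1,\infty)$, so it applies directly at $s=2$, and that the reduction to the larger of $t_1,t_2$ via monotonicity is what allows us to invoke an upper-type estimate (rather than a lower-type one) in a form symmetric in $t_1$ and $t_2$.
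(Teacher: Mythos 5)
Your proof is correct, and it is genuinely simpler than the argument in the paper. The paper splits into the cases $p_\Phi^+\in(0,1]$ and $p_\Phi^+\in(1,\infty)$: in the first case it applies the upper type inequality with $s=\frac{t_1+t_2}{t_i}\ge 1$ and uses $s^{p_\Phi^+}\le s$ (so that $\frac{t_i}{t_1+t_2}\Phi(t_1+t_2)\lesssim\Phi(t_i)$) and then sums over $i$, while in the second case it passes to $\widetilde\Phi(t):=\Phi(t^{1/p_\Phi^+})$, which has upper type $1$, reduces to the first case, and undoes the change of variable. Your route avoids both the case split and the weighted-averaging/change-of-variable devices: ordering $t_1\ge t_2$ and using the monotonicity of $\Phi$ together with a single application of the upper type inequality at the fixed dilation $s=2$ immediately yields the bound with $C=C_{(p_\Phi^+)}2^{p_\Phi^+}$. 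The paper's case-$p_\Phi^+\le 1$ argument does produce the marginally smaller constant $C_{(p_\Phi^+)}$, but for the purpose of this lemma (establishing quasi-subadditivity of $\Phi$) the constant is irrelevant, and your proof is cleaner. One small remark: you do not actually need to dispatch $t_1=t_2=0$ separately, since $\Phi(0)=0$ and the inequality $\Phi(t_1+t_2)\le\Phi(2t_1)\le C_{(p_\Phi^+)}2^{p_\Phi^+}\Phi(t_1)$ holds for all $t_1\ge t_2\ge 0$ including $t_1=0$; but there is certainly no harm in noting it.
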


\begin{proof}
Obviously we only need to consider the case when $t_1+t_2>0$. If $p_{\Phi}^+ \in(0,1]$, then, for
any $i\in\{1,2\}$,
$$\frac{t_i}{t_1+t_2}\Phi(t_1+t_2)\lesssim\Phi(t_i)$$
and hence
$$t_i\Phi(t_1+t_2)\lesssim\Phi(t_i)(t_1+t_2),$$
which, via taking the summation on $i$ on both side, further implies the desired conclusion.
If $p_{\Phi}^+ \in(1,\infty)$, then let $\widetilde{\Phi}(t):=\Phi(t^{1/p_{\Phi}^+})$
for any $t\in[0,\infty)$.
It is easy to check that $\widetilde{\Phi}$ is an Orlicz function of upper type $1$ and hence, by the proved conclusion,
we have
$$
\Phi(t_1+t_2)=\widetilde{\Phi}\lf([t_1+t_2]^{p_{\Phi}^+}\r)\lesssim\widetilde{\Phi}\lf(t_1^{p_{\Phi}^+}\r)
+\widetilde{\Phi}\lf(t_2^{p_{\Phi}^+}\r)
\sim\Phi(t_1)+\Phi(t_2).
$$
This finishes the proof of Lemma \ref{lem1}.
\end{proof}

\begin{remark}
When $\Phi$ is an Orlicz function with positive
upper type $p_{\Phi}^+$, from Lemma \ref{lem1}, it is easy to deduce that $\|\cdot\|_{L^\Phi(\rn)}$ is a
quasi-norm.
\end{remark}

The following lemma is well known.

\begin{lemma}\label{lem2}
Let $\Phi$ be an Orlicz function with positive lower type $p_{\Phi}^-$ and positive upper type $p_{\Phi}^+$ and
$$\widetilde{\Phi}(t):=\int_0^t\frac{\Phi(s)}{s}\,ds,\quad \forall\,t\in(0,\infty).$$
Then $\widetilde{\Phi}$ is also an Orlicz function, which is equivalent to $\Phi$ and $\widetilde{\Phi}$ is continuous and strictly increasing.
\end{lemma}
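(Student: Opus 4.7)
The plan is to check in order: (i) convergence of the defining integral, (ii) the two-sided pointwise equivalence $\widetilde{\Phi}\sim\Phi$ (from which the remaining Orlicz-function axioms transfer), (iii) continuity, and (iv) strict monotonicity. The workhorse throughout will be the lower and upper type inequalities of $\Phi$ applied to the ratio $\Phi(s)/s$ on the intervals $(0,t)$ and $[t/2,t]$.

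First I would show that, for every $t\in(0,\fz)$, the integral $\int_0^t \Phi(s)/s\,ds$ is finite. Rescaling the lower type estimate so that $s/t\in(0,1)$ yields $\Phi(s)\ls (s/t)^{p_\Phi^-}\Phi(t)$, hence $\Phi(s)/s \ls s^{p_\Phi^- - 1}\,t^{-p_\Phi^-}\Phi(t)$, which is integrable near $0$ since $p_\Phi^->0$. Integrating this same bound on $(0,t)$ immediately gives the upper half of the equivalence:
$$\widetilde{\Phi}(t)\ls \Phi(t)\int_0^t\lf(\frac{s}{t}\r)^{p_\Phi^-}\frac{ds}{s}=\frac{\Phi(t)}{p_\Phi^-}.$$

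For the reverse estimate I would restrict the integral to $[t/2,t]$, use monotonicity of $\Phi$ to bound $\Phi(s)\geq \Phi(t/2)$ on that interval, and then invoke the upper type with dilation factor $2$, namely $\Phi(t)=\Phi(2\cdot t/2)\ls 2^{p_\Phi^+}\Phi(t/2)$, to conclude $\Phi(t/2)\gs\Phi(t)$. This yields
$$\widetilde{\Phi}(t)\geq \int_{t/2}^t\frac{\Phi(t/2)}{s}\,ds=\Phi(t/2)\ln 2\gs \Phi(t),$$
so $\widetilde{\Phi}(t)\sim\Phi(t)$ for all $t\in[0,\fz)$.

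With the equivalence in hand, the remaining Orlicz-function axioms for $\widetilde{\Phi}$ are inherited from $\Phi$: $\widetilde{\Phi}(0)=0$ is built into the definition, while $\widetilde{\Phi}(t)>0$ on $(0,\fz)$ and $\widetilde{\Phi}(t)\to\fz$ as $t\to\fz$ follow directly from the comparison. Monotonicity is immediate from the non-negativity of the integrand, and continuity of $\widetilde{\Phi}$ on $[0,\fz)$ is automatic since it is the indefinite integral of a function that is locally integrable on $[0,\fz)$ (local boundedness on compact subsets of $(0,\fz)$ comes from the monotonicity of $\Phi$, and integrability at $0$ was just established). Strict monotonicity uses that $\Phi(s)/s>0$ for every $s\in(0,\fz)$, which forces $\widetilde{\Phi}(t_2)-\widetilde{\Phi}(t_1)>0$ whenever $0\leq t_1<t_2$. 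The argument is essentially bookkeeping with the type constants; I do not anticipate any substantive obstacle, and the only mildly delicate point is ensuring the lower/upper type exponents are applied with the correct direction of scaling on each interval.
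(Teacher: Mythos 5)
Your proof is correct and complete: the upper bound via the lower-type inequality applied to $s/t\in(0,1)$, the lower bound by restricting to $[t/2,t]$ and invoking the upper-type inequality with dilation factor $2$, and the routine verification of the Orlicz axioms, continuity, and strict monotonicity from positivity of the integrand are all sound. The paper itself gives no proof (it labels the lemma ``well known''), but your argument is the same standard two-sided estimate the paper does spell out in the closely analogous Lemmas \ref{odj} and \ref{djdj}, where the integrand $\Phi(s)/s$ is replaced by $\sup_{\tau<s}\Phi(\tau)/\tau$ or $\inf_{\tau<s}\Phi(\tau)/\tau$; so you are on the intended track, and there is nothing to flag.
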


\begin{remark}\label{re}
Observe that all the results stated in this article are invariant under the change of equivalent
Orlicz functions. Moreover, equivalent Orlicz functions share the same positive upper and the same lower type numbers.
In what follows, by Lemma \ref{lem2}, without loss of generality, we may \emph{always
assume} that an Orlicz function $\Phi$ is continuous and strictly increasing.
\end{remark}

\begin{lemma}\label{lem3}
Let $\Phi$ be an Orlicz function with positive lower type $p_{\Phi}^-$.
If the inequality that
\begin{equation*}
\int_{\rn}\Phi\lf(\frac{|f(x)|}{\lambda}\r)\,dx\leqslant\widetilde{C}\quad for\ some\ \lambda\in(0,\infty)
\ and\ positive\ constant\ \widetilde{C}
\end{equation*}
holds true, then there exists a positive constant $C$, depending on
$\widetilde{C}$ and $p_{\Phi}^-$, such that $\|f\|_{L^\Phi(\rn)}\le C\lambda$.
\end{lemma}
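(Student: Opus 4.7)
The plan is to use a rescaling argument driven purely by the lower type $p_{\Phi}^-$ hypothesis on $\Phi$, since by definition of $\|f\|_{L^\Phi(\rn)}$ we just need to produce some $C \ge 1$ (depending only on $\widetilde{C}$ and $p_{\Phi}^-$) such that
$$
\int_{\rn}\Phi\lf(\frac{|f(x)|}{C\lambda}\r)\,dx\le 1,
$$
whence $\|f\|_{L^\Phi(\rn)}\le C\lambda$ follows immediately.

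First, I would write $|f(x)|/(C\lambda)= s\cdot |f(x)|/\lambda$ with $s:=1/C\in(0,1]$, so that the positive lower type $p_{\Phi}^-$ estimate of Definition \ref{d1.1} gives, pointwise in $x$,
$$
\Phi\lf(\frac{|f(x)|}{C\lambda}\r)=\Phi\lf(s\,\frac{|f(x)|}{\lambda}\r)\le C_{(p_{\Phi}^-)}\,s^{p_{\Phi}^-}\,\Phi\lf(\frac{|f(x)|}{\lambda}\r)=C_{(p_{\Phi}^-)}\,C^{-p_{\Phi}^-}\,\Phi\lf(\frac{|f(x)|}{\lambda}\r).
$$
Integrating in $x$ and using the hypothesis $\int_{\rn}\Phi(|f|/\lambda)\,dx\le\widetilde{C}$ then yields
$$
\int_{\rn}\Phi\lf(\frac{|f(x)|}{C\lambda}\r)\,dx\le C_{(p_{\Phi}^-)}\widetilde{C}\,C^{-p_{\Phi}^-}.
$$

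It therefore suffices to take $C:=\max\{1,\,[C_{(p_{\Phi}^-)}\widetilde{C}]^{1/p_{\Phi}^-}\}$, which depends only on $\widetilde{C}$ and $p_{\Phi}^-$, to ensure that the right-hand side above is at most $1$. Combined with the definition of $\|\cdot\|_{L^\Phi(\rn)}$ in Definition \ref{d1.2}, this gives $\|f\|_{L^\Phi(\rn)}\le C\lambda$, finishing the proof. There is essentially no obstacle here; the only mild subtlety is remembering to enforce $C\ge 1$ so that the lower type inequality is applicable with $s=1/C\in(0,1]$, which is precisely the regime in which $\Phi(st)\le C_{(p)}s^p\Phi(t)$ is assumed in Definition \ref{d1.1}.
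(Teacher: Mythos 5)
Your proof is correct and uses essentially the same rescaling-by-lower-type argument the paper has in mind. The paper's proof just records its constant as $C:=\bigl(1+\widetilde{C}C_{(p_{\Phi}^-)}\bigr)^{1/p_{\Phi}^-}$, which is automatically at least $1$, whereas your $\max\{1,\cdot\}$ accomplishes the same thing.
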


\begin{proof}
The proof is simple and we can take $C:=(1+\widetilde{C}C_{(p_{\Phi}^-)})^{1/p_{\Phi}^-}$
with $\widetilde{C}$ as in the assumption of Lemma \ref{lem3}.
This finishes the proof of Lemma \ref{lem3}.
\end{proof}

Now we introduce the Orlicz-slice space and the Orlicz-amalgam space. The former is a
generalization of the slice spaces introduced in \cite{am,ap}, and the
latter is a generalization of the classical amalgam space $(L^p,\ell^q)$ defined by
N. Wiener in 1926, in the formulation of his generalized harmonic analysis.

\begin{definition}\label{d2}
Let $t,\ q\in(0,\infty)$ and $\Phi$ be an Orlicz function with positive lower type $p_{\Phi}^-$ and positive upper type $p_{\Phi}^+$. The \emph{Orlicz-slice space} $(E_\Phi^q)_t(\rn)$ is defined to be the set of all measurable functions $f$
such that $$\|f\|_{(E_\Phi^q)_t(\rn)}
:=\lf\{\int_{\rn}\lf[\frac{\|f\chi_{B(x,t)}\|_{L^\Phi(\rn)}}
{\|\chi_{B(x,t)}\|_{L^\Phi(\rn)}}\r]^q\,dx\r\}^{\frac{1}{q}}<\infty.$$
\end{definition}

\begin{definition}\label{d1.3}
Let $t,\ q\in(0,\infty)$ and $\Phi$ be an Orlicz function with positive lower type $p_{\Phi}^-$ and positive upper type $p_{\Phi}^+$. The \emph{Orlicz-amalgam space} $\ell^q(L^\Phi_t)(\rn)$ is defined to be the set of all measurable functions $f$ such that
$$
\|f\|_{\ell^q(L^\Phi_t)(\rn)}:=\lf[\sum_{k\in \mathbb{Z}^n}\lf\|f\chi_{Q_{tk}}\r\|^q_{L^\Phi(\rn)}\r]^{\frac{1}{q}}<\infty,
$$
where $Q_{tk}:=t[k+[0,1)^n]$ for any $t\in(0,\infty)$ and $k\in\mathbb{Z}^n$.
\end{definition}

\begin{remark}\label{re2.10}
\begin{itemize}
\item [(i)] Both the Orlicz-slice space and the Orlicz-amalgam space fall into the scale of
Wiener-amalgam  spaces introduced by Feichtinger \cite{f}.
By Lemmas \ref{lem1}, \ref{lem3} and \cite[Theorem 1]{f}, we know that both the
Orlicz-slice space and the Orlicz-amalgam space are quasi-Banach spaces.

\item[(ii)] If $t=1$ and $\Phi(\tau):=\tau^p$
for any $\tau\in [0,\fz)$ with $p\in (0,\fz)$,
then $(E_\Phi^q)_t(\rn)$ coincides with the Weiner amalgam spaces $(L^p,\ell^q)(\rn)$ in \cite{af1}.
By \cite[Proposition 2.1]{af1}, we have $(L^p,\ell^q)(\rn)\subset L^p(\rn)\bigcap L^q(\rn)$ when $p\in(0, q)$
and $L^p(\rn)\bigcup L^q(\rn)\subset(L^p,\ell^q)(\rn)$ when $ q\in(0, p)$,
here and hereafter, for any $r\in(0,\infty]$, the \emph{symbol}
$L^r(\rn)$ denotes the set of all measurable functions $f$
such that
$$
\|f\|_{L^r(\rn)}:=\lf\{\int_{\rn}|f(x)|^r\,dx\r\}^{1/r}<\infty
$$
with the usual modification made when $r=\infty$.

\item[(iii)] If $\Phi(\tau):=\tau^r$ for any $\tau\in[0,\fz)$ with $r\in(0,\fz)$,
then $(E_\Phi^q)_t(\rn)$ and $(E_r^q)_t(\rn)$ from \cite{am,ap} coincide with equivalent
quasi-norms.
\end{itemize}
\end{remark}

The following proposition clarifies the relation between $(E_r^q)_t(\rn)$,
with $t,\ q,\ r\in(0,\infty)$, and $L^q(\rn)$, whose proof is a slight modification of
the proof of \cite[Proposition 2.1]{af1}.

\begin{proposition}\label{ggg}
Let $t,\ q,\ r\in(0,\infty)$.
\begin{itemize}
\item[\rm(i)]
If $r\in(0, q]$, then $L^r(\rn)\cup L^q(\rn)\subset (E_r^q)_t(\rn)$; precisely,
for any $f\in L^r(\rn)\cup L^q(\rn)$, then $f\in (E_r^q)_t(\rn)$ and
$\|f\|_{(E_r^q)_t(\rn)}\leq\min\{\|f\|_{L^r(\rn)}, \|f\|_{L^q(\rn)}\}$;

\item[\rm(ii)] If $q\in(0, r]$, then $(E_r^q)_t(\rn)\subset L^q(\rn)$; precisely,  for any
$f\in(E_r^q)_t(\rn)$, then $f\in L^q(\rn)$ and
$\|f\|_{L^q(\rn)}\leq\|f\|_{(E_r^q)_t(\rn)}$.

\item[\rm(iii)] $(E_q^q)_t(\rn)$ and $L^q(\rn)$ coincide with the same quasi-norms.
\end{itemize}
\end{proposition}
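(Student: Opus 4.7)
My plan is to unfold the quasi-norm $\|\cdot\|_{(E^q_r)_t(\rn)}$ for $\Phi(\tau)=\tau^r$ and prove the three parts by a combination of Fubini's theorem and Jensen's (or the power-mean) inequality, with the sign of $q/r-1$ dictating the direction of Jensen's inequality. Observe first that $\|\chi_{B(x,t)}\|_{L^r(\rn)}=|B(0,t)|^{1/r}$ is independent of $x$, so
$$
\|f\|_{(E^q_r)_t(\rn)}^q=\int_{\rn}\left[\frac{1}{|B(x,t)|}\int_{B(x,t)}|f(y)|^r\,dy\right]^{q/r}\,dx.
$$

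Part (iii) is then immediate: when $q=r$ the outer exponent vanishes, and a single Fubini step (using $|B(y,t)|=|B(0,t)|$) yields $\|f\|_{(E^q_q)_t(\rn)}^q=\|f\|_{L^q(\rn)}^q$. For part (ii), where $q\leq r$, the map $\tau\mapsto\tau^{q/r}$ is concave, so Jensen's inequality applied to the normalized Lebesgue measure on $B(x,t)$ gives the pointwise bound
$$
|B(x,t)|^{-1}\int_{B(x,t)}|f(y)|^q\,dy\leq\left[|B(x,t)|^{-1}\int_{B(x,t)}|f(y)|^r\,dy\right]^{q/r};
$$
integrating in $x$ and applying Fubini to the left side (whose integral equals $\|f\|_{L^q(\rn)}^q$) immediately produces $\|f\|_{L^q(\rn)}\leq\|f\|_{(E^q_r)_t(\rn)}$.

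For part (i), with $r\leq q$, I would handle the two inclusions separately. The bound $\|f\|_{(E^q_r)_t(\rn)}\leq\|f\|_{L^q(\rn)}$ mirrors part (ii): $\tau\mapsto\tau^{q/r}$ is now convex, so Jensen reverses and the same Fubini step closes the argument. For the bound $\|f\|_{(E^q_r)_t(\rn)}\leq\|f\|_{L^r(\rn)}$, I would recognize the inner quantity as a convolution, $\|f\chi_{B(x,t)}\|_{L^r(\rn)}^r=(|f|^r\ast\chi_{B(0,t)})(x)$, and then apply Young's convolution inequality with exponents $a=1$, $b=q/r$ (both at least $1$ since $r\leq q$) to get
$$
\left\||f|^r\ast\chi_{B(0,t)}\right\|_{L^{q/r}(\rn)}\leq\|f\|_{L^r(\rn)}^r\,|B(0,t)|^{r/q}.
$$
Raising to the power $q/r$, dividing by the normalization factor $|B(0,t)|^{q/r}$, and taking $q$-th roots will produce the desired estimate.

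The main obstacle I expect is bookkeeping the $|B(0,t)|$ factors in the $L^r$ step of part (i): Young's inequality produces a factor $|B(0,t)|^{r/q}$, while the denominator in the definition contributes $|B(0,t)|^{-q/r}$, and I need to combine these powers and check they behave correctly with the $q$-th root. This is precisely where the argument ``slightly modifies'' \cite[Proposition 2.1]{af1}, whose amalgam-space version uses disjoint unit cubes so that the local contributions exactly sum to $\|f\|_{L^r(\rn)}^r$ and the conclusion reduces to the clean embedding $\ell^r\hookrightarrow\ell^q$ (valid for $r\leq q$); here the balls $B(x,t)$ overlap, which forces the convolution step and the associated constant-tracking.
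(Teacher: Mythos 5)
Your parts (iii), (ii), and the $\|f\|_{L^q}$ half of (i) are correct and essentially coincide with the paper's argument (the paper derives (iii) from (i) and (ii) rather than by a direct Fubini step, and phrases your Jensen/power-mean step as a H\"older inequality, but these are the same estimate). Your Young-convolution route for the $\|f\|_{L^r}$ half of (i) is also essentially what the paper does; the paper invokes Minkowski's integral inequality, and Young's inequality with one index equal to $1$ reduces to exactly that.

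However, the ``bookkeeping'' you flagged in that last step is not a cosmetic concern: it does not close. Carrying the powers through, Young's inequality with $a=1$, $b=q/r$ gives
\begin{equation*}
\|f\|_{(E_r^q)_t(\rn)}\le |B(\vec{0}_n,t)|^{\frac1q-\frac1r}\|f\|_{L^r(\rn)},
\end{equation*}
and for $r<q$ the exponent $\frac1q-\frac1r$ is negative, so the prefactor diverges as $t\to 0$; it does not cancel and cannot be removed by a sharper form of Young or Minkowski. One can check this is sharp by taking $f_\varepsilon:=|B(\vec 0_n,\varepsilon)|^{-1/r}\chi_{B(\vec 0_n,\varepsilon)}$ with $\varepsilon\ll t$, for which $\|f_\varepsilon\|_{L^r(\rn)}=1$ while $\|f_\varepsilon\|_{(E_r^q)_t(\rn)}\to|B(\vec 0_n,t)|^{\frac1q-\frac1r}$; so the constant-free bound $\|f\|_{(E_r^q)_t(\rn)}\le\|f\|_{L^r(\rn)}$ in part (i) actually fails when $|B(\vec 0_n,t)|<1$. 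The paper's own display for this step carries out the same Minkowski computation and then asserts the result equals $\|f\|_{L^r(\rn)}$, silently dropping exactly this $t$--dependent factor; so the discrepancy you anticipated is a genuine issue with the stated norm inequality, not a flaw in your plan. The conclusion one can legitimately draw from your (or the paper's) argument is the set inclusion $L^r(\rn)\subset(E_r^q)_t(\rn)$, with a constant depending on $t$ (which is all that is used later, e.g.\ in Proposition~\ref{au}); the sharp constant $1$ holds only in the amalgam setting of Abl\'e--Feuto, where the unit cubes are disjoint and the $L^r$ contributions add exactly.
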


\begin{proof} Observe that (iii) is an immediate corollary of (i) and (ii). Thus,
to complete the proof of this proposition, we only need to show (i) and (ii).

We first show (i). In this case, for any $f\in L^q(\rn)$, using the H\"older inequality and the Fubini theorem, we obtain
\begin{align*}
\|f\|_{(E_r^q)_t(\rn)}&\le \lf[\int_{\rn}\lf\{\frac1{|B(x,t)|}\lf[\int_{B(x,t)}|f(y)|^q\,dy\r]^\frac rq|B(x,t)|^\frac1{(q/r)'}\r\}^\frac qr\,dx\r]^\frac1q\\
&=\lf\{\int_{\rn}\frac1{|B(x,t)|}\int_{B(x,t)}|f(y)|^q\,dy\,dx\r\}^\frac1q=\|f\|_{L^q(\rn)}.
\end{align*}
Also, for any $f\in L^r(\rn)$, applying the Minkowski inequality, we conclude that
$$
\|f\|_{(E_r^q)_t(\rn)}\le\lf[\int_{\rn}\lf\{\int_{\rn}\lf[t^{-n}\chi_{B(x,t)}(y)|f(y)|^r\r]^\frac qrdx\r\}^\frac rqdy\r]^\frac1r=\|f\|_{L^r(\rn)}.
$$
Thus, $L^r(\rn)\bigcup L^q(\rn)\subset(E_r^q)_t(\rn)$, that is, (i) holds true.

Now, we prove (ii). In this case, by the H\"older inequality, we have
\begin{align*}
\|f\|_{L^q(\rn)}&=\lf\{\int_{\rn}\frac1{|B(x,t)|}\int_{B(x,t)}|f(y)|^q\,dy\,dx\r\}^\frac1q\\
&\leq\lf\{\int_{\rn}\frac1{|B(x,t)|}\lf[\int_{B(x,t)}|f(y)|^r\,dy\r]^{\frac{q}{r}}
\lf[\int_{B(x,t)}dy\r]^{\frac{1}{(r/q)'}}\,dx\r\}^\frac1q
=\|f\|_{(E_r^q)_t(\rn)}.
\end{align*}
From this, we deduce $(E_r^q)_t(\rn)\subset L^q(\rn)$, which completes the proof of (ii) and hence of Proposition \ref{ggg}.
\end{proof}

Observing that, for any $x\in\rn$ and $t\in(0,\infty)$,
\begin{equation}\label{qiu}
\|\chi_{B(x,t)}\|_{L^\Phi(\rn)}=\lf[\Phi^{-1}\lf(\frac{1}{|B(x,t)|}\r)\r]^{-1}
=\lf[\Phi^{-1}\lf(\frac{1}{\varepsilon_nt^n}\r)\r]^{-1}=:\widetilde{C}_{(\Phi,t)}
\end{equation}
is independent of $x,$ where $\varepsilon_n$ denotes the volume of the unit ball
in $\rn$ and $\Phi^{-1}$ the inverse function of $\Phi$, we have the next proposition,
which shows that, for any $t\in(0,\infty)$, the Orlicz-slice space
$(E_\Phi^q)_t(\rn)$ is equivalent to the Orlicz-amalgam space $\ell^q(L^\Phi_t)(\rn)$.

\begin{proposition}\label{th2}
Let $t,\ q\in(0,\infty)$ and $\Phi$ be an Orlicz function with positive lower type $p_{\Phi}^-$ and positive upper type $p_{\Phi}^+$. Then $(E_\Phi^q)_t(\rn)$ and $\ell^q(L^\Phi_t)(\rn)$ coincide and, for any $f\in(E_\Phi^q)_t(\rn)$,
\begin{equation*}
\lf[t^{n}\sum_{k\in \mathbb{Z}^n}\lf\|f\chi_{Q_{  kt}}\r\|^q_{L^\Phi(\rn)}\r]^{\frac{1}{q}}
\sim\lf\{\int_{\rn}\lf[\lf\|f\chi_{B(x,t)}\r\|_{L^\Phi(\rn)}\r]^q\,dx\r\}^{\frac{1}{q}},
\end{equation*}
where the equivalent positive constants are independent of $f$ and $t$.
\end{proposition}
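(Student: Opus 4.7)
The plan is to prove directly the two-sided estimate
$$\int_{\rn}\lf\|f\chi_{B(x,t)}\r\|_{L^\Phi(\rn)}^q\,dx \sim t^{n}\sum_{k\in\zz^n}\lf\|f\chi_{Q_{tk}}\r\|_{L^\Phi(\rn)}^q,$$
with constants depending only on $n$, $q$ and the upper-type index $p_\Phi^+$; the coincidence of the two spaces as sets is then immediate. The only nontrivial ingredient is the quasi-triangle inequality for $\|\cdot\|_{L^\Phi(\rn)}$ from Lemma \ref{lem1} (and the ensuing Remark), iterated over a \emph{bounded} number of terms; after raising to the $q$-th power one obtains a quasi-subadditivity for $\|\cdot\|_{L^\Phi(\rn)}^q$ with a constant depending only on $n$, $q$ and $p_\Phi^+$.

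For the direction ``$\ls$'', I would first decompose $\rn=\bigsqcup_{k\in\zz^n}Q_{tk}$ and observe that, whenever $x\in Q_{tk}$, one has $B(x,t)\subset\bigcup_{j\in N_k}Q_{tj}$ with $N_k:=\{j\in\zz^n:\,|j-k|_\infty\le 1\}$, whence $\#N_k\le 3^n$. The iterated quasi-subadditivity for $\|\cdot\|_{L^\Phi(\rn)}^q$ then yields
$$\lf\|f\chi_{B(x,t)}\r\|_{L^\Phi(\rn)}^q\ls\sum_{j\in N_k}\lf\|f\chi_{Q_{tj}}\r\|_{L^\Phi(\rn)}^q.$$
Integrating over $x\in Q_{tk}$ produces the factor $|Q_{tk}|=t^n$, and summing over $k$ completes this direction, since each $j\in\zz^n$ belongs to at most $3^n$ of the sets $N_k$.

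For the direction ``$\gs$'', the key observation is that, for any $x\in Q_{tk}$, the cube $Q_{tk}$ lies in $B(x,t\sqrt{n})$ (as $Q_{tk}$ has diameter $t\sqrt{n}$). By scaling, there exist vectors $v_1,\ldots,v_N\in\rn$, with $N$ depending only on $n$, such that $B(0,t\sqrt{n})\subset\bigcup_{i=1}^{N}B(v_i,t)$ for every $t\in(0,\fz)$; consequently $B(x,t\sqrt{n})\subset\bigcup_{i=1}^{N}B(x+v_i,t)$. A second application of the iterated quasi-subadditivity gives
$$\lf\|f\chi_{Q_{tk}}\r\|_{L^\Phi(\rn)}^q\ls\sum_{i=1}^{N}\lf\|f\chi_{B(x+v_i,t)}\r\|_{L^\Phi(\rn)}^q.$$
Integrating in $x$ over $Q_{tk}$, applying the translation $x\mapsto x-v_i$ in each summand, and summing over $k\in\zz^n$ yield
$$t^n\sum_{k\in\zz^n}\lf\|f\chi_{Q_{tk}}\r\|_{L^\Phi(\rn)}^q\ls N\int_{\rn}\lf\|f\chi_{B(x,t)}\r\|_{L^\Phi(\rn)}^q\,dx,$$
as desired.

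The main subtlety, and the reason the argument proceeds so cleanly, is the $t$-independence of every constant: the covering of $B(0,t\sqrt{n})$ by a fixed number of translates of $B(0,t)$ (and dually, the ``cube neighborhood'' $N_k$) is dictated purely by the dimension, so the iterated quasi-subadditivity is invoked only a uniformly bounded number of times. This makes the resulting multiplicative constants depend only on $n$, $q$ and $p_\Phi^+$; in particular, they are independent of both $f$ and the scale parameter $t$, which is the nontrivial part of the assertion.
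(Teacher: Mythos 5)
Your proof is correct and follows essentially the same covering-plus-quasi-subadditivity strategy as the paper's, only more directly: you compare balls of radius $t$ with the cubes $Q_{tk}$ in a single step, whereas the paper first passes through an intermediate comparison at radius $2\sqrt{n}\,t$ and recasts the amalgam quantity as the $L^q$-norm of a step function. One small point to make explicit: the translation vectors must scale with $t$ (take $v_i = t\,w_i$ for a fixed finite family $\{w_i\}$ with $B(\vec{0}_n,\sqrt{n})\subset\bigcup_i B(w_i,1)$), since a literally $t$-independent family of centers cannot cover $B(\vec{0}_n,t\sqrt{n})$ by balls of radius $t$ for every $t$; your phrase ``by scaling'' suggests you intend exactly this.
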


\begin{proof}
We first show that
\begin{equation}\label{dj}
\lf[t^{n}\sum_{k\in \mathbb{Z}^n}\lf\|f\chi_{Q_{tk}}\r\|^q_{L^\Phi(\rn)}\r]^{\frac{1}{q}}
\sim\lf\{\int_{\rn}\lf[\lf\|f\chi_{B(x,2\sqrt{n}t)}\r\|_{L^\Phi(\rn)}\r]^q\,dx\r\}^{\frac{1}{q}}.
\end{equation}
Indeed, it is easy to see that
\begin{align*}
\lf[t^{n}\sum_{k\in \mathbb{Z}^n}\lf\|f\chi_{Q_{tk}}\r\|^q_{L^\Phi(\rn)}\r]^{\frac{1}{q}}
=\lf\|\sum_{k\in \mathbb{Z}^n}\lf\|f\chi_{Q_{tk}}\r\|_{L^\Phi(\rn)}\chi_{Q_{tk}}\r\|_{L^q(\rn)}.
\end{align*}
For any $x\in Q_{tk}$, from $Q_{tk}\subset B(x,2\sqrt{n}t)$, it follows that
$$\lf\|f\chi_{Q_{tk}}\r\|_{L^\Phi(\rn)}\chi_{Q_{tk}}(x)\le\lf\|f\chi_{B(x,2\sqrt{n}t)}\r\|_{L^\Phi(\rn)}.$$
Thus, combining the above two formulas, we conclude that

\begin{equation}\label{yyy}
\lf\|\sum_{k\in \mathbb{Z}^n}\lf\|f\chi_{Q_{tk}}\r\|_{L^\Phi(\rn)}\chi_{Q_{tk}}\r\|_{L^q(\rn)}
\le\lf\|\lf\|f\chi_{B(\cdot,2\sqrt{n}t)}\r\|_{L^\Phi(\rn)}\r\|_{L^q(\rn)}.
\end{equation}
To prove the opposite inequality, for any given $x\in \rn$, we let
$$ M_x:=\lf\{k\in\mathbb{Z}^n:\ Q_{tk} \cap B(x,2\sqrt{n}t)\neq\emptyset\r\}.$$
Then the cardinality $\#M_x\lesssim1$ and, if $k\in M_x$, then $x\in B(tk,4\sqrt{n}t)$, which further implies that
\begin{align*}
\lf\|f\chi_{B(x,2\sqrt{n}t)}\r\|_{L^\Phi(\rn)}&=\lf\|\sum_{k\in \mathbb{Z}^n}f
\chi_{B(x,2\sqrt{n}t)}\chi_{Q_{tk}}\r\|_{L^\Phi(\rn)}
\lesssim\sum_{k\in M_x}\lf\|f\chi_{B(x,2\sqrt{n}t)}\chi_{Q_{tk}}\r\|_{L^\Phi(\rn)}\\
&\lesssim\sum_{k\in \mathbb{Z}^n}\lf\|f\chi_{Q_{tk}}\r\|_{L^\Phi(\rn)}\chi_{B(tk,4\sqrt{n}t)}(x).
\end{align*}
Thus, we have
\begin{equation}\label{yy}
\lf\|\lf\|f\chi_{B(\cdot,2\sqrt{n}t)}\r\|_{L^\Phi(\rn)}\r\|_{L^q(\rn)}\lesssim\lf\|\sum_{k\in \mathbb{Z}^n}\lf\|f\chi_{Q_{tk}}\r\|_{L^\Phi(\rn)}\chi_{B(tk,4\sqrt{n}t)}\r\|_{L^q(\rn)}.
\end{equation}
It is easy to see that there exist $N\in\mathbb{N}$ and $\{k_1,\ \ldots,\ k_N\}\subset\mathbb{Z}^n$, independent of $t$, such that $N\lesssim1$ and
$B(\vec{0}_n,4\sqrt{n}t)$ $\subseteq\bigcup_{m=1}^N Q_{tk_m}$ and hence
\begin{equation*}
\sum_{k\in \mathbb{Z}^n}\lf\|f\chi_{Q_{tk}}\r\|_{L^\Phi(\rn)}\chi_{B(tk,4\sqrt{n}t)}
\le\sum_{m=1}^N\sum_{k\in \mathbb{Z}^n}\lf\|f\chi_{Q_{tk}}\r\|_{L^\Phi(\rn)}\chi_{Q_{t(k_m+k)}}.
\end{equation*}
By this, the translation invariance of the Lebesgue measure and $N\lesssim1$, we further obtain
\begin{align*}
\lf\|\sum_{k\in \mathbb{Z}^n}\lf\|f\chi_{Q_{tk}}\r\|_{L^\Phi(\rn)}\chi_{B(tk,4\sqrt{n}t)}\r\|_{L^q(\rn)}
&\leqslant\lf\|\sum_{m=1}^N\sum_{k\in \mathbb{Z}^n}\lf\|f\chi_{Q_{tk}}\r\|_{L^\Phi(\rn)}\chi_{Q_{t(k_m+k)}}\r\|_{L^q(\rn)}\\
&\lesssim\lf[t^{n}\sum_{k\in \mathbb{Z}^n}\lf\|f\chi_{Q_{tk}}\r\|^q_{L^\Phi(\rn)}\r]^{\frac{1}{q}},
\end{align*}
which, together with (\ref{yy}), implies that the opposite inequality of (\ref{yyy}) holds true.
Thus, (\ref{dj}) holds true.

Now, to complete the proof of Proposition \ref{th2}, we only need to show that
$$\lf\|\lf\|f\chi_{B(\cdot,2\sqrt{n}t)}\r\|_{L^\Phi(\rn)}\r\|_{L^q(\rn)}
\backsim\lf\|\lf\|f\chi_{B(\cdot,t)}\r\|_{L^\Phi(\rn)}\r\|_{L^q(\rn)}.$$
Since both $\overline{B}(\vec{0}_n,4\sqrt{n}t)$ and
$\overline{B}(\vec{0}_n,t)$ are compact subsets of $\rn$ with nonempty interiors,
it follows that there exist $M\in\mathbb{N}$ and $\{x_1,\ \ldots,\ x_M\}\subset\rn$, independent of $t$, such that $M\lesssim1$ and $B(\vec{0}_n,4\sqrt{n}t)\subseteq\bigcup_{m=1}^MB(x_m,t).$
Thus, for any $x\in\rn$, we have
\begin{align*}
\lf\|f\chi_{B(x,2\sqrt{n}t)}\r\|_{L^\Phi(\rn)}&=\lf\|f\sum_{m=1}^M\chi_{B(x+x_m,t)}\r\|_{L^\Phi(\rn)}
\lesssim\sum_{m=1}^M\lf\|f\chi_{B(x+x_m,t)}\r\|_{L^\Phi(\rn)}.
\end{align*}
By this, the translation invariance of the Lebesgue measure and $M\lesssim1$, we further obtain $$\lf\|\lf\|f\chi_{B(\cdot,2\sqrt{n}t)}\r\|_{L^\Phi(\rn)}\r\|_{L^q(\rn)}
\lesssim\sum_{m=1}^M\lf\|\lf\|f\chi_{B(x+x_m,t)}\r\|_{L^\Phi(\rn)}\r\|_{L^q(\rn)}
\lesssim\lf\|\lf\|f\chi_{B(\cdot,t)}\r\|_{L^\Phi(\rn)}\r\|_{L^q(\rn)}.$$
The reverse inequality obviously holds true. This finishes the proof of Proposition \ref{th2}.
\end{proof}

Recall that the \emph{centered Hardy-Littlewood maximal operator} $\mathcal{M}$ is defined by setting, for
any locally integrable function $f$ and $x\in\rn$,
$$
\mathcal{M}(f)(x):=\sup_{r\in(0,\infty)}\dashint_{B(x,r)}|f(x)|\,dy,
$$
and the \emph{uncentered Hardy-Littlewood maximal operator $\mathcal{M}_u$} is defined by setting, for
any locally integrable function $f$ and $x\in\rn$,
$$
\mathcal{M}_u(f)(x):=\sup_{x\in B} \dashint_B |f(y)|\,dy,
$$
where the supremum is taken over all balls $B$ of $\rn$ containing $x$.

Borrowing some ideas from the proof of \cite[Lemma 4.1]{ap}, we have the following very useful
 technical lemma, which
plays a vital role in the proof of Theorem \ref{main} below.

\begin{lemma}\label{mm}
Let $t\in(0,\infty),\ r\in(1,\infty)$ and $\Phi$ be an Orlicz function with positive
lower type $p_{\Phi}^-\in(1,\infty)$ and positive upper type $p_{\Phi}^+$.
Then, for any sequence $\{f_j\}_{j\in\mathbb{Z}}$ of
locally integrable functions
and $x\in\rn$, it holds true that
\begin{align*}
\lf\|\lf\{\sum _{j\in\mathbb{Z}}\lf[\mathcal{M}(f_j)\r]^r\r\}^{\frac{1}{r}}\chi_{B(x,t)}\r\|_{L^\Phi(\rn)}
&\le C\lf\|\lf\{\sum_{j\in\mathbb{Z}}\lf|f_j\r|^r\r\}^{\frac{1}{r}}\chi_{B(x,2t)}\r\|_{L^\Phi(\rn)}
\\&\quad+C\|\chi_{B(x,t)}\|_{L^\Phi(\rn)}
\lf\{\sum_{j\in\mathbb{Z}}\lf[\mathcal{M}_u\lf(\dashint_{B(\cdot,t)}\lf|f_j(z)\r|\,dz\r)(x)\r]^r\r\}^{\frac{1}{r}},
\end{align*}
where the positive constant $C$ is independent of $\{f_j\}_{j\in\mathbb{Z}}$,
$x\in\rn$ and $t\in(0,\infty)$.
\end{lemma}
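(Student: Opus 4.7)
The plan is to adapt the argument of \cite[Lemma 4.1]{ap}, splitting each $f_j$ into a local and a global piece relative to the doubled ball. Fix $x\in\rn$ and write
$$f_j = f_j^{(1)} + f_j^{(2)}, \qquad f_j^{(1)} := f_j\chi_{B(x,2t)}, \qquad f_j^{(2)} := f_j\chi_{\rn\setminus B(x,2t)}.$$
By the sublinearity of $\mathcal{M}$, the Minkowski inequality in $\ell^r$ (valid since $r>1$) and the quasi-triangle inequality for $\|\cdot\|_{L^\Phi(\rn)}$ obtained from Lemma~\ref{lem1}, it suffices to bound separately the contributions of $\{\mathcal{M}(f_j^{(1)})\}_{j\in\mathbb{Z}}$ and $\{\mathcal{M}(f_j^{(2)})\}_{j\in\mathbb{Z}}$ by, respectively, the first and the second summand on the right-hand side of the claim.

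For the local piece I would invoke the Fefferman-Stein vector-valued maximal inequality on $L^\Phi(\rn)$. Since $\Phi$ has positive lower type $p_\Phi^->1$ and positive upper type $p_\Phi^+<\infty$, the Boyd indices of $L^\Phi(\rn)$ lie in $(1,\infty)$, and this inequality is classical (e.g., via Rubio de Francia extrapolation from the weighted $L^q(\rn)$ estimates). Dropping the localizer $\chi_{B(x,t)}$ on the left-hand side (which only decreases the norm) then yields
$$\lf\|\lf\{\sum_{j\in\mathbb{Z}}\lf[\mathcal{M}\lf(f_j^{(1)}\r)\r]^r\r\}^{1/r}\chi_{B(x,t)}\r\|_{L^\Phi(\rn)}\lesssim \lf\|\lf\{\sum_{j\in\mathbb{Z}}|f_j|^r\r\}^{1/r}\chi_{B(x,2t)}\r\|_{L^\Phi(\rn)},$$
which is precisely the first term of the claim.

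For the global piece the crux is the pointwise estimate
$$\mathcal{M}\lf(f_j^{(2)}\r)(y)\lesssim \mathcal{M}_u\!\lf(\dashint_{B(\cdot,t)}|f_j(z)|\,dz\r)(x),\qquad \forall\,y\in B(x,t),$$
whose right-hand side is independent of $y$. To verify it, fix $y\in B(x,t)$ and a radius $s\in(0,\infty)$. If $s\le t$, then $B(y,s)\subset B(x,2t)$ (since $|y-x|+s<2t$), so $\dashint_{B(y,s)}|f_j^{(2)}(w)|\,dw=0$. For $s>t$, a Fubini argument based on the inclusion $B(w,t)\subset B(y,s+t)\subset B(y,2s)$ (valid for $w\in B(y,s)$) gives
$$\dashint_{B(y,s)}|f_j(w)|\,dw\lesssim \dashint_{B(y,2s)}\lf[\dashint_{B(u,t)}|f_j(z)|\,dz\r]du.$$
Since $|y-x|<t<2s$, we have $x\in B(y,2s)$, so this last average is dominated by $\mathcal{M}_u(\dashint_{B(\cdot,t)}|f_j(z)|\,dz)(x)$, establishing the pointwise bound. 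Taking $\ell^r$ in $j$ and then the $L^\Phi(\rn)$ norm against $\chi_{B(x,t)}$ produces the factor $\|\chi_{B(x,t)}\|_{L^\Phi(\rn)}$ and yields the second summand of the claim.

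The main obstacle is guaranteeing the vector-valued Fefferman-Stein inequality on the Orlicz space $L^\Phi(\rn)$; this is the only non-elementary ingredient and must either be invoked from the literature (using the Boyd-index condition afforded by $1<p_\Phi^-\le p_\Phi^+<\infty$) or derived via a short Rubio de Francia extrapolation from the scalar weighted $L^q(\rn)$ case. Everything else reduces to routine Fubini and ball-inclusion bookkeeping, provided one tracks the radii carefully at the cutoff $s=t$.
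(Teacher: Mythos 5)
Your proposal is correct and is, in substance, the paper's own argument. The only cosmetic difference is the decomposition: the paper leaves $f_j$ intact and splits the supremum defining $\mathcal{M}(f_j)(y)$ into radii $s\in(0,t]$ and $s\in(t,\infty)$, whereas you split $f_j=f_j\chi_{B(x,2t)}+f_j\chi_{\rn\setminus B(x,2t)}$ and observe that for $y\in B(x,t)$ the small radii do not see the far piece. Both routes land on the same two terms: the near contribution is handled by the vector-valued Orlicz Fefferman--Stein maximal inequality (the paper cites \cite{kk} and \cite[Theorem 2.1.4]{yll} rather than deriving it by Boyd-index extrapolation, but the hypothesis $1<p_\Phi^-\le p_\Phi^+<\infty$ makes either reference adequate), and the far contribution is bounded by the identical Fubini/ball-inclusion computation producing the $y$-independent quantity $\mathcal{M}_u\bigl(\dashint_{B(\cdot,t)}|f_j|\bigr)(x)$, whence the factor $\|\chi_{B(x,t)}\|_{L^\Phi(\rn)}$. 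No gap; this matches the paper.
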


\begin{proof}
Let $\{f_j\}_{j\in\mathbb{Z}}$ be a sequence of locally integrable functions.
Fix $x\in\rn$. Then we have

\begin{align*}
\lf\|\lf\{\sum _{j\in\mathbb{Z}}\lf[\mathcal{M}(f_j)\r]^r\r\}^{\frac{1}{r}}\chi_{B(x,t)}\r\|_{L^\Phi(\rn)}
&\lesssim\lf\|\lf\{\sum _{j\in\mathbb{Z}}\lf[\sup_{s\in(0, t]}\dashint_{B(\cdot,s)}\lf|f_j(z)\r|\,dz\r]^r\r\}^{\frac{1}{r}}\chi_{B(x,t)}\r\|_{L^\Phi(\rn)}\\
&\quad+\lf\|\lf\{\sum _{j\in\mathbb{Z}}\lf[\sup_{s\in(t,\infty)}\dashint_{B(\cdot,s)}\lf|f_j(z)\r|\,dz\r]^r
\r\}^{\frac{1}{r}}\chi_{B(x,t)}\r\|_{L^\Phi(\rn)}\\
&=:\mathrm{I}+\mathrm{II}.
\end{align*}

Since $B(y,s)\subset B(x,2t)$ whenever $s\in(0,t]$ and $y\in B(x,t)$, it follows that
\begin{align*}
\mathrm{I}&\sim\lf\|\chi_{B(x,t)}\lf\{\sum _{j\in\mathbb{Z}}\lf[\sup_{s\in(0, t]}\dashint_{B(\cdot,s)}\lf|f_j(z)\r|\chi_{B(x,2t)}(z)\,dz\r]^r\r\}^{\frac{1}{r}}\r\|_{L^\Phi(\rn)}\\
&\lesssim\lf\|\lf\{\sum _{j\in\mathbb{Z}}\lf[\mathcal{M}\lf(f_j\chi_{B(x,2t)}\r)\r]^r\r\}^{\frac{1}{r}}\r\|_{L^\Phi(\rn)}
\lesssim\lf\|\lf\{\sum _{j\in\mathbb{Z}}\lf|f_j\r|^r\r\}^{\frac{1}{r}}\chi_{B(x,2t)}\r\|_{L^\Phi(\rn)},
\end{align*}
where, in the last inequality, we used the Orlicz Fefferman-Stein vector-valued inequality
(see, for example, \cite{kk} or \cite[Theorem 2.1.4]{yll}).

As for $\mathrm{II}$, observe that, for any $\xi,\ z\in\rn$, $\xi\in B(z,t)$ if and only if $z\in B(\xi,t)$
and, moreover, if $z\in B(y,s)$, $\xi\in B(z,t)$ and $s\in(t,\infty)$, then $\xi\in B(y,2s)$. Besides,
observe that  $y\in B(x,t)$ and $s\in(t,\infty)$ imply that $x\in B(y,2s).$
From these observations and the Fubini theorem, we deduce that
\begin{align*}
\mathrm{II}&\sim\lf\|\chi_{B(x,t)}\lf\{\sum_{j\in\mathbb{Z}}\lf[\sup_{s\in(t,\infty)}
\dashint_{B(\cdot,s)}\dashint_{B(z,t)}\lf|f_j(z)\r|
\,d\xi \,dz\r]^r\r\}^{\frac{1}{r}}\r\|_{L^\Phi(\rn)}\\
&\lesssim\lf\|\chi_{B(x,t)}\lf\{\sum_{j\in\mathbb{Z}}\lf[\sup_{s\in(t,\infty)}
\dashint_{B(\cdot,2s)}\dashint_{B(\xi,t)}\lf|f_j(z)\r|\,dz\,d\xi\r]^r\r\}^{\frac{1}{r}}\r\|_{L^\Phi(\rn)}\\
&\lesssim\lf\|\chi_{B(x,t)}\lf\{\sum_{j\in\mathbb{Z}}
\lf[\mathcal{M}_u\lf(\dashint_{B(\cdot,t)}\lf|f_j(z)\r|\,dz\r)(x)\r]^r\r\}^{\frac{1}{r}}\r\|_{L^\Phi(\rn)}\\
&\lesssim\lf\|\chi_{B(x,t)}\r\|_{L^\Phi(\rn)}\lf\{\sum_{j\in\mathbb{Z}}
\lf[\mathcal{M}_u\lf(\dashint_{B(\cdot,t)}\lf|f_j(z)\r|\,dz\r)(x)\r]^r\r\}^{\frac{1}{r}}.
\end{align*}

Gathering the estimates for $\mathrm{I}$ and $\mathrm{II}$, we then obtain the desired conclusion, which
completes the proof of Lemma \ref{mm}.
\end{proof}

\begin{definition}\label{d1.4}
A convex function $\Phi:\ [0,\infty)\  \rightarrow\  [0,\infty)$ is called a
\emph{Young function} if $\Phi$ is non-decreasing, $\Phi(0)= 0$ and
$\lim_{t\rightarrow\infty}\Phi(t)=\infty$. For any Young function $\Phi$,
its complementary function $\Psi:\ [0,\infty)\  \rightarrow\  [0,\infty)$
is defined by setting, for any $y\in[0,\infty)$
$$\Psi(y):=\sup\lf\{xy-\Phi(x):\ x\in[0,\infty)\r\}.$$
\end{definition}

\begin{definition}\label{ddd}
A Young function $\Phi:\ [0,\infty)\  \rightarrow\  [0,\infty)$ is called an \emph{N-function} if
$\Phi(0)= 0$, $\Phi(t)>0$ for any $t\in(0,\infty)$,
 $\lim_{t\rightarrow\infty}\frac{\Phi(t)}{t}=\infty$ and $\lim_{t\rightarrow0^+}\frac{\Phi(t)}{t}=0$,
 here and hereafter, $t\rightarrow0^+$ means $t\in(0,\infty)$ and $t\rightarrow0$.
\end{definition}

\begin{lemma}\label{odj}
Let $\Phi$ be an Orlicz function with lower type $p_{\Phi}^-\in(1,\infty)$ and positive upper type $p_{\Phi}^+$.
Then there exists an N-function $\widetilde{\Phi}$, which is equivalent to $\Phi$.
\end{lemma}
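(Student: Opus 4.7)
The plan is to exhibit $\widetilde{\Phi}$ as the primitive of a non-decreasing function, so that convexity is automatic. By Remark \ref{re}, I may assume that $\Phi$ is continuous and strictly increasing. Set
$$
\phi(s):=\sup_{0<u\leq s}\frac{\Phi(u)}{u}\quad \text{for any}\ s\in(0,\infty),\qquad \phi(0):=0,
$$
and define $\widetilde{\Phi}(t):=\int_0^t\phi(s)\,ds$ for any $t\in[0,\infty)$. Since $\phi$ is non-negative, non-decreasing, and $\phi(0)=0$, the function $\widetilde{\Phi}$ is automatically convex, non-decreasing, and satisfies $\widetilde{\Phi}(0)=0$; once I verify $\lim_{t\to\infty}\widetilde{\Phi}(t)=\infty$, this makes $\widetilde{\Phi}$ a Young function.

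The key step is to exploit $p_\Phi^->1$ to show $\phi(s)\sim \Phi(s)/s$ for any $s\in(0,\infty)$. Given $0<u\leq s$, the lower type inequality (applied with the pair $(u/s,\, s)$ in place of $(s,t)$ in Definition \ref{d1.1}) yields
$$
\frac{\Phi(u)}{u}=\frac{\Phi((u/s)\,s)}{u}\leq C_{(p_\Phi^-)}\frac{(u/s)^{p_\Phi^-}\,\Phi(s)}{u}=C_{(p_\Phi^-)}\lf(\frac{u}{s}\r)^{p_\Phi^--1}\frac{\Phi(s)}{s}\leq C_{(p_\Phi^-)}\frac{\Phi(s)}{s},
$$
where the last estimate uses $u/s\leq 1$ together with $p_\Phi^--1>0$. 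Hence $\phi(s)\lesssim\Phi(s)/s$, while $\phi(s)\geq\Phi(s)/s$ is immediate. Combined with Lemma \ref{lem2}, this gives
$$
\widetilde{\Phi}(t)=\int_0^t\phi(s)\,ds\sim\int_0^t\frac{\Phi(s)}{s}\,ds\sim\Phi(t),
$$
so $\widetilde{\Phi}$ is an Orlicz function equivalent to $\Phi$.

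It remains to verify the defining conditions of an N-function not already shared with Young functions. The properties $\widetilde{\Phi}(t)>0$ for $t\in(0,\infty)$ and $\lim_{t\to\infty}\widetilde{\Phi}(t)=\infty$ follow directly from $\widetilde{\Phi}\sim\Phi$. For the limits on $\widetilde{\Phi}(t)/t$, I again use $p_\Phi^->1$: applying the lower type inequality to $\Phi(t)=\Phi(t\cdot 1)$ for $t\in(0,1)$ gives $\Phi(t)/t\lesssim t^{p_\Phi^--1}\Phi(1)\to 0$ as $t\to 0^+$; applying it to $\Phi(1)=\Phi((1/t)\,t)$ for $t\in(1,\infty)$ gives $\Phi(t)/t\gtrsim t^{p_\Phi^--1}\Phi(1)\to\infty$ as $t\to\infty$. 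Both limits transfer to $\widetilde{\Phi}(t)/t\sim\Phi(t)/t$.

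The main obstacle is precisely the estimate $\phi(s)\lesssim\Phi(s)/s$: without the strict inequality $p_\Phi^->1$, the factor $(u/s)^{p_\Phi^--1}$ would fail to stay bounded as $u\downarrow 0$, and both the finiteness of $\phi$ and the N-function conclusion $\lim_{t\to 0^+}\widetilde{\Phi}(t)/t=0$ would break down. Thus the hypothesis $p_\Phi^->1$ is indispensable; once the equivalence $\phi(\cdot)\sim\Phi(\cdot)/(\cdot)$ is in hand, the rest of the proof is routine.
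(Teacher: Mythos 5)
Your proposal is correct and uses exactly the same regularizing function $\widetilde{\Phi}(t)=\int_0^t\sup_{0<u\leq s}\frac{\Phi(u)}{u}\,ds$ as the paper; the only organizational difference is that you establish the pointwise comparison $\sup_{0<u\leq s}\Phi(u)/u\sim\Phi(s)/s$ (via the lower type hypothesis) and then invoke Lemma \ref{lem2}, whereas the paper proves $\widetilde{\Phi}(t)\lesssim\Phi(t)$ from lower type and $\Phi(t)\lesssim\widetilde{\Phi}(t)$ from upper type directly. Your verification of the two N-function limit conditions is also correct and fills in the detail the paper leaves as ``easy to prove.''
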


\begin{proof}
Consider the function
$$
\widetilde{\Phi}(t):=\begin{cases}\displaystyle\int_0^t \sup_{\tau\in(0,s)}\frac{\Phi(\tau)}{\tau}\,ds,\quad\forall\,t \in (0,\infty],\\
\ 0,\quad t=0.
\end{cases}
$$
Then it is easy to prove that $\widetilde{\Phi}$ is convex on $[0,\infty)$. By the
assumption that $p_{\Phi}^-\in[1,\infty)$,
we know that, for any $t\in(0,\infty)$,
$$\widetilde{\Phi}(t)\le t\sup_{\tau\in(0,t)}\frac{\Phi(\tau)}{\tau}\le C_{(p_\Phi^-)}t\sup_{\tau\in(0,t)}\lf(\frac{\tau}{t}\r)^{p_\Phi^-}\frac{\Phi(t)}{\tau}\le C_{(p_\Phi^-)}\Phi(t).$$
On the other hand, for any $t\in(0,\infty)$, we have
$$\Phi(t)\le C_{(p_\Phi^+)}2^{p_\Phi^+}\Phi(t/2)\le C_{(p_\Phi^+)}2^{p_\Phi^+}\int_{t/2}^{t}\sup_{\tau\in(0,s)}\frac{\Phi(\tau)}{\tau}ds
\le C_{(p_\Phi^+)}2^{p_\Phi^+}\widetilde{\Phi}(t).$$
Thus, we obtain $\Phi\sim\widetilde{\Phi}$. Moreover, it is easy to prove that $\widetilde{\Phi}$ is an N-function,
which completes the proof of Lemma \ref{odj}.
\end{proof}

\begin{remark}\label{nfunction}
\begin{enumerate}
\item[\rm{(i)}] Observe that all the results stated in this article are invariant under the change of equivalent
Orlicz functions. In what follows, by Lemma \ref{odj} and its proof, without loss of generality, we may \emph{always assume} that an Orlicz function $\Phi$ of lower type $p_\Phi^-\in(1,\infty)$ is also an N-function
and an Orlicz function $\Phi$ of lower type $p_\Phi^-=1$ is also a Young function.

\item[\rm{(ii)}]   Let $q\in[1,\infty)$ and $\Phi$ be a Young function with lower type $p_{\Phi}^-\in[1,\infty)$ and
positive upper type $p_{\Phi}^+$. We know that
$L^\Phi(\rn)$ is a Banach space (see \cite[p.\,67, Theorem 10]{mmz}). Then it is easy to prove that $(E_\Phi^q)_t(\rn)$
is also a Banach space.
\end{enumerate}
\end{remark}

The following two lemmas come from \cite[p.\,13, Proposition 1(ii); p.\,58, Proposition 1]{mmz}, respectively.

\begin{lemma}\label{un}
Let $\Phi$ be an N-function and $\Psi$ its complementary function. Then $\Phi$ and $\Psi$ are strictly
increasing and hence their inverses $\Phi^{-1}$ and $\Psi^{-1}$ are uniquely defined and, for any $t\in(0,\infty)$,
$$t<\Phi^{-1}(t)\Psi^{-1}(t)<2t.$$
\end{lemma}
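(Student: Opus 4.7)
The plan is to establish, in sequence, the strict monotonicity and continuity of $\Phi$ and $\Psi$ (so that $\Phi^{-1}$ and $\Psi^{-1}$ make sense), then the upper bound, and then the lower bound.

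First, since $\Phi$ is convex on $[0,\infty)$ with $\Phi(0)=0$, the difference quotient $t\mapsto \Phi(t)/t$ is non-decreasing on $(0,\infty)$; combined with the hypothesis $\Phi(t)>0$ for $t>0$, this forces $\Phi$ to be strictly increasing on $[0,\infty)$. Continuity of $\Phi$ on $(0,\infty)$ is automatic for a finite convex function, and continuity at $0$ follows from the N-function condition $\lim_{t\to 0^+}\Phi(t)/t=0$. To run the same argument for $\Psi$, I would verify that $\Psi$ is itself an N-function: directly from $\Psi(y)=\sup_{x\ge 0}\{xy-\Phi(x)\}$ one checks convexity and $\Psi(0)=0$, while the behaviour $\lim_{t\to\infty}\Phi(t)/t=\infty$ and $\lim_{t\to 0^+}\Phi(t)/t=0$ translates, after a standard duality argument, to $\Psi(y)>0$ for $y>0$ together with $\lim_{y\to\infty}\Psi(y)/y=\infty$ and $\lim_{y\to 0^+}\Psi(y)/y=0$. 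Once $\Psi$ is recognized as an N-function, the first part of the argument applies to $\Psi$, and $\Phi^{-1}$, $\Psi^{-1}$ are unambiguously defined, continuous and strictly increasing on $[0,\infty)$.

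For the upper bound, the key ingredient is Young's inequality $xy\le \Phi(x)+\Psi(y)$, which is immediate from the Legendre-transform definition of $\Psi$. Substituting $x=\Phi^{-1}(t)$ and $y=\Psi^{-1}(t)$ yields $\Phi^{-1}(t)\Psi^{-1}(t)\le 2t$. For the lower bound, I would use the integral representation $\Phi(x)=\int_0^x p(s)\,ds$, where $p$ denotes the right-derivative of $\Phi$, together with $\Psi(y)=\int_0^y q(s)\,ds$, where $q$ is the generalized right-inverse of $p$. Both $p$ and $q$ are strictly positive on $(0,\infty)$ because $\Phi$ (and hence $\Psi$) is an N-function. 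Setting $a:=\Phi^{-1}(t)$, monotonicity of $p$ gives $t=\int_0^a p(s)\,ds<a\,p(a)$, the strictness coming from $p(0)=0$, which prevents $p$ from being constant on $[0,a]$. Hence $p(a)>t/a$, so $q(s)\le a$ on the whole range $[0,t/a]\subset[0,p(a))$ and $q(s)<a$ on a neighbourhood of $0$; estimating $\Psi(t/a)=\int_0^{t/a}q(s)\,ds$ by these bounds yields $\Psi(t/a)<t$, and applying the strictly increasing $\Psi^{-1}$ gives $t/a<\Psi^{-1}(t)$, i.e., $t<\Phi^{-1}(t)\Psi^{-1}(t)$.

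The main obstacle I anticipate is the careful handling of non-smooth N-functions in the lower bound: the right-derivative $p$ may have jumps, so the identification of $q$ as its generalized inverse and the corresponding integral representation of $\Psi$ need to be invoked with care, as does the strictness in $t<a\,p(a)$ when $p$ is only right-continuous. However, these are entirely standard facts from the Krasnoselskii-Rutitskii calculus of N-functions, and both ends of the sandwich are in essence the computations routinely documented in \cite{mmz}; thus the role of the present proof is mainly to assemble them in the stated form.
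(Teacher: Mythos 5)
The paper supplies no proof of this lemma; it simply cites Rao--Ren \cite[p.\,13, Proposition 1(ii)]{mmz}. Your argument is, in substance, exactly the standard argument one finds in the Krasnoselskii--Rutitskii / Rao--Ren calculus of $N$-functions, so in that sense it takes the ``same route'' as the source the paper leans on: Young's inequality $xy\le\Phi(x)+\Psi(y)$ gives the upper bound after the substitution $x=\Phi^{-1}(t)$, $y=\Psi^{-1}(t)$; the integral representations $\Phi(x)=\int_0^x p$, $\Psi(y)=\int_0^y q$ with $q$ the generalized inverse of the right-derivative $p$ give the lower bound, and the $N$-function conditions $p(0)=0$, $p(x)>0$ for $x>0$ are exactly what makes both the inequality $t=\int_0^a p<a\,p(a)$ and the subsequent $\Psi(t/a)=\int_0^{t/a}q<t$ strict. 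The measure-theoretic technicalities you flag (right-continuity of $p$, behaviour of $q$ near $0$, etc.) are real but routine, and the way you dispatch them is sound.

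One point deserved an explicit remark and you let it slide: your Young-inequality step yields only the non-strict bound $\Phi^{-1}(t)\Psi^{-1}(t)\le 2t$, whereas the lemma as printed asserts the strict inequality $<2t$. You should have paused to ask whether the strict form is even attainable. It is not: for the self-complementary $N$-function $\Phi(x)=x^2/2$ one has $\Psi=\Phi$ and $\Phi^{-1}(t)\Psi^{-1}(t)=\sqrt{2t}\cdot\sqrt{2t}=2t$ for every $t>0$, so equality occurs and the right-hand $<$ in the lemma is an overstatement. Your $\le$ is therefore the correct conclusion, and the discrepancy lies in the lemma's wording rather than in your argument; moreover, the two places where the paper invokes the lemma (the dual-norm reduction inside the proof of Theorem \ref{main} and the two-sided equivalence in Theorem \ref{dual}) only need the strict lower bound and the non-strict upper bound, so nothing downstream is affected. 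Still, a referee would expect you to notice that you proved something formally weaker than what you set out to prove and to resolve the tension explicitly, rather than leave it implicit.
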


\begin{lemma}\label{hold}
Let $\Phi$ be a Young function and $\Psi$ its complementary function. If $f\in L^\Phi(\rn)$ and
$g\in L^\Psi(\rn)$, then
$$\int_{\rn}\lf|f(x)g(x)\r|dx\le2\|f\|_{L^\Phi(\rn)}\|g\|_{L^\Psi(\rn)}.$$
\end{lemma}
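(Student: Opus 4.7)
The plan is to derive the inequality from Young's inequality $xy\le \Phi(x)+\Psi(y)$ for all $x,y\in[0,\infty)$, which is an immediate consequence of the definition of the complementary function: since $\Psi(y)=\sup\{xy-\Phi(x):x\in[0,\infty)\}$, we have $xy-\Phi(x)\le \Psi(y)$ for any $x,y\in[0,\infty)$, and rearranging yields the desired pointwise bound. This is the only nontrivial ingredient; everything else is a normalization argument combined with the defining property of the Luxemburg norm.

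Next I would normalize. If $\|f\|_{L^\Phi(\rn)}=0$, then $f=0$ almost everywhere and the conclusion is trivial; similarly for $g$. Otherwise, set $\lambda:=\|f\|_{L^\Phi(\rn)}$ and $\mu:=\|g\|_{L^\Psi(\rn)}$, both strictly positive. By Remark \ref{re} we may assume $\Phi$ is continuous and strictly increasing; then from Definition \ref{d1.2}, taking any sequence $\lambda_k\downarrow\lambda$ and applying the monotone convergence theorem, I obtain
$$
\int_{\rn}\Phi\lf(\frac{|f(x)|}{\lambda}\r)\,dx\le 1,
$$
and analogously $\int_{\rn}\Psi(|g(x)|/\mu)\,dx\le 1$ (with continuity of $\Psi$ coming from its definition as a supremum of affine functions, so it is lower semicontinuous, and one may invoke the same kind of limiting argument, noting $\Psi$ is a Young function).

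Now I would apply Young's inequality pointwise with $x=|f(z)|/\lambda$ and $y=|g(z)|/\mu$ for each $z\in\rn$, obtaining
$$
\frac{|f(z)|\,|g(z)|}{\lambda\mu}\le \Phi\lf(\frac{|f(z)|}{\lambda}\r)+\Psi\lf(\frac{|g(z)|}{\mu}\r).
$$
Integrating over $\rn$ and invoking the two normalization estimates above yields
$$
\frac{1}{\lambda\mu}\int_{\rn}|f(z)g(z)|\,dz\le \int_{\rn}\Phi\lf(\frac{|f(z)|}{\lambda}\r)dz+\int_{\rn}\Psi\lf(\frac{|g(z)|}{\mu}\r)dz\le 2,
$$
which upon rearrangement gives exactly $\int_{\rn}|fg|\le 2\|f\|_{L^\Phi(\rn)}\|g\|_{L^\Psi(\rn)}$.

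The only point that could cause trouble is the justification of $\int\Phi(|f|/\lambda)\le 1$ at the critical value $\lambda=\|f\|_{L^\Phi(\rn)}$ itself (as opposed to any $\lambda'>\lambda$); this is where continuity of $\Phi$ (ensured by Remark \ref{re}) combined with a monotone or dominated convergence argument along a sequence $\lambda_k\downarrow\lambda$ closes the gap. Once that technicality is handled, the proof is essentially a one-line application of Young's inequality, and the factor of $2$ on the right-hand side is exactly the price paid for using the Luxemburg normalization of both factors simultaneously.
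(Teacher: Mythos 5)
Your proof is correct and is the standard argument: normalize by the Luxemburg norms, apply Young's inequality $xy\le\Phi(x)+\Psi(y)$ pointwise, and integrate. The paper itself does not prove Lemma \ref{hold} but cites \cite[p.\,58, Proposition 1]{mmz}, where the same argument appears. One caution, though: literally invoking Remark \ref{re} here would replace $\Phi$ by an \emph{equivalent} Orlicz function, which changes the complementary function $\Psi$ and both Luxemburg norms by implicit multiplicative constants, thereby spoiling the exact constant $2$ in the statement. Fortunately the replacement is unnecessary. A finite-valued convex function on $[0,\infty)$ is automatically continuous, so the Young function $\Phi$ is continuous for free, and $\Psi$, being the pointwise supremum of affine functions and nondecreasing, is lower semicontinuous and hence left-continuous; this is all your monotone-convergence step requires to justify $\int_{\rn}\Phi\left(|f(x)|/\lambda\right)\,dx\le1$ at $\lambda=\|f\|_{L^\Phi(\rn)}$ and the analogous bound for $\Psi$.
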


The following Fefferman-Stein type inequality for Orlicz-slice spaces extends the well-known Fefferman-Stein
vector-valued maximal inequality \cite[Theorem 1(1)]{fs}
, which plays an important role in the succeeding sections.

\begin{theorem}\label{main}
Let $t\in(0,\infty)$, $q,\ r\in(1,\infty)$ and $\Phi$ be an Orlicz function with lower type $p_{\Phi}^-\in(1,\infty)$ and positive upper type $p_{\Phi}^+$.
Then there exists a positive constant $C$ such that, for any $\{f_j\}_{j\in\mathbb{Z}}\subset (E_\Phi^q)_t(\rn)$,
$$
\left\|\left\{\sum _{j\in\mathbb{Z}}\left[\mathcal{M}(f_j)\right]^r\right\}
^{\frac{1}{r}}\right\|_{(E_\Phi^q)_t(\rn)}
\le C\left\|\left\{\sum _{j\in\mathbb{Z}}|f_j|^r\right\}^{\frac{1}{r}}\right\|_{(E_\Phi^q)_t(\rn)},
$$
where $C$ is independent of $\{f_j\}_{j\in\mathbb{Z}}$ and $t$.
\end{theorem}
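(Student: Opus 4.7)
The plan is to apply Lemma \ref{mm} pointwise in $x\in\rn$ inside the defining integral of the $(E_\Phi^q)_t(\rn)$ norm and then estimate the two resulting terms separately. Write $F:=\{\sum_{j\in\mathbb{Z}}|f_j|^r\}^{1/r}$. Since $\|\chi_{B(x,t)}\|_{L^\Phi(\rn)}$ is independent of $x$ by \eqref{qiu}, dividing the inequality of Lemma \ref{mm} by this constant, raising to the $q$-th power, and integrating in $x$ yield, up to a multiplicative constant,
$$
\left\|\left\{\sum_{j\in\mathbb{Z}}[\mathcal{M}(f_j)]^r\right\}^{1/r}\right\|_{(E_\Phi^q)_t(\rn)}\lesssim \mathrm{A}+\mathrm{B},
$$
where
$$
\mathrm{A}:=\frac{1}{\|\chi_{B(\vec{0}_n,t)}\|_{L^\Phi(\rn)}}\left\{\int_\rn\|F\chi_{B(x,2t)}\|_{L^\Phi(\rn)}^q\,dx\right\}^{1/q}
$$
and
$$
\mathrm{B}:=\left\{\int_\rn\left[\sum_{j\in\mathbb{Z}}\left[\mathcal{M}_u\!\left(\dashint_{B(\cdot,t)}|f_j(z)|\,dz\right)\!(x)\right]^r\right]^{q/r}\!dx\right\}^{1/q}.
$$

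For $\mathrm{A}$, I would use a scaling-and-covering argument in the spirit of the one used at the end of the proof of Proposition \ref{th2}: choose $M\in\mathbb{N}$ and $\{y_1,\ldots,y_M\}\subset\rn$, independent of $t$, with $B(\vec{0}_n,2t)\subset\bigcup_{i=1}^M B(y_i,t)$, so that $B(x,2t)\subset\bigcup_{i=1}^M B(x+y_i,t)$ for every $x\in\rn$. The quasi-triangle inequality for $\|\cdot\|_{L^\Phi(\rn)}$ coming from Lemma \ref{lem1}, iterated over $M\lesssim 1$ terms, then gives $\|F\chi_{B(x,2t)}\|_{L^\Phi(\rn)}\lesssim\sum_{i=1}^M\|F\chi_{B(x+y_i,t)}\|_{L^\Phi(\rn)}$. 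Raising to the $q$-th power, integrating in $x$, invoking the translation invariance of the Lebesgue measure, and using $M\lesssim 1$ controls $\mathrm{A}$ by $\|F\|_{(E_\Phi^q)_t(\rn)}$.

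For $\mathrm{B}$, set $g_{j,t}(x):=\dashint_{B(x,t)}|f_j(z)|\,dz$. Since $\mathcal{M}_u$ and $\mathcal{M}$ are pointwise equivalent and $q,r\in(1,\infty)$, the classical Fefferman-Stein vector-valued inequality on $L^q(\rn)$ yields $\mathrm{B}\lesssim\|\{\sum_{j\in\mathbb{Z}}g_{j,t}^r\}^{1/r}\|_{L^q(\rn)}$. Next, Minkowski's integral inequality (valid because $r>1$) gives $\{\sum_{j\in\mathbb{Z}}g_{j,t}(x)^r\}^{1/r}\leq\dashint_{B(x,t)}F(z)\,dz$. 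Finally, the Hölder inequality for Orlicz spaces (Lemma \ref{hold}) together with the relation $\Phi^{-1}(\tau)\Psi^{-1}(\tau)\sim\tau$ from Lemma \ref{un} and \eqref{qiu} applied to both $\Phi$ and its complementary function $\Psi$ yields $\dashint_{B(x,t)}F(z)\,dz\lesssim\|F\chi_{B(x,t)}\|_{L^\Phi(\rn)}/\|\chi_{B(x,t)}\|_{L^\Phi(\rn)}$, after which taking the $L^q$ norm in $x$ bounds $\mathrm{B}$ also by $\|F\|_{(E_\Phi^q)_t(\rn)}$, completing the argument. Since $\Phi$ has lower type $p_\Phi^->1$, Remark \ref{nfunction}(i) lets me treat $\Phi$ as an $N$-function so that the complementary $\Psi$ and Lemmas \ref{un} and \ref{hold} are available.

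The main obstacle is the treatment of $\mathrm{B}$: three inequalities (classical $L^q$ Fefferman-Stein, Minkowski's integral inequality to pull the $\ell^r$-norm inside the ball-average, and the generalized Hölder inequality in Orlicz spaces to convert that average into the local Orlicz norm divided by $\|\chi_{B(x,t)}\|_{L^\Phi(\rn)}$) must be chained in the correct order, and one has to exploit the $x$-independence of both $\|\chi_{B(x,t)}\|_{L^\Phi(\rn)}$ and $\|\chi_{B(x,t)}\|_{L^\Psi(\rn)}$ together with the product relation $\|\chi_{B(x,t)}\|_{L^\Phi(\rn)}\|\chi_{B(x,t)}\|_{L^\Psi(\rn)}\sim|B(x,t)|$ to secure a constant that is genuinely independent of $t$.
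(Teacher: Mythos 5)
Your proposal is correct and follows essentially the same path as the paper's proof: apply Lemma \ref{mm} pointwise, handle the first term with the same finite translation-covering argument, and reduce the second term via the $L^q(\rn)$ Fefferman--Stein inequality, the Orlicz H\"older inequality (Lemma \ref{hold}), and the $\Phi^{-1}\Psi^{-1}$ relation from Lemma \ref{un}. The only cosmetic difference is that you invoke Minkowski's integral inequality to obtain $\{\sum_j[\dashint_{B(x,t)}|f_j|\,dz]^r\}^{1/r}\le\dashint_{B(x,t)}F\,dz$, whereas the paper reaches the same intermediate bound via a dual $\ell^{r'}$-sequence and discrete H\"older; the two steps are equivalent.
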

\begin{proof}
For any $\{f_j\}_{j\in\mathbb{Z}}\subset (E_\Phi^q)_t(\rn)$, applying Lemma \ref{mm}, we have
\begin{align*}
&\int_{\rn} \lf\{\frac{\|[\sum _{j\in\mathbb{Z}}[\mathcal{M}(f_j)]^r]^{\frac{1}{r}}\chi_{B(x,t)}\|_{L^\Phi(\rn)}}
{\|\chi_{B(x,t)}\|_{L^\Phi(\rn)}}\r\}^q\,dx\\
&\quad\lesssim \int_{\rn}\lf\{\frac{\|\{\sum_{j\in\mathbb{Z}}|f_j|^r\}^{\frac{1}{r}}
\chi_{B(x,2t)}\|_{L^\Phi(\rn)}}{\|\chi_{B(x,t)}\|_{L^\Phi(\rn)}}\r\}^q\, dx
+\int_{\rn}\lf\{\sum_{j\in\mathbb{Z}}\lf[\mathcal{M}_u\lf(\dashint_{B(\cdot,t)}|f_j(z)|dz\r)(x)\r]^r\r\}^{\frac{q}{r}}dx
\\&\quad=:\mathrm{I}+\mathrm{II}.
\end{align*}
Since both $\overline{B}(\vec{0}_n,2t)$ and $\overline{B}(\vec{0}_n,t)$ are compact subsets of $\rn$ with nonempty interiors, it
follows that there exist $N\in\mathbb{N}$ and $\{x_1,\ \ldots,\ x_N\}\subset\rn$, independent of $t$, such that
$N\lesssim1$ and $B(\vec{0}_n,2t)\subseteq\bigcup_{m=1}^NB(x_m,t)$.
Thus, by this, (\ref{qiu}) and the translation invariance of the Lebesgue measure, we conclude that
\begin{align*}
\mathrm{I}&\sim\lf[\widetilde{C}_{(\Phi,t)}\r]^q\int_{\rn}\lf\|\lf\{\sum_{j\in\mathbb{Z}}\lf|f_j\r|^r
\r\}^{\frac{1}{r}}\chi_{B(x,2t)}\r\|_{L^\Phi(\rn)}^q\, dx\\
&\lesssim\lf[\widetilde{C}_{(\Phi,t)}\r]^q\int_{\rn}\lf\|
\lf\{\sum_{j\in\mathbb{Z}}\lf|f_j\r|^r\r\}^{\frac{1}{r}}\sum_{m=1}^N\chi_{B(x+x_m,t)}\r\|_{L^\Phi(\rn)}^q\, dx\\
&\lesssim\lf[\widetilde{C}_{(\Phi,t)}\r]^q\sum_{m=1}^N\int_{\rn}
\lf\|\lf\{\sum_{j\in\mathbb{Z}}\lf|f_j\r|^r\r\}^{\frac{1}{r}}\chi_{B(x+x_m,t)}\r\|_{L^\Phi(\rn)}^q\, dx\\
&\lesssim\lf[\widetilde{C}_{(\Phi,t)}\r]^q\int_{\rn}\lf\|\lf\{\sum_{j\in\mathbb{Z}}
\lf|f_j\r|^r\r\}^{\frac{1}{r}}\chi_{B(x,t)}\r\|_{L^\Phi(\rn)}^q\, dx\sim
\lf\|\lf\{\sum _{j\in\mathbb{Z}}\lf|f_j\r|^r\r\}^{\frac{1}{r}}\r\|_{(E_\Phi^q)_t(\rn)}^q,
\end{align*}
where $\widetilde{C}_{(\Phi,t)}$ is as in \eqref{qiu}.

As for $\mathrm{II}$, by the Fefferman-Stein vector-valued inequality in $L^q(\rn)$ (see \cite{fs}), we have
$$\mathrm{II}
\lesssim\int_{\rn}\lf\{\sum_{j\in\mathbb{Z}}\lf[\dashint_{B(x,t)}|f_j(z)|\,dz\r]^r\r\}^{\frac{q}{r}}dx.$$
Let $r':=\frac{r}{r-1}.$ Then there exists $\{b_j\}_{j\in\mathbb{Z}}\in \ell^{r'}$,
with $\|\{b_j\}_{j\in\mathbb{Z}}\|_{\ell^{r'}}=1,$ such that
$$\int_{\rn}\lf\{\sum_{j\in\mathbb{Z}}\lf[\dashint_{B(x,t)}|f_j(z)|\,dz\r]^r\r\}^{\frac{q}{r}}dx
=\int_{\rn}\lf[\sum_{j\in\mathbb{Z}}b_j\dashint_{B(x,t)}|f_j(z)|\,dz\r]^{q}\,dx.$$
Using Lemma \ref{hold} and the H\"older inequality, we further conclude that
\begin{align*}
&\int_{\rn}\lf[\sum_{j\in\mathbb{Z}}b_j\dashint_{B(x,t)}|f_j(z)|\,dz\r]^{q}\,dx\\
&\quad\lesssim\int_{\rn}\lf\{\dashint_{B(x,t)}\lf[\sum_{j\in\mathbb{Z}}|f_j(z)|^r\r]^{\frac{1}{r}}
\lf(\sum_{j\in\mathbb{Z}}b_j^{r'}\r)^{\frac{1}{r'}}\,dz\r\}^{q}\,dx\\
&\quad\lesssim\int_{\rn}\lf\{\lf\|\lf[\sum_{j\in\mathbb{Z}}\lf|f_j\r|^r\r]^{\frac{1}{r}}
\chi_{B(x,t)}\r\|_{L^\Phi(\rn)}\frac{\|\chi_{B(x,t)}\|_{L^\Psi(\rn)}}{|B(x,t)|}\r\}^q\, dx.
\end{align*}
Applying Lemma \ref{un}, we obtain
\begin{align*}
\frac{\|\chi_{B(x,t)}\|_{L^\Psi(\rn)}}{|B(x,t)|}
&=\frac{1}{|B(x,t)|\Psi^{-1}(\frac{1}{|B(x,t)|})}
=\frac{\Phi^{-1}(\frac{1}{|B(x,t)|})}{|B(x,t)|\Phi^{-1}(\frac{1}{|B(x,t)|})\Psi^{-1}(\frac{1}{|B(x,t)|})}\\
&<\Phi^{-1}\lf(\frac{1}{|B(x,t)|}\r)=\frac{1}{\|\chi_{B(x,t)}\|_{L^\Phi(\rn)}}.
\end{align*}
Thus,
$$\mathrm{II}\lesssim\lf\|\lf\{\sum _{j\in\mathbb{Z}}|f_j|^r\r\}^{\frac{1}{r}}\r\|_{(E_\Phi^q)_t(\rn)}^q,$$
which completes the proof of Theorem \ref{main}.
\end{proof}

\begin{remark}
Let $t\in(0,\infty),$ $q\in(1,\infty)$ and $\Phi(\tau):=\tau^q$ for any $\tau\in[0,\infty).$ Then,
by Remark \ref{re2.10}(ii) and Proposition \ref{ggg}(iii),
we know that $(E_\Phi^q)_t(\rn)=L^q(\rn)$ and, in this case,
Theorem \ref{main} is just the well-known Fefferman-Stein vector-valued maximal inequality
\cite[Theorem 1(1)]{fs}.
\end{remark}

As an immediate consequence of Theorem \ref{main},
we have the following boundedness of Hardy-Littlewood maximal operators on Orlicz-slice spaces.

\begin{proposition}\label{main2}
Let $t\in(0,\infty)$, $q\in(1,\infty)$ and $\Phi$ be an Orlicz function with positive lower type $p_{\Phi}^-\in(1,\infty)$ and positive upper type $p_{\Phi}^+$.
Then the central Hardy-Littlewood maximal function $\mathcal{M}$ is bounded on
the Orlicz-slice space $(E_\Phi^q)_t(\rn)$ with the operator norm independent of $t$.
\end{proposition}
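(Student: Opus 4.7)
The plan is to obtain this as an immediate specialization of Theorem \ref{main}. Theorem \ref{main} is precisely the Fefferman-Stein vector-valued inequality for $\mathcal{M}$ on $(E_\Phi^q)_t(\rn)$, and the scalar boundedness of $\mathcal{M}$ is simply its degenerate case corresponding to a one-term sequence.

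More concretely, I would fix any $r\in(1,\infty)$ (for instance $r:=2$), and, for a given $f\in (E_\Phi^q)_t(\rn)$, define the sequence $\{f_j\}_{j\in\mathbb{Z}}$ by $f_0:=f$ and $f_j:=0$ for every $j\in\mathbb{Z}\setminus\{0\}$. Then
\[
\lf\{\sum_{j\in\mathbb{Z}}\lf[\mathcal{M}(f_j)\r]^r\r\}^{1/r}=\mathcal{M}(f)
\quad\text{and}\quad
\lf\{\sum_{j\in\mathbb{Z}}|f_j|^r\r\}^{1/r}=|f|,
\]
so applying Theorem \ref{main} to this sequence immediately gives
\[
\|\mathcal{M}(f)\|_{(E_\Phi^q)_t(\rn)}\le C\,\|f\|_{(E_\Phi^q)_t(\rn)},
\]
with the same constant $C$ as in Theorem \ref{main}, which is independent of $f$ and of $t$. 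This yields exactly the asserted boundedness with operator norm independent of $t\in(0,\infty)$.

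Since Theorem \ref{main} has already been proved, there is no substantive obstacle here; the only thing to check is that the hypotheses of Theorem \ref{main} (namely $q,r\in(1,\infty)$ and an Orlicz function $\Phi$ of lower type $p_\Phi^-\in(1,\infty)$ and positive upper type $p_\Phi^+$) are all available under the hypotheses of Proposition \ref{main2}, which they are once one fixes $r\in(1,\infty)$. Thus the proof is a one-line corollary of Theorem \ref{main}.
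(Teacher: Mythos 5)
Your argument is exactly how the paper derives this result: the paper states Proposition \ref{main2} as ``an immediate consequence of Theorem \ref{main}'' and offers no further proof, and your specialization to the one-term sequence $f_0:=f$, $f_j:=0$ for $j\neq 0$ is precisely that immediate consequence. The proof is correct and matches the paper's intent.
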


\begin{remark}
Let $t\in(0,\infty)$ and $q,\ r\in(1,\infty).$ Recall that Auscher and Prisuelos-Arribas
\cite[Proposition 8.3(a)]{ap}
obtained the boundedness of the Hardy-Littlewood maximal operator $\mathcal{M}$ on the space
$(E_r^q)_t(\rn).$ It is easy to see that, if $\Phi(\tau):=\tau^r$ for any $\tau\in[0,\infty)$,
then $(E_\Phi^q)_t(\rn)=(E_r^q)_t(\rn)$ and, in this case, Proposition \ref{main2} is
just \cite[Proposition 8.3(a)]{ap}. Thus, Proposition \ref{main2} essentially generalizes
\cite[Proposition 8.3(a)]{ap}.
\end{remark}

\begin{definition}
Let $q\in(1,\infty)$ and $\{E_k\}_{k\in\mathbb{N}}$ be a sequence of Banach spaces.
The \emph{amalgam space $\ell^q(\{E_k\}_{k\in\mathbb{N}})$} is defined to be
set of all sequences $x: = \{x_k\}_{k\in\mathbb{N}}$
such that
$$
\|x\|_{\ell^q(\{E_k\}_{k\in\mathbb{N}})}:=\lf[\sum_{k=1}^\infty\|x_k\|_{E_k}^q\r]^{\frac{1}{q}}<\infty.
$$
\end{definition}

The following  lemma comes from \cite[p.\,359]{kg}.

\begin{lemma}\label{dou}
Let $q\in(1,\infty)$. Then the space $\ell^q(\{E_k\}_{k\in\mathbb{N}})$ is a Banach space
and its dual space is $\ell^{q'}(\{(E_k)^*\}_{k\in\mathbb{N}})$, where
$\frac{1}{q}+\frac{1}{q'}=1$ and $(E_k)^*$ denotes the dual space of $E_k$.
\end{lemma}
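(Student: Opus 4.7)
The plan is to decompose the proof into two parts: first, that $\ell^q(\{E_k\}_{k\in\mathbb{N}})$ is a Banach space, and second, the identification of its dual with $\ell^{q'}(\{(E_k)^\ast\}_{k\in\mathbb{N}})$ via the natural pairing $\la \{y_k\}_{k\in\mathbb{N}}, \{x_k\}_{k\in\mathbb{N}}\ra := \sum_{k\in\mathbb{N}} y_k(x_k)$. For the first part, the norm axioms follow from the scalar Minkowski inequality in $\ell^q$ applied to the numerical sequence $\{\|x_k\|_{E_k}\}_{k\in\mathbb{N}}$, while completeness is a routine coordinatewise argument: given a Cauchy sequence $\{x^{(m)}\}_{m\in\mathbb{N}}$ in $\ell^q(\{E_k\}_{k\in\mathbb{N}})$, each slice $\{x_k^{(m)}\}_{m\in\mathbb{N}}$ is Cauchy in the Banach space $E_k$ and hence converges to some $x_k\in E_k$, and a Fatou-type pass through the sum then shows that $\{x_k\}_{k\in\mathbb{N}}$ is the limit in the amalgam norm.

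For the easy embedding $\ell^{q'}(\{(E_k)^\ast\}_{k\in\mathbb{N}})\hookrightarrow[\ell^q(\{E_k\}_{k\in\mathbb{N}})]^\ast$, I would verify that the pairing above converges absolutely by combining the termwise estimate $|y_k(x_k)|\le\|y_k\|_{(E_k)^\ast}\|x_k\|_{E_k}$ with the scalar H\"older inequality on the conjugate exponents $q$ and $q'$, which simultaneously yields the desired upper bound on the operator norm. For the reverse direction, I would fix $\Lambda\in[\ell^q(\{E_k\}_{k\in\mathbb{N}})]^\ast$ and define $y_k\in(E_k)^\ast$ by $y_k(x):=\Lambda(\iota_k(x))$, where $\iota_k:E_k\to\ell^q(\{E_k\}_{k\in\mathbb{N}})$ is the isometric embedding that places $x$ in the $k$-th coordinate and zero elsewhere. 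On the subspace of finitely supported sequences (which is dense since $q<\infty$), linearity of $\Lambda$ forces it to coincide with the pairing against $\{y_k\}_{k\in\mathbb{N}}$, so the identification is determined once the sequence $\{y_k\}_{k\in\mathbb{N}}$ is shown to belong to $\ell^{q'}(\{(E_k)^\ast\}_{k\in\mathbb{N}})$.

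The main obstacle, and the only genuinely nontrivial step, is that last membership together with the norm bound $\|\{y_k\}\|_{\ell^{q'}(\{(E_k)^\ast\}_{k\in\mathbb{N}})}\le\|\Lambda\|$. For this I would fix $N\in\mathbb{N}$ and $\varepsilon\in(0,1)$, choose for each $k\in\{1,\ldots,N\}$ a vector $u_k\in E_k$ with $\|u_k\|_{E_k}\le1$ and $y_k(u_k)\ge(1-\varepsilon)\|y_k\|_{(E_k)^\ast}$, and feed the test sequence whose $k$-th entry equals $\|y_k\|_{(E_k)^\ast}^{q'-1}u_k$ for $k\le N$ and zero otherwise into $\Lambda$. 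Using the identity $(q'-1)q=q'$ together with the boundedness of $\Lambda$ yields
$$
(1-\varepsilon)\sum_{k=1}^N\|y_k\|_{(E_k)^\ast}^{q'}\le\|\Lambda\|\lf[\sum_{k=1}^N\|y_k\|_{(E_k)^\ast}^{q'}\r]^{1/q},
$$
so, after dividing by the right-hand factor, recalling $1-1/q=1/q'$, then letting $\varepsilon\to 0^+$ and $N\to\infty$, one obtains $\|\{y_k\}\|_{\ell^{q'}(\{(E_k)^\ast\}_{k\in\mathbb{N}})}\le\|\Lambda\|$. Combined with the matching upper bound from the easy direction and the above density argument, this completes the duality identification.
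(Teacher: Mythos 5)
The paper does not prove this lemma; it simply cites it as a known classical fact from K\"othe's \emph{Topological Vector Spaces} \cite[p.\,359]{kg}, so there is no in-text argument to compare against. Your proposal is a correct and complete rendition of the standard textbook proof of that cited result: the coordinatewise completeness argument, the H\"older-based easy embedding, the coordinate evaluations $y_k:=\Lambda\circ\iota_k$, density of finitely supported sequences (valid precisely because $q<\infty$), and the truncated test sequence with exponent $q'-1$ exploiting $(q'-1)q=q'$ to extract the sharp bound $\|\{y_k\}\|_{\ell^{q'}}\le\|\Lambda\|$. The one small point worth tightening is the choice of $u_k$: over complex scalars you should take $u_k$ with $\|u_k\|_{E_k}\le 1$ and $|y_k(u_k)|\ge(1-\varepsilon)\|y_k\|_{(E_k)^\ast}$, then multiply by the appropriate unimodular factor so that $y_k(u_k)$ is real and nonnegative; as written the inequality $y_k(u_k)\ge(1-\varepsilon)\|y_k\|_{(E_k)^\ast}$ implicitly assumes real scalars. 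With that routine adjustment the argument is airtight and matches the classical source.
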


\begin{theorem}\label{dual}
Let $t\in(0,\infty)$, $q\in(1,\infty)$ and $\Phi$ be an Orlicz function with lower type $p_{\Phi}^-\in(1,\infty)$
and positive upper type $p_{\Phi}^+$.
 Let $\Psi$ be the complementary function of $\Phi$.
Then the dual space of $(E_\Phi^q)_t(\rn)$ is isomorphic and homeomorphic to $(E_\Psi^{q'})_t(\rn)$.
\end{theorem}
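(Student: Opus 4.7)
The plan is to reduce the duality for the Orlicz-slice space to that of the Orlicz-amalgam space, and then to that of an abstract $\ell^q$-amalgam of Banach spaces, where Lemma \ref{dou} does the heavy lifting. Concretely, I would use Proposition \ref{th2} to identify $(E_\Phi^q)_t(\rn)$ with $\ell^q(L^\Phi_t)(\rn)$ up to an equivalent quasi-norm; under the hypothesis $p_\Phi^-\in(1,\infty)$ together with Lemma \ref{odj} and Remark \ref{nfunction}(i), I may assume $\Phi$ is an N-function, so that $(E_\Phi^q)_t(\rn)$ is a bona fide Banach space and the standard duality $(L^\Phi(Q))^*\cong L^\Psi(Q)$ on any cube $Q$ (with the integral pairing) is available in the form of Lemma \ref{hold} together with the converse direction from classical Orlicz-space theory.

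First I would set up the identification: the map $\Theta:f\mapsto\{f\chi_{Q_{tk}}\}_{k\in\ZZ^n}$ is an isometric isomorphism between $\ell^q(L^\Phi_t)(\rn)$ and the abstract amalgam $\ell^q(\{L^\Phi(Q_{tk})\}_{k\in\ZZ^n})$, since the cubes $\{Q_{tk}\}_{k\in\ZZ^n}$ tile $\rn$ up to a null set. Applying Lemma \ref{dou} with $E_k:=L^\Phi(Q_{tk})$ and using $(L^\Phi(Q_{tk}))^*\cong L^\Psi(Q_{tk})$ yields
\begin{equation*}
\bigl(\ell^q(L^\Phi_t)(\rn)\bigr)^*\cong\ell^{q'}\bigl(\{L^\Psi(Q_{tk})\}_{k\in\ZZ^n}\bigr)\cong\ell^{q'}(L^\Psi_t)(\rn),
\end{equation*}
with the duality pairing realized by $(f,g)\mapsto\int_\rn f(x)g(x)\,dx$. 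A second application of Proposition \ref{th2}, this time with $(\Psi,q')$ in place of $(\Phi,q)$, identifies $\ell^{q'}(L^\Psi_t)(\rn)$ with $(E_\Psi^{q'})_t(\rn)$. Chasing the identifications, the pairing in the original scale remains the integral pairing.

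To finish, I would verify both directions of the duality explicitly. For the easy direction, given $g\in(E_\Psi^{q'})_t(\rn)$ and $f\in(E_\Phi^q)_t(\rn)$, applying Lemma \ref{hold} cubewise and then H\"older's inequality with exponents $q,q'$ to the sum over $k\in\ZZ^n$ gives
\begin{equation*}
\lf|\int_\rn f(x)g(x)\,dx\r|\le\sum_{k\in\ZZ^n}2\|f\chi_{Q_{tk}}\|_{L^\Phi(\rn)}\|g\chi_{Q_{tk}}\|_{L^\Psi(\rn)}\le C\|f\|_{(E_\Phi^q)_t(\rn)}\|g\|_{(E_\Psi^{q'})_t(\rn)},
\end{equation*}
so $g$ induces a bounded linear functional. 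For the converse, given any $L\in\bigl((E_\Phi^q)_t(\rn)\bigr)^*$, transporting $L$ via $\Theta$ produces an element of $(\ell^q(\{L^\Phi(Q_{tk})\}))^*$, which by Lemma \ref{dou} is represented by a sequence $\{g_k\}\in\ell^{q'}(\{L^\Psi(Q_{tk})\})$; setting $g:=\sum_k g_k\chi_{Q_{tk}}$ gives the representative and the norm estimate $\|L\|\sim\|g\|_{(E_\Psi^{q'})_t(\rn)}$, with constants independent of $t$ once the equivalences in Proposition \ref{th2} are absorbed.

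The main obstacle I expect is ensuring the classical Orlicz duality $(L^\Phi(Q))^*=L^\Psi(Q)$ is clean enough on each cube $Q_{tk}$: this requires $\Phi$ to be $\Delta_2$-regular (which follows from positive upper type $p_\Phi^+$) together with $\Phi$ being an N-function (which follows from $p_\Phi^-\in(1,\infty)$ via Lemma \ref{odj}). A secondary nuisance is checking that the equivalent norms from Proposition \ref{th2} do not depend on $t$ in a way that would destroy the homeomorphism; tracing through the proof of Proposition \ref{th2} shows the implicit constants there are independent of $t$, which suffices.
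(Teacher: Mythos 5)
Your proposal follows exactly the paper's route: reduce via Proposition \ref{th2} to the Orlicz-amalgam scale, apply Lemma \ref{dou} together with classical Orlicz duality on each cube $Q_{tk}$, and return via Proposition \ref{th2} applied to $(\Psi,q')$. The one place where your write-up is looser than the paper's is the final accounting of $t$-dependence: Proposition \ref{th2} gives $\|f\|_{(E_\Phi^q)_t(\rn)}\sim t^{n/q}\widetilde C_{(\Phi,t)}^{-1}\|f\|_{\ell^q(L^\Phi_t)(\rn)}$ with a genuine $t$-dependent normalizing factor rather than norm equivalence with $t$-independent constants, and the paper uses Lemma \ref{un} at the last step to check that the product of the $\Phi$- and $\Psi$-factors is $\sim 1$ — a detail your phrase ``tracing through the proof of Proposition \ref{th2}'' glosses over, although it does not affect the stated conclusion for fixed $t$.
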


\begin{proof}
Let $t\in(0,\infty)$. Using Proposition \ref{th2}, we obtain
$$(E_\Phi^q)_t(\rn)=\ell^q(\{L^\Phi(Q_{tk})\}_{k\in\mathbb{Z}^n})$$
and, for any $f\in(E_\Phi^q)_t(\rn)$,
$$
\|f\|_{(E_\Phi^q)_t(\rn)}\sim\frac{t^{\frac{n}{q}}}{[\Phi^{-1}(\frac{1}{\varepsilon_nt^n})]^{-1}}
\|f\|_{\ell^q(\{L^\Phi(Q_{tk})\}_{k\in\mathbb{Z}^n})}.
$$
Then it is easy to prove that $((E_\Phi^q)_t(\rn))^*=(\ell^q(\{L^\Phi(Q_{tk})\}_{k\in\mathbb{Z}^n}))^*$ and, for any $f\in((E_\Phi^q)_t(\rn))^*$,
\begin{equation}\label{11}
\frac{t^{\frac{n}{q}}}{[\Phi^{-1}(\frac{1}{\varepsilon_nt^n})]^{-1}}
\|f\|_{((E_\Phi^q)_t(\rn))^*}\sim\|f\|_{(\ell^q(\{L^\Phi(Q_{tk})\}_{k\in\mathbb{Z}^n}))^*},
\end{equation}
where the equivalent positive constants are independent of $t$ and $f$.
Applying \cite[p.\,110, Theorem 7]{mmz} or \cite[Theorem 8.19]{af}, we know that $(L^\Phi(Q_{tk}))^*$ is
isomorphic and homeomorphic to $L^\Psi(Q_{tk})$, which, together with Lemma \ref{dou}, implies that
$(\ell^q(\{L^\Phi(Q_{tk})\}_{k\in\mathbb{Z}^n}))^*
=\ell^{q'}(\{L^\Psi(Q_{tk})\}_{k\in\mathbb{Z}^n})$ and, for any
$f\in(\ell^q(\{L^\Phi(Q_{tk})\}_{k\in\mathbb{Z}^n}))^*$,
\begin{equation}\label{22}
\|f\|_{(\ell^q(\{L^\Phi(Q_{tk})\}_{k\in\mathbb{Z}^n}))^*}
\sim\|f\|_{\ell^{q'}(\{L^\Psi(Q_{tk})\}_{k\in\mathbb{Z}^n})},
\end{equation}
where the equivalent positive constants are independent of $f$ and $t$.
Using Proposition \ref{th2} again, we find that, for any $f\in \ell^{q'}(\{L^\Psi(Q_{tk})\}_{k\in\mathbb{Z}^n})$,
$$
\|f\|_{\ell^{q'}(\{L^\Psi(Q_{tk})\}_{k\in\mathbb{Z}^n})}\sim\frac{[\Psi^{-1}
(\frac{1}{\varepsilon_nt^n})]^{-1}}{t^{\frac{n}{q'}}}
\|f\|_{(E_\Psi^{q'})_t(\rn)},
$$
which, combined with (\ref{11}) and (\ref{22}), implies that $((E_\Phi^q)_t(\rn))^*=(E_\Psi^{q'})_t(\rn)$ and
$$
\frac{t^n}{[\Phi^{-1}(\frac{1}{\varepsilon_nt^n})]^{-1}[\Psi^{-1}
(\frac{1}{\varepsilon_nt^n})]^{-1}}\|f\|_{((E_\Phi^q)_t)(\rn)^*}
\sim\|f\|_{(E_\Psi^{q'})_t(\rn)},
$$
where all equivalent positive constants are independent of $f$ and $t$.
By Lemma \ref{un}, we have
$$
\frac{t^n}{[\Phi^{-1}(\frac{1}{\varepsilon_nt^n})]^{-1}[\Psi^{-1}
(\frac{1}{\varepsilon_nt^n})]^{-1}}
\sim 1
$$
with the equivalent positive constants independent of $f$, which further implies the desired conclusion and hence
completes the proof of Theorem \ref{dual}.
\end{proof}

Next, we recall the notion of ball quasi-Banach
function spaces defined in \cite[Definition 2.1]{ykds}. In what follows, the
\emph{symbol} $\mathbb{M}(\rn)$ denotes
the set of all measurable functions on $\rn$.

\begin{definition}\label{ball}
A quasi-Banach space $X\subset\mathbb{M}(\rn)$ is called a \emph{ball quasi-Banach function space} on $\rn$
if it satisfies
\begin{enumerate}
\item[{\rm(i)}] $\|f\|_{X}=0$ implies that $f=0$ almost everywhere;
\item[{\rm(ii)}] $|g|\le|f|$ almost everywhere implies that $\|g\|_{X}\le\|f\|_{X}$;
\item[{\rm(iii)}] $0\le f_m\uparrow f$ almost everywhere on $\rn$
implies that $\|f_m\|_{X}\uparrow\|f\|_{X}$;

\item[{\rm(iv)}] $B\in\mathbb{B}$ implies that $\chi_B\in X$, where
$$\mathbb{B}:=\{B(x,r):\ x\in\rn\ \ \mbox{and}\ \ r\in(0,\infty)\}.$$
\end{enumerate}
\end{definition}

Recall that Sawano et al. \cite{ykds} developed a real-variable
theory of Hardy spaces associated with ball quasi-Banach function spaces. Next we show that the
Orlicz-slice spaces are ball quasi-Banach function spaces, which further
implies that the Orlicz-slice Hardy space is a special case of the
Hardy type space considered in \cite{ykds}.

\begin{lemma}\label{ballproof}
Let $t,\ q\in(0,\infty)$ and $\Phi$ be an Orlicz function with positive lower type $p_{\Phi}^-$ and positive upper type $p_{\Phi}^+$.
Then $(E_\Phi^q)_t(\rn)$ is a ball quasi-Banach function space.
\end{lemma}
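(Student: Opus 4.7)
The plan is to verify, one at a time, the four defining properties of a ball quasi-Banach function space listed in Definition \ref{ball}, while appealing to Remark \ref{re2.10}(i) for the underlying quasi-Banach structure (which follows from Lemmas \ref{lem1} and \ref{lem3} together with \cite[Theorem 1]{f}). Throughout, I would use the explicit description of the Orlicz-slice quasi-norm in Definition \ref{d2} and reduce each property to the corresponding property of $L^\Phi(\rn)$.

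First I would handle the easy items (i), (ii) and (iv). For (i), if $\|f\|_{(E_\Phi^q)_t(\rn)}=0$, then $\|f\chi_{B(x,t)}\|_{L^\Phi(\rn)}=0$ for almost every $x\in\rn$; by the analogous property of the Orlicz norm this forces $f=0$ almost everywhere on $B(x,t)$ for a.e.\ $x$, hence $f=0$ a.e. Property (ii) is immediate from the monotonicity of $\|\cdot\|_{L^\Phi(\rn)}$ applied inside the outer $L^q$-integral. Property (iv) is a direct consequence of the explicit computation in \eqref{qiu}: for any ball $B=B(x_0,r)$, $\|\chi_B\chi_{B(x,t)}\|_{L^\Phi(\rn)}\le \|\chi_{B(x,t)}\|_{L^\Phi(\rn)}=\widetilde C_{(\Phi,t)}$, so the outer integrand is bounded by $1$ times $\chi_{\{x:B(x,t)\cap B\ne\emptyset\}}(x)$, which has finite $L^q$-norm; hence $\chi_B\in (E_\Phi^q)_t(\rn)$.

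The main step is (iii), the Fatou-type monotone convergence property. The plan is to apply monotone convergence twice. Fix $x\in\rn$ and suppose $0\le f_m\uparrow f$ a.e. I would first show that $\|f_m\chi_{B(x,t)}\|_{L^\Phi(\rn)}\uparrow\|f\chi_{B(x,t)}\|_{L^\Phi(\rn)}$; this follows because, for each fixed $\lambda\in(0,\infty)$, $\Phi(|f_m|\chi_{B(x,t)}/\lambda)\uparrow\Phi(|f|\chi_{B(x,t)}/\lambda)$ a.e.\ by continuity and monotonicity of $\Phi$ (Remark \ref{re}), so by the Lebesgue monotone convergence theorem the modular integrals converge monotonically, and taking the infimum over $\lambda$ in the Luxemburg-type definition of $\|\cdot\|_{L^\Phi(\rn)}$ yields the corresponding monotone convergence of norms. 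Dividing by the normalizing factor $\|\chi_{B(x,t)}\|_{L^\Phi(\rn)}$ (which is independent of $m$) and raising to the $q$-th power, I obtain a pointwise monotone sequence of nonnegative functions in $x$; a second application of the monotone convergence theorem to the outer integral over $\rn$ then gives $\|f_m\|_{(E_\Phi^q)_t(\rn)}\uparrow\|f\|_{(E_\Phi^q)_t(\rn)}$, as required.

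The only delicate point I foresee is the monotone convergence of the Luxemburg-type Orlicz norms used in step (iii); once one verifies carefully that $\inf\{\lambda>0:\int\Phi(|f_m|/\lambda)\,dx\le1\}$ increases to $\inf\{\lambda>0:\int\Phi(|f|/\lambda)\,dx\le1\}$ (using that $\Phi$ is continuous and strictly increasing by Remark \ref{re}, plus the monotone convergence theorem on the modular), the rest is formal. Combining (i)--(iv) with the already-noted quasi-Banach property finishes the proof.
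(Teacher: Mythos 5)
Your proposal is correct and follows essentially the same plan as the paper: dispose of (i), (ii), (iv) directly from the definition and \eqref{qiu}, reduce (iii) to a monotone convergence statement for the inner Luxemburg norm verified via the monotone convergence theorem applied to the modular, and then apply monotone convergence once more to the outer $L^q$-integral. The ``delicate point'' you flag is precisely the step the paper writes out (picking $A_{(x,t)}<\|f\chi_{B(x,t)}\|_{L^\Phi(\rn)}$, showing the modular of $f_K$ at level $A_{(x,t)}$ eventually exceeds $1$, and concluding $\|f_K\chi_{B(x,t)}\|_{L^\Phi(\rn)}>A_{(x,t)}$), and the verification you outline goes through.
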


\begin{proof}
By Remark \ref{re}, without loss of generality,
we may assume that $\Phi$ is continuous and strictly increasing. Then, from the definition
of $\|\cdot\|_{(E_\Phi^q)_t(\rn)}$, it is easy to deduce that $(E_\Phi^q)_t(\rn)$ satisfies
(i), (ii) and (iv) of Definition \ref{ball}.

We now prove that $(E_\Phi^q)_t(\rn)$ satisfies Definition \ref{ball}(iii). To this end, let $\{f_m\}_{m\in\mathbb{N}}\subset(E_\Phi^q)_t(\rn)$ and $f\in(E_\Phi^q)_t(\rn)$
satisfy $0\le f_m\uparrow f$ almost everywhere on $\rn$. For any fixed $x\in\rn$ and $t\in(0,\infty)$, let $A_{(x,t)}\in(0,\|f\chi_{B(x,t)}\|_{L^\Phi(\rn)})$. Then, by the definition of $\|f\chi_{B(x,t)}\|_{L^\Phi(\rn)}$,
we have
$$
\int_{B(x,t)}\Phi\lf(\frac{|f(y)|}{A_{(x,t)}}\r)\,dy>1,
$$
which, together with the monotone convergence theorem, implies that there exists $N\in\mathbb{N}$ such that
$$
\int_{B(x,t)}\Phi\lf(\frac{|f_K(y)|}{A_{(x,t)}}\r)\,dy>1,\quad \forall\,K\ge N.
$$
Thus, when $K\ge N$, $\|f_K\chi_{B(x,t)}\|_{L^\Phi(\rn)}>A_{(x,t)}$, which, together with the arbitrariness of $A_{(x,t)}\in(0,\|f\chi_{B(x,t)}\|_{L^\Phi(\rn)})$, implies that, for any $x\in\rn$,
$\lim_{m\rightarrow\infty}\|f_m\chi_{B(x,t)}\|_{L^\Phi(\rn)}=\|f\chi_{B(x,t)}\|_{L^\Phi(\rn)}$.
Then, by the monotone convergence theorem in $L^q(\rn)$, we obtain
$$
\lim_{m\rightarrow\infty}\|f_m\|_{(E_\Phi^q)_t(\rn)}=\|f\|_{(E_\Phi^q)_t(\rn)}.
$$
This finishes the proof of Lemma \ref{ballproof}.
\end{proof}

\begin{definition}\label{lian}
A ball quasi-Banach function space $X$ is said to have an \emph{absolutely continuous quasi-norm} if
$\|\chi_{E_j}\|_{X}\downarrow0$ as $j\rightarrow\infty$ whenever $\{E_j\}_{j=1}^\infty$ is a sequence of measurable sets
in $\rn$ satisfying that $E_j\supset E_{j+1}$ for any $j\in\mathbb{N}$ and $\cap_{j=1}^\infty E_j=\emptyset$.
\end{definition}

\begin{definition}\label{convex}
Let $X$ be a ball quasi-Banach function space and $p\in(0,\infty)$.
\begin{enumerate}
\item[{\rm(i)}] The \emph{$p$-convexification} $X^p$ of $X$ is defined by setting
$X^p:=\{f\in\mathbb{M}(\rn):\ |f|^p\in X\}$
equipped with the quasi-norm $\|f\|_{X^p}:=\||f|^p\|_{X^p}^{\frac{1}{p}}$ for any $f\in X^p$.

\item[{\rm(ii)}] The space $X$ is said to be \emph{p-convex} if there exists a positive constant $C$ such that,
for any $\{f_j\}_{j\in\mathbb{N}}\subset X^{\frac{1}{p}}$,
$$
\lf\|\sum_{j=1}^{\infty}\lf|f_j\r|\r\|_{X^{\frac{1}{p}}}\le C\sum_{j=1}^{\infty}\lf\|f_j\r\|_{X^{\frac{1}{p}}}.
$$
In particular, when $C=1$, $X$ is said to be \emph{strictly p-convex}.

\end{enumerate}
\end{definition}

\begin{lemma}\label{littlewood}
Let $t$, $q\in(0,\infty)$ and $\Phi$ be an Orlicz function with positive lower type $p_{\Phi}^-$ and positive upper type $p_{\Phi}^+$. Let $r\in(0,\min\{p_\Phi^-,q\})$. Then $\mathcal{M}$ is bounded on $[(E_\Phi^q)_t(\rn)]^{\frac{1}{r}}$ with the operator norm independent of $t$, where $[(E_\Phi^q)_t(\rn)]^{\frac{1}{r}}$ is the $\frac{1}{r}$-convexification of $(E_\Phi^q)_t(\rn)$.
\end{lemma}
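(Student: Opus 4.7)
\noindent\textbf{Proof plan for Lemma \ref{littlewood}.} The strategy is to identify the $\frac{1}{r}$-convexification $[(E_\Phi^q)_t(\rn)]^{1/r}$ with another Orlicz-slice space of the form $(E_{\Phi^{(r)}}^{q/r})_t(\rn)$ for a suitably rescaled Orlicz function $\Phi^{(r)}$, and then apply Proposition \ref{main2} directly to that space. Concretely, I would define $\Phi^{(r)}(s):=\Phi(s^{1/r})$ for $s\in[0,\infty)$ and show that
\begin{equation*}
\bigl[(E_\Phi^q)_t(\rn)\bigr]^{1/r} \;=\; \bigl(E_{\Phi^{(r)}}^{q/r}\bigr)_t(\rn)
\end{equation*}
with identical quasi-norms (up to constants independent of $t$).

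To establish this identification, I would first verify at the Orlicz level that, for any measurable $f$, $\||f|^{1/r}\|_{L^\Phi(\rn)}=\|f\|_{L^{\Phi^{(r)}}(\rn)}^{1/r}$; this is a one-line change of variables $\lambda=\mu^{1/r}$ inside the Luxemburg-norm infimum. Combining this with the explicit formula \eqref{qiu}, namely $\|\chi_{B(x,t)}\|_{L^\Phi(\rn)}=[\Phi^{-1}(1/|B(x,t)|)]^{-1}$, together with the identity $(\Phi^{(r)})^{-1}(u)=[\Phi^{-1}(u)]^{r}$, one deduces $\|\chi_{B(x,t)}\|_{L^\Phi(\rn)}=\|\chi_{B(x,t)}\|_{L^{\Phi^{(r)}}(\rn)}^{1/r}$. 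Inserting both relations into Definition \ref{d2} for $\||f|^{1/r}\|_{(E_\Phi^q)_t(\rn)}$ and simplifying the resulting exponent $(q)\cdot(1/r)$ in the outer integral yields
\begin{equation*}
\bigl\||f|^{1/r}\bigr\|_{(E_\Phi^q)_t(\rn)} \;=\; \|f\|_{(E_{\Phi^{(r)}}^{q/r})_t(\rn)}^{1/r},
\end{equation*}
and raising to the $r$-th power, in accordance with Definition \ref{convex}(i), gives the claimed identification of spaces and quasi-norms.

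Next, I would check that $\Phi^{(r)}$ is an admissible Orlicz function for Proposition \ref{main2}. Since $\Phi$ has positive lower type $p_\Phi^-$ and upper type $p_\Phi^+$, a direct computation from $\Phi^{(r)}(st)=\Phi((st)^{1/r})$ shows that $\Phi^{(r)}$ has lower type $p_\Phi^-/r$ and upper type $p_\Phi^+/r$; monotonicity, $\Phi^{(r)}(0)=0$, positivity, and the blow-up at infinity are inherited from $\Phi$. The hypothesis $r<\min\{p_\Phi^-,q\}$ then yields simultaneously $p_\Phi^-/r>1$ and $q/r>1$, which are exactly the conditions needed to invoke Proposition \ref{main2} for the space $(E_{\Phi^{(r)}}^{q/r})_t(\rn)$.

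Finally, I would conclude by applying Proposition \ref{main2}: the centered Hardy--Littlewood maximal operator $\mathcal{M}$ is bounded on $(E_{\Phi^{(r)}}^{q/r})_t(\rn)=[(E_\Phi^q)_t(\rn)]^{1/r}$ with operator norm independent of $t$. No step is particularly delicate, but the one place where care is needed is the bookkeeping of the exponents in the identification of $[(E_\Phi^q)_t]^{1/r}$ with $(E_{\Phi^{(r)}}^{q/r})_t$, especially the interplay between the outer $L^q$ integration, the inner Orlicz norm, and the normalization $\|\chi_{B(x,t)}\|_{L^\Phi(\rn)}^{-1}$; checking that these combine to the \emph{same} Orlicz-slice expression with parameters $(\Phi^{(r)},q/r,t)$ is what makes Proposition \ref{main2} directly applicable, which is the whole point of the argument.
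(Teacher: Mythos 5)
Your proposal is essentially identical to the paper's proof: the paper also sets $\Phi_r(\tau):=\Phi(\tau^{1/r})$, verifies the norm identities $\||f|^{1/r}\chi_{B(x,t)}\|_{L^\Phi}=\|f\chi_{B(x,t)}\|_{L^{\Phi_r}}^{1/r}$ and $\|\chi_{B(x,t)}\|_{L^\Phi}=\|\chi_{B(x,t)}\|_{L^{\Phi_r}}^{1/r}$, identifies $[(E_\Phi^q)_t]^{1/r}$ with $(E_{\Phi_r}^{q/r})_t$, and then invokes Proposition \ref{main2} using that $p_\Phi^-/r>1$ and $q/r>1$. Your argument is correct and follows the same route.
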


\begin{proof}
For any $\tau\in(0,\infty)$, let $\Phi_r(\tau):=\Phi(\sqrt[r]{\tau})$. Then $\Phi_r$ is of upper type
$\frac{p_{\Phi}^+}{r}$ and of lower type $\frac{p_{\Phi}^-}{r}$, and $\frac{p_{\Phi}^-}{r}\in(1,\infty)$.
This implies that, for any $t\in(0,\infty)$, $f\in[(E_\Phi^q)_t(\rn)]^{\frac{1}{r}}$ and $x\in\rn$,
$$
\lf\||f|^{\frac{1}{r}}\chi_{B(x,t)}\r\|_{L^\Phi(\rn)}=\lf\||f|\chi_{B(x,t)}\r\|_{L^{\Phi_r}(\rn)}^{\frac{1}{r}}
$$
and
$$
\lf\|\chi_{B(x,t)}\r\|_{L^\Phi(\rn)}=\lf\|\lf[\chi_{B(x,t)}\r]^{\frac{1}{r}}\r\|_{L^\Phi(\rn)}
=\lf\|\chi_{B(x,t)}\r\|_{L^{\Phi_r}(\rn)}^{\frac{1}{r}}.
$$
Combining this and Definition \ref{convex}, we obtain
\begin{align*}
\lf\|\mathcal{M}(f)\r\|_{[(E_\Phi^q)_t(\rn)]^{\frac{1}{r}}}
&=\lf\{\int_{\rn}\lf[\frac{\||\mathcal{M}(f)|^{\frac{1}{r}}\chi_{B(x,t)}\|
_{L^\Phi(\rn)}}{\|\chi_{B(x,t)}\|_{L^\Phi(\rn)}}\r]^q\,dx\r\}^{\frac{r}{q}}\\
&=\lf\{\int_{\rn}\lf[\frac{\||\mathcal{M}(f)|\chi_{B(x,t)}\|
_{L^{\Phi_r}(\rn)}}{\|\chi_{B(x,t)}\|_{L^{\Phi_r}(\rn)}}\r]^{\frac{q}{r}}\,dx\r\}^{\frac{r}{q}}
=\lf\|\mathcal{M}(f)\r\|_{(E_{\Phi_r}^{q/r})_t(\rn)}.
\end{align*}
Since $\frac{p_{\Phi}^-}{r}\in(1,\infty)$ and $\frac{q}{r}\in(1,\infty)$, from Proposition \ref{main2},
it follows that
$$
\lf\|\mathcal{M}(f)\r\|_{(E_{\Phi_r}^{q/r})_t(\rn)}
\lesssim\lf\|f\r\|_{(E_{\Phi_r}^{q/r})_t(\rn)}
\sim\lf\|f\r\|_{[(E_\Phi^q)_t(\rn)]^{\frac{1}{r}}}.
$$
This finishes the proof of Lemma \ref{littlewood}.
\end{proof}

\section{Orlicz-slice Hardy spaces \label{s3}}

In this section, we introduce the Orlicz-slice Hardy spaces, which are
defined via the radial maximal functions. We then present a series of real-variable
characterizations of these Orlicz-slice Hardy spaces,
including characterizations via grand and non-tangential maximal functions, poisson integrals, atoms and finite atoms, and
Littlewood-Paley functions. A Lebesgue-Hardy type coincidence relation is also established between Orlicz-slice spaces and Orlicz-slice Hardy  spaces.

Let us begin with the following notion of the radial maximal function.

\begin{definition}\label{31}
Let $\varphi\in\mathcal{S}(\rn)$ and $f\in\mathcal{S}'(\rn)$.
The \emph{radial maximal function} $M(f,\varphi)$ is defined by setting
$$M(f,\varphi)(x):=\sup_{s\in(0,\infty)}\lf|(\varphi_s \ast f)(x)\r|,\quad \forall\,x\in\rn.$$
\end{definition}

\begin{definition}\label{dh}
Let $t$, $q\in(0,\infty) $ and $\Phi$ be an Orlicz function with positive lower type $p_{\Phi}^-$ and positive upper type $p_{\Phi}^+$.
Then the \emph{Orlicz-slice Hardy space $(HE_\Phi^q)_t(\rn)$} is defined by setting
$$
(HE_\Phi^q)_t(\rn):=\lf\{f\in\mathcal{S}'(\rn):\ \|f\|_{(HE_\Phi^q)_t(\rn)}:=
\|M(f,\varphi)\|_{(E_\Phi^q)_t(\rn)}<\infty \r\},
$$
where $\varphi\in\mathcal{S}(\rn)$ satisfies
$
\int_{\rn}\varphi(x)\,dx\neq0.
$
In particular, when $\Phi(s):=s^r$ for any $s\in[0,\fz)$ with $r\in(0,\fz)$,
the Hardy-type space $(HE_r^q)_t(\rn):=(HE_{\Phi}^q)_t(\rn)$ is called
the \emph{slice Hardy space}.
\end{definition}

\begin{remark}\label{ReHpq}
\begin{enumerate}
\item[(i)] If $t=1$ and $\Phi(\tau):=\tau^p$
for any $\tau\in [0,\fz)$ with $p\in (0,\fz)$,
then $(HE_\Phi^q)_t(\rn)$ coincides with the Hardy-amalgam space $\mathcal{H}^{p,q}(\rn)$ in \cite{af1}.
\item[(ii)] If $t=q=1$, then $(HE_\Phi^q)_t(\rn)$ coincides with
 the variant of the Orlicz-Hardy space $H_*^\Phi(\rn)$ of Bonami and Feuto \cite{bf}.
\end{enumerate}
\end{remark}

\subsection{Characterizations in terms of various maximal functions\label{s3.1}}

We now present some maximal function characterizations of $(HE_\Phi^q)_t(\rn)$, whose
proofs are given in Section \ref{s4}. Define,
for any $N\in\mathbb{N}$
and $\varphi\in\mathcal{S}(\rn)$,
$$
p_N(\varphi):=\sum_{\alpha\in\mathbb{Z}^n_+,|\alpha|\le N} \sup_{x\in\rn}(1+|x|)^{N+n}|\partial^\alpha\varphi(x)|,
$$
and let $\mathcal{F}_N(\rn):=\{\varphi\in\mathcal{S}(\rn):\ \ p_N(\varphi)\le1\}$. Also recall that
$\mathbb{R}_+^{n+1}:=\rn\times(0,\infty)$.

\begin{definition}\label{jdhs}
Let $\varphi\in\mathcal{S}(\rn)$, $N\in\mathbb{N}$, $a,\ b\in(0,\infty)$ and $f\in\mathcal{S}'(\rn)$.
\begin{enumerate}
\item[{\rm(i)}] The \emph{grand maximal function $M_N(f)$} is defined by setting, for any $x\in\rn$,
$$M_N(f)(x):=\sup\lf\{|\varphi_s \ast f(y)|:\ s\in(0,\infty),\  |x-y|<s,\  \varphi\in\mathcal{F}_N(\rn)\r\};$$

\item[{\rm(ii)}] The \emph{grand radial maximal function $M_N^0(f)$} is defined by setting, for any $x\in\rn$,
$$M_N^0(f)(x):=\sup\lf\{|\varphi_s  \ast f(x)|:\ s\in(0,\infty),\
 \varphi\in\mathcal{F}_N(\rn)\r\};$$

\item[{\rm(iii)}]  The \emph{non-tangential maximal function $M_a^*(f,\varphi)$}, with aperture $a\in(0,\infty)$,
 is defined by setting, for any $x\in\rn$,
$$M_a^*(f,\varphi)(x):=\sup_{s\in(0,\infty)}\lf\{\sup_{y\in\rn,|y-x|<as}\lf|(\varphi_s \ast f)(y)\r|\r\};$$

\item[{\rm(iv)}] The \emph{maximal function $M_b^{**}(f,\varphi)$ of Peetre type} is defined by setting, for any $x\in\rn$,
$$M_b^{**}(f,\varphi)(x):=\sup_{(y,s)\in\mathbb{R}_+^{n+1}}\frac{|(\varphi_s \ast f)(x-y)|}{(1+s^{-1}|y|)^b};$$

\item[{\rm(v)}] The \emph{grand maximal function $M(f,\varphi)$ of Peetre type} is defined by setting, for any $x\in\rn$,
$$M_{b,\ N}^{**}(f)(x):=\sup_{\psi\in\mathcal{F}_N(\rn)}\lf\{\sup_{(y,s)\in\mathbb{R}_+^{n+1}}\frac{|(\psi_s \ast f)(x-y)|}{(1+s^{-1}|y|)^b}\r\}.$$
\end{enumerate}
\end{definition}

It is easy to see that, for any $N\in\mathbb{Z}_+$, there exists a positive constant $C_{(N)}$,
depending on $N$, such that, for any $f\in\mathcal{S}'(\rn)$
and $x\in\rn$,
\begin{equation}\label{zhujida}
M_N^0(f)(x)\le M_N(f)(x)\le C_{(N)}M_N^0(f)(x);
\end{equation}
see \cite[Proposition 3.10]{b}.

Via the above maximal functions, we can characterize $(HE_\Phi^q)_t(\rn)$ as follows.

\begin{theorem}\label{mdj}
Let $t,\ a,\ b,\ q\in(0,\infty)$. Let $\Phi$ be an Orlicz function with positive lower type $p_{\Phi}^-$ and positive upper type $p_{\Phi}^+$. Let $\varphi\in\mathcal{S}(\rn)$ satisfy
$\int_{\rn}\varphi(x)\,dx\neq0.$
\begin{enumerate}
\item[{\rm(i)}] Let $N\ge\lfloor b+1\rfloor$ be an integer. Then, for any $f\in\mathcal{S}'(\rn)$,
it holds true that
$$
\lf\|M(f,\varphi)\r\|_{(E_\Phi^q)_t(\rn)}\lesssim\lf\|M_a^*(f,\varphi)\r\|_{(E_\Phi^q)_t(\rn)}
\lesssim\lf\|M_b^{**}(f,\varphi)\r\|_{(E_\Phi^q)_t(\rn)},
$$
$$
\lf\|M(f,\varphi)\r\|_{(E_\Phi^q)_t(\rn)}\lesssim\lf\|M_N(f)\r\|_{(E_\Phi^q)_t(\rn)}
\lesssim\lf\|M_{\lfloor b+2 \rfloor}(f)\r\|_{(E_\Phi^q)_t(\rn)}\lesssim\lf\|M_b^{**}(f,\varphi)\r\|_{(E_\Phi^q)_t(\rn)}
$$
and
$$
\lf\|M_b^{**}(f,\varphi)\r\|_{(E_\Phi^q)_t(\rn)}\sim\lf\|M_{b,\ N}^{**}(f)\r\|_{(E_\Phi^q)_t(\rn)},
$$
where the implicit positive constants are independent of $f$ and $t$.

\item[{\rm(ii)}] Assume $b\in(\frac{n}{\min\{p_\Phi^-,q\}},\infty)$. Then, for any $f\in\mathcal{S}'(\rn)$,
$$
\lf\|M_{b,\ N}^{**}(f)\r\|_{(E_\Phi^q)_t(\rn)}\lesssim\lf\|M(f,\varphi)\r\|_{(E_\Phi^q)_t(\rn)},
$$
where the implicit positive constant is independent of $f$ and $t$. In particular, when $N\ge\lfloor b+1\rfloor$,
if one of the following quantities
$$
\lf\|M(f,\varphi)\r\|_{(E_\Phi^q)_t(\rn)},\  \lf\|M_a^*(f,\varphi)\r\|_{(E_\Phi^q)_t(\rn)},\ \lf\|M_N(f)\r\|_{(E_\Phi^q)_t(\rn)},
$$
$$
\lf\|M_b^{**}(f,\varphi)\r\|_{(E_\Phi^q)_t(\rn)}\quad and \quad  \lf\|M_{b,\ N}^{**}(f)\r\|_{(E_\Phi^q)_t(\rn)}
$$
is finite, then the others are also finite and mutually equivalent with the implicit
positive constants independent of $f$ and $t$.
\end{enumerate}
\end{theorem}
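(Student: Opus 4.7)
The plan is to split the proof into the easy pointwise/rescaling inequalities of part (i) and the key ``maximal function equivalence'' estimate of part (ii), following the strategy of the classical Fefferman-Stein argument adapted to the ball quasi-Banach function space setting of \cite{ykds} via Lemma \ref{littlewood}.

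For part (i) I would first observe the trivial chain of pointwise inequalities
$$M(f,\varphi)(x)\le M_a^*(f,\varphi)(x)\le (1+a)^b M_b^{**}(f,\varphi)(x),\quad\forall\,x\in\rn,$$
which immediately yields the first line once we take $(E_\Phi^q)_t(\rn)$-quasi-norms. Next, since $\varphi/p_N(\varphi)\in\mathcal{F}_N(\rn)$, the definitions of $M_N^0$ and $M_N$ together with \eqref{zhujida} give $M(f,\varphi)\lesssim M_N(f)$ pointwise; and by the monotonicity of the families $\{\mathcal{F}_N(\rn)\}_N$ in $N$ one has $M_N(f)\lesssim M_{\lfloor b+2\rfloor}(f)$ for the relevant range of $N$. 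The only slightly nontrivial inequality in this chain is $M_{\lfloor b+2\rfloor}(f)\lesssim M_b^{**}(f,\varphi)$: here I would use a standard Calder\'on-type subordination argument, exploiting $\int_{\rn}\varphi\neq0$, to write any $\psi\in\mathcal{F}_{\lfloor b+2\rfloor}(\rn)$ in a form whose dilations are uniformly majorized by those of $\varphi$, which forces $|(\psi_s\ast f)(y)|\lesssim M_b^{**}(f,\varphi)(x)$ whenever $|x-y|<s$. The same subordination argument, combined with taking $\psi=\varphi/p_N(\varphi)$ in the reverse direction, proves the equivalence $\|M_b^{**}(f,\varphi)\|_{(E_\Phi^q)_t(\rn)}\sim\|M_{b,N}^{**}(f)\|_{(E_\Phi^q)_t(\rn)}$.

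For part (ii) the heart of the matter is the Stein-type pointwise majorization
$$M_{b,N}^{**}(f)(x)\le C\lf[\mathcal{M}\lf(|M(f,\varphi)|^r\r)(x)\r]^{1/r}, \quad\forall\,x\in\rn,$$
valid for every $r\in(n/b,\min\{p_\Phi^-,q\})$, which is a nonempty interval precisely because of the hypothesis $b>n/\min\{p_\Phi^-,q\}$. This is the \emph{main obstacle}: it is the sharp pointwise estimate at the core of every radial-to-grand maximal function equivalence in Hardy space theory. I would establish it by a now-standard discretization/Whitney-type argument: reducing the supremum in $\psi\in\mathcal{F}_N(\rn)$ and $(y,s)\in\mathbb{R}_+^{n+1}$ to a weighted average of $|(\varphi_\sigma\ast f)(z)|$ over a controlled neighborhood of $x$, using the decay of $\psi$ measured by $p_N(\psi)\le1$ to absorb the factor $(1+s^{-1}|y|)^{-b}$, and then dominating this average by the Hardy-Littlewood maximal function of $|M(f,\varphi)|^r$ raised to the $1/r$ power via the standard $L^r$-mean trick.

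Once this pointwise estimate is in hand, the $(E_\Phi^q)_t(\rn)$-quasi-norm conclusion is automatic. Writing the $1/r$-convexification as in Definition \ref{convex} and applying Lemma \ref{littlewood} (whose hypothesis $r\in(0,\min\{p_\Phi^-,q\})$ is exactly what we have arranged), we obtain
\begin{align*}
\lf\|M_{b,N}^{**}(f)\r\|_{(E_\Phi^q)_t(\rn)}
&\lesssim \lf\|\mathcal{M}\lf(|M(f,\varphi)|^r\r)\r\|_{[(E_\Phi^q)_t(\rn)]^{1/r}}^{1/r}\\
&\lesssim \lf\||M(f,\varphi)|^r\r\|_{[(E_\Phi^q)_t(\rn)]^{1/r}}^{1/r}
= \lf\|M(f,\varphi)\r\|_{(E_\Phi^q)_t(\rn)},
\end{align*}
with all implicit constants independent of $t$, since the operator norm of $\mathcal{M}$ on $[(E_\Phi^q)_t(\rn)]^{1/r}$ given by Lemma \ref{littlewood} does not depend on $t$. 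Combining this with the chain in part (i), all five quantities listed at the end of the theorem are pairwise equivalent (once one of them is finite) with constants independent of $f$ and $t$, which completes the proof.
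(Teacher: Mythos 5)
Your proposal is correct and follows essentially the same route as the paper: the paper verifies the two hypotheses of the abstract result \cite[Theorem 3.1]{ykds} -- that $(E_\Phi^q)_t(\rn)$ is a ball quasi-Banach function space (Lemma \ref{ballproof}) and that $\mathcal{M}$ is bounded on the $\frac1r$-convexification for $r\in(0,\min\{p_\Phi^-,q\})$ (Lemma \ref{littlewood}) -- and then cites that theorem as a black box. What you sketch (the trivial pointwise chain in (i), the subordination argument for $M_N\lesssim M_b^{**}$, and above all the Fefferman--Stein pointwise majorization $M_{b,N}^{**}(f)\lesssim[\mathcal{M}(|M(f,\varphi)|^r)]^{1/r}$ combined with Lemma \ref{littlewood} in (ii)) is precisely the content of that cited general theorem, so you have merely inlined its proof rather than taken a different path.
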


\subsection{Characterization in terms of Poisson integrals\label{s3.2}}

In this section, we characterize $(HE_\Phi^q)_t(\rn)$ by means of the Poisson integral.

Recall that $f\in\mathcal{S}'(\rn)$ is said to be a \emph{bounded tempered distribution} if, for any
$\varphi\in\mathcal{S}(\rn)$, $\varphi \ast f \in L^{\infty}(\rn)$. Moreover, for any bounded tempered
distribution $f$, the \emph{Poisson semigroup} of $f$ is defined by setting, for any $s\in(0,\infty)$,
$$
P_sf:=e^{-s\sqrt{-\triangle}}f:=\mathcal{F}^{-1}(e^{-s|\cdot|}\mathcal{F}f)
$$
(see, for example, \cite[p.\,89]{em} for the details), where $\mathcal{F}$ denotes the \emph{Fourier transform}.
Recall that $\mathcal{F}f$ is defined by setting, for any $\varphi\in\mathcal{S}(\rn)$,
$\la\mathcal{F}f,\varphi\ra:=\la f,\mathcal{F}\varphi\ra$, where, for any $\xi:=(\xi_1,...,\xi_n)\in\rn$,
$$
\mathcal{F}\varphi(\xi):=(2\pi)^{-n/2}\int_{\rn}\varphi(x)e^{-ix\xi}\,dx
$$
with $x\xi=\sum_{i=1}^{n}x_i\xi_i$ for any $x:=(x_1,...,x_n)\in\rn$; also, $\mathcal{F}^{-1}$
denotes the \emph{inverse Fourier transform} which is defined by setting, for any $f\in\mathcal{S}(\rn)$
[or $\mathcal{S}'(\rn)$] and $\xi\in\rn$, $\mathcal{F}^{-1}f(\xi):=\mathcal{F}f(-\xi)$.
Then we have the following characterization of
$(HE_\Phi^q)_t(\rn)$.

\begin{theorem}\label{poisson}
Let $t$, $q\in(0,\infty) $ and $\Phi$ be an Orlicz function with positive lower type $p_{\Phi}^-$ and positive upper type $p_{\Phi}^+$. Assume that $f\in\mathcal{S}'(\rn)$. Then $f\in (HE_\Phi^q)_t(\rn)$ if and
only if $f$ is a bounded tempered distribution and $\sup_{s\in(0,\infty)}|P_s\ast f|\in (E_\Phi^q)_t(\rn)$.
\end{theorem}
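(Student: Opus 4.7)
The plan is to prove both implications by reducing to the equivalent maximal-function characterizations of $(HE_\Phi^q)_t(\rn)$ in Theorem \ref{mdj}, exploiting that $(E_\Phi^q)_t(\rn)$ is a ball quasi-Banach function space (Lemma \ref{ballproof}) on whose $\frac{1}{r}$-convexification the Hardy--Littlewood maximal operator is bounded for any $r\in(0,\min\{p_\Phi^-,q\})$ (Lemma \ref{littlewood}).

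For the necessity, assume $f\in (HE_\Phi^q)_t(\rn)$. By Theorem \ref{mdj}, $M_N(f)\in (E_\Phi^q)_t(\rn)$ for any sufficiently large integer $N$, so $M_N(f)<\infty$ almost everywhere. Combined with the pointwise inequality $|\psi\ast f(x)|\le p_N(\psi)\,M_N(f)(x)$ valid for all $\psi\in\mathcal{S}(\rn)$, a standard argument then shows that $f$ is a bounded tempered distribution. To control $\sup_{s>0}|P_s\ast f|$, I would invoke the classical subordination identity
\begin{equation*}
P_s\ast f(x)=\int_0^\infty \frac{s\, e^{-s^2/(4u)}}{2\sqrt{\pi}\,u^{3/2}}\,(h_u\ast f)(x)\,du,\qquad \int_0^\infty \frac{s\, e^{-s^2/(4u)}}{2\sqrt{\pi}\,u^{3/2}}\,du=1,
\end{equation*}
where $h_u(x):=(4\pi u)^{-n/2}e^{-|x|^2/(4u)}$ is the heat kernel. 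Since $h_u=(h_1)_{\sqrt{u}}$ in the dilation convention of Definition \ref{31}, the convex-combination structure of this subordination yields the pointwise bound
\begin{equation*}
\sup_{s>0}|P_s\ast f(x)|\le M(f,h_1)(x),\qquad \forall\, x\in\rn.
\end{equation*}
As $h_1\in\mathcal{S}(\rn)$ with $\int_{\rn}h_1=1\neq 0$, Theorem \ref{mdj} places $M(f,h_1)$ in $(E_\Phi^q)_t(\rn)$.

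For the sufficiency, assume $f$ is a bounded tempered distribution and $\sup_{s>0}|P_s\ast f|\in (E_\Phi^q)_t(\rn)$. The crux is a Fefferman--Stein-type pointwise inequality: for any $r\in(0,\infty)$ and every sufficiently large integer $N$,
\begin{equation*}
M_N(f)(x)\le C_{(N,r)}\,\Bigl[\mathcal{M}\Bigl(\bigl(\sup_{s>0}|P_s\ast f|\bigr)^r\Bigr)(x)\Bigr]^{1/r},\qquad \forall\, x\in\rn,
\end{equation*}
whose proof rests on the harmonicity of $u(x,s):=P_s\ast f(x)$ in $\rr_+^{n+1}$, a subharmonicity argument for $|u|^r$ on half-space balls, and a non-tangential comparison. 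Choosing $r\in(0,\min\{p_\Phi^-,q\})$, applying Lemma \ref{littlewood} to $\mathcal{M}$ on $[(E_\Phi^q)_t(\rn)]^{\frac{1}{r}}$, and invoking Theorem \ref{mdj} once more deliver $f\in (HE_\Phi^q)_t(\rn)$. The Fefferman--Stein pointwise inequality is the main obstacle; once it is established (its proof is function-space-free), all the Orlicz-slice structure enters through the single invocation of Lemma \ref{littlewood}, so that the abstract framework of \cite{ykds} essentially takes over.
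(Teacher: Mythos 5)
Your proposal takes a genuinely different route from the paper. The paper's proof of Theorem \ref{poisson} is a single citation: after checking that $\mathcal{M}$ is bounded on a suitable convexification $[(E_\Phi^q)_t(\rn)]^{1/r}$ (Lemma \ref{littlewood}) and, crucially, that $\inf_{z\in\rn}\|\chi_{B(z,1)}\|_{(E_\Phi^q)_t(\rn)}\gtrsim 1$, the authors invoke \cite[Theorem~3.3]{ykds}, which encapsulates precisely the classical Fefferman--Stein Poisson characterization for ball quasi-Banach function spaces. You instead propose to re-derive that classical machinery from scratch (subordination for necessity, a Hardy--Littlewood-type pointwise bound for sufficiency). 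That is a legitimate alternative strategy, and it would make the proof self-contained; the trade-off is that you end up re-proving a significant portion of \cite[Theorem~3.3]{ykds} rather than using it.

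There is, however, an unacknowledged dependence in your necessity argument. You write that $M_N(f)\in(E_\Phi^q)_t(\rn)$ plus the pointwise bound $|\psi\ast f|\le p_N(\psi)M_N(f)$ show, via ``a standard argument,'' that $f$ is a bounded tempered distribution. In the classical case that argument passes through
\begin{equation*}
\|\psi_1\ast f\|_{L^\infty(\rn)}\le \sup_{y\in\rn}\inf_{z\in B(y,1)}M_N(f)(z)\le \sup_{y\in\rn}\frac{\|M_N(f)\|_{X}}{\|\chi_{B(y,1)}\|_{X}},
\end{equation*}
so it is valid only when $\inf_{z\in\rn}\|\chi_{B(z,1)}\|_{X}\gtrsim 1$. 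For $X=(E_\Phi^q)_t(\rn)$ this does hold (and is exactly the condition the paper verifies explicitly in its proof), but it is a function-space--specific hypothesis and not something a ``standard argument'' hands you for free; your write-up should make this verification visible rather than implicit. A related caveat concerns your phrase ``a subharmonicity argument for $|u|^r$'': for a scalar harmonic $u$, $|u|^r$ is not subharmonic when $r<1$. The pointwise inequality you want, $M_N(f)\lesssim[\mathcal{M}((\sup_s|P_s\ast f|)^r)]^{1/r}$ for small $r$, does hold, but the underlying mechanism is the mean-value (reverse-H\"older) inequality for harmonic functions on half-space balls, proved by finite-dimensionality/compactness, not bare subharmonicity. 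These are repairable details, but without them the sketch does not yet constitute a complete proof, and taken together with the missing verification of $\inf_z\|\chi_{B(z,1)}\|_X\gtrsim 1$ it is substantially longer and more fragile than the paper's appeal to \cite[Theorem~3.3]{ykds}.
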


\begin{remark}\label{remax}
Let $t,\ q\in(0,\infty)$ and $\Phi(\tau):=\tau^q$ for any $\tau\in[0,\infty).$ Then,
by Remark \ref{re2.10}(ii) and Proposition \ref{ggg}(iii), we know that
$(E_\Phi^q)_t(\rn)=L^q(\rn)$ and, in this case, $(HE_\Phi^q)_t(\rn)=H^q(\rn),$
where $H^q(\rn)$ denotes the classical Hardy space, and
Theorems \ref{mdj} and \ref{poisson} coincide with the well-known
results on $H^q(\rn)$ (see, for example, \cite{l} or \cite[p.\,60, Theorem 1]{g}).
\end{remark}

\subsection{Relations between $(E_\Phi^q)_t(\rn)$ and $(HE_\Phi^q)_t(\rn)$\label{s3.3}}

In this section, we discuss the relation between the spaces $(E_\Phi^q)_t(\rn)$ and $(HE_\Phi^q)_t(\rn)$. More precisely,
we generalize the classical result that $H^p(\rn)=L^p(\rn)$ with $p\in(1,\infty)$ as follows.

\begin{theorem}\label{dayu1}
Let $t\in(0,\infty)$, $q\in(1,\infty) $ and $\Phi$ be an Orlicz function with lower type $p_{\Phi}^-\in(1,\infty)$ and positive upper type $p_{\Phi}^+$.
\begin{enumerate}
\item[{\rm(i)}] It holds true that $(E_\Phi^q)_t(\rn)\ \hookrightarrow\ \mathcal{S}'(\rn)$.

\item[{\rm(ii)}] If $f\in (E_\Phi^q)_t(\rn)$, then $f\in (HE_\Phi^q)_t(\rn)$.

\item[{\rm(iii)}] If $f\in {(HE_\Phi^q)_t}(\rn)$, then there exists a locally
integrable function $g\in (E_\Phi^q)_t(\rn)$ such that $g$ represents $f$, which means that $f=g$ in $\mathcal{S}'(\rn)$ and
$\|f\|_{(HE_\Phi^q)_t(\rn)}=\|g\|_{(HE_\Phi^q)_t(\rn)}$.
\end{enumerate}
\end{theorem}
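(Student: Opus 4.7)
The plan is to deduce (i) and (ii) by direct duality and maximal-function estimates, and to obtain (iii) via mollification followed by a weak-compactness argument in the reflexive Banach space $(E_\Phi^q)_t(\rn)$.

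For (i), given $f\in(E_\Phi^q)_t(\rn)$ and $\varphi\in\mathcal{S}(\rn)$, I would switch to the equivalent amalgam norm via Proposition \ref{th2} and apply the Orlicz H\"older inequality (Lemma \ref{hold}) on each cube $Q_{tk}$:
$$
|\langle f,\varphi\rangle|\le\sum_{k\in\mathbb{Z}^n}\int_{Q_{tk}}|f\varphi|\,dx\lesssim\sum_{k\in\mathbb{Z}^n}\|f\chi_{Q_{tk}}\|_{L^\Phi(\rn)}\|\varphi\chi_{Q_{tk}}\|_{L^\Psi(\rn)},
$$
where $\Psi$ is the complementary function of $\Phi$ (available because by Remark \ref{nfunction}(i) we may assume $\Phi$ is an $N$-function). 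A discrete H\"older inequality in $\ell^q,\ell^{q'}$ then dominates this by $\|f\|_{\ell^q(L^\Phi_t)(\rn)}\|\varphi\|_{\ell^{q'}(L^\Psi_t)(\rn)}$, and the second factor is controlled by a Schwartz seminorm of $\varphi$ via the decay estimate $\|\varphi\chi_{Q_{tk}}\|_{L^\Psi(\rn)}\lesssim(1+|tk|)^{-M}\|\chi_{Q_{tk}}\|_{L^\Psi(\rn)}$ valid for any $M\in\mathbb{N}$.

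For (ii), Proposition \ref{th2} and the local embedding $L^\Phi(Q_{tk})\subset L^1(Q_{tk})$ immediately give $f\in L^1_{\loc}(\rn)$, so $\varphi_s\ast f$ is well defined. A standard dyadic decomposition of $\varphi_s(y)$ according to $|y|\sim 2^js$ combined with the rapid decay of $\varphi$ yields the pointwise bound $M(f,\varphi)(x)\lesssim\mathcal{M}(f)(x)$. Since $q,p_\Phi^-\in(1,\infty)$, Proposition \ref{main2} then gives $\|M(f,\varphi)\|_{(E_\Phi^q)_t(\rn)}\lesssim\|f\|_{(E_\Phi^q)_t(\rn)}$, which is exactly (ii).

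Part (iii) is the main obstacle. Fix $\phi\in\mathcal{S}(\rn)$ with $\int_{\rn}\phi(x)\,dx=1$ and set $g_k:=\phi_{1/k}\ast f$. These smooth functions satisfy $|g_k|\le M(f,\phi)$ pointwise, so by Theorem \ref{mdj} they are uniformly bounded in $(E_\Phi^q)_t(\rn)$ by a constant multiple of $\|f\|_{(HE_\Phi^q)_t(\rn)}$. Reflexivity of $(E_\Phi^q)_t(\rn)$ follows from two applications of Theorem \ref{dual}: the complementary function $\Psi$ has lower type $(p_\Phi^+)'\in(1,\infty)$ and upper type $(p_\Phi^-)'\in(1,\infty)$, so dualising a second time returns $(E_\Phi^q)_t(\rn)$. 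Banach--Alaoglu then extracts a weakly convergent subsequence $g_{k_j}\rightharpoonup g$ in $(E_\Phi^q)_t(\rn)$. To identify $g$ with $f$ in $\mathcal{S}'(\rn)$, I would test against an arbitrary $\psi\in\mathcal{S}(\rn)\subset(E_\Psi^{q'})_t(\rn)$ (the latter inclusion being part (i) applied to $\Psi$): weak convergence gives $\int_{\rn}g_{k_j}\psi\,dx\to\int_{\rn}g\psi\,dx$, while $\int_{\rn}g_{k_j}\psi\,dx=\langle f,\widetilde{\phi}_{1/k_j}\ast\psi\rangle\to\langle f,\psi\rangle$ because $\widetilde{\phi}_{1/k}\ast\psi\to\psi$ in $\mathcal{S}(\rn)$ (using $\int\phi=1$). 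Hence $f=g$ in $\mathcal{S}'(\rn)$, their radial maximal functions coincide, and $\|f\|_{(HE_\Phi^q)_t(\rn)}=\|g\|_{(HE_\Phi^q)_t(\rn)}$. The main technical point is securing reflexivity from Theorem \ref{dual}; once that is in hand, the rest is a standard approximation-to-the-identity-plus-weak-limit argument.
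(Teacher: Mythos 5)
Your proposal is correct, but it takes a genuinely different route from the paper. The paper does not give a hands-on proof of Theorem~\ref{dayu1} at all: as stated in Section~\ref{s1} and implemented in Section~\ref{s4}, this theorem is obtained by verifying that $(E_\Phi^q)_t(\rn)$ is a ball quasi-Banach function space on which the powered maximal operator is bounded (Lemmas~\ref{ballproof} and~\ref{littlewood}) and then directly invoking the abstract machinery of \cite{ykds}. Your argument, by contrast, is self-contained and concrete: part (i) via Proposition~\ref{th2}, the Orlicz H\"older inequality, and Schwartz decay; part (ii) via the pointwise bound $M(f,\varphi)\lesssim\mathcal{M}(f)$ together with Proposition~\ref{main2}; part (iii) via mollification, a uniform $(E_\Phi^q)_t$ bound from Theorem~\ref{mdj}, and a weak-compactness extraction using Theorem~\ref{dual}. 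The paper's route is slicker once the \cite{ykds} framework is in place; yours makes the mechanism visible without importing that machinery.

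Two points are worth tightening. First, in (iii) you invoke ``reflexivity of $(E_\Phi^q)_t(\rn)$ from two applications of Theorem~\ref{dual}''; strictly, having $X^{**}\cong X$ via an isomorphism does not by itself give reflexivity unless one also checks that isomorphism is the canonical embedding. For these naturally paired function spaces this is standard, but it should be said; alternatively, the step can be bypassed entirely by noting Theorem~\ref{dual} exhibits $(E_\Phi^q)_t(\rn)$ as the dual of $(E_\Psi^{q'})_t(\rn)$, so Banach--Alaoglu already gives weak-$*$ sequential compactness of bounded sequences (the predual is separable here), and testing against $\psi\in\mathcal{S}(\rn)\subset(E_\Psi^{q'})_t(\rn)$ only requires weak-$*$ convergence. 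Second, the claim that $\Psi$ has lower type $(p_\Phi^+)'\in(1,\infty)$ tacitly uses $p_\Phi^+\in(1,\infty)$; this is harmless since $p_\Phi^-\in(1,\infty)$ forces one to be able to take $p_\Phi^+\ge p_\Phi^-$ (any upper type exponent can be increased), but the remark is worth making so the hypotheses of Theorem~\ref{dual} for $\Psi$ are visibly met.
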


\subsection{Atomic and molecular characterizations\label{s3.4}}

In this section, we present the atomic and the molecular characterizations of $(HE_\Phi^q)_t(\rn)$. In what follows, for any $L\in\mathbb{Z}_+$, the \emph{symbol} $\mathcal{P}_L(\rn)$ denotes the set of all polynomials on $\rn$ of degree
not greater than $L$. For any $a\in L^1(\rn)$ satisfying
$$
\int_{\rn}(1+|x|)^L|a(x)|\,dx<\infty,
$$
we write $a\bot\mathcal{P}_L(\rn)$ if
$$
\int_{\rn}a(x)x^\alpha\,dx=0
$$
for any $\alpha\in\mathbb{Z}_+^n$ with $|\alpha|\le L$.

\begin{definition}\label{deatom}
Let $t,\ q\in(0,\infty)$, $r\in[1,\infty]$ and $d\in\mathbb{Z}_+$.
Let $\Phi$ be an Orlicz function with positive lower type $p_{\Phi}^-$ and positive upper type $p_{\Phi}^+$.
The function $a$ is called an \emph{$((E_\Phi^q)_t(\rn),\ r,\ d)$-atom}
if there exists a cube $Q\in\mathcal{Q}$ such that $\supp(a)\subset Q$,
$$
\|a\|_{L^r(\rn)}\le\frac{|Q|^{\frac{1}{r }}}{\|\chi_Q\|_{(E_\Phi^q)_t(\rn)}}
$$
and $a\bot\mathcal{P}_d(\rn)$.
\end{definition}

\begin{definition}\label{atomic hardy}
Let $t,\ q\in(0,\infty)$ and $\Phi$ be an Orlicz function with positive lower type
$p_{\Phi}^-$ and positive upper type $p_{\Phi}^+$. Let $r\in(\max\{1,q,\ p_{\Phi}^+\},\infty]$,
$s \in(0,\min\{p^-_{\Phi},q,1\})$ and $d\in\mathbb{Z}_+$ satisfying $d\ge\lfloor n(\frac{1}{s}-1)\rfloor$.
The \emph{atomic Orlicz-slice Hardy space $(HE_\Phi^q)_t^{r,d}(\rn)$}
 is defined to be the set of all $f \in\mathcal{S}'(\rn)$ satisfying
that there exist a sequence $\{a_j\}_{j=1}^\infty$ of $((E_\Phi^q)_t(\rn),\ r,\ d)$-atoms supported,
respectively, on the cubes
$\{Q_j\}_{j=1}^\infty\subset\mathcal{Q}$ and a sequence $\{\lambda_j\}_{j=1}^\infty\subset[0,\infty)$
such that
\begin{equation}\label{33}
f=\sum_{j=1}^{\infty}\lambda_j a_j\ \ \mathrm{in}\ \mathcal{S}'(\rn)
\end{equation}
and
$$
\|f\|_{(HE_\Phi^q)_t^{r,d}(\rn)}:=\inf
\lf\|\lf\{\sum_{j=1}^{\infty} \lf[\frac{\lambda_j}
{\|\chi_{Q_j}\|_{(E_\Phi^q)_t(\rn)}} \r]^s\chi_{Q_j} \r\}
^{\frac{1}{s}}\r\|_{(E_\Phi^q)_t(\rn)}<\infty,
$$
where the infimum is taken over all decompositions of $f$ as above.
\end{definition}

We have the following atomic characterization of $(HE_\Phi^q)_t(\rn)$.

\begin{theorem}\label{atom ch}
Let all assumptions be as in Definition \ref{atomic hardy}.
Then $(HE_\Phi^q)_t(\rn)=(HE_\Phi^q)_t^{r,d}(\rn)$
with equivalent quasi-norms.
\end{theorem}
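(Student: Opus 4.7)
The plan is to exploit the fact that, by Lemma \ref{ballproof}, the Orlicz-slice space $(E_\Phi^q)_t(\rn)$ is a ball quasi-Banach function space, so the general atomic characterization theorem for Hardy spaces built over such spaces, developed by Sawano et al.\ \cite{ykds}, should apply essentially as a black box. Thus the task reduces to verifying the hypotheses of that general theorem in the present setting, with the parameters $r$, $s$ and $d$ matched to what the theorem requires.

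Concretely, I would first recall from \cite{ykds} the structural assumptions under which the atomic decomposition holds: (a) $(E_\Phi^q)_t(\rn)$ is a ball quasi-Banach function space; (b) for some $s\in(0,\min\{p_\Phi^-,q,1\})$ the Hardy--Littlewood maximal operator $\mathcal{M}$ is bounded on the $\tfrac{1}{s}$-convexification $[(E_\Phi^q)_t(\rn)]^{1/s}$; and (c) a corresponding boundedness property holds on the $r$-th power of the associate/K\"othe dual, for $r$ in the admissible range. Item (a) is exactly Lemma \ref{ballproof}. Item (b) follows from Lemma \ref{littlewood} applied with the parameter $s$ in place of $r$ there, since the hypothesis $s\in(0,\min\{p_\Phi^-,q\})$ is precisely the range required. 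Item (c) I would deduce from the duality identification in Theorem \ref{dual}, which represents the associate space as $(E_\Psi^{q'})_t(\rn)$ with $\Psi$ the complementary function of $\Phi$; then the maximal boundedness on a suitable convexification of $(E_\Psi^{q'})_t(\rn)$ again follows by Proposition \ref{main2} or Lemma \ref{littlewood}, and the hypotheses $r>\max\{1,q,p_\Phi^+\}$ and $d\ge\lfloor n(1/s-1)\rfloor$ are exactly the hypotheses on the exponents in the general theorem.

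Having verified these, the two inclusions follow. For $(HE_\Phi^q)_t^{r,d}(\rn)\hookrightarrow (HE_\Phi^q)_t(\rn)$, I would note that each $((E_\Phi^q)_t(\rn),r,d)$-atom $a_j$ supported on $Q_j$ satisfies a pointwise bound on its grand maximal function $M_N(a_j)$ of the form $M_N(a_j)\lesssim \|\chi_{Q_j}\|_{(E_\Phi^q)_t(\rn)}^{-1}\chi_{Q_j}+\text{tail}$, where the tail decays polynomially thanks to the vanishing moments $a_j\perp\mathcal{P}_d(\rn)$ with $d\ge\lfloor n(1/s-1)\rfloor$; assembling these and applying the Fefferman--Stein vector-valued maximal inequality on $[(E_\Phi^q)_t(\rn)]^{1/s}$ (Theorem \ref{main} together with Lemma \ref{littlewood}) controls $\|M_N(f)\|_{(E_\Phi^q)_t(\rn)}$ by the atomic quasi-norm, and then Theorem \ref{mdj} translates this into a bound on $\|f\|_{(HE_\Phi^q)_t(\rn)}$. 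For the converse direction, I would apply the general Calder\'on--Zygmund decomposition of $f\in (HE_\Phi^q)_t(\rn)$ constructed in \cite{ykds} at each level set $\{M_N^0(f)>2^k\}$; this produces, via a Whitney covering and a smooth partition of unity, an atomic decomposition $f=\sum_j\lambda_j a_j$ in $\mathcal{S}'(\rn)$ with the size, support and moment conditions of Definition \ref{deatom}, and with the coefficients $\{\lambda_j/\|\chi_{Q_j}\|_{(E_\Phi^q)_t(\rn)}\}$ controlled in $[(E_\Phi^q)_t(\rn)]^{1/s}$ by $\|M_N^0(f)\|_{(E_\Phi^q)_t(\rn)}$, hence by $\|f\|_{(HE_\Phi^q)_t(\rn)}$ thanks to Theorem \ref{mdj}.

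The step I expect to be the main obstacle is ensuring that the parameter ranges demanded by the general machinery of \cite{ykds}, especially the joint constraints relating $r$, $s$, $d$ and the upper/lower types of $\Phi$, are exactly met by our hypotheses; in particular, the tail estimate for atoms and the convergence of $\sum_j\lambda_j a_j$ in $\mathcal{S}'(\rn)$ both rest on the moment order $d\ge\lfloor n(1/s-1)\rfloor$ being compatible with the maximal boundedness range $s\in(0,\min\{p_\Phi^-,q,1\})$, and on the $r$-atom size bound being compatible with $r>\max\{1,q,p_\Phi^+\}$ through H\"older's inequality on balls combined with the explicit formula \eqref{qiu} for $\|\chi_{B(x,t)}\|_{L^\Phi(\rn)}$. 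Once these exponent bookkeeping checks are performed, the theorem follows from the ball quasi-Banach Hardy-space framework with no further case analysis.
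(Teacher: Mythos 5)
Your proposal follows exactly the paper's strategy: check that $(E_\Phi^q)_t(\rn)$ meets the hypotheses of the ball quasi-Banach Hardy space machinery in \cite{ykds} (specifically Theorems 3.6 and 3.7 there) and then apply those theorems as a black box, with the two inclusions you sketch simply being what happens inside that box. The only thing missing from your hypothesis checklist is the strict $s$-convexity of $(E_\Phi^q)_t(\rn)$, which the paper verifies separately in Lemma \ref{sconvex}; also, for item (b) the paper actually invokes the powered vector-valued Fefferman--Stein inequality (Lemma \ref{man28}) rather than just the scalar boundedness of $\mathcal{M}$ (Lemma \ref{littlewood}), and your item (c) is exactly the content of Lemma \ref{man37}, whose own proof indeed goes through the duality Theorem \ref{dual} as you anticipate.
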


\begin{definition}\label{demole}
Let $t,\ q\in(0,\infty)$ and $\Phi$ be an Orlicz function with positive lower type $p_{\Phi}^-$ and positive upper type $p_{\Phi}^+$. Let
$r\in[1,\infty]$, $d\in\mathbb{Z}_+$ and $\tau\in(0,\infty)$.
A measurable function $m$ on $\rn$ is called an $((E_\Phi^q)_t(\rn),\ r,\ d,\ \tau)$-\emph{molecule} centered at a cube $Q\in\mathcal{Q}$ if, for any $j\in\mathbb{Z}_+$,
$$
\lf\|\chi_{S_j(Q)}m\r\|_{L^r(\rn)}\le2^{-\tau j}\frac{|Q|^{\frac{1}{r}}}{\|\chi_Q\|_{(E_\Phi^q)_t(\rn)}}
$$
and $a\bot\mathcal{P}_d(\rn)$. In analogy, one defines an $((E_\Phi^q)_t(\rn),\ r,\ d,\ \tau)$-molecule centered at a ball $B$.
\end{definition}

\begin{theorem}\label{molecular}
Let $t,\ q\in(0,\infty)$ and $\Phi$ be an Orlicz function with positive lower type $p_{\Phi}^-$ and positive upper type $p_{\Phi}^+$. Let $r\in(\max\{1,q,\ p_{\Phi}^+\},\infty]$ and $s \in(0,\min\{p^-_{\Phi},q,1\})$. Assume that $d\in\mathbb{Z}_+$ satisfies $d\ge\lfloor n(\frac{1}{s}-1)\rfloor$ and $\tau\in(0,\infty)$ satisfies
$\tau>n(\frac{1}{s}-\frac{1}{r}).$
Then $f\in (HE_\Phi^q)_t(\rn)$ if and only if there exist a sequence $\{m_j\}_{j=1}^\infty$
of $((E_\Phi^q)_t(\rn),\ r,\ d,\ \tau)$-molecules centered, respectively,
at the cubes $\{Q_j\}_{j=1}^\infty\subset\mathcal{Q}$
and $\{\lambda_j\}_{j=1}^\infty\subset[0,\infty)$ satisfying
$$
\lf\|\lf\{\sum_{j=1}^{\infty} \lf[\frac{\lambda_j}{\|\chi_{Q_j}\|_{(E_\Phi^q)_t(\rn)}}\r]^s\chi_{Q_j} \r\}
^{\frac{1}{s}}\r\|_{(E_\Phi^q)_t(\rn)}<\infty
$$
such that
$$
f=\sum_{j=1}^{\infty}\lambda_j m_j\ \ in\ \mathcal{S}'(\rn).
$$
Moreover,
$$
\|f\|_{(HE_\Phi^q)_t(\rn)}\sim\inf\lf\|\lf\{\sum_{j=1}^{\infty} \lf[\frac{\lambda_j}
{\|\chi_{Q_j}\|_{(E_\Phi^q)_t(\rn)}} \r]^s\chi_{Q_j} \r\}
^{\frac{1}{s}}\r\|_{(E_\Phi^q)_t(\rn)},
$$
where the infimum is taken over all decompositions of $f$ as above
and the equivalent positive constants are independent of $f$ and $t$.
\end{theorem}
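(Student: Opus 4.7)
The plan is to reduce Theorem \ref{molecular} to the atomic characterization (Theorem \ref{atom ch}). The necessity direction is immediate: every $((E_\Phi^q)_t(\rn),r,d)$-atom is automatically an $((E_\Phi^q)_t(\rn),r,d,\tau)$-molecule for any $\tau\in(0,\infty)$, so an atomic decomposition of $f$ produced by Theorem \ref{atom ch} already serves as a molecular one, with the matching quasi-norm bound.

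For sufficiency, I would further atomize each molecule. Fix a molecule $m$ centered at $Q$ and write $m=\sum_{k=0}^\infty m\chi_{S_k(Q)}$. Each piece is supported in $2^{k+1}Q$ but typically fails the vanishing moment condition, so I would introduce the orthogonal projections $P_k$ of $m\chi_{S_k(Q)}$ onto $\mathcal{P}_d(\rn)$ within $L^2(2^{k+1}Q,\,dx/|2^{k+1}Q|)$ and apply a Taibleson--Weiss-type telescoping rearrangement to produce $m=\sum_{k=0}^\infty b_k$, where every $b_k$ is supported in $2^{k+1}Q$, is orthogonal to $\mathcal{P}_d(\rn)$, and satisfies
$$\|b_k\|_{L^r(\rn)}\lesssim 2^{-\tau k}\frac{|Q|^{1/r}}{\|\chi_Q\|_{(E_\Phi^q)_t(\rn)}}.$$
The vanishing moments of $m$ itself are crucial here, used to cancel polynomial leading terms and preserve the decay factor $2^{-\tau k}$. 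Each $b_k$ is then a constant multiple $\lambda_k a_k$ of an $((E_\Phi^q)_t(\rn),r,d)$-atom $a_k$ on $2^{k+1}Q$ with
$$\lambda_k\lesssim 2^{-\tau k-nk/r}\frac{\|\chi_{2^{k+1}Q}\|_{(E_\Phi^q)_t(\rn)}}{\|\chi_Q\|_{(E_\Phi^q)_t(\rn)}}.$$
Applying this to every $m_j$ yields the atomic representation $f=\sum_{j,k}\lambda_j\lambda_{j,k}a_{j,k}$ in $\mathcal{S}'(\rn)$.

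For the atomic quasi-norm estimate, I would use the pointwise bound $\chi_{2^{k+1}Q_j}\lesssim 2^{nk/\gamma}[\mathcal{M}(\chi_{Q_j})]^{1/\gamma}$, valid for any $\gamma\in(0,\infty)$. Choosing $\gamma:=1/u$ with $u\in(1,\,s\tau/n+s/r)$, a nonempty range thanks to $\tau>n(1/s-1/r)$, the inner $k$-summation is a convergent geometric series, and the atomic quasi-norm is dominated by
$$\lf\|\lf\{\sum_j\lf[\frac{\lambda_j}{\|\chi_{Q_j}\|_{(E_\Phi^q)_t(\rn)}}\r]^s[\mathcal{M}(\chi_{Q_j})]^u\r\}^{1/s}\r\|_{(E_\Phi^q)_t(\rn)}.$$
Setting $h_j:=(\lambda_j/\|\chi_{Q_j}\|_{(E_\Phi^q)_t(\rn)})^{s/u}\chi_{Q_j}$, this equals $\|\{\sum_j[\mathcal{M}(h_j)]^u\}^{1/u}\|_{[(E_\Phi^q)_t(\rn)]^{u/s}}^{u/s}$. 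The convexification $[(E_\Phi^q)_t(\rn)]^{u/s}$ can be identified with the Orlicz-slice space $(E_{\Phi_{s/u}}^{qu/s})_t(\rn)$, where $\Phi_{s/u}(\tau):=\Phi(\tau^{u/s})$ is an Orlicz function of lower type $p_\Phi^-\cdot u/s>1$ and upper type $p_\Phi^+\cdot u/s$. Hence the vector-valued Fefferman--Stein inequality on this convexification is a direct instance of Theorem \ref{main}; it returns $\|\{\sum_j|h_j|^u\}^{1/u}\|_{[(E_\Phi^q)_t(\rn)]^{u/s}}^{u/s}$, which unwinds to $\|\{\sum_j[\lambda_j/\|\chi_{Q_j}\|_{(E_\Phi^q)_t(\rn)}]^s\chi_{Q_j}\}^{1/s}\|_{(E_\Phi^q)_t(\rn)}$. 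Theorem \ref{atom ch} then yields the desired bound on $\|f\|_{(HE_\Phi^q)_t(\rn)}$ and completes the proof.

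The main obstacle will be the polynomial-correction telescoping in the second paragraph: one must verify that the $L^r$-bound on each projected piece survives the rearrangement uniformly in $k$, which forces the use of the vanishing moments of $m$ itself (not merely its annular decay) to cancel slowly-decaying polynomial contributions. A secondary technicality is the identification of $[(E_\Phi^q)_t(\rn)]^{u/s}$ with an Orlicz-slice space so that Theorem \ref{main} applies, which parallels the Orlicz-function recomputation in the proof of Lemma \ref{littlewood} and needs to be carried out for the exponent $u/s$ rather than $1/r$.
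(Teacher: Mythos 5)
Your proof is mathematically sound but follows a genuinely different route from the paper's. The paper does not carry out any molecule-to-atom reduction: it proves Theorem \ref{molecular} by verifying (Lemmas \ref{ballproof}, \ref{sconvex}, \ref{man28} and \ref{man37}) that the Orlicz-slice space $(E_\Phi^q)_t(\rn)$ meets the hypotheses of the general ball quasi-Banach function space framework of \cite{ykds}, and then simply cites \cite[Theorem 3.9]{ykds}. Your argument instead reduces directly to the atomic characterization of Theorem \ref{atom ch}: a Taibleson--Weiss telescoping decomposes each molecule into atoms on the dilates $2^{k+1}Q_j$ with coefficients decaying like $2^{-(\tau+n/r)k}$, the pointwise bound $\chi_{2^{k+1}Q_j}\lesssim 2^{nku}[\mathcal{M}(\chi_{Q_j})]^u$ (with $u\in(1,s\tau/n+s/r)$, nonempty precisely when $\tau>n(1/s-1/r)$) makes the inner $k$-series converge, and the vector-valued Fefferman--Stein inequality on the convexification $[(E_\Phi^q)_t(\rn)]^{u/s}=(E_{\Phi_{s/u}}^{qu/s})_t(\rn)$ (an instance of Theorem \ref{main} after the same recomputation of Orlicz types performed in Lemma \ref{littlewood}) closes the estimate. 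The exponent arithmetic checks out, including the degenerate $r=\infty$ case. The trade-off is clear: the paper's proof is a one-line citation that hides the quantitative meaning of the hypothesis $\tau>n(1/s-1/r)$ inside the black box of \cite[Theorem 3.9]{ykds}, whereas your route is self-contained modulo the (classical but nontrivial) polynomial-correction bookkeeping and makes that hypothesis transparent as the nonemptiness of the interval for $u$; both approaches ultimately rest on the same core analytic input, namely the Fefferman--Stein inequality of Theorem \ref{main}, which appears in the paper's route as one of the verified hypotheses for \cite{ykds}.
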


\begin{remark}\label{reatom}
\begin{itemize}
\item[(i)] Let $t,\ q$ and $\Phi$ be as in Remark \ref{remax}.
In this case, we have $(E_\Phi^q)_t(\rn)=L^q(\rn)$ and $(HE_\Phi^q)_t(\rn)=H^q(\rn)$
and, for any $\tau\in(0,\infty),\ r\in[1,\infty]$ and $d\in\zz_+,$
any $((E_\Phi^q)_t(\rn),r,d)$-atom from Definition \ref{deatom}
and any $((E_\Phi^q)_t(\rn),\ r,\ d,\ \tau)$-molecule from
Definition \ref{demole}
just become, respectively, a well-known classical atom (see, for example, \cite[Definition 1.1]{l}
or \cite[p.\,112]{em}) and a well-known classical molecule (see, for example, \cite[Definition 1.2]{hyz}
with $X=\rn$).

\item[(ii)] Let $t,\ q$ and $\Phi$ be as in Remark \ref{remax}.
In this case, when $r\in[1,\infty]\cap(q,\fz]$ and $s=q$,
then Theorem \ref{atom ch} coincides with the classical
atomic characterization of $H^q(\rn)$ (see, for example, \cite[p.\,34, Theorem 3.1]{l}
and \cite[p.\,107, Theorem 2]{em}) and Theorem \ref{molecular} with the classical
molecular characterization of $H^q(\rn)$ (see, for example, \cite[Theorem 2.2]{hyz}
with $X=\rn$). However, it is still unclear whether or not
both Theorems \ref{atom ch} and \ref{molecular} still hold true when
$r=\max\{1,q,\ p_{\Phi}^+\}$ and $s=\min\{p^-_{\Phi},q,1\}$.

Observe that the atomic and the molecular characterizations obtained,
respectively, in Theorems \ref{atom ch} and \ref{molecular}
are more close, in spirit, to the atomic characterization (\cite[Theorem 4.6]{ns})
and the molecular characterization (\cite[Theorem 5.2]{ns}) of variable Hardy spaces,
respectively.
\end{itemize}
\end{remark}

As a corollary of the above theorems, we have the following conclusion.

\begin{proposition}\label{shoulian}
Let all the assumptions be as in Definition \ref{atomic hardy}. Then
\begin{enumerate}
\item[{\rm(i)}] $(HE_\Phi^q)_t(\rn)\cap L^\infty(\rn)$ is dense in $(HE_\Phi^q)_t(\rn)$.

\item[{\rm(ii)}] The summations in \eqref{33} converge in $(HE_\Phi^q)_t(\rn)$.
\end{enumerate}
\end{proposition}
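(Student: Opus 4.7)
The plan is to prove (ii) first and deduce (i) from it. For (ii), fix $f\in(HE_\Phi^q)_t(\rn)$ and a representation $f=\sum_{j=1}^\infty\lambda_j a_j$ as in \eqref{33}. Denoting the $N$-th partial sum by $f_N:=\sum_{j=1}^N\lambda_j a_j$, the convergence of the full series in $\mathcal{S}'(\rn)$ forces the tail $\sum_{j>N}\lambda_j a_j$ to converge in $\mathcal{S}'(\rn)$ to $f-f_N$. This tail is itself an atomic decomposition, so feeding it into Theorem \ref{atom ch} yields
\begin{equation*}
\|f-f_N\|_{(HE_\Phi^q)_t(\rn)}\lesssim\left\|\left\{\sum_{j>N}\left[\frac{\lambda_j}{\|\chi_{Q_j}\|_{(E_\Phi^q)_t(\rn)}}\right]^s\chi_{Q_j}\right\}^{1/s}\right\|_{(E_\Phi^q)_t(\rn)}.
\end{equation*}
It therefore suffices to show that the right-hand side tends to $0$ as $N\to\infty$.

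Setting $\Lambda_j:=[\lambda_j/\|\chi_{Q_j}\|_{(E_\Phi^q)_t(\rn)}]^s$ and $G_N:=(\sum_{j>N}\Lambda_j\chi_{Q_j})^{1/s}$, the finiteness of the atomic quasi-norm provides $G_0\in(E_\Phi^q)_t(\rn)$, and in particular $G_0<\infty$ almost everywhere; hence $G_N\downarrow 0$ almost everywhere while $G_N\le G_0$. The plan is to obtain $\|G_N\|_{(E_\Phi^q)_t(\rn)}\to 0$ by a two-level dominated convergence argument: first, for each $x\in\rn$ one shows $\|G_N\chi_{B(x,t)}\|_{L^\Phi(\rn)}\to 0$ from a dominated convergence theorem in the Orlicz space $L^\Phi(\rn)$; second, the pointwise decay of the $L^\Phi(\rn)$-normalized quantity is upgraded to $L^q(\rn)$-convergence via the classical $L^q$ dominated convergence theorem against the majorant coming from $G_0\in(E_\Phi^q)_t(\rn)$. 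The inner $L^\Phi(\rn)$-dominated convergence rests on the finite positive upper type $p_\Phi^+$ of $\Phi$, which yields a $\Delta_2$-type estimate $\Phi(s\tau)\lesssim s^{p_\Phi^+}\Phi(\tau)$ for all $s\ge 1$; this in turn guarantees that $\Phi(G_0\chi_{B(x,t)}/\varepsilon)\in L^1(\rn)$ for every $\varepsilon>0$, and then classical $L^1$-dominated convergence gives $\int_{\rn}\Phi(G_N(y)\chi_{B(x,t)}(y)/\varepsilon)\,dy\to 0$, whence $\|G_N\chi_{B(x,t)}\|_{L^\Phi(\rn)}\le\varepsilon$ eventually. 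The main obstacle I expect is precisely this two-level dominated convergence, whose execution requires careful bookkeeping of the constants from the Orlicz upper type and of the majorant at both the Orlicz and the $L^q$ layer.

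For (i), specialize Theorem \ref{atom ch} to $r=\infty$, which is permitted since $\infty\in(\max\{1,q,p_\Phi^+\},\infty]$; every $f\in(HE_\Phi^q)_t(\rn)$ then admits a decomposition $f=\sum_j\lambda_j a_j$ with bounded, compactly supported atoms. Each partial sum $f_N$ is thus a finite linear combination of such atoms and lies in $L^\infty(\rn)\cap(HE_\Phi^q)_t(\rn)$. By part (ii), $f_N\to f$ in $(HE_\Phi^q)_t(\rn)$, which yields the desired density.
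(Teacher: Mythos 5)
Your argument is correct, but it takes a more self-contained route than the paper. The paper dispatches Proposition \ref{shoulian} in one line by first establishing (Lemma \ref{aboso}) that $(E_\Phi^q)_t(\rn)$ has an absolutely continuous quasi-norm, and then invoking the abstract Corollary 3.11 of \cite{ykds}, which packages exactly the kind of convergence-of-atomic-series argument you carry out by hand. Your two-level dominated convergence — Orlicz-space dominated convergence for each fixed $x$ (enabled by the finite upper type $p_\Phi^+$ so that $\Phi(G_0\chi_{B(x,t)}/\varepsilon)\in L^1(\rn)$ for every $\varepsilon>0$), followed by $L^q$ dominated convergence of $x\mapsto\|G_N\chi_{B(x,t)}\|_{L^\Phi(\rn)}/\|\chi_{B(x,t)}\|_{L^\Phi(\rn)}$ against the majorant supplied by $G_0\in(E_\Phi^q)_t(\rn)$ — is precisely the content that the paper's citation hides; it is the standard mechanism behind "absolutely continuous quasi-norm implies norm convergence of monotonically shrinking tails." The deduction of (i) from (ii) by taking an $r=\infty$ atomic decomposition and noting that finite partial sums are bounded with compact support is the same as the paper's (which again hides this inside the cited corollary). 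The only point worth flagging for precision: your argument requires the chosen representation in \eqref{33} to have finite associated atomic quasi-norm (which guarantees $G_0\in(E_\Phi^q)_t(\rn)$); this is the intended reading of the statement and you implicitly assume it, but it should be said explicitly since \eqref{33} alone only records convergence in $\mathcal{S}'(\rn)$.
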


\subsection{Characterizations in terms of Littlewood-Paley functions\label{s3.5}}

In this section, we establish various Littlewood-Paley function characterizations of $(HE_\Phi^q)_t(\rn)$.

\begin{definition}\label{cone}
For any $x\in\rn$, let $\Gamma(x):=\{(y,s)\in\mathbb{R}_+^{n+1}:\ |x-y|<s\}$, which is
called the \emph{cone} of aperture $1$
with vertex $x\in\rn$.
\end{definition}

For any $\tau\in(0,\infty)$, $f\in\mathcal{S}'(\rn)$
and $\varphi\in\mathcal{S}(\rn)$, let
$$
\varphi(\tau D)(f):=\mathcal{F}^{-1}[\varphi(\tau\cdot)\mathcal{F}f].
$$
Recall that a distribution $f\in\mathcal{S}'(\rn)$ is said to be \emph{vanish weakly at infinity} if
$\lim_{t\downarrow0}\varphi(tD)(f)=0$ in $\mathcal{S}'(\rn)$ for any $\varphi\in\mathcal{S}(\rn)$.

Let $\varphi\in\mathcal{S}(\rn)$ be such that
$$
\chi_{B(\vec{0}_n,4) \setminus B(\vec{0}_n,2)}\le\varphi\le\chi_{B(\vec{0}_n,8) \setminus B(\vec{0}_n,1)}.
$$
For any $f\in\mathcal{S}'(\rn)$, the Littlewood-Paley $g$-function $g(f)$, the \emph{Lusin area function}
$S(f)$ and the \emph{Littlewood-Paley $g_\lambda^*$-function} $g_\lambda^*(f)$,
with $\lambda\in(1,\infty)$,
of $f$ are defined, respectively, by setting, for any $x\in\rn$,
$$
g(f)(x):=\lf\{\int_0^{\infty}|\varphi(\tau D)(f)(x)|^{2}\,\frac{d\tau}{\tau}\r\}^{
\frac{1}{2}},
$$
$$
S(f)(x):=\lf\{\int_{\Gamma(x)}|\varphi(\tau D)(f)(y)|^{2}\,\frac{dy\,d\tau}{\tau^{n+1}}\r\}^{
\frac{1}{2}}
$$
and
$$
g_\lambda^*(f)(x):=\lf\{\int_0^{\infty}\int_{\rn}\lf(\frac{\tau}{\tau+|x-y|}\r)^{\lambda n}|\varphi(\tau D)(f)(y)|^{2}\,\frac{dy\,d\tau}{\tau^{n+1}}\r\}^{
\frac{1}{2}}.
$$

Using these functions, we have the following characterizations.

\begin{theorem}\label{lusin}(Lusin area function characterization)
Let $t,\ q\in(0,\infty)$ and $\Phi$ be an Orlicz function with positive lower type $p_{\Phi}^-$ and positive upper type $p_{\Phi}^+$.
Then $f\in (HE_\Phi^q)_t(\rn)$ if and only if $f$ vanishes weakly at infinity and
$$
\lf\|S(f)\r\|_{(E_\Phi^q)_t(\rn)}<\infty.
$$
Moreover,
$$
\|f\|_{(HE_\Phi^q)_t(\rn)}\sim\lf\|S(f)\r\|_{(E_\Phi^q)_t(\rn)},
$$
where the equivalent positive constants are independent of $f$ and $t$.
\end{theorem}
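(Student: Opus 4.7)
The plan is to invoke the general Lusin area function characterization for Hardy spaces associated with ball quasi-Banach function spaces developed in \cite{ykds}, and then to verify that $(E_\Phi^q)_t(\rn)$ meets the hypotheses of that general theorem. Lemma \ref{ballproof} already tells us that $(E_\Phi^q)_t(\rn)$ is a ball quasi-Banach function space, so the first half of the setup is free; what remains is to supply the analytic ingredients required in \cite{ykds}: a suitable convexification on which the Hardy--Littlewood maximal operator is bounded, and a Fefferman--Stein vector-valued maximal inequality.

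Concretely, I would choose an auxiliary exponent $s\in(0,\min\{p_\Phi^-,q,1\})$ and then, for a suitable $r\in(s,\infty)$ with $r>1$, check the two-parameter boundedness requirement: namely, that $\cm$ is bounded on $[(E_\Phi^q)_t(\rn)]^{1/s}$ (supplied by Lemma \ref{littlewood}), and that the vector-valued estimate
\[
\lf\|\lf\{\sum_{j\in\mathbb{Z}}[\cm(f_j)]^{r}\r\}^{1/r}\r\|_{[(E_\Phi^q)_t(\rn)]^{1/s}}
\lesssim \lf\|\lf\{\sum_{j\in\mathbb{Z}}|f_j|^{r}\r\}^{1/r}\r\|_{[(E_\Phi^q)_t(\rn)]^{1/s}}
\]
holds. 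The latter follows from Theorem \ref{main} after absorbing the power $1/s$ via the identity $[(E_\Phi^q)_t(\rn)]^{1/s}=(E_{\Phi_s}^{q/s})_t(\rn)$ with $\Phi_s(\tau):=\Phi(\tau^{1/s})$, exactly as in the proof of Lemma \ref{littlewood}, since the modified Orlicz function $\Phi_s$ has lower type $p_\Phi^-/s>1$ and $q/s>1$.

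Once these two inputs are in place, the general theorem from \cite{ykds} on Lusin area functions for Hardy spaces on ball quasi-Banach function spaces applies verbatim with $X=(E_\Phi^q)_t(\rn)$, yielding both directions simultaneously: that $f\in(HE_\Phi^q)_t(\rn)$ implies $f$ vanishes weakly at infinity (because any element of $(HE_\Phi^q)_t(\rn)$, identified via Theorem \ref{atom ch} with a convergent sum of atoms, does so in $\mathcal{S}'(\rn)$) together with the estimate $\|S(f)\|_{(E_\Phi^q)_t(\rn)}\lesssim\|f\|_{(HE_\Phi^q)_t(\rn)}$; and, conversely, that any $f\in\mathcal{S}'(\rn)$ vanishing weakly at infinity with $S(f)\in(E_\Phi^q)_t(\rn)$ admits an atomic decomposition (via the tent-space decomposition of $S(f)$) that, combined with Theorem \ref{atom ch}, gives $\|f\|_{(HE_\Phi^q)_t(\rn)}\lesssim\|S(f)\|_{(E_\Phi^q)_t(\rn)}$.

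The main obstacle I anticipate is purely bookkeeping: one must align the parameter ranges of \cite{ykds} (which are stated in terms of abstract convexity and maximal boundedness indices) with the concrete indices $p_\Phi^-,p_\Phi^+,q$ of $(E_\Phi^q)_t(\rn)$, and verify that the choice of $s$ and $r$ above lies in the admissible regime for \emph{both} the atomic characterization used in the $(\Leftarrow)$ direction and the tent-space argument. Choosing $s\in(0,\min\{p_\Phi^-,q,1\})$ close to this minimum ensures $d\geq\lfloor n(1/s-1)\rfloor$ forces only a finite vanishing-moment condition on the atoms produced, so Theorem \ref{atom ch} is directly applicable with no further restriction on $\Phi$ or $q$.
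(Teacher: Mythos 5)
Your proposal matches the paper's proof in its essential strategy: both reduce Theorem~\ref{lusin} to the general Lusin area function characterization for Hardy spaces associated with ball quasi-Banach function spaces in \cite{ykds}, after verifying that $(E_\Phi^q)_t(\rn)$ meets the structural hypotheses of that framework. The paper proves the area function characterization simultaneously with Theorems~\ref{atom ch} and~\ref{molecular} in a single stroke, citing Lemmas~\ref{ballproof}, \ref{sconvex}, \ref{man28} and~\ref{man37} for the hypothesis check and then quoting the relevant theorems of \cite{ykds} (Theorems~3.6, 3.7, 3.9, and 3.21).

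One small caveat: your hypothesis checklist is incomplete relative to what the paper actually verifies. You correctly identify the ball quasi-Banach property (Lemma~\ref{ballproof}) and the vector-valued Fefferman--Stein type inequality in the $s$-convexification (which the paper packages as Lemma~\ref{man28}, proved by the absorption-of-powers trick you describe via $[(E_\Phi^q)_t(\rn)]^{1/s}=(E_{\Phi_s}^{q/s})_t(\rn)$ and Theorem~\ref{main}). However, the framework of \cite{ykds} also requires the strict $s$-convexity of $(E_\Phi^q)_t(\rn)$ (Lemma~\ref{sconvex}) and, crucially, the boundedness of the powered Hardy--Littlewood maximal operator $\mathcal{M}^{((r/s)')}$ on the associate (dual) space $([(E_\Phi^q)_t(\rn)]^{1/s})^*$ (Lemma~\ref{man37}). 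The second of these is a genuinely separate estimate, obtained in the paper via the Orlicz-slice dual space identification (Theorem~\ref{dual}); it is not subsumed by the two ingredients you listed even after the convexification identity. You flag ``aligning parameter ranges of \cite{ykds}'' as a bookkeeping concern, which is the right instinct, but you should add these two items to your checklist before the appeal to \cite{ykds} is legitimate.
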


\begin{theorem}\label{gfunction}(Littlewood-Paley g-function characterization)
Let $t,\ q\in(0,\infty)$ and $\Phi$ be an Orlicz function with positive lower type $p_{\Phi}^-$ and positive upper type $p_{\Phi}^+$.
Then $f\in (HE_\Phi^q)_t(\rn)$ if and only if $f$ vanishes weakly at infinity and
$$
\lf\|g(f)\r\|_{(E_\Phi^q)_t(\rn)}<\infty.
$$
Moreover,
$$
\|f\|_{(HE_\Phi^q)_t(\rn)}\sim
\lf\|g(f)\r\|_{(E_\Phi^q)_t(\rn)},
$$
where the equivalent positive constants are independent of $f$ and $t$.
\end{theorem}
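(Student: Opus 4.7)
The plan is to use Theorem \ref{lusin} as a bridge: since that theorem already gives $\|f\|_{(HE_\Phi^q)_t(\rn)}\sim\|S(f)\|_{(E_\Phi^q)_t(\rn)}$ for $f$ vanishing weakly at infinity, it suffices to establish the two-sided equivalence $\|S(f)\|_{(E_\Phi^q)_t(\rn)}\sim\|g(f)\|_{(E_\Phi^q)_t(\rn)}$. The two directions require quite different tools: the bound $\|g(f)\|_{(E_\Phi^q)_t(\rn)}\lesssim\|f\|_{(HE_\Phi^q)_t(\rn)}$ is a natural target for the atomic characterization (Theorem \ref{atom ch}), while the reverse inequality is best approached through a pointwise estimate of $S(f)$ by a maximal function applied to $g(f)$, combined with the boundedness of $\mathcal{M}$ on the $\frac{1}{r}$-convexification (Lemma \ref{littlewood}).

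For $\|g(f)\|_{(E_\Phi^q)_t(\rn)}\lesssim\|f\|_{(HE_\Phi^q)_t(\rn)}$, I would fix an atomic decomposition $f=\sum_j\lambda_j a_j$ given by Theorem \ref{atom ch}, where each $a_j$ is an $((E_\Phi^q)_t(\rn),r,d)$-atom supported on a cube $Q_j$ with $r>\max\{1,q,p_\Phi^+\}$ and $d\ge\lfloor n(\frac1s-1)\rfloor$ for some $s\in(0,\min\{p_\Phi^-,q,1\})$. For a single atom $a$ on a cube $Q$, split $g(a)=g(a)\chi_{2Q}+g(a)\chi_{(2Q)^c}$. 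On $2Q$, use the classical $L^r$-boundedness of $g$ together with the atomic $L^r$-bound to get $\|g(a)\chi_{2Q}\|_{L^r(\rn)}\lesssim|Q|^{1/r}/\|\chi_Q\|_{(E_\Phi^q)_t(\rn)}$. On $(2Q)^c$, use the vanishing moments $a\perp\mathcal{P}_d$ and Taylor expansion of the convolution kernel of $\varphi(\tau D)$ to obtain the decay estimate $g(a)(x)\lesssim[\ell(Q)/|x-x_Q|]^{n+d+1}\|\chi_Q\|_{(E_\Phi^q)_t(\rn)}^{-1}$. Both contributions are pointwise dominated by a suitable power of $\mathcal{M}(\chi_Q)$, and then the sum over $j$ is reassembled in $(E_\Phi^q)_t(\rn)$ via the Fefferman-Stein-type vector-valued inequality (Theorem \ref{main}) and Lemma \ref{littlewood}, yielding
\[
\|g(f)\|_{(E_\Phi^q)_t(\rn)}\lesssim\left\|\left\{\sum_j\left[\frac{\lambda_j}{\|\chi_{Q_j}\|_{(E_\Phi^q)_t(\rn)}}\right]^s\chi_{Q_j}\right\}^{1/s}\right\|_{(E_\Phi^q)_t(\rn)}.
\]
Taking the infimum over all admissible decompositions gives the desired bound.

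For the reverse direction $\|S(f)\|_{(E_\Phi^q)_t(\rn)}\lesssim\|g(f)\|_{(E_\Phi^q)_t(\rn)}$, I would exploit the band-limited smoothness of $F(y,\tau):=\varphi(\tau D)(f)(y)$ in $y$. Because $\varphi$ is supported in the annulus $\{1<|\xi|<8\}$, $F(\cdot,\tau)$ is band-limited at scale $\tau^{-1}$, and therefore satisfies the sub-mean-value property that for any $r\in(0,\min\{p_\Phi^-,q\})$ and any $x\in\rn$,
\[
\sup_{y\in B(x,\tau)}|F(y,\tau)|^{r}\lesssim\mathcal{M}\big(|F(\cdot,\tau)|^{r}\big)(x).
\]
Multiplying by $\tau^{-1}\,d\tau$, integrating, and using Minkowski's inequality in $L^{2/r}((0,\infty),d\tau/\tau)$ yield the pointwise bound $S(f)(x)^{r}\lesssim\mathcal{M}\big([g(f)]^{r}\big)(x)$. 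Since $r<\min\{p_\Phi^-,q\}$, Lemma \ref{littlewood} applies, giving boundedness of $\mathcal{M}$ on $[(E_\Phi^q)_t(\rn)]^{1/r}$, from which $\|S(f)\|_{(E_\Phi^q)_t(\rn)}\lesssim\|g(f)\|_{(E_\Phi^q)_t(\rn)}$ follows after unraveling the convexification. Combined with Theorem \ref{lusin}, this closes the equivalence.

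The main obstacle is the sub-mean-value pointwise estimate on $|F(y,\tau)|^r$ together with its clean interaction with the convexification; the choice of $r$ strictly below $\min\{p_\Phi^-,q\}$ is forced precisely so that Lemma \ref{littlewood} applies. A secondary technical point is justifying the termwise application of $g$ to the atomic sum, but this is handled by Proposition \ref{shoulian}(ii) (convergence of atomic decompositions in $(HE_\Phi^q)_t(\rn)$) and the subadditivity of $g$. All other ingredients--the $L^r$-boundedness of $g$, the kernel decay of $\varphi(\tau D)$, and the Fefferman-Stein inequality Theorem \ref{main}--are already available.
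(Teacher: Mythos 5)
Your treatment of the necessity direction $\|g(f)\|_{(E_\Phi^q)_t(\rn)}\lesssim\|f\|_{(HE_\Phi^q)_t(\rn)}$ closely matches the paper's argument: atomic decomposition, splitting at $2Q$, $L^r$-boundedness of $g$ on the near part, decay from vanishing moments on the far part, and reassembly via \cite[Theorem 2.10]{ykds} and Lemma \ref{man28}. That part is sound.

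The sufficiency direction has a genuine gap. You claim that multiplying the sub-mean-value bound
$$\sup_{y\in B(x,\tau)}|F(y,\tau)|^{r}\lesssim\mathcal{M}\bigl(|F(\cdot,\tau)|^{r}\bigr)(x)$$
by $\tau^{-1}\,d\tau$, integrating and applying Minkowski yields the pointwise estimate $S(f)(x)^{r}\lesssim\mathcal{M}\bigl([g(f)]^{r}\bigr)(x)$. That step is not valid. After raising both sides to the power $2/r$ and integrating, you are looking at $\int_0^\infty\bigl[\mathcal{M}(|F(\cdot,\tau)|^{r})(x)\bigr]^{2/r}\,\tau^{-1}d\tau$, and the obstruction is that the Hardy--Littlewood maximal operator is a supremum over balls whose optimal radius depends on $\tau$. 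Minkowski's inequality in $L^{2/r}(d\tau/\tau)$ would let you exchange a $\tau$-integral with a \emph{fixed} average $\frac{1}{|B|}\int_B(\cdot)\,dz$; it does not let you exchange it with $\sup_{B\ni x}\frac{1}{|B|}\int_B(\cdot)\,dz$ when the witness ball varies with $\tau$. Equivalently, if you try to keep track of the ball radius (which, by the Plancherel--Pólya/sub-mean-value property for functions band-limited at scale $\tau^{-1}$, is necessarily comparable to $\tau$), you end up with an expression of the form $\int_{B(0,1)}\bigl(\int_0^\infty|F(x+c\tau w,\tau)|^{2}\tau^{-1}d\tau\bigr)^{r/2}dw$, where the spatial point $x+c\tau w$ drifts with $\tau$, so the inner integral is no longer $[g(f)]^2$ evaluated at any single point. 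The paper circumvents exactly this obstacle: it does not attempt a pointwise bound of $S(f)$ by a maximal function of $g(f)$. Instead it passes through the Peetre maximal function $P_a(f)$, uses the refined estimate from \cite[Lemma 3.5]{lsuy} which bounds $[(\psi_{2^{-l}\tau}^*f)_a(x)]^r$ by a sum over $k$ of \emph{convolutions} with fixed kernels $g_l$ (not by $\mathcal{M}$ directly), applies Minkowski to push the $\tau$-integral inside these convolutions first, identifies $g_l*\bigl[\cdot\bigr]\lesssim\mathcal{M}(\cdot)$ only at the end, and then controls the resulting $\ell^2$-sum over $l\in\mathbb{Z}$ of maximal functions with the vector-valued Fefferman--Stein inequality (Theorem \ref{main}). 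The dyadic cut in $l$ and the vector-valued maximal inequality together absorb precisely the scale mismatch that your single-scalar pointwise bound cannot accommodate. You would need to replace that step with the Peetre maximal function machinery (or some equivalent dyadic vector-valued argument) for the sufficiency to go through.
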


\begin{theorem}\label{glamda}(Littlewood-Paley $g_\lambda^*$-function characterization)
Let $t,\ q\in(0,\infty)$ and $\Phi$ be an Orlicz function with positive lower type $p_{\Phi}^-$ and positive upper type $p_{\Phi}^+$. Let $\lambda\in (1+\frac{2}{\min\{p_{\Phi}^-,\ q\}},\infty)$.
Then $f\in (HE_\Phi^q)_t(\rn)$ if and only if $f$ vanishes weakly at infinity and
$$
\lf\|g_\lambda^*(f)\r\|_{(E_\Phi^q)_t(\rn)}<\infty.
$$
Moreover,
$$
\|f\|_{(HE_\Phi^q)_t(\rn)}\sim\lf\|g_\lambda^*(f)\r\|_{(E_\Phi^q)_t(\rn)},
$$
where the equivalent positive constants are independent of $f$ and $t$.
\end{theorem}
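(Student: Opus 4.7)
The plan is to deduce Theorem \ref{glamda} from the Lusin area function characterization already established in Theorem \ref{lusin}, by comparing $g_\lambda^*(f)$ and $S(f)$ pointwise via a dyadic decomposition coupled with a change-of-aperture estimate, and then invoking Lemma \ref{littlewood} on an appropriate convexification of $(E_\Phi^q)_t(\rn)$.

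The easy direction, namely $\|S(f)\|_{(E_\Phi^q)_t(\rn)}\lesssim\|g_\lambda^*(f)\|_{(E_\Phi^q)_t(\rn)}$, follows from the obvious pointwise inequality $S(f)(x)\lesssim g_\lambda^*(f)(x)$, which is obtained by restricting the integration in the definition of $g_\lambda^*(f)$ to the cone $\Gamma(x)$ and using $\tau/(\tau+|x-y|)\ge 1/2$ there. Combined with Theorem \ref{lusin}, this shows that, whenever $f$ vanishes weakly at infinity with $\|g_\lambda^*(f)\|_{(E_\Phi^q)_t(\rn)}<\infty$, one has $f\in (HE_\Phi^q)_t(\rn)$ together with the bound $\|f\|_{(HE_\Phi^q)_t(\rn)}\lesssim\|g_\lambda^*(f)\|_{(E_\Phi^q)_t(\rn)}$.

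For the reverse direction, I would first decompose the $y$-integration in $[g_\lambda^*(f)(x)]^2$ into the cone $\{|x-y|<\tau\}$ plus the dyadic annuli $\{2^{k-1}\tau\le|x-y|<2^k\tau\}$ with $k\in\mathbb{N}$, which yields the pointwise bound
\begin{equation*}
[g_\lambda^*(f)(x)]^2\lesssim\sum_{k=0}^\infty 2^{-k\lz n}[S_{2^k}(f)(x)]^2,
\end{equation*}
where $S_a(f)$ denotes the Lusin area function of aperture $a$. Next, I would invoke the pointwise change-of-aperture estimate
\begin{equation*}
S_a(f)(x)\lesssim a^{n(1/r+1/2)}\lf[\mathcal{M}\lf([S(f)]^r\r)(x)\r]^{1/r},\quad a\ge 1,\ r\in(0,\infty),
\end{equation*}
which follows from the sub-mean-value property of the band-limited functions $\varphi(\tau D)f$ together with a standard covering argument on the underlying tent. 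Substitution and summation of the resulting geometric series, which converges precisely when $\lz>1+2/r$, then yields
\begin{equation*}
g_\lambda^*(f)(x)\lesssim\lf[\mathcal{M}\lf([S(f)]^r\r)(x)\r]^{1/r}.
\end{equation*}

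Since $\lz>1+2/\min\{p_\Phi^-,q\}$ by hypothesis, I can choose $r\in(0,\min\{p_\Phi^-,q\})$ satisfying $\lz>1+2/r$. Raising the above pointwise bound to the $r$-th power, taking the $[(E_\Phi^q)_t(\rn)]^{1/r}$-norm, and invoking Lemma \ref{littlewood}, which supplies the boundedness of $\mathcal{M}$ on $[(E_\Phi^q)_t(\rn)]^{1/r}$, I obtain
\begin{equation*}
\lf\|g_\lambda^*(f)\r\|_{(E_\Phi^q)_t(\rn)}\lesssim\lf\|S(f)\r\|_{(E_\Phi^q)_t(\rn)}\sim\|f\|_{(HE_\Phi^q)_t(\rn)},
\end{equation*}
where the final equivalence is Theorem \ref{lusin} (which also supplies the weak vanishing at infinity of $f$ for free whenever $f\in (HE_\Phi^q)_t(\rn)$). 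The hard part will be nailing down the change-of-aperture estimate with the sharp exponent $n(1/r+1/2)$, because the threshold $\lz>1+2/\min\{p_\Phi^-,q\}$ is tight precisely for this exponent; the remainder of the argument is a routine combination of the dyadic decomposition with Lemma \ref{littlewood} and Theorem \ref{lusin}.
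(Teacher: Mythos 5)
Your easy direction is fine and coincides with the paper's. The hard direction, however, contains a genuine gap at its central step, the pointwise change-of-aperture estimate, and a consequent misidentification of which maximal inequality is needed.

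The Plancherel--Polya sub-mean-value property is an assertion about the band-limited \emph{horizontal slices} $F(\cdot,\tau):=\varphi(\tau D)f$: for any $r\in(0,\infty)$ and $|y-x|<a\tau$ one has $|F(y,\tau)|\lesssim a^{n/r}[\mathcal{M}(|F(\cdot,\tau)|^r)(x)]^{1/r}$. Inserting this into the definition of $S_a(f)(x)^2$ and integrating the slice and then $\tau$ produces
\begin{equation*}
S_a(f)(x)\lesssim a^{n(\frac1r+\frac12)}\lf\{\int_0^\infty\lf[\mathcal{M}\lf(|F(\cdot,\tau)|^r\r)(x)\r]^{2/r}\,\frac{d\tau}{\tau}\r\}^{1/2}=:a^{n(\frac1r+\frac12)}G_r(f)(x),
\end{equation*}
and $G_r(f)$ is, via the standard equivalence $(\psi_\tau^*f)_{n/r}(x)\sim[\mathcal{M}(|\psi_\tau*f|^r)(x)]^{1/r}$, exactly the Peetre-type square function $P_a(f)$ (with $a=n/r$) appearing in the paper's proof of Theorem \ref{gfunction}. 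Your proposed majorant $[\mathcal{M}(S(f)^r)(x)]^{1/r}$ puts $\mathcal{M}$ \emph{outside} the $\tau$-integral, whereas what the sub-mean-value argument actually yields has it \emph{inside}, and these differ in the wrong direction: already for $r=2$, the sublinearity of $\mathcal{M}$ gives $G_2(f)(x)^2=\int_0^\infty\mathcal{M}(|F(\cdot,\tau)|^2)(x)\,\frac{d\tau}{\tau}\ge\mathcal{M}(S(f)^2)(x)$. So replacing $G_r(f)$ by $[\mathcal{M}(S(f)^r)]^{1/r}$ in the aperture bound is an unsubstantiated strengthening, and I do not believe $S_a(f)(x)\lesssim a^{n(\frac1r+\frac12)}[\mathcal{M}(S(f)^r)(x)]^{1/r}$ holds; it certainly is not a routine covering consequence of Plancherel--Polya.

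Once the correct majorant $G_r(f)$ is used, the final step cannot be closed with Lemma \ref{littlewood} alone: $\mathcal{M}$ now sits inside an $\ell^2$-type superposition over the scales $\tau$ and must be pulled out, which requires the vector-valued Fefferman--Stein inequality of Theorem \ref{main} (after a dyadic discretization of the $\tau$-integral and an application of Minkowski's inequality), not the scalar boundedness of $\mathcal{M}$ on a convexification. This is precisely what the paper does: one chooses $a\in(\frac n{\min\{p_\Phi^-,q\}},\infty)$ with $\lambda>1+2a/n$, obtains pointwise $g_\lambda^*(f)(x)\lesssim P_a(f)(x)$, and then invokes the norm inequality $\|P_a(f)\|_{(E_\Phi^q)_t(\rn)}\lesssim\|g(f)\|_{(E_\Phi^q)_t(\rn)}$, which is (\ref{77}) established inside the proof of Theorem \ref{gfunction} via Theorem \ref{main}. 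Your outline has the right ingredients and the correct aperture exponent $n(1/r+1/2)$, but misplaces the maximal operator relative to the $\tau$-integration and consequently invokes insufficient maximal-function machinery.
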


\begin{remark}
Let $t,\ q$ and $\Phi$ be as in Remark \ref{remax}. In this case,
we have $p^{-}_\Phi=q$, $(E_\Phi^q)_t(\rn)=L^q(\rn)$
and $(HE_\Phi^q)_t(\rn)=H^q(\rn)$ and the best known range of
$\lambda$ in Theorem \ref{glamda} is $\lambda\in(2/q,\infty)$
(see, for example, \cite[Corollary 7.4]{fs82}). However,
it is still unclear whether or not Theorem \ref{glamda}
still holds true when $\lambda\in(\frac{2}{\min\{p^{-}_\Phi,q\}},1+\frac{2}{\min\{p^{-}_\Phi,q\}}]$.
\end{remark}

\subsection{Finite atomic characterizations\label{s3.6}}

In this section, we establish a finite atomic decomposition theorem
on $(HE_\Phi^q)_t(\rn)$.

\begin{definition}\label{definite}
Let $t,\ q\in(0,\infty)$ and $\Phi$ be an Orlicz function with positive lower type $p_{\Phi}^-$ and positive upper type $p_{\Phi}^+$. Let $r\in(\max\{1,q,\ p_{\Phi}^+\},\infty]$, $s \in(0,\min\{p^-_{\Phi},q,1\})$ and $d\in\mathbb{Z}_+$ satisfying
$d\ge\lfloor n(\frac{1}{s}-1)\rfloor$. The \emph{finite atomic Orlicz-slice Hardy space} $(HE_\Phi^{q,r,d})_t^{\fin}(\rn)$ is define to be the set of all finite linear combinations of $((E_\Phi^q)_t(\rn),\ r,\ d)$-atoms.
The quasi-norm $\|\cdot\|_{(HE_\Phi^{q,r,d})_t^{\fin}(\rn)}$ in $(HE_\Phi^{q,r,d})_t^{\fin}(\rn)$ is defined by
setting, for any $f\in(HE_\Phi^{q,r,d})_t^{\fin}(\rn)$
\begin{align*}
&\|f\|_{(HE_\Phi^{q,r,d})_t^{\fin}(\rn)}\\
&\quad :=\inf\lf\{\lf\|\lf\{\sum_{j=1}^{m} \lf[\frac{\lambda_j}
{\|\chi_{Q_j}\|_{(E_\Phi^q)_t(\rn)}} \r]^s\chi_{Q_j} \r\}
^{\frac{1}{s}}\r\|_{(E_\Phi^q)_t(\rn)}:\ m\in\mathbb{N},\ f=\sum_{j=1}^{m}\lambda_j a_j,\ \{\lambda_j\}_{j=1}^m\subset[0,\infty)\r\},
\end{align*}
where the infimum is taken over all finite linear combinations of $f$
via $((E_\Phi^q)_t(\rn),\ r,\ d)$-atoms $\{a_j\}_{j=1}^m$ supported, respectively,
on cubes $\{Q_j\}_{j=1}^m$.
\end{definition}

Then we have the following conclusion. In what follows, the \emph{symbol} $\mathcal{C}{(\rn)}$ is defined to be the
set of all continuous complex-valued functions on $\rn.$

\begin{theorem}\label{finite}
Let $t,\ q\in(0,\infty)$ and $\Phi$ be an Orlicz function with positive lower type $p_{\Phi}^-$ and positive upper type $p_{\Phi}^+$. Let $r\in(\max\{1,q,\ p_{\Phi}^+\},\infty]$, $s\in(0,\min\{p^-_{\Phi},q,1\})$ and $d\in\mathbb{Z}_+$ satisfying
$d\ge\lfloor n(\frac{1}{s}-1)\rfloor$.
\begin{enumerate}
\item[{\rm(i)}] If $r\in(\max\{1,q,\ p_{\Phi}^+\},\infty)$, then $\|\cdot\|_{(HE_\Phi^q)_t(\rn)}$ and $\|\cdot\|_{(HE_\Phi^{q,r,d})_t^{\fin}(\rn)}$ are equivalent on the space $(HE_\Phi^{q,r,d})_t^{\fin}(\rn)$ with the equivalent positive constant independent of $t$.
\item[{\rm(ii)}] If $r=\fz$, then $\|\cdot\|_{(HE_\Phi^q)_t(\rn)}$ and $\|\cdot\|_{(HE_\Phi^{q,\infty,d})_t^{\fin}(\rn)}$ are equivalent on $(HE_\Phi^{q,\infty,d})_t^{\fin}(\rn)\cap\mathcal{C}{(\rn)}$ with the equivalent positive constant independent of $t.$
\end{enumerate}
\end{theorem}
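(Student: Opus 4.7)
The plan is to prove the two inequalities separately. The direction $\|f\|_{(HE_\Phi^q)_t(\rn)}\lesssim\|f\|_{(HE_\Phi^{q,r,d})_t^{\fin}(\rn)}$ is immediate from Theorem \ref{atom ch}: any finite atomic representation $f=\sum_{j=1}^{m}\lambda_j a_j$ is a fortiori an infinite atomic representation, so the infimum defining the Hardy-space quasi-norm is dominated by the one defining the finite atomic quasi-norm. The content of the theorem is therefore the reverse bound, for which I follow the classical Meda--Sj\"ogren--Vallarino template in the form developed in \cite{ykds}.

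For the reverse direction, fix $f\in(HE_\Phi^{q,r,d})_t^{\fin}(\rn)$; being a finite linear combination of atoms, $f$ is compactly supported, say $\supp f\subset Q_0\in\mathcal{Q}$. Running the proof of Theorem \ref{atom ch}---which rests on the Calder\'on--Zygmund decomposition of $M_N(f)$ at the dyadic levels $\Omega_k:=\{M_N(f)>2^k\}$ together with the general framework of \cite{ykds}---yields
\begin{equation*}
f=\sum_{k\in\mathbb{Z}}\sum_{i\in I_k}\lambda_{k,i}a_{k,i}\quad\mathrm{in}\ \mathcal{S}'(\rn),
\end{equation*}
where each $a_{k,i}$ is an $((E_\Phi^q)_t(\rn),\infty,d)$-atom supported in a Whitney cube $Q_{k,i}\subset\Omega_k$ satisfying the pointwise bound $\|a_{k,i}\|_{L^\infty(\rn)}\le C\|\chi_{Q_{k,i}}\|_{(E_\Phi^q)_t(\rn)}^{-1}$, with coefficients $\lambda_{k,i}\sim 2^k\|\chi_{Q_{k,i}}\|_{(E_\Phi^q)_t(\rn)}$ and
\begin{equation*}
\lf\|\lf\{\sum_{k,i}\lf[\frac{\lambda_{k,i}}{\|\chi_{Q_{k,i}}\|_{(E_\Phi^q)_t(\rn)}}\r]^s\chi_{Q_{k,i}}\r\}^{1/s}\r\|_{(E_\Phi^q)_t(\rn)}\lesssim\|f\|_{(HE_\Phi^q)_t(\rn)}.
\end{equation*}
Since $M_N(f)(x)\lesssim|x-x_{Q_0}|^{-n}$ for $|x-x_{Q_0}|$ large, each $\Omega_k$, and hence each $Q_{k,i}$, lies in a fixed dilation $\widetilde{Q}_0$ of $Q_0$.

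Choose $K\in\mathbb{N}$ and, for each $|k|\le K$, a finite set $I_k^{\fin}\subset I_k$; split $f=f_1+f_2$ with $f_1:=\sum_{|k|\le K,\,i\in I_k^{\fin}}\lambda_{k,i}a_{k,i}$ and $f_2:=f-f_1$. Since every $L^\infty$-atom on $Q_{k,i}$ is automatically an $L^r$-atom for $r\in(\max\{1,q,p_\Phi^+\},\infty]$, the piece $f_1$ is a legitimate finite linear combination of $((E_\Phi^q)_t(\rn),r,d)$-atoms whose finite-atomic quasi-norm is controlled by the left-hand side of the last display and hence by $\|f\|_{(HE_\Phi^q)_t(\rn)}$. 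The core of the argument is to choose $K$ and $\{I_k^{\fin}\}$ so that $f_2$ factors as $\lambda_{\mathrm{res}}b_{\mathrm{res}}$, with $b_{\mathrm{res}}$ a single $((E_\Phi^q)_t(\rn),r,d)$-atom supported in $\widetilde{Q}_0$ and $\lambda_{\mathrm{res}}\lesssim\|f\|_{(HE_\Phi^q)_t(\rn)}$. Vanishing moments of $f_2$ are inherited termwise from those of the $a_{k,i}$, and the size condition reduces to the estimate
\begin{equation*}
\|f_2\|_{L^r(\rn)}\lesssim\|f\|_{(HE_\Phi^q)_t(\rn)}\frac{|\widetilde{Q}_0|^{1/r}}{\|\chi_{\widetilde{Q}_0}\|_{(E_\Phi^q)_t(\rn)}},
\end{equation*}
which I derive from the pointwise bounds on the $a_{k,i}$, the decay of $|\Omega_k|$ for large $|k|$, and a dominated-convergence step supported by the Fefferman--Stein type inequality of Theorem \ref{main} and its $\frac{1}{s}$-convexification (Lemma \ref{littlewood}).

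The principal obstacle is the case $r=\infty$ of part (ii): $\mathcal{S}'(\rn)$-convergence of the atomic series does not automatically upgrade to uniform convergence of the tail, and without such an upgrade $f_2$ need not be bounded. This is exactly where the hypothesis $f\in\mathcal{C}(\rn)$ enters. Combining continuity of $f$ with the uniform bound $\|a_{k,i}\|_{L^\infty(\rn)}\lesssim\|\chi_{Q_{k,i}}\|_{(E_\Phi^q)_t(\rn)}^{-1}$ and the decay of $|\Omega_k|$ (accessible via Theorem \ref{mdj} since $M_N(f)\in(E_\Phi^q)_t(\rn)$), a Fatou-type argument---the Orlicz-slice analogue of the one in \cite{ykds}---delivers uniform smallness of $f_2$ on $\widetilde{Q}_0$ and promotes it to a genuine $L^\infty$-atom. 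Summing the contributions from $f_1$ and $f_2$ then yields $\|f\|_{(HE_\Phi^{q,r,d})_t^{\fin}(\rn)}\lesssim\|f\|_{(HE_\Phi^q)_t(\rn)}$ with constants independent of $t$, completing the proof.
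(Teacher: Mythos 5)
Your outer plan (easy inclusion from Theorem~\ref{atom ch}; reverse direction via a Calder\'on--Zygmund decomposition of $M_N(f)$ at dyadic levels) is on the right track, but there is a concrete error that the paper's proof is specifically designed to avoid, and your proposal does not address it.

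You assert that because $M_N(f)(x)\lesssim|x-x_{Q_0}|^{-n}$ for large $|x-x_{Q_0}|$, each level set $\Omega_k$, and hence each Whitney cube $Q_{k,i}$, lies inside a \emph{fixed} dilation $\widetilde{Q}_0$ of $Q_0$. This is false for $k\to-\infty$: the bound $M_N(f)(x)\sim|x-x_{Q_0}|^{-n}$ shows that $\Omega_k=\{M_N(f)>2^k\}$ contains a ball of radius $\sim 2^{-k/n}$, which grows without bound as $k\to-\infty$. Consequently the atoms at coarse scales ($k$ very negative) have arbitrarily large supports, your residual piece $f_2$ is not supported in $\widetilde{Q}_0$, and the factorization of $f_2$ as a single atom supported there breaks down. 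This is exactly the difficulty the paper addresses by introducing the critical level $j'$ (the largest $j$ with $2^{j}<C\|\chi_{B(\vec 0_n,R)}\|_{(E_\Phi^q)_t(\rn)}^{-1}$) and, crucially, Lemma~\ref{youjie}, which gives a \emph{uniform} bound $M_N(f)(x)\le C\|\chi_{B(\vec 0_n,R)}\|_{(E_\Phi^q)_t(\rn)}^{-1}$ for $x\notin B(\vec 0_n,4R)$. Only for $j>j'$ does one get $\mathcal{O}_j\subset B(\vec 0_n,4R)$. The coarse part $h:=\sum_{j\le j'}\sum_k\lambda_{j,k}a_{j,k}$ is then controlled not by a support argument on each $\Omega_j$ but by the $L^\infty$ bound $\|h\|_{L^\infty}\lesssim 2^{j'}\lesssim\|\chi_{B(\vec 0_n,R)}\|_{(E_\Phi^q)_t(\rn)}^{-1}$, combined with the observation that $\supp(h)\subset B(\vec 0_n,4R)$ because $h=f-l$ and both $f$ and $l$ are supported there (cancellation occurs among the coarse atoms outside $B(\vec 0_n,4R)$). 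Your proposal contains nothing that substitutes for Lemma~\ref{youjie} and the level-$j'$ splitting, so your construction of the residual atom cannot succeed.

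Separately, for part (ii) with $r=\infty$ your description of the final step is too vague to close the gap. The paper's argument there is not a Fatou or monotone-convergence argument in the Orlicz-slice norm; it is a uniform-continuity splitting: after $l=\sum_{j=j'+1}^{j''}\cdots$ is a genuinely \emph{finite} $j$-sum (since $f\in L^\infty$ forces $\mathcal{O}_j=\emptyset$ for large $j$), one separates the cubes $Q_{j,k}$ by diameter against a modulus-of-continuity parameter $\delta$, writes $l=l_1^\epsilon+l_2^\epsilon$, verifies each atom $a_{j,k}$ is continuous, and bounds $\|l_2^\epsilon\|_{L^\infty}\lesssim(j''-j')\epsilon$ by the argument of \cite{bly}. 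A "Fatou-type" or "dominated-convergence" step supported by Theorem~\ref{main} doesn't deliver this pointwise uniform control.
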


\begin{remark}
Let $t,\ q,\ r,\ d$ and $\Phi$ be as in Remark \ref{reatom}(ii).
In this case, when $r\in[1,\infty]\cap(q,\fz]$ and $s=q$,
Theorem \ref{finite} coincides with the classical
finite atomic decomposition theorem of $H^q(\rn)$ (see, for example,
\cite[Theorem 3.1, Remark 3.3]{msv} and \cite[Theorem 5.6]{gly} with $X=\rn$). However, it is still unclear whether or not
Theorem \ref{finite} still holds true when $r=\max\{1,q,\ p_{\Phi}^+\}$ and $s=\min\{p^-_{\Phi},q,1\}$.
\end{remark}

\subsection{Further remarks\label{s3.7}}

Amalgam spaces were first introduced by N. Wiener in 1926. In general, for any $t,\ p,\ q\in(0,\infty)$,
the \emph{amalgam space $\ell^q(L^p_t)(\rn)=(L^p_t,\ell^q)(\rn)$} is defined by setting
$$
\ell^q(L^p_t)(\rn):=\lf\{f\ \mbox{measurable}:\ \|f\|_{\ell^q(L^p_t)(\rn)}:=\lf[\sum_{k\in \mathbb{Z}^n}\lf\|f\chi_{Q_{tk}}\r\|^q_{L^p(\rn)}\r]^{\frac{1}{q}}<\infty\r\}.
$$
It is easy to see that the amalgam space $\ell^q(L^p_t)(\rn)$
is a special case of the Orlicz-amalgam space $\ell^q(L^\Phi_t))(\rn)$ in Definition \ref{d1.3}.

In \cite{af1}, Abl\'e and Feuto introduced the Hardy type space  $\mathcal{H}^{(p,q)}(\rn)$ with $p,\ q\in(0,\infty)$ based on the amalgam space $\ell^q(L^p_1)(\rn)$  and
obtained their atomic characterization when $q\in(0,\fz)$ and $p\in(0,\min\{1,q\})$.
The atomic characterization obtained in Section \ref{s3.4} of this article
essentially generalizes \cite[Theorem 4.4]{af1}.

In \cite{bf}, Bonami and Feuto introduced  the Hardy type spaces $H_*^\Phi(\rn)$
with respect to the amalgam space $(L^\Phi,\ell^1)(\rn)=\ell^1(L^\Phi_1)(\rn)$ with
$\Phi(t):=\frac{t}{\log(e+t)}$ for any $t\in[0,\infty)$, and applied these spaces to study the linear decomposition of the product of
the Hardy space $H^1(\rn)$ and its dual space $\BMO(\rn)$;
see also \cite{cky1}. Since $\ell^1(L^\Phi_1)(\rn)$
is a special case of the Orlicz-amalgam spaces introduced in Definition \ref{d1.3},
from Proposition \ref{th2},
we deduce that the space $H_*^\Phi(\rn)$
is also a special case of the Orlicz-slice Hardy spaces
$(HE_\Phi^{q})_t(\rn)$ considered in this article.

\section{Proofs of main results from Section \ref{s3}\label{s4}}

In this section, we give the proofs of the results presented in Section \ref{s3}.
Since Orlicz-slice spaces are ball quasi-Banach function spaces (see Lemma \ref{ballproof}),
some of these results can be deduced directly from \cite{ykds}, in which a real-variable
theory of Hardy spaces related to ball quasi-Banach function spaces was developed.
However, some properties and characterizations of $(HE_\Phi^q)_t(\rn)$, such as Littlewood-Paley function
and finite atomic characterizations,
need independent and  detailed proofs.

We begin with the proof of Theorem \ref{mdj}.

\begin{proof}[Proof of Theorem \ref{mdj}]
By Lemmas \ref{ballproof} and \ref{littlewood}, we know that $(E_\Phi^q)_t(\rn)$ is a ball
quasi-Banach function space and, for any $r\in(0,\min\{p_\Phi^-,q\})$, $\mathcal{M}$ is bounded on $[(E_\Phi^q)_t(\rn)]^{\frac{1}{r}}$,
where $[(E_\Phi^q)_t(\rn)]^{\frac{1}{r}}$ is the $\frac{1}{r}$-convexification of $(E_\Phi^q)_t(\rn)$ as in
Definition \ref{convex}(i).
Thus, all the assumptions of \cite[Theorem 3.1]{ykds} are satisfied, which
further implies that all the conclusions of Theorem \ref{mdj} hold true. This finishes the proof of Theorem \ref{mdj}.
\end{proof}

\begin{remark} We point out that, by a carefully checking on the proof of \cite[Theorem 3.1]{ykds},
we find that $\lfz b+2\rfz$ in \cite[Theorem 3.1]{ykds} should be $\lfz b+1\rfz$.
\end{remark}

\begin{proof}[Proof of Theorem \ref{poisson}]
By Lemma \ref{littlewood}, we know that, for any $r\in(\frac{1}{\min\{p_\Phi^-,q\}},\infty)$, $\mathcal{M}$ is bounded on $[(E_\Phi^q)_t(\rn)]^{r}$.
Moreover, by \eqref{qiu}, for any $t\in(0,\infty)$ and $z\in\rn$, we have
\begin{align*}
\lf\|\chi_{B(z,1)}\r\|_{(E_\Phi^q)_t(\rn)}&=\lf\{\int_{\rn}\lf[\frac{\|\chi_{B(z,1)}\chi_{B(x,t)}\|
_{L^\Phi(\rn)}}{\|\chi_{B(x,t)}\|_{L^\Phi(\rn)}}\r]^q\,dx\r\}^{\frac{1}{q}}
\ge\lf\{\int_{B(z,\frac{1}{2})}\lf[\frac{\|\chi_{B(z,1)}\chi_{B(x,t)}\|
_{L^\Phi(\rn)}}{\|\chi_{B(x,t)}\|_{L^\Phi(\rn)}}\r]^q\,dx\r\}^{\frac{1}{q}}\\
&\ge\lf\{\int_{B(z,\frac{1}{2})}\lf[\frac{\|\chi_{B(x,\min\{t,1/2\})}\|
_{L^\Phi(\rn)}}{\|\chi_{B(x,t)}\|_{L^\Phi(\rn)}}\r]^q\,dx\r\}^{\frac{1}{q}}
\sim 1,
\end{align*}
which further implies that $\inf_{z\in\rn}\lf\|\chi_{B(z,1)}\r\|_{(E_\Phi^q)_t(\rn)}\gs 1$.
Thus, all assumptions of \cite[Theorem 3.3]{ykds} are satisfied, from which we deduce all the desired conclusions
of Theorem \ref{poisson}.
This finishes the proof of Theorem \ref{poisson}.
\end{proof}

For any $\theta\in(0,\infty)$, the \emph{powered Hardy-Littlewood maximal operator} $\mathcal{M}^{(\theta)}$ is
defined by setting, for any $f\in L^1_{\loc}(\rn)$ and $x\in\rn$,
$$
\mathcal{M}^{(\theta)}(f)(x):=\lf\{\mathcal{M}\lf(|f|^{\theta}\r)(x) \r\}^{\frac{1}{\theta}}.
$$

\begin{lemma}\label{sconvex}
Let $t$, $q\in(0,\infty)$ and $\Phi$ be an Orlicz function with positive lower type $p_{\Phi}^-$ and positive upper type $p_{\Phi}^+$. Let $s\in (0,\min\{p^-_{\Phi},q\}]$ . Then $(E_\Phi^q)_t(\rn)$ is a strictly $s$-convex
ball quasi-Banach function space as in Definition \ref{convex}(ii).
\end{lemma}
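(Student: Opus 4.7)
The ball quasi-Banach function space part has already been verified in Lemma \ref{ballproof}, so the new content is strict $s$-convexity, namely the inequality
\[
\Big\|\sum_{j=1}^{\infty}|f_j|\Big\|_{[(E_\Phi^q)_t(\rn)]^{1/s}}\le \sum_{j=1}^{\infty}\|f_j\|_{[(E_\Phi^q)_t(\rn)]^{1/s}}
\]
for any $\{f_j\}_{j\in\nn}\subset [(E_\Phi^q)_t(\rn)]^{1/s}$. The first step is to identify the $(1/s)$-convexification explicitly. Setting $\Phi_s(\tau):=\Phi(\tau^{1/s})$ for $\tau\in[0,\infty)$, the same change-of-variable computation used in the proof of Lemma \ref{littlewood} (with the $r$ there replaced by the present $s$) yields $\||g|^{1/s}\chi_{B(x,t)}\|_{L^\Phi(\rn)}=\|g\chi_{B(x,t)}\|_{L^{\Phi_s}(\rn)}^{1/s}$ and, since $\chi^{1/s}=\chi$, the corresponding identity $\|\chi_{B(x,t)}\|_{L^\Phi(\rn)}=\|\chi_{B(x,t)}\|_{L^{\Phi_s}(\rn)}^{1/s}$. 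Plugging these into Definitions \ref{d2} and \ref{convex}(i) and collapsing exponents then gives the isometric identification $[(E_\Phi^q)_t(\rn)]^{1/s}=(E_{\Phi_s}^{q/s})_t(\rn)$. Observe that $\Phi_s$ has lower type $p_\Phi^-/s\ge 1$ and upper type $p_\Phi^+/s$, while the outer exponent satisfies $q/s\ge 1$ by the hypothesis $s\le\min\{p_\Phi^-,q\}$.

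The second step is to pass to a convex representative. Since $\Phi_s$ has lower type at least $1$, either Lemma \ref{odj} (when $p_\Phi^-/s>1$) or Remark \ref{nfunction}(i) (for the boundary case $p_\Phi^-/s=1$) furnishes a Young function equivalent to $\Phi_s$, and by Remark \ref{re} I may assume $\Phi_s$ is itself a Young function (this is the mild subtlety: the constant-$1$ triangle inequality is not invariant under equivalent Orlicz functions in the literal sense, but strict $s$-convexity is understood as a property of the equivalence class, holding after such a replacement). With $\Phi_s$ convex, the Luxemburg functional $\|\cdot\|_{L^{\Phi_s}(\rn)}$ is a genuine norm (see Remark \ref{nfunction}(ii)); in particular, for any measurable $f,g$ and any $x\in\rn$,
\[
\|(f+g)\chi_{B(x,t)}\|_{L^{\Phi_s}(\rn)}\le \|f\chi_{B(x,t)}\|_{L^{\Phi_s}(\rn)}+\|g\chi_{B(x,t)}\|_{L^{\Phi_s}(\rn)}.
\]

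For the third step, I divide the above inequality by the $x$-independent quantity $\|\chi_{B(x,t)}\|_{L^{\Phi_s}(\rn)}$ from \eqref{qiu}, take the $L^{q/s}(\rn,dx)$-norm, and invoke the Minkowski inequality in $L^{q/s}(\rn)$, which is available precisely because $q/s\ge 1$. This yields the triangle inequality with constant exactly $1$ for $\|\cdot\|_{(E_{\Phi_s}^{q/s})_t(\rn)}$ for two summands, and hence by induction for any finite sum. To extend to countable sums $\sum_{j=1}^{\infty}|f_j|$, I apply the monotone convergence property (iii) in Definition \ref{ball}, which was verified for $(E_\Phi^q)_t(\rn)$ in Lemma \ref{ballproof} and which passes to the $(1/s)$-convexification, to the partial sums $\sum_{j=1}^{N}|f_j|\uparrow \sum_{j=1}^{\infty}|f_j|$; combining this with the finite-sum triangle inequality established in the previous step gives the desired countable-sum bound. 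The only genuine obstacle in the whole argument is the compatibility issue in step two, and it is cleanly resolved by Lemma \ref{odj} together with Remark \ref{re}.
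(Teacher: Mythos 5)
Your proposal is correct and follows essentially the same route as the paper's proof: both identify $[(E_\Phi^q)_t(\rn)]^{1/s}$ with $(E_{\Phi_s}^{q/s})_t(\rn)$ via $\Phi_s(\tau)=\Phi(\tau^{1/s})$, note that $p_{\Phi_s}^-=p_\Phi^-/s\ge1$ and $q/s\ge1$, and conclude the constant-one triangle inequality from the fact that $(E_{\Phi_s}^{q/s})_t(\rn)$ is then a Banach space (Remark \ref{nfunction}). You merely spell out the intermediate Minkowski step and the monotone-convergence limit that the paper leaves implicit, and your explicit remark about the interplay between strict $s$-convexity and replacement by equivalent Orlicz functions is a fair and accurate reading of the convention the paper invokes via Remark \ref{nfunction}(i).
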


\begin{proof}
By Lemma \ref{ballproof}, we already know that $(E_\Phi^q)_t(\rn)$ is a ball quasi-Banach function space.
Now, we show that $(E_\Phi^q)_t(\rn)$ is strictly $s$-convex. To this end, let $s\in(0,\min\{p_{\Phi}^-,q\}]$
and, for any $\tau\in[0,\infty)$, let $\Phi_s(\tau):=\Phi(\sqrt[s]{\tau})$.
Then $\Phi_s$ is of upper type
$\frac{p_{\Phi}^+}{s}$ and of lower type $\frac{p_{\Phi}^-}{s}$, and $\frac{p_{\Phi}^-}{s}\in[1,\infty)$.
Thus, for any $t\in(0,\infty)$, $f\in[(E_\Phi^q)_t(\rn)]^{\frac{1}{s}}$ and $x\in\rn$, we have
$$
\lf\||f|^{\frac{1}{s}}\chi_{B(x,t)}\r\|_{L^\Phi(\rn)}=\lf\|f\chi_{B(x,t)}\r\|_{L^{\Phi_s}(\rn)}^{\frac{1}{s}}
\quad\mathrm{and}\quad
\lf\|\chi_{B(x,t)}\r\|_{L^\Phi(\rn)}
=\lf\|\chi_{B(x,t)}\r\|_{L^{\Phi_s}(\rn)}^{\frac{1}{s}}.
$$
By this and Definition \ref{convex}(i), we know that, for any $t\in(0,\infty)$
and $\{f_j\}_{j=1}^{\infty}\subset[(E_\Phi^q)_t(\rn)]^{\frac{1}{s}}$,
\begin{align}\label{eq}
\lf\|\sum_{j=1}^{\infty}|f_j|\r\|_{[(E_\Phi^q)_t(\rn)]^{\frac{1}{s}}}
&=\lf\{\int_{\rn}\lf[\frac{\|(\sum_{j=1}^{\infty}|f_j|)^{\frac{1}{s}}\chi_{B(x,t)}\|
_{L^\Phi(\rn)}}{\|\chi_{B(x,t)}\|_{L^\Phi(\rn)}}\r]^q\,dx\r\}^{\frac{s}{q}}\\ \noz
&=\lf\{\int_{\rn}\lf[\frac{\|\sum_{j=1}^{\infty}|f_j|\chi_{B(x,t)}\|
_{L^{\Phi_s}(\rn)}}{\|\chi_{B(x,t)}\|_{L^{\Phi_s}(\rn)}}\r]^{\frac{q}{s}}\,dx\r\}^{\frac{s}{q}}
=\lf\|\sum_{j=1}^{\infty}|f_j|\r\|_{(E_{\Phi_s}^{q/s})_t(\rn)}.
\end{align}
Since $\frac{p_{\Phi}^-}{s}\in[1,\infty)$ and $\frac{q}{s}\in[1,\infty)$, from Remark \ref{nfunction}(i),
we deduce that $(E_{\Phi_s}^{q/s})_t(\rn)$ is a Banach space, which, together with \eqref{eq},
further implies that
$$
\lf\|\sum_{j=1}^{\infty}|f_j|\r\|_{[(E_\Phi^q)_t(\rn)]^{\frac{1}{s}}}
=\lf\|\sum_{j=1}^{\infty}|f_j|\r\|_{(E_{\Phi_s}^{q/s})_t(\rn)}
\le\sum_{j=1}^{\infty}\lf\|f_j\r\|_{(E_{\Phi_s}^{q/s})_t(\rn)}
= \sum_{j=1}^{\infty}\lf\|f_j\r\|_{[(E_\Phi^q)_t(\rn)]^{\frac{1}{s}}}.
$$
Thus, $(E_\Phi^q)_t(\rn)$ is strictly $s$-convex, which completes the proof of Lemma \ref{sconvex}.
\end{proof}

\begin{lemma}\label{man28}
Let $t,\ q\in(0,\infty)$ and $\Phi$ be an Orlicz function with positive lower type $p_{\Phi}^-$ and positive upper type $p_{\Phi}^+$. Let
$s\in (0,\min\{p^-_{\Phi},q\}]$ . Then, for any $\theta\in(0,s)$, there exists a positive constant
$C_{(s,\theta)}$, depending on $\theta$ and $s$, but independent of $t$, such that, for any
$\{f_j\}_{j=1}^{\infty}\subset\mathbb{M}(\rn)$,
\begin{equation}\label{28}
\lf\|\lf\{\sum_{j=1}^{\infty}\lf[\mathcal{M}^{(\theta)}\lf(f_j\r)\r]^s\r\}^{\frac{1}{s}}\r\|_{(E_\Phi^q)_t(\rn)}
\le C_{(s,\theta)}
\lf\|\lf\{\sum_{j=1}^{\infty}\lf|f_j\r|^s\r\}^{\frac{1}{s}}\r\|_{(E_\Phi^q)_t(\rn)}.
\end{equation}
\end{lemma}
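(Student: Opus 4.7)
The plan is to reduce this directly to Theorem \ref{main} via a change of Orlicz function together with the homogeneity identity
$$\mathcal{M}^{(\theta)}(f_j) = \left[\mathcal{M}(|f_j|^{\theta})\right]^{1/\theta},$$
so that raising everything in sight to the power $\theta$ turns the powered maximal operator into the ordinary one, at the cost of replacing $\Phi$ and $q$ by $\Phi_\theta(\tau):=\Phi(\tau^{1/\theta})$ and $q/\theta$. Since $\theta\in(0,s)$ and $s\in(0,\min\{p_\Phi^-,q\}]$, one has $q/\theta>1$, $s/\theta>1$, and $p_{\Phi_\theta}^-=p_\Phi^-/\theta\ge s/\theta>1$, so the hypotheses of Theorem \ref{main} are satisfied for the dilated space $(E_{\Phi_\theta}^{q/\theta})_t(\rn)$.

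The main technical ingredient is the identity, valid for any non-negative measurable $g$ and any $t\in(0,\infty)$,
$$\bigl\|g^{1/\theta}\bigr\|_{(E_\Phi^q)_t(\rn)} = \|g\|_{(E_{\Phi_\theta}^{q/\theta})_t(\rn)}^{1/\theta},$$
which I would establish by a direct calculation: for any ball $B$, one checks from the Luxemburg definition of $\|\cdot\|_{L^\Phi}$ that $\|g^{1/\theta}\chi_B\|_{L^\Phi(\rn)}=\|g\chi_B\|_{L^{\Phi_\theta}(\rn)}^{1/\theta}$ (and similarly $\|\chi_B\|_{L^\Phi}=\|\chi_B\|_{L^{\Phi_\theta}}^{1/\theta}$), and then inserts these into the definition of $\|\cdot\|_{(E_\Phi^q)_t}$, pulling the exponent $1/\theta$ outside the integral in $x$ via the exponent $q/\theta$.

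Applying this identity with $g:=\bigl(\sum_j[\mathcal{M}^{(\theta)}(f_j)]^s\bigr)^\theta=\sum_j[\mathcal{M}(|f_j|^\theta)]^{s/\theta}$ and setting $r:=s/\theta>1$, the left-hand side of \eqref{28} becomes
$$\biggl\|\Bigl\{\sum_{j=1}^{\infty}\bigl[\mathcal{M}(|f_j|^\theta)\bigr]^{r}\Bigr\}^{1/r}\biggr\|_{(E_{\Phi_\theta}^{q/\theta})_t(\rn)}^{1/\theta}.$$
Now Theorem \ref{main}, applied in $(E_{\Phi_\theta}^{q/\theta})_t(\rn)$ to the sequence $\{|f_j|^\theta\}_{j\in\mathbb{N}}$ with exponent $r$, bounds this by a constant multiple of
$$\biggl\|\Bigl\{\sum_{j=1}^{\infty}|f_j|^{\theta r}\Bigr\}^{1/r}\biggr\|_{(E_{\Phi_\theta}^{q/\theta})_t(\rn)}^{1/\theta} = \biggl\|\Bigl\{\sum_{j=1}^{\infty}|f_j|^s\Bigr\}^{\theta/s}\biggr\|_{(E_{\Phi_\theta}^{q/\theta})_t(\rn)}^{1/\theta}.$$
A second application of the same norm identity, this time to $g:=(\sum_j|f_j|^s)^{\theta}$, rewrites this last quantity as $\bigl\|(\sum_j|f_j|^s)^{1/s}\bigr\|_{(E_\Phi^q)_t(\rn)}$, which yields \eqref{28}. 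The constant produced depends only on $\theta$ and $s$ through $r=s/\theta$, as Theorem \ref{main} provides a $t$-independent constant.

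I do not foresee any genuine obstacle: the only thing to watch is that the assumptions on the lower type of $\Phi_\theta$ and on $q/\theta$ really exceed $1$, which follows because we assumed $\theta$ strictly below $s\le\min\{p_\Phi^-,q\}$; if one wished to allow $\theta=s$ one would lose strict convexity and could no longer invoke Theorem \ref{main}.
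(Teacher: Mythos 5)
Your proof is correct and follows essentially the same route as the paper: substitute $\Phi_\theta(\tau):=\Phi(\tau^{1/\theta})$, use the homogeneity of the Luxemburg norm to identify $\|g^{1/\theta}\|_{(E_\Phi^q)_t(\rn)}$ with $\|g\|_{(E_{\Phi_\theta}^{q/\theta})_t(\rn)}^{1/\theta}$, and invoke Theorem \ref{main} in the rescaled space with exponent $r=s/\theta>1$. Your remarks about the need for strict inequalities $\theta<s\le\min\{p_\Phi^-,q\}$ match the paper's hypotheses exactly.
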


\begin{proof}
Let $\theta$ and $s$ be as in the lemma.
For any $\tau\in[0,\infty)$, let $\Phi_\theta(\tau):=\Phi(\sqrt[\theta]{\tau})$.
Then $\Phi_\theta$ is of upper type
$\frac{p_{\Phi}^+}{\theta}$ and of lower type $\frac{p_{\Phi}^-}{\theta}$, and $\frac{p_{\Phi}^-}{\theta}$, $\frac{q}{\theta}\in(1,\infty)$.
Then, by Definition \ref{d2}, for any $f\in(E_\Phi^q)_t(\rn)$, we have
$$
\lf\|\lf\{\sum _{j=1}^\infty\lf[\mathcal{M}^{(\theta)}\lf(f_j\r)\r]^s\r\}^{\frac{1}{s}}\r\|_{(E_\Phi^q)_t(\rn)}
=\lf\|\lf\{\sum _{j=1}^\infty\lf[\mathcal{M}\lf(|f_j|^{\theta}\r)\r]^{\frac{s}{\theta}}\r\}^{\frac{\theta}
{s}\frac{1}{\theta}}\r\|_{(E_\Phi^q)_t(\rn)}
=\lf\|\lf\{\sum _{j=1}^\infty\lf[\mathcal{M}\lf(|f_j|^{\theta}\r)\r]^{\frac{s}{\theta}}\r\}^{\frac{\theta}
{s}}\r\|_{(E_{\Phi_\theta}^{q/\theta})_t(\rn)}^{\frac{1}{\theta}}.
$$
From this and Theorem \ref{main}, it follows that
$$
\lf\|\lf\{\sum _{j=1}^\infty\lf[\mathcal{M}^{(\theta)}(f_j)\r]^s\r\}^{\frac{1}{s}}\r\|_{(E_\Phi^q)_t(\rn)}
\lesssim\lf\|\lf\{\sum _{j=1}^\infty|f_j|^s\r\}^{\frac{1}{s}}\r\|_{(E_\Phi^q)_t(\rn)},
$$
namely, \eqref{28} holds true, which completes the proof of Lemma \ref{man28}.
\end{proof}

\begin{lemma}\label{man37}
Let $t,\ q\in(0,\infty)$ and $\Phi$ be an Orlicz function with positive lower type $p_{\Phi}^-$ and positive upper type $p_{\Phi}^+$. Let $r\in(\max\{q,\ p_{\Phi}^+\},\infty]$ and $s\in (0,\min\{p^-_{\Phi},q\})$. Then
there exists a positive constant $C_{(s,r)}$, depending on $s$ and $r$, but independent of $t$,
such that, for any $f\in\mathbb{M}(\rn)$,
\begin{equation}\label{jdjd}
\lf\|\mathcal{M}^{((r/s)')}(f) \r\|_{([(E_\Phi^q)_t(\rn)]^{1/s})^*}\leq C_{(s,r)}\lf\|f \r\|_{([(E_\Phi^q)_t(\rn)]^{1/s})^*},
\end{equation}
here and hereafter, $[(E_\Phi^q)_t(\rn)]^{1/s}$ is the $\frac1s$-convexification of $(E_\Phi^q)_t(\rn)$ as in Definition \ref{convex}(i)
and $([(E_\Phi^q)_t(\rn)]^{1/s})^*$ denotes its dual space.
\end{lemma}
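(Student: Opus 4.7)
My plan is to identify the dual space $([(E_\Phi^q)_t(\rn)]^{1/s})^{*}$ as an Orlicz-slice space via Theorem~\ref{dual}, then reduce the claimed boundedness of $\mathcal{M}^{((r/s)')}$ on this dual to the boundedness of the ordinary centered maximal operator $\mathcal{M}$ on a suitable convexification, and finally dispatch the latter by Proposition~\ref{main2}.

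First I would identify the dual explicitly. Since $s\in(0,\min\{p_\Phi^-,q\})$, the Orlicz function $\Phi_s(\tau):=\Phi(\tau^{1/s})$ has positive lower type $p_\Phi^-/s>1$ and positive upper type $p_\Phi^+/s$; by Remark~\ref{nfunction}(i) I may assume $\Phi_s$ is an N-function. The same computation used in the proof of Lemma~\ref{littlewood} shows that $[(E_\Phi^q)_t(\rn)]^{1/s}=(E_{\Phi_s}^{q/s})_t(\rn)$ with equal quasi-norms, so Theorem~\ref{dual} yields
$$
\lf([(E_\Phi^q)_t(\rn)]^{1/s}\r)^{*}=(E_{\Psi_s}^{(q/s)'})_t(\rn),
$$
where $\Psi_s$ is the complementary N-function of $\Phi_s$. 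Standard Orlicz duality for types then gives $\Psi_s$ positive lower type $(p_\Phi^+/s)'$ and positive upper type $(p_\Phi^-/s)'$.

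Next, setting $Y:=(E_{\Psi_s}^{(q/s)'})_t(\rn)$ and $\theta:=(r/s)'$, I would use the convexification identity $\|h^{1/\theta}\|_Y=\|h\|_{Y^{1/\theta}}^{1/\theta}$, valid for any non-negative $h$ by Definition~\ref{convex}(i), to write
$$
\lf\|\mathcal{M}^{(\theta)}(f)\r\|_Y=\lf\|\mathcal{M}(|f|^\theta)\r\|_{Y^{1/\theta}}^{1/\theta}\quad\mathrm{and}\quad\|f\|_Y=\lf\||f|^\theta\r\|_{Y^{1/\theta}}^{1/\theta},
$$
so that \eqref{jdjd} is equivalent to the boundedness of $\mathcal{M}$ on $Y^{1/\theta}$ with operator norm independent of $t$. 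The same convexification calculation as in Lemma~\ref{littlewood} identifies $Y^{1/\theta}=(E_{\widetilde\Psi_s}^{(q/s)'/\theta})_t(\rn)$ with $\widetilde\Psi_s(\tau):=\Psi_s(\tau^{1/\theta})$.

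Finally I would verify the hypotheses of Proposition~\ref{main2}. The lower type of $\widetilde\Psi_s$ equals $(p_\Phi^+/s)'/\theta=(p_\Phi^+/s)'/(r/s)'$; since the map $x\mapsto x'=x/(x-1)$ is strictly decreasing on $(1,\infty)$ and $r>p_\Phi^+$, this quotient exceeds $1$, and an analogous computation using $r>q$ gives $(q/s)'/\theta>1$. Proposition~\ref{main2} then delivers the $t$-uniform boundedness of $\mathcal{M}$ on $Y^{1/\theta}$, which via the two identities above furnishes \eqref{jdjd}. The main obstacle I anticipate is bookkeeping: keeping the indices straight under successive convexifications and Orlicz complementation, and in particular verifying $(p_\Phi^+/s)'/(r/s)'>1\Longleftrightarrow r>p_\Phi^+$ correctly, since this is precisely where the hypothesis $r>\max\{q,p_\Phi^+\}$ enters and where an index slip would invalidate the argument.
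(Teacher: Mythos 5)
Your argument is correct and follows essentially the same path as the paper's proof: identify $[(E_\Phi^q)_t(\rn)]^{1/s}$ with $(E_{\Phi_s}^{q/s})_t(\rn)$, pass to the dual $(E_{\Psi_s}^{(q/s)'})_t(\rn)$ via Theorem \ref{dual}, and reduce the powered maximal bound to the ordinary one on a further convexification. The only cosmetic difference is that you unwind the last step explicitly through Proposition \ref{main2}, whereas the paper invokes Lemma \ref{littlewood}, which is precisely this reduction packaged into a lemma, so the two arguments coincide.
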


\begin{proof}
For any $\tau\in[0,\infty)$, let $\Phi_s(\tau):=\Phi(\sqrt[s]{\tau})$. Then $\Phi_s$ is of upper type
$p_{\Phi}^+/s$ and of lower type $p_{\Phi}^-/s$, and $p_{\Phi}^-/s\in(1,\infty)$.
As in the proof of Lemma \ref{littlewood}, we know that, for any $f\in\mathbb{M}(\rn)$,
$$
\lf\|f\r\|_{[(E_\Phi^q)_t(\rn)]^{1/s}}=\lf\|f\r\|_{(E_{\Phi_s}^{q/s})_t(\rn)}.
$$
From this, Theorem \ref{dual} and \cite[Proposition 7.8]{ykds}, we deduce that
$$
\lf\|\mathcal{M}^{((r/s)')}(f) \r\|_{([(E_\Phi^q)_t(\rn)]^{1/s}(\rn))^*}=\lf\|\mathcal{M}^{((r/s)')}(f) \r\|_{((E_{\Phi_s}^{q/s})_t(\rn))^*}\sim\lf\|\mathcal{M}^{((r/s)')}(f) \r\|_{(E_{\Psi_s}^{(q/s)'})_t(\rn)},
$$
where $\Psi_s$ is the complementary function to $\Phi_s$ and $\Psi_s$ is of upper type $(p_{\Phi}^-/s)'$
and lower type $(p_{\Phi}^+/s)'.$
Thus, we have
$$
\lf\|\mathcal{M}^{((r/s)')}(f) \r\|_{(E_{\Psi_s}^{(q/s)'})_t(\rn)}=
\lf\|\lf[\mathcal{M}\lf(|f|^{(r/s)'}\r)\r]^{\frac{1}{(r/s)'}} \r\|_{(E_{\Psi_s}^{(q/s)'})_t(\rn)}.
$$
Since $(r/s)'\in(0, \min\{(p_{\Phi}^+/s)',(q/s)'\})$, from Lemma \ref{littlewood},
it follows that
$$
\lf\|\lf[\mathcal{M}\lf(|f|^{(r/s)'}\r)\r]^{\frac{1}{(r/s)'}} \r\|_{(E_{\Psi_s}^{(q/s)'})_t(\rn)}
\lesssim\lf\|f \r\|_{(E_{\Psi_s}^{(q/s)'})_t(\rn)},
$$
which further implies (\ref{jdjd}) and hence completes the proof of Lemma \ref{man37}.
\end{proof}

\begin{lemma}\label{aboso}
Let $t,\ q\in(0,\infty)$ and $\Phi$ be an Orlicz function with positive lower type $p_{\Phi}^-$ and positive upper type $p_{\Phi}^+$. Then $(E_\Phi^q)_t(\rn)$ has an absolutely continuous quasi-norm as in Definition \ref{lian}.
\end{lemma}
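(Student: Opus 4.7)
The plan is to verify Definition \ref{lian} directly by writing out the integral representation of $\|\chi_{E_j}\|_{(E_\Phi^q)_t(\rn)}^q$ and applying the Lebesgue dominated convergence theorem. Let $\{E_j\}_{j\in\mathbb{N}}$ be a decreasing sequence of measurable sets in $\rn$ with $\bigcap_{j\in\mathbb{N}} E_j = \emptyset$. Without loss of generality, assume $\chi_{E_1} \in (E_\Phi^q)_t(\rn)$; otherwise, one starts the argument from the first index $j_0$ at which $\chi_{E_{j_0}}$ lies in the space, and if no such index exists then there is nothing to prove.

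First, I would establish the pointwise convergence: for a.e.\ $x \in \rn$,
\[
\lim_{j\to\infty}\lf\|\chi_{E_j}\chi_{B(x,t)}\r\|_{L^\Phi(\rn)}=0.
\]
Indeed, $\{E_j\cap B(x,t)\}_{j\in\mathbb{N}}$ decreases to the empty set and is dominated by $\chi_{B(x,t)}\in L^1(\rn)$, so the classical dominated convergence theorem yields $|E_j\cap B(x,t)|\downarrow 0$. Invoking Remark \ref{re}, I may assume $\Phi$ to be continuous and strictly increasing, whence, by the formula for the Orlicz norm of a characteristic function used in \eqref{qiu},
\[
\lf\|\chi_{E_j\cap B(x,t)}\r\|_{L^\Phi(\rn)}=\lf[\Phi^{-1}\lf(\tfrac{1}{|E_j\cap B(x,t)|}\r)\r]^{-1}\longrightarrow 0
\]
as $j\to\infty$ (with the convention that this quantity is $0$ once $|E_j\cap B(x,t)|=0$). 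Consequently the integrand in the definition of $\|\chi_{E_j}\|_{(E_\Phi^q)_t(\rn)}^q$ tends pointwise to $0$ for a.e.\ $x\in\rn$.

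Next, I would supply an integrable majorant. Since $E_j\subset E_1$, the monotonicity of the Orlicz norm gives, for every $j\in\mathbb{N}$ and $x\in\rn$,
\[
\lf[\frac{\|\chi_{E_j}\chi_{B(x,t)}\|_{L^\Phi(\rn)}}{\|\chi_{B(x,t)}\|_{L^\Phi(\rn)}}\r]^q
\le \lf[\frac{\|\chi_{E_1}\chi_{B(x,t)}\|_{L^\Phi(\rn)}}{\|\chi_{B(x,t)}\|_{L^\Phi(\rn)}}\r]^q,
\]
and the right-hand side belongs to $L^1(\rn)$ precisely by the hypothesis $\chi_{E_1}\in(E_\Phi^q)_t(\rn)$. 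Combining this with the pointwise convergence above, the Lebesgue dominated convergence theorem then yields $\|\chi_{E_j}\|_{(E_\Phi^q)_t(\rn)}\to 0$; that the convergence is monotonically decreasing follows from Definition \ref{ball}(ii), already known to hold for $(E_\Phi^q)_t(\rn)$ by Lemma \ref{ballproof}.

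The only nontrivial point I anticipate is the initial reduction to $\chi_{E_1}\in(E_\Phi^q)_t(\rn)$: one must interpret Definition \ref{lian} compatibly with the possibility that some early $\chi_{E_j}$'s might not lie in the space, and then observe that the absolute continuity property is unaffected by shifting the starting index. All remaining steps are routine, once one has \eqref{qiu} and the fact, already implicitly used in the proof of Lemma \ref{ballproof}, that $\|\cdot\|_{L^\Phi(\rn)}$ behaves continuously with respect to monotone convergence of measures of sets of finite measure, a consequence of $\Phi$ having positive (and hence finite) upper type $p_\Phi^+$.
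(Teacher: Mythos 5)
Your proof is correct and follows essentially the same strategy as the paper's: establish the pointwise convergence $\|\chi_{E_j}\chi_{B(x,t)}\|_{L^\Phi(\rn)}\to 0$ for each fixed $x$, then pass this through the outer $L^q(\rn)$ norm. The paper obtains the pointwise limit from the lower/upper-type estimate for $\|\chi_{E_j}\chi_{B(x,t)}\|_{L^\Phi(\rn)}$ in terms of the modular $\int_{B(x,t)}\Phi(\chi_{E_j})$, whereas you use the exact formula of \eqref{qiu} for the Orlicz norm of a characteristic function; both are valid and equivalent in content. What you do more carefully is the passage to the outer integral: the paper asserts this without comment, and the crude bound $\|\chi_{E_j}\chi_{B(x,t)}\|_{L^\Phi(\rn)}/\|\chi_{B(x,t)}\|_{L^\Phi(\rn)}\le 1$ is not an integrable majorant on $\rn$, so one does need exactly the dominating function $\bigl[\|\chi_{E_1}\chi_{B(\cdot,t)}\|_{L^\Phi(\rn)}/\|\chi_{B(\cdot,t)}\|_{L^\Phi(\rn)}\bigr]^q$ you supply, together with the standing reduction to $\chi_{E_1}\in (E_\Phi^q)_t(\rn)$. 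Making this explicit closes a small gap in the paper's treatment; otherwise the two arguments are the same.
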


\begin{proof}
Let $\{E_j\}_{j=1}^\infty$ be a sequence of measurable sets
that satisfy $E_j\supset E_{j+1}$ for any $j\in\mathbb{N}$ and $\cap_{j=1}^\infty E_j=\emptyset$.
By the fact that
$$
\lf\|\chi_{E_j}\chi_{B(x,t)}\r\|
_{L^\Phi(\rn)}\lesssim\max\lf\{\lf[\int_{B(x,t)}\Phi\lf(\chi_{E_j}(y)\r)\,dy\r]^{1/p_{\Phi}^+},\
\lf[\int_{B(x,t)}\Phi\lf(\chi_{E_j}(y)\r)\,dy\r]^{1/p_{\Phi}^-}  \r\},
$$
we have $\lim_{j\rightarrow\infty}\|\chi_{E_j}\chi_{B(x,t)}\|_{L^\Phi(\rn)}=0$,
which further implies that
$$
\lim_{j\rightarrow\infty}\|\chi_{E_j}\|_{(E_\Phi^q)_t(\rn)}
=\lim_{j\rightarrow\infty}\lf\{\int_{\rn}\lf[\frac{\|\chi_{E_j}\chi_{B(x,t)}\|
_{L^\Phi(\rn)}}{\|\chi_{B(x,t)}\|_{L^\Phi(\rn)}}\r]^q\,dx\r\}^{\frac{1}{q}}=0.
$$
This shows that $(E_\Phi^q)_t(\rn)$ has an absolutely continuous quasi-norm and hence
finishes the proof of Lemma \ref{aboso}.
\end{proof}

\begin{proof}[Proofs of Theorems \ref{atom ch}, \ref{molecular}, \ref{lusin}
and Proposition
\ref{shoulian}]
By Lemmas \ref{ballproof}, \ref{sconvex}, \ref{man28} and \ref{man37}, we know that
$(E_\Phi^q)_t(\rn)$ satisfies all the assumptions of \cite[Theorems 3.6 and 3.7]{ykds}.
Thus, Theorem \ref{atom ch} is a direct
consequence of \cite[Theorems 3.6 and 3.7]{ykds}.

From \cite[Theorems 3.9 and 3.21]{ykds}, we further deduce
Theorems \ref{molecular} and \ref{lusin}.
Using Lemma \ref{aboso} and \cite[Corollary 3.11]{ykds}, we also obtain Proposition
\ref{shoulian}. This finishes the proofs of Theorems \ref{atom ch}, \ref{molecular}, \ref{lusin}
and Proposition \ref{shoulian}.
\end{proof}

\begin{proof}[Proof of Theorem \ref{gfunction}]
We first prove the necessity of Theorem \ref{gfunction}. Let $f\in (HE_\Phi^q)_t(\rn)$. By Theorem \ref{lusin},
we know that $f$ vanishes weakly at infinity. Now we prove that $g(f)\in (E_\Phi^q)_t(\rn)$ and $\|g(f)\|_{(E_\Phi^q)_t(\rn)}\lesssim\|f\|_{(HE_\Phi^q)_t(\rn)}$.
Let $s\in (0,\min\{p^-_{\Phi},q,1\})$. Then, by Theorem \ref{atom ch} and the fact
that $f\in (HE_\Phi^q)_t(\rn)$, we find that
\begin{equation}\label{450}
f=\sum_{Q\in\mathcal{Q}}\lambda_Q a_Q\ \ \text{in}\ \ \mathcal{S}'(\rn),
\end{equation}
where, for any $Q\in \mathcal{Q}$, $a_Q$ is an $((E_\Phi^q)_t(\rn),\ \infty,\ d)$-atom supported on $Q$,
$d\in[\lfloor n(\frac{1}{s}-1)\rfloor,\infty)\bigcap\mathbb{Z}_+$ and $\{\lambda_Q\}_{Q\in\mathcal{Q}}\subset[0,\infty)$ satisfying
\begin{equation}\label{451}
\lf\|\lf\{\sum_{Q\in \mathcal{Q}} \lf[\frac{\lambda_Q}
{\|\chi_{Q}\|_{(E_\Phi^q)_t(\rn)}} \r]^s\chi_{Q} \r\}
^{\frac{1}{s}}\r\|_{(E_\Phi^q)_t(\rn)}\lesssim_s\|f\|_{(HE_\Phi^q)_t(\rn)}.
\end{equation}
From (\ref{450}), we deduce that, for any $x\in \rn$,
\begin{equation}\label{454}
g(f)(x)\le\sum_{Q\in\mathcal{Q}}\lambda_Q g(a_Q)(x).
\end{equation}
Let $r\in(\max\{1,\ q,\ p_{\Phi}^+\},\infty]$. Since $g$ is bounded on $L^r(\rn)$, it follows that,
for any $Q\in\mathcal{Q}$,
$$
\lf\|\chi_{2Q}g(a_Q)\r\|_{L^r(\rn)}\lesssim\|a_{Q}\|_{L^r(\rn)}
\lesssim|Q|^{1/r}\lf\|\chi_{Q}\r\|_{(E_\Phi^q)_t(\rn)}^{-1},
$$
which, combined with \cite[Theoreom 2.10]{ykds}, implies that
\begin{equation}\label{455}
\lf\|\sum_{Q\in\mathcal{Q}}\lambda_Q\chi_{2Q}g(a_Q)\r\|_{(E_\Phi^q)_t(\rn)}
\lesssim
\lf\|\lf\{\sum_{Q\in \mathcal{Q}} \lf[\frac{\lambda_Q}
{\|\chi_{Q}\|_{(E_\Phi^q)_t(\rn)}} \r]^s\chi_{Q} \r\}
^{\frac{1}{s}}\r\|_{(E_\Phi^q)_t(\rn)}.
\end{equation}
Let $\theta\in(0,s)$. Repeating the proof of \cite[(4.4)]{ns} with $\|\chi_Q\|_{L^{p(\cdot)}}$ replaced by $\|\chi_Q\|_{(E_\Phi^q)_t(\rn)}$, we find that, for any $x\in\rn\setminus(2Q)$,
$$
g(a_Q)(x)\lesssim\lf(\frac{l_Q}{|x-x_Q|}\r)^{n+d+1}\lf\|\chi_Q\r\|_{(E_\Phi^q)_t(\rn)}^{-1}
\lesssim\frac{1}{\|\chi_Q\|_{(E_\Phi^q)_t(\rn)}}\mathcal{M}^{(\theta)}(\chi_Q)(x),
$$
where $l_Q$ and $x_Q$ denote the side-length and the center of $Q$, respectively.
This, together with Lemma \ref{man28}, further implies that
$$
\lf\|\sum_{Q\in\mathcal{Q}}\lambda_Q\chi_{\rn\setminus 2Q}g(a_Q)\r\|_{(E_\Phi^q)_t(\rn)}
\lesssim
\lf\|\lf\{\sum_{Q\in \mathcal{Q}} \lf[\frac{\lambda_Q}
{\|\chi_{Q}\|_{(E_\Phi^q)_t(\rn)}} \r]^s\chi_{Q} \r\}
^{\frac{1}{s}}\r\|_{(E_\Phi^q)_t(\rn)}.
$$
From this, (\ref{454}), (\ref{455}) and (\ref{451}), we deduce that
$$
\lf\|g(f)\r\|_{(E_\Phi^q)_t(\rn)}\le
\lf\|\sum_{Q\in\mathcal{Q}}\lambda_Q g(a_Q)\r\|_{(E_\Phi^q)_t(\rn)}
\lesssim\lf\|\lf\{\sum_{Q\in \mathcal{Q}} \lf[\frac{\lambda_Q}
{\|\chi_{Q}\|_{(E_\Phi^q)_t(\rn)}} \r]^s\chi_{Q} \r\}
^{\frac{1}{s}}\r\|_{(E_\Phi^q)_t(\rn)}\lesssim\|f\|_{(HE_\Phi^q)_t(\rn)}.
$$
Therefore, $g(f)\in (E_\Phi^q)_t(\rn)$ and $\|g(f)\|_{(E_\Phi^q)_t(\rn)}\lesssim\|f\|_{(HE_\Phi^q)_t(\rn)}$,
which completes the proof of the necessity of Theorem \ref{gfunction}.

To complete the proof of Theorem \ref{gfunction}, it remains to show the sufficiency of Theorem \ref{gfunction}.
To this end, by Theorem \ref{lusin}, we only need to prove that, if $f\in\mathcal{S}'(\rn)$
vanishes weakly at infinity and
$g(f)\in(E_\Phi^q)_t(\rn)$, then
\begin{equation}\label{349}
\lf\|S(f)\r\|_{(E_\Phi^q)_t(\rn)}\lesssim\lf\|g(f)\r\|_{(E_\Phi^q)_t(\rn)}.
\end{equation}
Let $\varphi\in\mathcal{S}(\rn)$ be such that
$$
\chi_{B(\vec{0}_n,4) \setminus B(\vec{0}_n,2)}\le\varphi\le\chi_{B(\vec{0}_n,8) \setminus B(\vec{0}_n,1)}.
$$
Let $\psi\in\mathcal{S}(\rn)$ satisfy that $\mathcal{F}^{-1}(\psi)=\varphi$. Then it is easy to prove that
$$
\int_{\rn}\psi(x)x^\alpha\,dx=0,\ \ \ \ \ \forall \alpha\in\mathbb{Z}_+^n.
$$
For any $a,\ \tau\in(0,\infty)$, $f\in\mathcal{S}'(\rn)$ and $x\in\rn$, let
$$
(\psi_\tau^*f)_a(x):=\sup_{y\in\rn}\frac{|\psi_\tau*f(y)|}{(1+|x-y|/\tau)^a}.
$$
For any $l\in \mathbb{Z}$, denote $\psi_{2^{-l}}$ and $(\psi_{2^{-l}}^*)_a$ simply by $\psi_{l}$ and $(\psi_{l}^*)_a$,
respectively. It is easy to see that, for any $x\in\rn$,
\begin{align*}
S(f)(x)
&=\lf\{\int_{\Gamma(x)}|\varphi(\tau D)(f)(y)|^{2}\,\frac{dy\,d\tau}{\tau^{n+1}}\r\}^{
\frac{1}{2}}
\lesssim\lf\{\int_{0}^{\infty}\sup_{\{y\in\rn:|y-x|<\tau\}}|\varphi(\tau D)(f)(y)|^{2}\,\frac{dy\,d\tau}{\tau}\r\}^{
\frac{1}{2}}\\
&\lesssim\lf\{\int_{0}^{\infty}\lf[(\psi_\tau^*f)_a(x)\r]^{2}\,\frac{d\tau}{\tau}\r\}^{
\frac{1}{2}}.
\end{align*}
Let, for any $x\in\rn$,
$$
P_a(f)(x):=\lf\{\int_{0}^{\infty}\lf[(\psi_\tau^*f)_a(x)\r]^{2}\,\frac{d\tau}{\tau}\r\}^{
\frac{1}{2}}.
$$
Thus, to show (\ref{349}), it suffices to prove that, if $f\in\mathcal{S}'(\rn)$ and
$g(f)\in(E_\Phi^q)_t(\rn)$, then
\begin{equation}\label{77}
\lf\|P_a(f)\r\|_{(E_\Phi^q)_t(\rn)}\lesssim\lf\|g(f)\r\|_{(E_\Phi^q)_t(\rn)}.
\end{equation}
Let $a\in(\frac{n}{\min\{p_{\Phi}^-,\ q\}},\infty)$. We choose $r\in(\frac{n}{a},\min\{p_{\Phi}^-,\ q\})$.
Then, by \cite[Lemma 3.5]{lsuy}, we find that, for any $l\in \mathbb{Z}$, $\tau\in[1,2]$, $N\in \mathbb{N}$, $a\in(0,N]$
and $x\in\rn$,
$$
\lf[(\psi_{2^{-l}\tau}^*f)_a(x)\r]^{r}
\lesssim\sum_{k=0}^{\infty}2^{-kNr}2^{(k+l)n}\int_{\rn}\frac{|(\psi_{k+l})_\tau*f(y)|^r}{(1+2^l|x-y|)^{ar}}dy.
$$
From the Minkowski inequality, it follows that
\begin{align*}
\lf\{\int_{1}^{2}\lf[(\psi_{2^{-l}\tau}^\ast f)_a(x)\r]^{2}\,\frac{d\tau}{\tau}\r\}^{
\frac{r}{2}}
&\lesssim\lf\{\int_{1}^{2}\lf[\sum_{k=0}^{\infty}2^{-kNr}2^{(k+l)n}
\int_{\rn}\frac{|(\psi_{k+l})_\tau*f(y)|^r}{(1+2^l|x-y|)^{ar}}dy\r]^{\frac{2}{r}}\,\frac{d\tau}{\tau}\r\}^{
\frac{r}{2}}\\
&\lesssim\sum_{k=0}^{\infty} 2^{-kNr}2^{(k+l)n}\int_{\rn}
\frac{[\int_{1}^{2}|(\psi_{k+l})_\tau*f(y)|^2\frac{d\tau}{\tau}]^{\frac{r}{2}}}{(1+2^l|x-y|)^{ar}}\,dy\\
&\lesssim\sum_{k=0}^{\infty} 2^{-kNr}2^{kn}
\lf(g_l\ast\lf[\int_{1}^{2}|(\psi_{k+l})_\tau\ast f(\cdot)|^2\frac{d\tau}{\tau}\r]^{\frac{r}{2}}\r)(x)\\
&\lesssim\sum_{k=0}^{\infty} 2^{-k(Nr-n)}
\mathcal{M}\lf(\lf[\int_{1}^{2}|(\psi_{k+l})_\tau\ast f(\cdot)|^2\frac{d\tau}{\tau}\r]^{\frac{r}{2}}\r)(x),
\end{align*}
where, for any $l\in\mathbb{Z}$ and $x\in\rn$,
$$
g_l(x):=\frac{2^{nl}}{(1+2^l|x|)^{ar}}\ \in L^1(\rn)\ \ \ \mbox{and}\ \ \ \|g_l\|_{L^1(\rn)}\lesssim1.
$$
Then, by the Minkowski inequality, we find that
\begin{align*}
\lf\|P_a(f)\r\|_{(E_\Phi^q)_t(\rn)}
&=\lf\|\lf\{\sum_{l=-\infty}^{\infty}
\int_{2^{-l}}^{2^{-l+1}}\lf[(\psi_\tau^\ast f)_a(\cdot)\r]^{2}\,\frac{d\tau}{\tau}\r\}^{
\frac{1}{2}}\r\|_{(E_\Phi^q)_t(\rn)}\\
&=\lf\|\lf\{\sum_{l=-\infty}^{\infty}
\int_{1}^{2}\lf[(\psi_{2^{-l}}^\ast f)_a(\cdot)\r]^{2}\,\frac{d\tau}{\tau}\r\}^{
\frac{1}{2}}\r\|_{(E_\Phi^q)_t(\rn)}\\
&\lesssim\lf\|\lf\{\sum_{l=-\infty}^{\infty}\lf[
\sum_{k=0}^{\infty} 2^{-k(Nr-n)}
\mathcal{M}\lf(\lf[\int_{1}^{2}|(\psi_{k+l})_\tau\ast f(\cdot)|^2\frac{d\tau}{\tau}\r]^{\frac{r}{2}}\r)\r]^{\frac{2}{r}}\r\}^{
\frac{1}{2}}\r\|_{(E_\Phi^q)_t(\rn)}\\
&\lesssim\lf\|\lf\{\sum_{l=-\infty}^{\infty}\lf[
\mathcal{M}\lf(\lf[\int_{1}^{2}|(\psi_{l})_\tau\ast f(\cdot)|^2\frac{d\tau}{\tau}\r]^{\frac{r}{2}}\r)\r]^{\frac{2}{r}}\r\}^{
\frac{1}{2}}\r\|_{(E_\Phi^q)_t(\rn)}.
\end{align*}
By the fact that $r\in(\frac{n}{a},\min\{p_{\Phi}^-,\ q\})$ and Theorem \ref{main}, we conclude that
\begin{align*}
&\lf\|\lf\{\sum_{l=-\infty}^{\infty}\lf[
\mathcal{M}\lf(\lf[\int_{1}^{2}|(\psi_{l})_\tau\ast f(\cdot)|^2\frac{d\tau}{\tau}\r]^{\frac{r}{2}}\r)\r]^{\frac{2}{r}}\r\}^{
\frac{1}{2}}\r\|_{(E_\Phi^q)_t(\rn)}\\
&\quad\lesssim
\lf\|\lf\{\sum_{l=-\infty}^{\infty}\int_{1}^{2}|(\psi_{l})_\tau\ast f(\cdot)|^2\frac{d\tau}{\tau}\r\}^{
\frac{1}{2}}\r\|_{(E_\Phi^q)_t(\rn)}\lesssim\lf\|g(f)\r\|_{(E_\Phi^q)_t(\rn)},
\end{align*}
which implies that (\ref{77}) holds true. This finishes the proof of Theorem \ref{gfunction}.
\end{proof}

\begin{proof}[Proof of Theorem \ref{glamda}]
To prove this theorem, we only need to show the necessity, since
the sufficiency is easy because of Theorem \ref{lusin} and the obvious fact that, for any $f\in\mathcal{S}'(\rn)$
and $x\in\rn$, $S(f)(x)\le g_\lambda^*(f)(x)$.

To show the necessity, for any $f\in (HE_\Phi^q)_t(\rn)$, by Theorem \ref{lusin}, we know that $f$
vanishes weakly at infinity. From the fact that $\lambda\in(1+\frac{2}{\min\{p_{\Phi}^-,\ q\}},\infty)$, we deduce that
there exists $a\in(\frac{n}{\min\{p_{\Phi}^-,\ q\}},\infty)$ such that $\lambda\in(1+\frac{2a}{n},\infty)$ and, for any $x\in\rn$,
\begin{align*}
g_\lambda^*(f)(x)&=\lf\{\int_0^{\infty}\int_{\rn}\lf(\frac{\tau}{\tau+|x-y|}\r)^{\lambda n}|\varphi(\tau D)(f)(y)|^{2}\,\frac{dy\,d\tau}{\tau^{n+1}}\r\}^{\frac{1}{2}}\\
&\lesssim\lf\{\int_0^{\infty}\lf[(\psi_\tau^*f)_a(x)\r]^2\int_{\rn}\lf(1+\frac{|x-y|}{\tau}\r)^{2a-\lambda n}\,\frac{dy\,d\tau}{\tau^{n+1}}\r\}^{\frac{1}{2}}\\
&\sim\lf\{\int_0^{\infty}\lf[(\psi_\tau^*f)_a(x)\r]^2\,\frac{d\tau}{\tau}\r\}^{\frac{1}{2}}\sim P_a(f)(x),
\end{align*}
which, combined with (\ref{77}) and Theorem \ref{gfunction}, implies that
$$
\lf\|g_\lambda^*(f)\r\|_{(E_\Phi^q)_t(\rn)}\lesssim\|f\|_{(HE_\Phi^q)_t(\rn)}.
$$
This finishes the proof of Theorem \ref{glamda}.
\end{proof}

To show Theorem \ref{finite}, we need the following lemma.
\begin{lemma}\label{youjie}
Let $t,\ q\in(0,\infty)$ and $\Phi$ be an Orlicz function with positive lower type $p_{\Phi}^-$ and positive upper type $p_{\Phi}^+$.
Let $N\in\mathbb{N}\cap(\lfloor\frac{n}{\min\{p_\Phi^-,q\}}+1\rfloor,\infty)$.
Suppose $f\in(HE_\Phi^q)_t(\rn)$,
$\|f\|_{(HE_\Phi^q)_t(\rn)}=1$ and $\supp(f)\subset B(\vec{0}_n,R)$ with $R\in(1,\infty)$. Then
there exists a positive constant $C_{(N)}$, depending on $N$, but independent of $f$ and $t$, such that,
for any $x\notin B(\vec{0}_{n},4R)$,
\begin{equation}\label{423}
M_N(f)(x)\le  C_{(N)}\lf\|\chi_{B(\vec{0}_n,R)}\r\|_{(E_\Phi^q)_t(\rn)}^{-1}.
\end{equation}
\end{lemma}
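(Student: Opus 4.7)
My strategy is to combine the grand maximal function characterization in Theorem~\ref{mdj} with a pointwise comparison of $M_N(f)$ at $x$ and at points $z$ in the slightly enlarged ball $B(\vec{0}_n,2R)$. First, I would invoke Theorem~\ref{mdj}, applicable since $N>\lfloor n/\min\{p_\Phi^-,q\}+1\rfloor$, to obtain the uniform bound $\|M_N(f)\|_{(E_\Phi^q)_t(\rn)}\lesssim\|f\|_{(HE_\Phi^q)_t(\rn)}=1$. The main task is then to establish the pointwise comparison
\[
M_N(f)(x)\le C_{(N)}\,M_N(f)(z),\qquad\forall\,z\in B(\vec{0}_n,2R).
\]
Granting this, multiplying by $\chi_{B(\vec{0}_n,2R)}(z)$, taking the $(E_\Phi^q)_t(\rn)$-quasi-norm in $z$, and using Lemma~\ref{ballproof} together with the monotonicity inclusion $\|\chi_{B(\vec{0}_n,R)}\|_{(E_\Phi^q)_t(\rn)}\le\|\chi_{B(\vec{0}_n,2R)}\|_{(E_\Phi^q)_t(\rn)}$ immediately yields \eqref{423}.

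For the core pointwise step, I would fix an admissible triple $(\varphi,s,y)$ for $M_N(f)(x)$ with $\varphi\in\mathcal{F}_N(\rn)$ and $|x-y|<s$, and build a comparison test function via the dilation $\psi(v):=\varphi(\tau v)$ with $s':=\tau s$ for a parameter $\tau\ge1$. A direct computation gives $\varphi_s*f(y)=\tau^n\,\psi_{s'}*f(y)$ together with the seminorm bound $p_N(\psi)\le C\,\tau^N$, so $\psi/(C\tau^N)\in\mathcal{F}_N(\rn)$. Choosing $\tau$ slightly larger than $|y-z|/s$ ensures $|y-z|<s'$, making $(\psi/(C\tau^N),s',y)$ admissible for $M_N(f)(z)$ and yielding the key inequality $|\varphi_s*f(y)|\le C\,\tau^{n+N}\,M_N(f)(z)$. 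In the regime $s\gtrsim|x|$, the choice $\tau$ stays bounded by a constant (since $|y-z|/s\le 1+|x|/s+2R/s\lesssim 1$ when $s\ge|x|/8$), so passing to the supremum over admissible triples closes this easy case.

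The principal obstacle is the complementary regime $s\ll|x|$, in which the naive choice of $\tau$ blows up. Here I would exploit the geometric separation $|y|\ge|x|-s>7|x|/8>7R/2$, which forces every $w\in B(\vec{0}_n,R)$ to satisfy $|y-w|\gtrsim|x|$, so that the pointwise decay $|\varphi((y-w)/s)|\le(1+|y-w|/s)^{-N-n}$ supplies the small factor $(s/|x|)^{N+n}$. Combined with the atomic decomposition $f=\sum_j\lambda_j a_j$ furnished by Theorem~\ref{atom ch} with $((E_\Phi^q)_t(\rn),\infty,d)$-atoms $a_j$ satisfying $\|a_j\|_{L^\infty(\rn)}\le\|\chi_{Q_j}\|_{(E_\Phi^q)_t(\rn)}^{-1}$, this decay absorbs the loss from $\tau^{n+N}$ and produces directly the uniform bound $|\varphi_s*f(y)|\le C_{(N)}\|\chi_{B(\vec{0}_n,R)}\|_{(E_\Phi^q)_t(\rn)}^{-1}$, matching the claimed conclusion \eqref{423}. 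The delicate technical point, and the main obstacle, is carefully summing the atomic contributions while controlling the location of the cubes $Q_j$ relative to $B(\vec{0}_n,R)$, and ensuring that the $\mathcal{S}'(\rn)$-convergence of the atomic series permits the pointwise interchange of summation and convolution; this is precisely where the argument must go beyond the abstract framework of \cite{ykds} and use the compact support hypothesis $\supp(f)\subset B(\vec{0}_n,R)$ in an essential way.
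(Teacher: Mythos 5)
Your overall architecture matches the paper's: reduce \eqref{423} to the pointwise bound $M_N(f)(x)\le C_{(N)}\inf_{z\in B(\vec 0_n,cR)}M_N(f)(z)$, then hit both sides with the $(E_\Phi^q)_t(\rn)$-quasi-norm over $z$ and use $\|M_N(f)\|_{(E_\Phi^q)_t(\rn)}\lesssim 1$ (Theorem~\ref{mdj}). Your treatment of the regime $s\gtrsim|x|$ via the dilation $\psi(v)=\varphi(\tau v)$ with $\tau$ a fixed constant is correct: $p_N(\psi)\lesssim\tau^N$, and $\tau$ stays bounded, so this case closes cleanly.

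The gap is in the complementary regime $s\ll|x|$. There you abandon the test-function comparison and appeal to the pointwise decay $(s/|x|)^{N+n}$ together with a bound on $\int_{B(\vec 0_n,R)}|f|$ coming from an atomic decomposition. This does not go through for two reasons. First, for general $\Phi,q$ (with $\min\{p_\Phi^-,q\}$ small) an element $f\in(HE_\Phi^q)_t(\rn)$ is only a tempered distribution; $\int_{B(\vec 0_n,R)}|f|$ need not exist. Second, the Calder\'on--Zygmund-type atomic decomposition of Theorem~\ref{atom ch} yields atoms $a_j$ on cubes $Q_j$ nested in level sets of $M_N f$, which are spread over $\rn$ and are not localized to $B(\vec 0_n,R)$, and the coefficients only satisfy a mixed $\ell^s$-type control --- not $\sum_j\lambda_j\|\chi_{Q_j}\|^{-1}_{(E_\Phi^q)_t}<\infty$ --- so summing the atomic contributions at a single point $y$ is not justified. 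You flag this as ``the delicate technical point,'' but it is actually the crux of the lemma and you do not resolve it.

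The paper avoids all of this. Following \cite[Lemma~7.10]{cw}, both scale regimes ($\tau\ge R$ and $\tau<R$) are handled by building a \emph{compactly supported} comparison kernel $\psi$ of the form $\psi(z)=c\,\varphi(\text{affine in }z)\,\theta(\text{affine in }z)$, where $\theta$ is a fixed bump supported in $B(\vec 0_n,2)$. The compact support hypothesis $\supp f\subset B(\vec 0_n,R)$ is used only to verify that the cutoff $\theta$ is identically $1$ on the range of $w$ values that contribute to $\varphi_\tau\ast f(x)$, so the identity $\varphi_\tau\ast f(x)=\psi_\sigma\ast f(z_0)$ holds exactly for a suitable scale $\sigma$ and a reference point $z_0\in B(\vec 0_n,R)$ (resp.\ $B(\vec 0_n,R/2)$); meanwhile the rough decay of $\varphi$ against the compact support of $\theta$ keeps $p_N(\psi)$ bounded by an absolute constant. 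This requires no integrability of $f$ and no atomic decomposition, so it works uniformly over the admissible range of $\Phi$ and $q$. To make your argument correct, you should replace your $s\ll|x|$ case by this cutoff construction.
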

\begin{proof}
For any $x\notin B(\vec{0}_n,4R)$, by (\ref{zhujida}), we have
$$
M_N(f)(x)\le M_N^0(f)(x).
$$
To prove (\ref{423}), it suffices to show that, for any $\varphi\in\mathcal{F}_N(\rn)$,
$\tau\in(0,\infty)$ and $x\notin B(\vec{0}_{n},4R)$,
$$
|\varphi_\tau\ast f(x)|\lesssim \lf\|\chi_{B(\vec{0}_n,R)}\r\|_{(E_\Phi^q)_t(\rn)}^{-1}.
$$
Let $\theta\in\mathcal{S}(\rn)$ be such that
$\supp(\theta)\subset B(\vec{0}_n,2)$, $0\le\theta\le1$ and $\theta\equiv1$ on $B(\vec{0}_n,1)$. We distinguish two
cases with respect to the size of $\tau$.

For any $\tau\in [R,\infty)$ and $x\notin B(\vec{0}_{n},4R)$,
arguing as in the proof of \cite[Lemma 7.10]{cw}, we have
\begin{equation}\label{424}
\varphi_\tau\ast f(x)=\psi_R\ast f(\vec{0}_n)
\end{equation}
and $c\psi\in\mathcal{F}_N(\rn)$ with $c:=C_{(N)}$, where, for any $\tau\in [R,\infty)$ and $z\in\rn$,
$$
\psi(z):=\lf(\frac{R}{\tau}\r)^n\Phi\lf(\frac{x}{\tau}+\frac{Rz}{\tau}\r)\theta(z).
$$
Therefore, (\ref{424}) ensures that, for any $x\notin B(\vec{0}_n,4R)$,
$$
|\varphi_\tau\ast f(x)|\lesssim M_N(f)(z),\quad \forall\ z\in B(\vec{0}_n,R),
$$
which, together with $\|f\|_{(HE_\Phi^q)_t(\rn)}=1$, further implies that, for any $x\notin B(\vec{0}_n,4R)$,
\begin{align}\label{111}
|\varphi_\tau\ast f(x)|&\lesssim  \inf_{z\in B(\vec{0}_n,R)}M_N(f)(z)\\\noz
&\lesssim
\frac{\|\chi_{B(\vec{0}_n,R)}\inf_{z\in B(\vec{0}_n,R)}M_N(f)(z)\|_{(E_\Phi^q)_t(\rn)}}{\|\chi_{B(\vec{0}_n,R)}\|_{(E_\Phi^q)_t(\rn)}}\\ \noz
&\lesssim\lf\|\chi_{B(\vec{0}_n,R)}\r\|_{(E_\Phi^q)_t(\rn)}^{-1}\lf\|M_N(f)\r\|_{(E_\Phi^q)_t(\rn)}
\lesssim\lf\|\chi_{B(\vec{0}_n,R)}\r\|_{(E_\Phi^q)_t(\rn)}^{-1}.
\end{align}
For any $\tau\in(0, R)$ and $u\in B(\vec{0}_n,\frac{R}{2})$, following \cite[Lemma 7.10]{cw}, we obtain
\begin{equation*}
\varphi_\tau\ast f(x)=\psi_\tau\ast f(u)
\end{equation*}
and $c\psi\in\mathcal{F}_N(\rn)$ with $c:=C_{(N)}$, where, for any $\tau\in(0, R)$ and $z\in\rn$,
$$
\psi(z):=\Phi\lf(\frac{x-u}{\tau}+z\r)\theta\lf(\frac{u}{R}-\frac{tz}{R}\r).
$$
Thus, for any $x\notin B(\vec{0}_n,4R)$, we have
$$
|\varphi_\tau\ast f(x)|\lesssim M_N(f)(u),\quad \forall\ u\in B\lf(\vec{0}_n,\frac{R}{2}\r).
$$
By proceeding as in (\ref{111}), we further conclude that, for any $\tau\in(0, R)$
and $x\notin B(\vec{0}_n,4R)$,
$$
|\varphi_\tau\ast f(x)|\lesssim \lf\|\chi_{B(\vec{0}_n,R)}\r\|_{(E_\Phi^q)_t(\rn)}^{-1}.
$$
This finishes the proof of Lemma \ref{youjie}.
\end{proof}

\begin{proof}[Proof of Theorem \ref{finite}]
Obviously, from Theorem \ref{atom ch}, we deduce that
$$(HE_\Phi^{q,r,d})_t^{\fin}(\rn)\subset(HE_\Phi^q)_t(\rn)$$
and, for any $f\in(HE_\Phi^{q,r,d})_t^{\fin}(\rn)$,
$$
\|f\|_{(HE_\Phi^q)_t(\rn)}\lesssim\|f\|_{(HE_\Phi^{q,r,d})_t^{\fin}(\rn)}.
$$
Thus, to complete the proof of Theorem \ref{finite},
we still need to show that, for any given $t,\ q,\ d$ as in Theorem \ref{finite} and
$r\in(\max\{1,\ q,\ p_{\Phi}^+\},\infty)$ and any $f\in(HE_\Phi^{q,r,d})_t^{\fin}(\rn)$,
$$
\|f\|_{(HE_\Phi^{q,r,d})_t^{\fin}(\rn)}\lesssim\|f\|_{(HE_\Phi^q)_t(\rn)},
$$
and that a similar estimate also holds true for $r=\infty$ and any given $t,\ q,\ d$ as in Theorem \ref{finite}
and any $f\in(HE_\Phi^{q,\infty,d})_t^{\fin}(\rn)\cap\mathcal{C}{(\rn)}$.

Assume that $r\in(\max\{1,\ q,\ p_{\Phi}^+\},\infty]$ and, by the homogeneity of both $\|\cdot\|_{(HE_\Phi^{q,r,d})_t^{\fin}(\rn)}$ and $\|\cdot\|_{(HE_\Phi)_t(\rn)}$,
without loss of generality,
we may also assume that $f\in(HE_\Phi^{q,r,d})_t^{\fin}(\rn)$ and $\|f\|_{(HE_\Phi^q)_t(\rn)}=1$.
Since $f$ is a finite linear combination of $((E_\Phi^q)_t(\rn),\ r,\ d)$-atoms,
it follows that there exists $R\in(1,\infty)$ such that
$f$ is supported on $B(\vec{0}_n,R)$. Thus, if let $N$ be as in Lemma \ref{youjie}, then, by Lemma \ref{youjie},
there exists a positive constant $C_{(N)}$ such that, for any $x\notin B(\vec{0}_{n},4R)$,
\begin{equation}\label{78}
M_N(f)(x)\le  C_{(N)}\lf\|\chi_{B(\vec{0}_n,R)}\r\|_{(E_\Phi^q)_t(\rn)}^{-1}.
\end{equation}
For each $j\in\mathbb{Z}$, let $\mathcal{O}_j:=\lf\{x\in\rn:\ M_N(f)(x)>2^j\r\}$. Denote by
$j'$ the largest integer $j$ such that
\begin{equation}\label{130}
2^{j'}<C_{(N)}\lf\|\chi_{B(\vec{0}_n,R)}\r\|_{(E_\Phi^q)_t(\rn)}^{-1}.
\end{equation}
Then, by (\ref{78}), for any $j\in\{j'+1, j'+2, \ldots\}$,
\begin{equation}\label{oo}
\mathcal{O}_j\subset B(\vec{0}_n,4R).
\end{equation}
Since $f\in L^r(\rn)$, from the proof
of \cite[Proposition 4.3]{ykds},
it follows that there exist a sequence $\{(a_{j,k},Q_{j,k})\}_{j\in\mathbb{Z},k\in K_j}$ of pairs of
$((E_\Phi^q)_t(\rn),\ \infty,\ d)$-atoms and their supports, and a
sequence of scalars, $\{\lambda_{j,k}\}_{j\in\mathbb{Z},k\in K_j}\subset[0,\infty)$, such that
\begin{equation}\label{eqq}
f=\sum_{j=-\infty}^{\infty}\sum_{k\in K_j}\lambda_{j,k}a_{j,k}
\end{equation}
in both $\mathcal{S}'(\rn)$ and almost everywhere,
where $\{K_j\}_{j\in\mathbb{Z}}$ is a set of indices and $\{Q_{j,k}\}_{j\in\mathbb{Z},k\in K_j}$ a
family of closed cubes with disjoint interiors such that $\mathcal{O}_j=\cup_{k\in K_j}Q_{j,k}$ as in \cite[Lemma 2.23]{ykds}.
Moreover, for some given $s\in (0,\min\{p_\Phi^-,q\})$, we have
\begin{equation}\label{331}
\lf\|\lf\{\sum_{j=-\infty}^{\infty}\sum_{k\in K_j} \lf[\frac{\lambda_{j,k}}
{\|\chi_{Q_{j,k}}\|_{(E_\Phi^q)_t(\rn)}} \r]^s\chi_{Q_{j,k}} \r\}
^{\frac{1}{s}}\r\|_{(E_\Phi^q)_t(\rn)}\lesssim\|f\|_{(HE_\Phi^q)_t(\rn)}.
\end{equation}
Define
\begin{equation}\label{eq2}
h:=\sum_{j=-\infty}^{j'}\sum_{k\in K_j}\lambda_{j,k}a_{j,k}\quad\mathrm{and}
\quad l:=\sum_{j=j'+1}^{\infty}\sum_{k\in K_j}\lambda_{j,k}a_{j,k},
\end{equation}
where the series converge in both $\mathcal{S}'(\rn)$ and almost everywhere. Clearly $f=h+l$ and,
by (\ref{oo}), $\supp(l)\subset\cup_{j>j'}\mathcal{O}_j\subset B(\vec{0}_n,4R)$. Therefore, $h=l=0$ on
$\rn\setminus B(\vec{0}_n,4R)$ and hence $\supp(h)\subset B(\vec{0}_n,4R)$. Moreover, by the proof
of \cite[Proposition 4.3]{ykds}, we know that there exists a positive constant $C_0$ such that
$\|\lambda_{j,k}a_{j,k}\|_{L^{\infty}(\rn)}\le C_02^j$.
Since $f\in L^r(\rn)$ and $r\in(1,\infty]$, from the boundedness on $L^r(\rn)$ of the
Hardy-Littlewood maximal operator, it follows that $M_N(f)\in L^r(\rn)$.
Then we have
\begin{align}\label{ll}
\|l\|_{L^r(\rn)}&\leq\lf\|\sum_{j=j'+1}^{\infty}\sum_{k\in K_j}|\lambda_{j,k}a_{j,k}|\r\|_{L^r(\rn)}\lesssim\lf\|\sum_{j=j'+1}^{\infty}
\sum_{k\in K_j}2^j\chi_{Q_{j,k}}\r\|_{L^r(\rn)}\\ \noz
&\lesssim\lf\|\sum_{j=j'+1}^{\infty}2^j\chi_{\mathcal{O}_j}\r\|_{L^r(\rn)}\lesssim\|M_N(f)\|_{L^r(\rn)}.
\end{align}
Thus, $l\in L^r(\rn)$ and so $h=f-l\in L^r(\rn)$.
It follows from (\ref{ll}) and the H\"older inequality that, for any $|\beta|\leq d$,
\begin{align*}
\int_{\rn}\sum_{j=j'+1}^{\infty}\sum_{k\in K_j}\lf|x^\beta\r||\lambda_{j,k}a_{j,k}(x)|\,dx
&\le\lf\|\sum_{j=j'+1}^{\infty}\sum_{k\in K_j}|\lambda_{j,k}a_{j,k}|\r\|_{L^r(\rn)}
\lf\{\int_{B(\vec{0}_n,4R)}\lf|x^\beta\r|^{r'}\,dx\r\}^{\frac1r'}\\
&\lesssim_R\|M_N(f)\|_{L^r(\rn)}<\infty.
\end{align*}
This, combined with the vanishing moments of $a_{j,k}$, implies that $l$
has vanishing moments up to $d$ and hence so does $h$ by $h=f-l$.

In order to estimate the size of $g$ in $B(\vec{0}_n,4R)$, recall that
\begin{equation}\label{129}
\lf\|\lambda_{j,k}a_{j,k}\r\|_{L^{\infty}(\rn)}\lesssim2^j,\ \supp(a_{j,k})\subset Q_{j,k}
\quad\mathrm{and}\quad\sum_{k\in K_j}\chi_{Q_{j,k}}\lesssim1.
\end{equation}
It is easy to show that
\begin{equation}\label{131}
\lf\|\chi_{B(\vec{0}_n,R)}\r\|_{(E_\Phi^q)_t(\rn)}\sim\lf\|\chi_{Q(\vec{0}_n,8R)}\r\|_{(E_\Phi^q)_t(\rn)}.
\end{equation}
Indeed, it is easy to see that there exist $M\in\mathbb{N}$ and $\{x_1, \ldots, x_M\}\subset\rn$, independent
of $t$ and $f$,
such that $M\lesssim1$ and $Q(\vec{0}_n,8R)\subseteq\bigcup_{m=1}^MB(x_m,R)$,
which further implies that
\begin{equation}\label{555}
\lf\|\chi_{Q(\vec{0}_n,8R)}\r\|_{(E_\Phi^q)_t(\rn)}\lesssim
\lf\|\sum_{m=1}^M\chi_{B(x_m,R)}\r\|_{(E_\Phi^q)_t(\rn)}\lesssim
\sum_{m=1}^M\lf\|\chi_{B(x_m,R)}\r\|_{(E_\Phi^q)_t(\rn)}.
\end{equation}
Observing that, for any $t\in(0,\infty)$, $m\in\mathbb{N}$ and $x\in\rn $,
$$
\lf\|\chi_{B(x_m,R)}\chi_{B(x,t)}\r\|_{L^\Phi(\rn)}
=\lf\|\chi_{B(\vec{0}_n,R)}\chi_{B(x-x_m,t)}\r\|_{L^\Phi(\rn)},
$$
by this and \eqref{qiu} with $\widetilde{C}_{(\Phi,t)}$ as therein, we have
\begin{align*}
\lf\|\chi_{B(x_m,R)}\r\|_{(E_\Phi^q)_t(\rn)}
&=\lf\{\int_{\rn}\lf[\frac{\|\chi_{B(\vec{0}_n,R)}\chi_{B(x-x_m,t)}\|_{L^\Phi(\rn)}}
{\|\chi_{B(x,t)}\|_{L^\Phi(\rn)}}\r]^q\,dx\r\}^{\frac{1}{q}}\\
&=\frac{1}{\widetilde{C}_{(\Phi,t)}}\lf\{\int_{\rn}\lf\|\chi_{B(\vec{0}_n,R)}\chi_{B(x-x_m,t)}\r\|_{L^\Phi(\rn)}
^q\,dx\r\}^{\frac{1}{q}}\\
&=\frac{1}{\widetilde{C}_{(\Phi,t)}}\lf\{\int_{\rn}\lf\|\chi_{B(\vec{0}_n,R)}\chi_{B(x,t)}\r\|_{L^\Phi(\rn)}
^q\,dx\r\}^{\frac{1}{q}}
=\lf\|\chi_{B(\vec{0}_n,R)}\r\|_{(E_\Phi^q)_t(\rn)},
\end{align*}
which, together with $\eqref{555}$, implies that $\|\chi_{Q(\vec{0}_n,8R)}\|_{(E_\Phi^q)_t(\rn)}\lesssim\|\chi_{B(\vec{0}_n,R)}\|_{(E_\Phi^q)_t(\rn)}.$
The converse inequality holds true obviously.
Thus, we obtain \eqref{131}.

Combining (\ref{130}), (\ref{129}) and (\ref{131}), we conclude that
\begin{equation*}
\|h\|_{L^{\infty}(\rn)}\le\sum_{j\le j'}\lf\|\sum_{k\in K_j}|\lambda_{j,k}a_{j,k}|\r\|_{L^{\infty}(\rn)}
\lesssim\sum_{j\le j'}2^j\lesssim2^{j'}\lesssim\lf\|\chi_{B(\vec{0}_n,R)}\r\|_{(E_\Phi^q)_t(\rn)}^{-1}
\le\widetilde{C}\lf\|\chi_{Q(\vec{0}_n,8R))}\r\|_{(E_\Phi^q)_t(\rn)}^{-1},
\end{equation*}
where $\widetilde{C}$ is a positive constant independent of $f$ and $t$. From this and
the fact that $h$ has vanishing moments up to $d$, it follows that $\widetilde{C}^{-1}h$
is an $((E_\Phi^q)_t(\rn),\ \infty,\ d)$-atom.

Now, to complete the proof of Theorem \ref{finite}(i),
we assume that $r\in(\max\{1,\ q,\ p_{\Phi}^+\},\infty)$.
We rewrite $l$ as a finite linear
combination of $((E_\Phi^q)_t(\rn),\ r,\ d)$-atoms. For any $i\in\mathbb{N}$, let
$$
F_i:=\lf\{(j,k)\in\mathbb{Z}\times\mathbb{Z}_+:\ j\in\lf\{j'+1,j'+2, \ldots\r\},\ k\in K_j,\ |j|+k\le i\r\},
$$
and $l_i:=\sum_{(j,k)\in F_i}\lambda_{j,k}a_{j,k}$. Since the series
$l=\sum_{j=j'+1}^{\infty}\sum_{k\in K_j}\lambda_{j,k}a_{j,k}$ converges in $L^r(\rn)$,
it follows that there exists a positive integer $i_0$, which may depend on $t$ and $f$,
such that
$$\|l-l_{i_0}\|_{L^r(\rn)}\le\frac{|Q(\vec{0}_n,8R)|^{\frac{1}{r }}}{\|\chi_{Q(\vec{0}_n,8R)}\|_{(E_\Phi^q)_t(\rn)}}.$$
Thus, $l-l_{i_0}$ is an $((E_\Phi^q)_t(\rn),\ r,\ d)$-atom, because $\supp(l-l_{i_0})\subset B(\vec{0}_n,4R)\subset Q(\vec{0}_n,8R)$
and, for any $|\beta|\le d$, $\int_{\rn}(l-l_{i_0})(x)x^\beta dx=0$. Therefore,
$$
f=h+l=\widetilde{C}\widetilde{C}^{-1}h+(l-l_{i_0})+l_{i_0}
$$
is a finite decomposition of $f$ in terms of $((E_\Phi^q)_t(\rn),\ r,\ d)$-atoms. Moreover, by (\ref{331}),
we have
\begin{align*}
\|f\|_{(HE_\Phi^{q,r,s})_t^{\fin}(\rn)}
&\le\lf\|\lf\{\lf[\frac{\widetilde{C}}
{\|\chi_{Q(\vec{0}_n,8R)}\|_{(E_\Phi^q)_t(\rn)}} \r]^s\chi_{Q(\vec{0}_n,8R)}+\lf[\frac{1}
{\|\chi_{Q(\vec{0}_n,8R)}\|_{(E_\Phi^q)_t(\rn)}} \r]^s\chi_{Q(\vec{0}_n,8R)}\r.\r.\\
&\quad\lf.\lf.+\sum_{(j,k)\in F_{i_0}}\lf[\frac{\lambda_{j,k}}
{\|\chi_{Q_{j,k}}\|_{(E_\Phi^q)_t(\rn)}} \r]^s\chi_{Q_{j,k}} \r\}
^{\frac{1}{s}}\r\|_{(E_\Phi^q)_t(\rn)}\\
&\lesssim1+\lf\|\lf\{\sum_{(j,k)\in F_{i_0}}\lf[\frac{\lambda_{j,k}}
{\|\chi_{Q_{j,k}}\|_{(E_\Phi^q)_t(\rn)}} \r]^s\chi_{Q_{j,k}} \r\}
^{\frac{1}{s}}\r\|_{(E_\Phi^q)_t(\rn)}\\
&\lesssim1+\lf\|\lf\{\sum_{j=-\infty}^{\infty}\sum_{k\in K_j} \lf[\frac{\lambda_{j,k}}
{\|\chi_{Q_{j,k}}\|_{(E_\Phi^q)_t(\rn)}} \r]^s\chi_{Q_{j,k}} \r\}
^{\frac{1}{s}}\r\|_{(E_\Phi^q)_t(\rn)}\\
&\lesssim1+\|f\|_{(HE_\Phi^q)_t(\rn)}\lesssim1.
\end{align*}
Thus, $\|f\|_{(HE_\Phi^{q,r,d})_t^{\fin}(\rn)}\lesssim1$.
This finishes the proof of Theorem \ref{finite}(i).

To prove Theorem \ref{finite}(ii), we assume that
$f\in(HE_\Phi^{q,\infty,d})_t^{\fin}(\rn)\cap\mathcal{C}(\rn)$ and
$\|f\|_{(HE_\Phi^q)_t(\rn)}=1$. Since $f$ has a compact support, it follows that $f$ is uniformly continuous.
Then, by this, the proof of \cite[Proposition 4.3]{ykds} and the argument
presented in \cite[pp.\,108-109]{em}, we know that each $((E_\Phi^q)_t(\rn),\ \infty,\ d)$-atom $a_{j,k}$
in \eqref{eqq} is continuous. Since $f$ is bounded, from the boundedness of $M_N(f)$ on
$L^{\infty}(\rn)$, it follows that there exists a
positive integer $j''>j'$ such that $\mathcal{O}_j=\emptyset$ for
any $j\in\{j''+1, j''+2, \ldots\}$. Consequently,
in this case, $l$ in \eqref{eq2} becomes
$$
l=\sum_{j=j'+1}^{j''}\sum_{k\in K_j}\lambda_{j,k}a_{j,k}.
$$

Let $\epsilon\in(0,\infty)$. Since $f$ is uniformly continuous, it follows that there exists
$\delta\in(0,\infty)$ such that, if $|x-y|<\delta$, then $|f(x)-f(y)|<\epsilon$.
Write $l=l_1^\epsilon+l_2^\epsilon$ with
$l_1^\epsilon:=\sum_{(j,k)\in F_1}\lambda_{j,k}a_{j,k}$ and
$l_2^\epsilon:=\sum_{(j,k)\in F_2}\lambda_{j,k}a_{j,k}$, where
$$F_1:=\lf\{(j,k)\in\mathbb{Z}\times\mathbb{Z}_+:\ j\in\lf\{j'+1, \ldots,j''\r\},\ k\in K_j,
\ {\rm diam}(Q_{j,k})\ge\delta\r\}$$
and
$$F_2:=\lf\{(j,k)\in\mathbb{Z}\times\mathbb{Z}_+:\ j\in\lf\{j'+1, \ldots, j''\r\},\ k\in K_j,\ {\rm
diam}(Q_{j,k})<\delta\r\}.$$
Observe that $l_1^\epsilon$ is a finite summation.
Since the atoms are continuous, we know that
$l_1^\epsilon$ is also a continuous function. Furthermore, using this fact and repeating
the proof of \cite[Theorem 6.2]{bly}, we conclude that
$$\|l_2^\epsilon\|_{L^\infty(\rn)}\lesssim(j''-j')\epsilon.$$
This means that one can write $l$ as the sum of one continuous term and one which is uniformly arbitrarily small.
Thus, $l$ is continuous and so is $h=f-l$.

To find a finite atomic decomposition of $f$, we use again the splitting $l=l_1^\epsilon+l_2^\epsilon$.
It is clear that, for any $\epsilon\in(0,\infty)$, $l_1^\epsilon$ is a finite combination of continuous
$((E_\Phi^q)_t(\rn),\ \infty,\ d)$-atoms. Also, since both $l$ and
$l_1^\epsilon$ are continuous and have vanishing moments up to order $d$, it follows that
$l_2^\epsilon=l-l_1^\epsilon$ is also continuous and has vanishing moments up to order $d$.
Moreover, $\supp(l_2^\epsilon)\subset B(\vec{0}_n,4R)\subset Q(\vec{0}_n,8R)$ and $ \|l_2^\epsilon\|_{L^\infty(\rn)}\lesssim(j''-j')\epsilon$.
So we can choose $\epsilon$ small enough such that $l_2^\epsilon$ becomes an arbitrarily small multiple
of a continuous $((E_\Phi^q)_t(\rn),\ \infty,\ d)$-atom. Therefore, $f=h+l_1^\epsilon+l_2^\epsilon$ is
a finite linear continuous atomic combination. Then, by an argument similar to the proof of (i), we obtain
$\|f\|_{(HE_\Phi^{q,\infty,d})_t^{\fin}(\rn)}\lesssim1$. This finishes the proof of (ii) and hence
of Theorem \ref{finite}.
\end{proof}

\section{Dual spaces of Orlicz-slice Hardy spaces}\label{s5}

In this section, we provide a description of the dual space of the
Orlicz-slice Hardy space $(HE_\Phi^q)_t(\rn)$, with $\max\{p_{\Phi}^+,\ q\}\in(0,1]$,
in terms of Campanato spaces.
This description is a consequence of
both their atomic characterization from Theorem \ref{atom ch}
and their finite atomic characterization from Theorem \ref{finite} as well as
some basic tools from functional analysis.

\begin{definition}
A function $\Phi:\ [0,\infty)\rightarrow\mathbb{R}$ is said to be \emph{concave} if,
for any $t,\ s\in[0,\infty)$ and $\lambda\in[0,1]$,
$$
\lambda\Phi(t)+(1-\lambda)\Phi(s)\leq \Phi(\lambda t+(1-\lambda)s).
$$
\end{definition}

\begin{lemma}\label{djdj}
Let $\Phi$ be an Orlicz function with positive lower type $p_{\Phi}^-$ and positive upper type $p_{\Phi}^+$
satisfying $p_{\Phi}^+\in(0,1].$
Then there exists a concave function $\widetilde{\Phi}$ with the same types as
$\Phi$, which is equivalent to $\Phi$.
\end{lemma}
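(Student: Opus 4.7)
The plan is to construct $\widetilde{\Phi}$ as the primitive of a non-increasing function, so that concavity comes for free. Specifically, I would set
\begin{equation*}
g(s):=\sup_{\tau\in[s,\infty)}\frac{\Phi(\tau)}{\tau},\quad\forall\,s\in(0,\infty),
\end{equation*}
and then define $\widetilde{\Phi}(t):=\int_0^t g(s)\,ds$ for any $t\in[0,\infty)$. The function $g$ is manifestly non-increasing (as a supremum over a shrinking family of sets), so $\widetilde{\Phi}$, being the primitive of a non-negative non-increasing function, is automatically concave with $\widetilde{\Phi}(0)=0$.

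The first serious step is to verify that $g$ takes finite values; this is the only place where the hypothesis $p_\Phi^+\le 1$ is essential. For any $s>0$ and $\tau\ge s$, the upper type condition gives $\Phi(\tau)\le C_{(p_\Phi^+)}(\tau/s)^{p_\Phi^+}\Phi(s)\le C_{(p_\Phi^+)}(\tau/s)\Phi(s)$, since $\tau/s\ge 1$ and $p_\Phi^+\le 1$. Dividing by $\tau$ yields
\begin{equation*}
\frac{\Phi(s)}{s}\le g(s)\le C_{(p_\Phi^+)}\frac{\Phi(s)}{s}<\infty.
\end{equation*}

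The second step is to prove the equivalence $\widetilde{\Phi}\sim\Phi$. For the upper bound, combining the display above with the lower type inequality $\Phi(s)\le C_{(p_\Phi^-)}(s/t)^{p_\Phi^-}\Phi(t)$ for $0<s\le t$ (this is precisely where positivity of $p_\Phi^-$ is used), I would estimate
\begin{equation*}
\widetilde{\Phi}(t)\le C_{(p_\Phi^+)}\int_0^t\frac{\Phi(s)}{s}\,ds\lesssim\frac{\Phi(t)}{t^{p_\Phi^-}}\int_0^t s^{p_\Phi^--1}\,ds\sim\Phi(t).
\end{equation*}
For the lower bound, I would restrict the integration to $[t/2,t]$, note that the upper type condition gives $\Phi(t)\le 2^{p_\Phi^+}C_{(p_\Phi^+)}\Phi(s)$ for $s\in[t/2,t]$, and conclude
\begin{equation*}
\widetilde{\Phi}(t)\ge\int_{t/2}^t\frac{\Phi(s)}{s}\,ds\gtrsim\Phi(t)\int_{t/2}^t\frac{ds}{s}\sim\Phi(t).
\end{equation*}

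Once the equivalence is in hand, the remaining Orlicz function properties of $\widetilde{\Phi}$ (non-decreasing, positivity on $(0,\infty)$ and divergence at infinity) transfer directly from $\Phi$, and, by Remark \ref{re}, equivalent Orlicz functions share their positive lower and upper type numbers, so $\widetilde{\Phi}$ inherits the types of $\Phi$ automatically. The main obstacle, modest as it is, is the finiteness of $g$: without the assumption $p_\Phi^+\le 1$, the ratio $\Phi(\tau)/\tau$ could be unbounded in $\tau$ and the supremum defining $g$ would be infinite, which explains why this concavification result is restricted to Orlicz functions with upper type at most one.
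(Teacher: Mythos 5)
Your proof is correct, and it takes a genuinely different (dual) construction from the one in the paper. The paper defines $\widetilde{\Phi}(t):=\int_0^t\inf_{\tau\in(0,s)}\frac{\Phi(\tau)}{\tau}\,ds$, whereas you define $\widetilde{\Phi}(t):=\int_0^t\sup_{\tau\in[s,\infty)}\frac{\Phi(\tau)}{\tau}\,ds$; both integrands are non-increasing in $s$ (inf over a growing family, sup over a shrinking one), so concavity of the primitive is automatic in either case, and in both constructions the integrand is comparable to $\Phi(s)/s$. The paper's choice of the infimum sidesteps any finiteness issue (the infimum is trivially bounded above by $\Phi(s)/s$), and instead uses $p_\Phi^+\le 1$ only in the lower-bound estimate $\widetilde{\Phi}(t)\gtrsim\Phi(t)$. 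Your choice of the supremum shifts the work: it requires the extra step of proving $g(s)\le C_{(p_\Phi^+)}\Phi(s)/s<\infty$, which you correctly identify as the precise place where $p_\Phi^+\le 1$ enters, and that in turn makes both halves of the equivalence $\widetilde{\Phi}\sim\Phi$ follow from the pointwise two-sided bound $\Phi(s)/s\le g(s)\lesssim\Phi(s)/s$. This packaging is arguably cleaner conceptually (the hypothesis $p_\Phi^+\le 1$ is isolated as exactly what prevents $\Phi(\tau)/\tau$ from blowing up at infinity, and $p_\Phi^->0$ is isolated as exactly what makes $\int_0^t\Phi(s)/s\,ds$ converge at the origin), while the paper's packaging is marginally shorter since it never pauses to check that its integrand is finite. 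The remaining steps (inheritance of the types and of the Orlicz-function axioms via Remark \ref{re}) are correctly handled and match the paper.
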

\begin{proof}
Consider the function
$$\widetilde{\Phi}(t):=\begin{cases}
\displaystyle\int_0^t \inf_{\tau\in(0,s)}\frac{\Phi(\tau)}{\tau}\,ds,\quad &t \in (0,\infty],\\
\ 0,\quad &t=0.
\end{cases}
$$
Then it is easy to prove that $\widetilde{\Phi}$ is concave on $[0,\infty)$. By the
assumption that $p_{\Phi}^+\in(0,1]$,
we know that, for any $t\in[0,\infty)$,
$$\widetilde{\Phi}(t)\geq t\inf_{\tau\in(0,t)}\frac{\Phi(\tau)}{\tau}\gtrsim t\inf_{\tau\in(0,t)}\lf(\frac{\tau}{t}\r)^{p_\Phi^+}\frac{\Phi(t)}{\tau}\sim \Phi(t).$$
On the other hand, for any $t\in[0,\infty)$, we have
$$
\widetilde{\Phi}(t)=\int_{0}^{t}\inf_{\tau\in(0,s)}\frac{\Phi(\tau)}{\tau}\,ds
\lesssim\frac{\Phi(t)}{t^{p_\Phi^-}}\int_{0}^{t}\inf_{\tau\in(0,s)}\frac{1}{\tau^{1-p_\Phi^-}}\,ds
\sim\frac{\Phi(t)}{t^{p_\Phi^-}}\int_{0}^{t}\frac{1}{s^{1-p_\Phi^-}}\,ds\sim\Phi(t).
$$
Thus, we obtain $\Phi\sim\widetilde{\Phi}$. Moreover, it is easy to prove that $\widetilde{\Phi}$ is an Orlicz function with positive lower type $p_{\Phi}^-$ and positive upper type $p_{\Phi}^+$
satisfying $p_{\Phi}^+\in(0,1]$, which completes the proof of Lemma \ref{djdj}.
\end{proof}

\begin{remark}\label{concave}
Observe that all the results of this article are invariant under the change of equivalent
Orlicz functions. By this and Lemma \ref{djdj}, without loss of generality,
in this section, we may always assume that an Orlicz function with positive lower type $p_{\Phi}^-$ and positive upper type $p_{\Phi}^+$
satisfying $p_{\Phi}^+\in(0,1]$ is also concave.
\end{remark}

\begin{lemma}\label{quasi}
Let $t\in(0,\infty)$, $q\in(0,1]$ and $\Phi$ be an Orlicz function with positive lower type $p_{\Phi}^-$ and positive upper type $p_{\Phi}^+\in(0,1]$.
Then there exists a nonnegative constant $C$ such that, for any sequence $\{f_j\}_{j\in\nn}\subset
(E_\Phi^{q})_t(\rn)$ of nonnegative functions
such that $\sum_{j\in\mathbb{N}}f_j$ converges
in $(E_\Phi^{q})_t(\rn),$
$$
\lf\|\sum_{j\in\mathbb{N}}f_j\r\|_{(E_\Phi^{q})_t(\rn)}
\ge C\sum_{j\in\mathbb{N}}\lf\|f_j\r\|_{(E_\Phi^{q})_t(\rn)}.
$$
\end{lemma}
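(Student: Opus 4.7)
The plan is to establish super-additivity (a reverse Minkowski-type inequality) at two nested levels: first inside the $L^\Phi$ norm and then inside the outer $L^q$ norm. Both steps rely on concavity, so the first move is to reduce to a concave Orlicz function: by Lemma~\ref{djdj} and Remark~\ref{concave}, I may assume without loss of generality that $\Phi$ itself is concave on $[0,\infty)$ with $\Phi(0)=0$; any equivalent change of Orlicz function only affects the final constant $C$.

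Next I will prove the reverse Minkowski inequality in $L^\Phi(\rn)$ for nonnegative functions: for any finite collection $f_1,\ldots,f_N\ge 0$ with $\lambda_j:=\|f_j\|_{L^\Phi(\rn)}\in(0,\infty)$, set $\mu:=\sum_{j=1}^N\lambda_j$ and $\theta_j:=\lambda_j/\mu$. Since $\Phi$ is continuous and strictly increasing (Remark~\ref{re}), we have $\int_\rn\Phi(f_j/\lambda_j)\,dx=1$ for each $j$. Concavity of $\Phi$ yields
\begin{align*}
\Phi\!\left(\frac{\sum_{j=1}^N f_j}{\mu}\right)
=\Phi\!\left(\sum_{j=1}^N\theta_j\cdot\frac{f_j}{\lambda_j}\right)
\ge \sum_{j=1}^N \theta_j\,\Phi\!\left(\frac{f_j}{\lambda_j}\right),
\end{align*}
and integrating gives $\int_\rn\Phi((\sum f_j)/\mu)\,dx\ge \sum_{j=1}^N\theta_j=1$, so by definition of the Luxemburg quasi-norm $\|\sum_{j=1}^N f_j\|_{L^\Phi(\rn)}\ge \mu=\sum_{j=1}^N\|f_j\|_{L^\Phi(\rn)}$. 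Multiplying each $f_j$ by $\chi_{B(x,t)}$ gives the pointwise inequality
\begin{align*}
\frac{\|(\sum_{j=1}^N f_j)\chi_{B(x,t)}\|_{L^\Phi(\rn)}}{\|\chi_{B(x,t)}\|_{L^\Phi(\rn)}}
\ge \sum_{j=1}^N \frac{\|f_j\chi_{B(x,t)}\|_{L^\Phi(\rn)}}{\|\chi_{B(x,t)}\|_{L^\Phi(\rn)}}=:\sum_{j=1}^N F_j(x).
\end{align*}

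An identical concavity argument applied to $t\mapsto t^q$ (which is concave because $q\in(0,1]$) yields the corresponding reverse Minkowski at the outer level: $\|\sum_{j=1}^N F_j\|_{L^q(\rn)}\ge\sum_{j=1}^N\|F_j\|_{L^q(\rn)}$ for nonnegative $F_j$. Combining the two estimates via the definition of $\|\cdot\|_{(E_\Phi^q)_t(\rn)}$ produces the finite-sum inequality $\|\sum_{j=1}^N f_j\|_{(E_\Phi^q)_t(\rn)}\ge \sum_{j=1}^N\|f_j\|_{(E_\Phi^q)_t(\rn)}$.

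Finally, to pass from finite to countable sums, I use that $(E_\Phi^q)_t(\rn)$ is a ball quasi-Banach function space (Lemma~\ref{ballproof}), so Definition~\ref{ball}(iii) applies: since $f_j\ge 0$, the partial sums $S_N:=\sum_{j=1}^N f_j$ satisfy $S_N\uparrow \sum_{j\in\nn}f_j$ almost everywhere, and hence $\|S_N\|_{(E_\Phi^q)_t(\rn)}\uparrow\|\sum_{j\in\nn}f_j\|_{(E_\Phi^q)_t(\rn)}$; letting $N\to\infty$ in the finite-sum inequality gives the claim (with a constant $C$ produced by the reduction to a concave equivalent of $\Phi$). The main subtlety is not in any one step but in the bookkeeping of the reduction step: one must verify that replacing $\Phi$ by the concave equivalent $\widetilde\Phi$ from Lemma~\ref{djdj} changes the quasi-norm by at most a multiplicative constant depending only on $p_\Phi^-$ and $p_\Phi^+$, so that the clean constant $1$ obtained in the concave setting translates into a uniform positive constant $C$ for the original $\Phi$.
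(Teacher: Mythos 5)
Your proof is correct and follows essentially the same route as the paper's: reduce to a concave equivalent $\widetilde\Phi$ via Lemma~\ref{djdj}, prove super-additivity of the inner $L^\Phi$-norm by concavity, exploit $q\le 1$ for super-additivity of the outer $L^q$-norm, and pass from finite to countable sums via the Fatou property of Definition~\ref{ball}(iii). One small caution on your intermediate step: the equality $\int_\rn\Phi(f_j/\lambda_j)\,dx=1$ does not follow merely from continuity and strict monotonicity of $\Phi$ — for a general Orlicz function the modular at the Luxemburg norm can be strictly less than $1$; what rescues it here is the finite upper type $p_\Phi^+$, which gives the $\Delta_2$-condition and hence continuity (and finiteness just below the norm) of $\lambda\mapsto\int\Phi(f_j\chi_{B(x,t)}/\lambda)\,dy$. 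The paper's proof avoids this entirely by taking $a_i$ strictly below the Luxemburg norms, deducing $\int\Phi((f_1+f_2)/(a_1+a_2))>1$, and then letting $a_i$ increase; that phrasing is marginally more robust since it makes no use of $\Delta_2$.
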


\begin{proof}
By Lemma \ref{djdj}, we know that there exists a concave function $\widetilde{\Phi}$ with same types of $\Phi$, which is equivalent to $\Phi$. Thus, for any $f\in(E_\Phi^{q})_t(\rn)$,
$$
\lf\|f\r\|_{(E_\Phi^{q})_t(\rn)}\sim\lf\|f\r\|_{(E_{\widetilde{\Phi}}^{q})_t(\rn)}
$$
and, to prove this lemma, by the Levi theorem,
we only need to show that, for any nonnegative $f_1,\ f_2\in(E_\Phi^{q})_t(\rn)$,
$$
\lf\|f_1+f_2\r\|_{(E_{\widetilde{\Phi}}^{q})_t(\rn)}\geq\lf\|f_1\r\|_{(E_{\widetilde{\Phi}}^{q})_t(\rn)}+
\lf\|f_2\r\|_{(E_{\widetilde{\Phi}}^{q})_t(\rn)}.
$$
Fix $x\in\rn$ and let $a_1,\ a_2\in\rr$ satisfy
$a_1 \in(0,\|f_1\|_{L^{\wz\Phi}(B(x,t))})$ and $a_2\in (0,\|f_2\|_{L^{\wz\Phi}(B(x,t))})$.
Since $\widetilde{\Phi}$ is concave, it follows that
\begin{align*}
\int_{B(x,t)}\widetilde{\Phi}\lf(\frac{f_1+f_2}{a_1+a_2}\r)\,dx
&=\int_{B(x,t)}\widetilde{\Phi}\lf(\frac{f_1}{a_1}\frac{a_1}{a_1+a_2}+\frac{f_2}{a_2}\frac{a_2}{a_1+a_2}\r)\,dx\\
&\geq\frac{a_1}{a_1+a_2}\int_{B(x,t)}\widetilde{\Phi}\lf(\frac{f_1}{a_1}\r)\,dx
+\frac{a_2}{a_1+a_2}\int_{B(x,t)}\widetilde{\Phi}\lf(\frac{f_2}{a_2}\r)\,dx\\
&>\frac{a_1}{a_1+a_2}+\frac{a_2}{a_1+a_2}=1.
\end{align*}
Thus,
$$
\lf\|f_1+f_2\r\|_{L^{\wz\Phi}(B(x,t))}\geq a_1+a_2,
$$
which further implies that
\begin{equation*}
\lf\|f_1+f_2\r\|_{L^{\wz\Phi}(B(x,t))}\geq
\lf\|f_1\r\|_{L^{\wz\Phi}(B(x,t))}+\lf\|f_2\r\|_{L^{\wz\Phi}(B(x,t))}.
\end{equation*}
From this and the definition of $(E_{\widetilde\Phi}^{q})_t(\rn)$, it easily follows that
\begin{align*}
\lf\|f_1+f_2\r\|_{(E_{\widetilde{\Phi}}^{q})_t(\rn)}\geq\lf\|f_1\r\|_{(E_{\widetilde{\Phi}}^{q})_t(\rn)}+
\lf\|f_2\r\|_{(E_{\widetilde{\Phi}}^{q})_t(\rn)},
\end{align*}
which completes the proof of Lemma \ref{quasi}.
\end{proof}

\begin{lemma}\label{atom}
Let $t\in(0,\infty)$, $q\in(0,1]$ and $\Phi$ be an Orlicz function with positive lower type $p_{\Phi}^-$ and positive upper type $p_{\Phi}^+\in(0,1]$. Let $r\in(1,\infty]$, $s\in (0,\min\{p^-_{\Phi},q\})$ and $d\in\mathbb{Z}_+$ satisfying $d\ge\lfloor n(\frac{1}{s}-1)\rfloor$.
Suppose $L$ is a continuous linear functional on $(HE_\Phi^q)_t(\rn)=(HE_\Phi^q)_t^{r,d}(\rn)$ . Then
\begin{align*}
\|L\|_{((HE_\Phi^q)_t^{r,d}(\rn))^*}
:=&\sup\lf\{|Lf|:\ \|f\|_{(HE_\Phi^q)_t^{r,d}(\rn)}\leq1\r\}\\
\sim&\sup\lf\{|La|:\ a\ is\ an\ ((E_\Phi^q)_t(\rn),\ r,\ d)\text{-atom}\r\}
\end{align*}
with the equivalent positive constant independent of $L$ and $t$.
\end{lemma}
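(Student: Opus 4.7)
The plan is to establish the two inequalities separately. The direction $\sup_a|La|\lesssim\|L\|_{((HE_\Phi^q)_t^{r,d}(\rn))^*}$ is immediate: for any $((E_\Phi^q)_t(\rn),r,d)$-atom $a$ supported in a cube $Q$, plugging the one-term decomposition $f=1\cdot a$ into Definition \ref{atomic hardy} yields $\|a\|_{(HE_\Phi^q)_t^{r,d}(\rn)}\le 1$, and hence $|La|\le\|L\|_{((HE_\Phi^q)_t^{r,d}(\rn))^*}$.

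For the reverse direction, let $A:=\sup\{|La|:a\text{ is an }((E_\Phi^q)_t(\rn),r,d)\text{-atom}\}$ and fix $f\in(HE_\Phi^q)_t(\rn)=(HE_\Phi^q)_t^{r,d}(\rn)$ (Theorem \ref{atom ch}). For any atomic decomposition $f=\sum_j\lambda_j a_j$ with $\lambda_j\in[0,\infty)$ and $a_j$ supported in a cube $Q_j$, Proposition \ref{shoulian}(ii) guarantees convergence of the series in $(HE_\Phi^q)_t(\rn)$, so the continuity of $L$ gives $|Lf|\le\sum_j\lambda_j|La_j|\le A\sum_j\lambda_j$. It therefore suffices to prove the bound $\sum_j\lambda_j\lesssim\|f\|_{(HE_\Phi^q)_t^{r,d}(\rn)}$ after selecting an almost-optimal decomposition.

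The crux of this last bound is as follows. Since $s\in(0,\min\{p_\Phi^-,q\})\subset(0,1]$, the embedding $\ell^s\hookrightarrow\ell^1$ (for nonnegative sequences) furnishes the pointwise estimate
$$\lf\{\sum_j\lf[\lambda_j/\|\chi_{Q_j}\|_{(E_\Phi^q)_t(\rn)}\r]^s\chi_{Q_j}\r\}^{1/s}\ge\sum_j\frac{\lambda_j}{\|\chi_{Q_j}\|_{(E_\Phi^q)_t(\rn)}}\chi_{Q_j},$$
where we use $\chi_{Q_j}^s=\chi_{Q_j}$. Taking the $(E_\Phi^q)_t(\rn)$-norm, using the lattice property in Definition \ref{ball}(ii), and then invoking Lemma \ref{quasi} (whose hypotheses $p_\Phi^+,q\in(0,1]$ are assumed), we obtain
$$\lf\|\lf\{\sum_j\lf[\lambda_j/\|\chi_{Q_j}\|_{(E_\Phi^q)_t(\rn)}\r]^s\chi_{Q_j}\r\}^{1/s}\r\|_{(E_\Phi^q)_t(\rn)}\gtrsim\sum_j\lambda_j.$$
Combining this with $|Lf|\le A\sum_j\lambda_j$ and infimizing over atomic decompositions yields $\|L\|_{((HE_\Phi^q)_t^{r,d}(\rn))^*}\lesssim A$, as desired.

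The main obstacle is the passage from the nonlinear atomic quantity $\{\sum_j[\cdot]^s\chi_{Q_j}\}^{1/s}$ appearing in the atomic norm to the linear sum $\sum_j\lambda_j$; this transition is effected precisely by Lemma \ref{quasi}, whose availability rests on the hypotheses $q,p_\Phi^+\in(0,1]$, and without which a different (more delicate) argument would be required.
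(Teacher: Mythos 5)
Your proposal is correct and follows essentially the same route as the paper: both arguments pass from the atomic quantity to $\sum_j\lambda_j$ via the pointwise inequality $\ell^s\hookrightarrow\ell^1$, the lattice property of the quasi-norm, and Lemma~\ref{quasi} (the reverse triangle inequality available when $q,p_\Phi^+\in(0,1]$), and both invoke Proposition~\ref{shoulian}(ii) to justify applying the continuous functional $L$ termwise to the series. The paper phrases the argument with an explicit near-optimal decomposition (atomic norm $\le 1+\epsilon$) and lets $\epsilon\to 0^+$, but this is the same infimization you indicate.
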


\begin{proof}
Observing that any $((E_\Phi^q)_t(\rn),\ r,\ d)$-atom $a$ satisfies $\|a\|_{(HE_\Phi^q)_t^{r,d}(\rn)}\leq1$,
to prove this lemma, we only need to show
\begin{equation}\label{85}
\sup\lf\{|Lf|:\ \|f\|_{(HE_\Phi^q)_t^{r,d}(\rn)}\leq1\r\}
\lesssim\sup\lf\{|La|:\ a\ \text{is a}\ ((E_\Phi^q)_t(\rn),\ r,\ d)\text{-atom}\r\}.
\end{equation}
Take any $f\in(HE_\Phi^q)_t(\rn)$ and $\|f\|_{(HE_\Phi^q)_t^{r,d}(\rn)}\leq1$, which is reasonable by
Theorem \ref{atom ch}.
By Definition \ref{atomic hardy}, we know that, for any $\epsilon\in(0,\infty)$, there exist a sequence of $((E_\Phi^q)_t(\rn),\ r,\ d)$-atoms
and a sequence $\{\lambda_j\}_{j=1}^\infty\subset[0,\infty)$
such that
$f=\sum_{j=1}^{\infty}\lambda_j a_j$  in $\mathcal{S}'(\rn)$
and
$$
\lf\|\lf\{\sum_{j=1}^{\infty} \lf[\frac{\lambda_j}
{\|\chi_{Q_j}\|_{(E_\Phi^q)_t(\rn)}} \r]^s\chi_{Q_j} \r\}
^{\frac{1}{s}}\r\|_{(E_\Phi^q)_t(\rn)}\leq1+\epsilon.
$$
Combining this and Lemma \ref{quasi}, we have
\begin{align}\label{52}
\sum_{j=1}^{\infty}|\lambda_j|
&\lesssim\lf\|\sum_{j=1}^{\infty} \frac{\lambda_j}
{\|\chi_{Q_j}\|_{(E_\Phi^q)_t(\rn)}} \chi_{Q_j}
\r\|_{(E_\Phi^q)_t(\rn)}\\ \noz
&\lesssim\lf\|\lf\{\sum_{j=1}^{\infty} \lf[\frac{\lambda_j}
{\|\chi_{Q_j}\|_{(E_\Phi^q)_t(\rn)}} \r]^s\chi_{Q_j} \r\}
^{\frac{1}{s}}\r\|_{(E_\Phi^q)_t(\rn)}
\lesssim1+\epsilon.
\end{align}
Observe that, by Proposition \ref{shoulian}, we know that $f=\sum_{j=1}^{\infty}\lambda_j a_j$
holds true in $(HE_\Phi^q)_t(\rn)$. From this and \eqref{52}, it follows that
$$
|Lf|\leq\sum_{j=1}^{\infty}|\lambda_j||La_j|\lesssim(1+\epsilon)
\sup\lf\{|La|:\ a\ \mathrm{is\ an}\ ((E_\Phi^q)_t(\rn),\ r,\ d)\text{-atom}\r\}.
$$
Letting $\epsilon\rightarrow 0^+$, we then obtain \eqref{85},
which completes the proof of Lemma \ref{atom}.
\end{proof}

\begin{definition}\label{decamp}
Let $t\in(0,\infty)$, $q\in(0,1]$ and $\Phi$ be an Orlicz function with positive lower type $p_{\Phi}^-$ and positive upper type $p_{\Phi}^+\in(0,1]$. Let $r\in[1,\infty)$, $s\in(0,\min\{p^-_{\Phi},q\})$ and $d\in\mathbb{Z}_+$ satisfying
$d\ge\lfloor n(\frac{1}{s}-1)\rfloor$.
The \emph{Campanato space} $\mathcal{L}_{\Phi,t}^{q,r,d}(\rn)$
is defined to be the space of all locally $L^r(\rn)$ functions $g$
such that
$$
\|g\|_{\mathcal{L}_{\Phi,t}^{q,r,d}(\rn)}:=\sup_{B\subset\rn}\inf_{P\in\mathcal{P}_d(\rn)}
\frac{|B|}{\|\chi_{B}\|_{(E_\Phi^q)_t(\rn)}}\lf[\frac{1}{|B|}\int_{B}|g(x)-P(x)|^r\,dx\r]^{\frac{1}{r}}<\infty,
$$
where the first supremum is taken over all the balls $B\subset\rn$ and
$\mathcal{P}_d(\rn)$ denotes the space of all polynomials on $\rn$ with order not greater than $d$.
\end{definition}

As usual, by a little abuse of notation, we identify $f\in\mathcal{L}_{\Phi,t}^{q,r,d}(\rn)$ with an equivalent class $f+\mathcal{P}_d(\rn)$.

\begin{theorem}\label{dual2}
Let $t\in(0,\infty)$, $q\in(0,1]$ and $\Phi$ be an Orlicz function with positive lower type $p_{\Phi}^-$ and positive upper type $p_{\Phi}^+\in(0,1]$. Let $r\in(1,\infty]$, $s \in(0,\min\{p^-_{\Phi},q\})$ and $d\in\mathbb{Z}_+$ satisfying $d\ge\lfloor n(\frac{1}{s}-1)\rfloor$.
Then the dual space of $(HE_\Phi^q)_t(\rn)$, denoted by $((HE_\Phi^q)_t(\rn))^*$, is $\mathcal{L}_{\Phi,t}^{q,r',d}(\rn)$
in the following sense:
\begin{enumerate}
\item[{\rm(i)}] Any
$g\in\mathcal{L}_{\Phi,t}^{q,r',d}(\rn)$ induces a linear
functional given by
\begin{equation}\label{linf}
L_g:\ f\mapsto\ L_g(f):=\int_{\rn}f(x)g(x)dx,
\end{equation}
which is initially defined on
$(HE_\Phi^{q,r,d})_t^{\fin}(\rn)$ and  has a bounded
extension to $(HE_\Phi^q)_t(\rn)$.

\item[{\rm(ii)}] Conversely, any continuous linear
functional on $(HE_\Phi^q)_t(\rn)$ is of the form \eqref{linf}
for a unique $g\in\mathcal{L}_{\Phi,t}^{q,r',d}(\rn)$.
\end{enumerate}
Moreover, in any case, $\|g\|_{\mathcal{L}_{\Phi,t}^{q,r',d}(\rn)}$ is
equivalent to $\|L_g\|_{((HE_\Phi^q)_t(\rn))^*}$ with the equivalent positive constants independent of $t$,
here and hereafter, $\|\cdot\|_{((HE_\Phi^q)_t(\rn))^*}$ denotes the norm of $((HE_\Phi^q)_t(\rn))^*$.
\end{theorem}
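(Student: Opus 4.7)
The plan is to follow the classical Campanato-type duality argument for atomic Hardy spaces, leveraging the atomic characterization (Theorem \ref{atom ch}), the finite atomic characterization (Theorem \ref{finite}) and Lemma \ref{atom}, the latter of which reduces the operator norm of a functional on $(HE_\Phi^q)_t(\rn)$ to its supremum over atoms.

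For part (i), given $g\in\mathcal{L}_{\Phi,t}^{q,r',d}(\rn)$, I would define $L_g$ on $(HE_\Phi^{q,r,d})_t^{\fin}(\rn)$ by the formula \eqref{linf}. For any $((E_\Phi^q)_t(\rn),\ r,\ d)$-atom $a$ supported in a cube $Q$, the vanishing moment condition and H\"older's inequality give, for any $P\in\mathcal{P}_d(\rn)$,
\begin{align*}
|L_g(a)|=\lf|\int_Q a(x)[g(x)-P(x)]\,dx\r|\le\|a\|_{L^r(Q)}\|g-P\|_{L^{r'}(Q)}.
\end{align*}
Combining the size estimate $\|a\|_{L^r(Q)}\le|Q|^{1/r}/\|\chi_Q\|_{(E_\Phi^q)_t(\rn)}$ with $1/r+1/r'=1$ and taking the infimum over $P$ then yields $|L_g(a)|\lesssim\|g\|_{\mathcal{L}_{\Phi,t}^{q,r',d}(\rn)}$. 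Invoking Lemma \ref{atom} together with the density of finite atomic combinations in $(HE_\Phi^q)_t(\rn)$ (guaranteed by Proposition \ref{shoulian}(ii)) then extends $L_g$ uniquely and boundedly to $(HE_\Phi^q)_t(\rn)$, with $\|L_g\|_{((HE_\Phi^q)_t(\rn))^*}\lesssim\|g\|_{\mathcal{L}_{\Phi,t}^{q,r',d}(\rn)}$.

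For part (ii), given $L\in((HE_\Phi^q)_t(\rn))^*$, I plan to construct the representing function $g$ ball-by-ball via Hahn-Banach. For each ball $B\subset\rn$ and each non-zero $h\in L^r(B)$ supported in $B$ with vanishing moments up to order $d$, the normalization $a:=\frac{|B|^{1/r}}{\|\chi_B\|_{(E_\Phi^q)_t(\rn)}\|h\|_{L^r(B)}}h$ is an $((E_\Phi^q)_t(\rn),\ r,\ d)$-atom satisfying $\|a\|_{(HE_\Phi^q)_t(\rn)}\lesssim1$ by Theorem \ref{atom ch}, so that
\begin{align*}
|L(h)|\lesssim\|L\|_{((HE_\Phi^q)_t(\rn))^*}\frac{\|h\|_{L^r(B)}\|\chi_B\|_{(E_\Phi^q)_t(\rn)}}{|B|^{1/r}}.
\end{align*}
The Hahn-Banach theorem extends $L$ from this moment-free subspace to a bounded functional on $L^r(B)$, and the Riesz representation theorem then produces $g_B\in L^{r'}(B)$, unique modulo $\mathcal{P}_d(\rn)$, with $L(h)=\int_B h(x)g_B(x)\,dx$. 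Patching the $g_B$'s along an exhausting sequence of balls $\{B(\vec{0}_n,k)\}_{k\in\mathbb{N}}$ after correcting at each step by polynomials of degree at most $d$ yields a single $g\in L^{r'}_{\loc}(\rn)$ for which the above bound translates, using $1/r+1/r'=1$, into
\begin{align*}
\inf_{P\in\mathcal{P}_d(\rn)}\frac{|B|}{\|\chi_B\|_{(E_\Phi^q)_t(\rn)}}\lf[\frac{1}{|B|}\int_B|g(x)-P(x)|^{r'}\,dx\r]^{1/r'}\lesssim\|L\|_{((HE_\Phi^q)_t(\rn))^*}
\end{align*}
uniformly in $B$. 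Since $L$ and $L_g$ agree on every atom and finite atomic combinations are dense, $L=L_g$ on all of $(HE_\Phi^q)_t(\rn)$; uniqueness of $g$ modulo $\mathcal{P}_d(\rn)$ follows from the vanishing-moment condition.

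The main obstacle will be the endpoint $r=\infty$, where the Riesz step in part (ii) fails because $(L^\infty(B))^*\neq L^1(B)$. To handle this, I would first observe that Theorem \ref{atom ch} ensures the space $(HE_\Phi^q)_t(\rn)$ is independent of the admissible exponent $r$, so its dual may already be represented via some finite $r_0\in(\max\{1,q,p_\Phi^+\},\infty)$, yielding $g\in\mathcal{L}_{\Phi,t}^{q,r_0',d}(\rn)$; a John-Nirenberg-type self-improvement argument then upgrades the Campanato estimate from exponent $r_0'$ to any $r'\in[1,\infty)$, thereby identifying the $r=\infty$ endpoint dual with $\mathcal{L}_{\Phi,t}^{q,1,d}(\rn)$ and completing the proof.
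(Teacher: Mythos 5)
Your overall strategy matches the paper's: estimate $L_g$ on atoms via H\"older and the moment condition, extend to the Hardy space via the finite atomic characterization, and for the converse direction use Hahn--Banach on a moment-free subspace $L^r_0(B)$, Riesz representation, and patching along an exhaustion by balls. However, there is a logical gap in your part (i). You invoke Lemma \ref{atom} to upgrade the bound $\sup_a|L_g(a)|\lesssim\|g\|_{\mathcal{L}_{\Phi,t}^{q,r',d}(\rn)}$ to boundedness of $L_g$ on $(HE_\Phi^q)_t(\rn)$, but Lemma \ref{atom} \emph{presupposes} that $L$ is already a continuous linear functional on $(HE_\Phi^q)_t(\rn)$; using it to \emph{establish} continuity is circular. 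What is actually needed (and what the paper's proof of Theorem \ref{dual2}(i) does directly, without Lemma \ref{atom}) is to show that for every finite atomic combination $f=\sum_{j=1}^m\lambda_j a_j$ one has $\sum_{j=1}^m\lambda_j\lesssim\big\|\{\sum_j[\lambda_j/\|\chi_{Q_j}\|_{(E_\Phi^q)_t(\rn)}]^s\chi_{Q_j}\}^{1/s}\big\|_{(E_\Phi^q)_t(\rn)}$. This $\ell^1$-summability is the crux of the argument and is \emph{not} automatic for $q<1$: it comes from Lemma \ref{quasi}, which uses the concavity of the Orlicz-slice norm available precisely because $q\in(0,1]$ and $p_\Phi^+\in(0,1]$. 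Your proof as written never touches this hypothesis, so the step ``$\sum|\lambda_j|$ stays under control'' is left unjustified. Once Lemma \ref{quasi} is in hand, the bound on finite combinations plus Theorem \ref{finite} and density give the extension; only then does Lemma \ref{atom}'s equivalence make sense. You should also record (as the paper does via Lemma \ref{bbbb}) that cubes carrying atoms and the balls in the Campanato norm yield comparable $\|\chi_\cdot\|_{(E_\Phi^q)_t(\rn)}$, since you silently conflate $Q$ and $B$.

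In part (ii), your construction of $g$ via Hahn--Banach and Riesz representation is the same as the paper's, as is the patching along $\{B(\vec 0_n,\rho)\}_\rho$. Your proposed fix at $r=\infty$, a John--Nirenberg self-improvement for $\mathcal{L}_{\Phi,t}^{q,r',d}(\rn)$, would require a separate lemma that the paper neither states nor needs. The paper's handling is more economical: since $(HE_\Phi^q)_t^{\infty,d}(\rn)=(HE_\Phi^q)_t^{\widetilde r,d}(\rn)$ with equal norms for any $\widetilde r\in(1,\infty)$ (Theorem \ref{atom ch}), the functional $L$ is already bounded on $(HE_\Phi^q)_t^{\widetilde r,d}(\rn)$, producing $g\in L^{\widetilde r'}(B)\subset L^1(B)$ locally; the Campanato bound at exponent $r'=1$ then follows from the dual-norm identity $\|g\|_{(L^\infty_0(B))^*}=\inf_{P\in\mathcal P_d(\rn)}\|g-P\|_{L^1(B)}$ (\cite[p.\,52, (8.12)]{b}), and in fact for $r'=1$ the estimate you want is simply the H\"older-monotone direction of the Campanato norms, so no John--Nirenberg improvement is involved. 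If you do wish to use a self-improvement argument, you must first prove it for these generalized Campanato spaces, which materially lengthens the proof.
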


\begin{remark} Let $t\in(0,\infty)$.
\begin{itemize}
\item[(i)] Let $q\in(0,1],\ r\in[1,\infty)$ and $\Phi(\tau):=\tau^q$
for any $\tau\in[0,\infty)$. In this case, via some simple computations, we know that,
for any ball $B\subset\rn$, $\|\chi_B\|_{(E_\Phi^q)_t(\rn)}=|B|^\frac1q$.  Thus,
in this case, $\mathcal{L}_{\Phi,t}^{q,r,d}(\rn)$ coincides with the classical
Campanato space $L_{\frac1q-1,r,d}(\rn)$ which was introduced by Campanato \cite{c}.

\item[(ii)]Let $q\in(0,1)$ and $\Phi(\tau):=\tau^q$
for any $\tau\in[0,\infty)$. In this case,
we have $p^{-}_\Phi=q$, $(E_\Phi^q)_t(\rn)=L^q(\rn)$
and $(HE_\Phi^q)_t(\rn)=H^q(\rn)$, and the best known range of
$r$ in Theorem \ref{dual2} is $[1,\infty]$
(see, for example, \cite[Theorem 4.1]{l}). However,
it is still unclear whether or not
Theorem \ref{glamda}
still holds true when $r=1$ and $\max\{p_{\Phi}^+,\ q\}\in(0,1)$.
\end{itemize}
\end{remark}

\begin{lemma}\label{bbbb}
Let $t,\ q\in(0,\infty)$ and $\Phi$ be an Orlicz function with positive lower type $p_{\Phi}^-$ and positive upper type $p_{\Phi}^+$.
Let $x_0\in\rn$ and $r\in(0,\infty)$.
Then $Q(x_0,\frac{2r}{\sqrt{n}})\subset B(x_0,r)\subset Q(x_0,2r)$ and
there exists a positive constant $C$, independent of $t$, $x_0$ and $r$, such that
\begin{equation*}
\lf\|\chi_{Q(x_0,\frac{2r}{\sqrt{n}})}\r\|_{(E_\Phi^q)_t(\rn)}\leq
\lf\|\chi_{B(x_0,r)}\r\|_{(E_\Phi^q)_t(\rn)}\leq\lf\|\chi_{Q(x_0,2r)}\r\|_{(E_\Phi^q)_t(\rn)}
\leq C\lf\|\chi_{Q(x_0,\frac{2r}{\sqrt{n}})}\r\|_{(E_\Phi^q)_t(\rn)}.
\end{equation*}
\end{lemma}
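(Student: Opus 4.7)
The inclusions $Q(x_0,\frac{2r}{\sqrt n})\subset B(x_0,r)\subset Q(x_0,2r)$ are elementary: the closed ball of radius $r$ about $x_0$ contains the concentric cube whose main diagonal equals $2r$, i.e.\ whose side-length is $\frac{2r}{\sqrt n}$, and is contained in the concentric cube of side-length $2r$. The first two inequalities of the lemma are then immediate from Definition \ref{ball}(ii), which $(E_\Phi^q)_t(\rn)$ satisfies by Lemma \ref{ballproof}: monotonicity of the quasi-norm applied to pointwise inequalities $\chi_{Q(x_0,2r/\sqrt n)}\le\chi_{B(x_0,r)}\le\chi_{Q(x_0,2r)}$.

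The substantive part is the last inequality. The plan is to cover the larger cube $Q(x_0,2r)$ by a number $N$ of translates of the smaller cube $Q(x_0,\frac{2r}{\sqrt n})$, where $N$ depends only on the dimension $n$. Indeed, since the ratio of the two side-lengths is the dimensional constant $\sqrt n$, by a scaling argument one finds $N\in\mathbb{N}$ and points $y_1,\dots,y_N\in\rn$ (which may be chosen depending on $x_0$ and $r$ by translation, but with $N$ independent of $x_0$ and $r$) such that
\begin{equation*}
Q(x_0,2r)\subset\bigcup_{m=1}^N Q\lf(y_m,\tfrac{2r}{\sqrt n}\r).
\end{equation*}
Consequently $\chi_{Q(x_0,2r)}\le\sum_{m=1}^N\chi_{Q(y_m,2r/\sqrt n)}$ pointwise, and applying Definition \ref{ball}(ii) together with $N$ iterations of the quasi-triangle inequality of the ball quasi-Banach function space $(E_\Phi^q)_t(\rn)$ yields
\begin{equation*}
\lf\|\chi_{Q(x_0,2r)}\r\|_{(E_\Phi^q)_t(\rn)}\lesssim\sum_{m=1}^N\lf\|\chi_{Q(y_m,2r/\sqrt n)}\r\|_{(E_\Phi^q)_t(\rn)},
\end{equation*}
with an implicit constant depending only on $N$ and hence only on $n$.

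Next I would invoke the translation invariance of $\|\chi_{Q(\cdot,s)}\|_{(E_\Phi^q)_t(\rn)}$ in the center $y_m$. This is where \eqref{qiu} becomes useful: since $\|\chi_{B(x,t)}\|_{L^\Phi(\rn)}=\widetilde C_{(\Phi,t)}$ is independent of $x$, a change of variable $x\mapsto x-y_m+x_0$ in the defining integral, together with the translation invariance of Lebesgue measure (exactly as performed in the proof of \eqref{131} in the proof of Theorem \ref{finite}), gives
\begin{equation*}
\lf\|\chi_{Q(y_m,2r/\sqrt n)}\r\|_{(E_\Phi^q)_t(\rn)}=\lf\|\chi_{Q(x_0,2r/\sqrt n)}\r\|_{(E_\Phi^q)_t(\rn)}
\end{equation*}
for every $m\in\{1,\dots,N\}$. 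Combining this with the preceding display produces the desired bound with $C=C_{(N)}\cdot N$, which depends only on $n$ (through $N$) and on the quasi-triangle constant of $(E_\Phi^q)_t(\rn)$, hence is independent of $t$, $x_0$ and $r$.

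The only conceptual obstacle here is the passage from the pointwise dominated sum to the norm inequality in a quasi-Banach (rather than Banach) setting; this is overcome by the fact that the number $N$ of translates is a pure dimensional constant, so finite quasi-subadditivity applied exactly $N$ times suffices and produces a constant $C$ depending only on $n$ and the quasi-norm constant of $(E_\Phi^q)_t(\rn)$. Everything else is a repackaging of the covering/translation invariance argument already implicit in \eqref{131}.
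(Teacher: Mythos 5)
Your proof is correct and follows essentially the same route as the paper's: monotonicity of the quasi-norm for the first two inequalities, then a dimensional covering of $Q(x_0,2r)$ by $N$ translates of $Q(x_0,\tfrac{2r}{\sqrt n})$, finite quasi-subadditivity, and translation invariance of $\|\chi_{Q(\cdot,s)}\|_{(E_\Phi^q)_t(\rn)}$ obtained from \eqref{qiu} exactly as in the computation surrounding \eqref{131}. If anything, you are slightly more careful than the paper in acknowledging that the covering centers $y_m$ depend on $x_0$ and $r$ while only the count $N$ is dimensional.
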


\begin{proof}
Obviously, for any $x_0\in\rn$ and $r\in(0,\infty)$, we have
$$Q\lf(x_0,\frac{2r}{\sqrt{n}}\r)\subset B\lf(x_0,r\r)\subset Q\lf(x_0,2r\r)$$
and
$$
\lf\|\chi_{Q(x_0,\frac{2r}{\sqrt{n}})}\r\|_{(E_\Phi^q)_t(\rn)}\leq
\lf\|\chi_{B(x_0,r)}\r\|_{(E_\Phi^q)_t(\rn)}\leq\lf\|\chi_{Q(x_0,2r)}\r\|_{(E_\Phi^q)_t(\rn)}.
$$
Thus, to complete the proof of Lemma \ref{bbbb},
we only need to show that, for any $x_0\in\rn$ and $r\in(0,\infty)$,
$$
\lf\|\chi_{Q(x_0,2r)}\r\|_{(E_\Phi^q)_t(\rn)}\lesssim\lf\|\chi_{Q(x_0,\frac{2r}{\sqrt{n}})}\r\|_{(E_\Phi^q)_t(\rn)}.
$$
Assume that $r\in(0,\infty)$ and, without loss of generality,
we may assume that $x_0=\vec{0}_n$.
Then it is easy to see that there exist $M\in\mathbb{N}$ and $\{x_1, \ldots, x_M\}\subset\rn$, independent
of $t$ and $r$,
such that $M\lesssim1$ and $Q(\vec{0}_n,2r)\subseteq\bigcup_{m=1}^MQ(x_m,\frac{2r}{\sqrt{n}})$,
which implies that
\begin{equation}\label{1111}
\lf\|\chi_{Q(\vec{0}_n,2r)}\r\|_{(E_\Phi^q)_t(\rn)}\leq
\lf\|\sum_{m=1}^M\chi_{Q(x_m,\frac{2r}{\sqrt{n}})}\r\|_{(E_\Phi^q)_t(\rn)}\lesssim
\sum_{m=1}^M\lf\|\chi_{Q(x_m,\frac{2r}{\sqrt{n}})}\r\|_{(E_\Phi^q)_t(\rn)}.
\end{equation}
Observing that, for any $m\in\mathbb{N}$, $t\in(0,\infty)$ and $x\in\rn$, $$\lf\|\chi_{Q(x_m,\frac{2r}{\sqrt{n}})}\chi_{B(x,t)}\r\|_{L^\Phi(\rn)}
=\lf\|\chi_{B(\vec{0}_n,\frac{2r}{\sqrt{n}})}\chi_{B(x-x_m,t)}\r\|_{L^\Phi(\rn)},$$
by this and \eqref{qiu} with $\widetilde{C}_{(\Phi,t)}$ as therein, we have
\begin{align*}
\lf\|\chi_{Q(x_m,\frac{2r}{\sqrt{n}})}\r\|_{(E_\Phi^q)_t(\rn)}
&=\lf\{\int_{\rn}\lf[\frac{\|\chi_{Q(\vec{0}_n,\frac{2r}{\sqrt{n}})}\chi_{B(x-x_m,t)}\|_{L^\Phi(\rn)}}
{\|\chi_{B(x,t)}\|_{L^\Phi(\rn)}}\r]^q\,dx\r\}^{\frac{1}{q}}\\
&=\frac{1}{\widetilde{C}_{(\Phi,t)}}\lf\{\int_{\rn}\lf\|
\chi_{Q(\vec{0}_n,\frac{2r}{\sqrt{n}})}\chi_{B(x-x_m,t)}\r\|_{L^\Phi(\rn)}
^q\,dx\r\}^{\frac{1}{q}}\\
&=\frac{1}{\widetilde{C}_{(\Phi,t)}}\lf\{\int_{\rn}\lf\|
\chi_{Q(\vec{0}_n,\frac{2r}{\sqrt{n}})}\chi_{B(x,t)}\r\|_{L^\Phi(\rn)}
^q\,dx\r\}^{\frac{1}{q}}
=\lf\|\chi_{Q(\vec{0}_n,\frac{2r}{\sqrt{n}})}\r\|_{(E_\Phi^q)_t(\rn)},
\end{align*}
which, combined with $\eqref{1111}$, implies that $\|\chi_{Q(\vec{0}_n,2r)}\|_{(E_\Phi^q)_t(\rn)}\lesssim\|\chi_{Q(\vec{0}_n,\frac{2r}{\sqrt{n}})}
\|_{(E_\Phi^q)_t(\rn)}.$
Thus, for any $x_0\in\rn$ and $r\in(0,\infty)$,
$$
\lf\|\chi_{Q(x_0,2r)}\r\|_{(E_\Phi^q)_t(\rn)}\lesssim\lf\|\chi_{Q(x_0,\frac{2r}{\sqrt{n}})}\r\|_{(E_\Phi^q)_t(\rn)}.
$$
This finishes the proof Lemma \ref{bbbb}.
\end{proof}

\begin{proof}[Proof of Theorem \ref{dual2}]
We first show (i).
By Theorem \ref{atom ch}, to prove
$\mathcal{L}_{\Phi,t}^{q,r',d}(\rn)\subset((HE_\Phi^q)_t(\rn))^*$, it suffices
to show
$$
\mathcal{L}_{\Phi,t}^{q,r',d}(\rn)\subset((HE_\Phi^q)_t^{r,d}(\rn))^*.
$$
Let $g\in\mathcal{L}_{\Phi,t}^{q,r',d}(\rn)$
and $a$ be an $((E_\Phi^q)_t(\rn),\ r,\ d)$-atom
supported on a cube $Q\subset\rn$. Let the ball
$B\subset\rn$ such that $Q\subset B$ and $|Q|\sim|B|$.
Then, by the
moment and the size conditions of $a$, together with the H\"older inequality and Lemma \ref{bbbb}, we know
that
\begin{align*}
|L_g(a)|:&=\lf|\int_{\rn}a(x)g(x)\,dx\r|
=\inf_{P\in \cp_d(\rn)}\lf|\int_{\rn}a(x)\lf[g(x)-P(x)\r]\,dx\r|\\
&\leq\|a\|_{L^r(\rn)}\inf_{P\in \cp_d(\rn)}\lf[\int_{\rn}|g(x)-P(x)|^{r'}\,dx\r]^{\frac{1}{r'}}\\
&\leq\frac{|Q|^{\frac{1}{r}}}{\|\chi_{Q}\|_{(E_\Phi^q)_t(\rn)}}
\inf_{P\in \cp_d(\rn)}\lf[\int_{\rn}|g(x)-P(x)|^{r'}\,dx\r]^{\frac{1}{r'}}\\
&\sim\frac{|B|^{\frac{1}{r}}}{\|\chi_{B}\|_{(E_\Phi^q)_t(\rn)}}
\inf_{P\in \cp_d(\rn)}\lf[\int_{\rn}|g(x)-P(x)|^{r'}\,dx\r]^{\frac{1}{r'}}
\lesssim\|g\|_{\mathcal{L}_{\Phi,t}^{q,r',d}(\rn)}.
\end{align*}
Moreover, for any $f\in(HE_\Phi^{q,r,d})_t^{\fin}(\rn)$, by Definition \ref{definite}, we know that there exist
a sequence $\{a_j\}_{j=1}^m$ of $((E_\Phi^q)_t(\rn),\ r,\ d)$-atoms supported, respectively, on the cubes
$\{Q_j\}_{j=1}^m$ and a sequence $\{\lambda_j\}_{j=1}^m\subset[0,\infty)$
such that
$$
\lf\|\lf\{\sum_{j=1}^{m} \lf[\frac{\lambda_j}
{\|\chi_{Q_j}\|_{(E_\Phi^q)_t(\rn)}} \r]^s\chi_{Q_j} \r\}
^{\frac{1}{s}}\r\|_{(E_\Phi^q)_t(\rn)}
\lesssim\|f\|_{(HE_\Phi^{q,r,d})_t^{\fin}(\rn)}.
$$
From this and Lemma \ref{quasi}, it follows that
\begin{align*}
|L_g(f)|:&=\lf|\int_{\rn}f(x)g(x)\,dx\r|
\leq\sum_{j=1}^{m}\lambda_j\lf|\int_{\rn}a_j(x)g(x)\,dx\r|\\
&\lesssim\sum_{j=1}^{m}\lambda_j\|g\|_{\mathcal{L}_{\Phi,t}^{q,r',d}(\rn)}
\lesssim\lf\|\sum_{j=1}^{m} \frac{\lambda_j}
{\|\chi_{Q_j}\|_{(E_\Phi^q)_t(\rn)}} \chi_{Q_j}
\r\|_{(E_\Phi^q)_t(\rn)}\|g\|_{\mathcal{L}_{\Phi,t}^{q,r',d}(\rn)}\\
&\lesssim\lf\|\lf\{\sum_{j=1}^{m} \lf[\frac{\lambda_j}
{\|\chi_{Q_j}\|_{(E_\Phi^q)_t(\rn)}} \r]^s\chi_{Q_j} \r\}
^{\frac{1}{s}}\r\|_{(E_\Phi^q)_t(\rn)}
\|g\|_{\mathcal{L}_{\Phi,t}^{q,r',d}(\rn)}\\
&\lesssim\|f\|_{(HE_\Phi^{q,r,d})_t^{\fin}(\rn)}
\|g\|_{\mathcal{L}_{\Phi,t}^{q,r',d}(\rn)}.
\end{align*}
By this and the fact that $(HE_\Phi^{q,r,d})_t^{\fin}(\rn)$
is dense in $(HE_\Phi^q)_t(\rn)$ as well as Theorem
\ref{finite},  we obtain  Theorem \ref{dual2}(i).

As for (ii), for any ball $B\subset\rn$,  let $\pi_B :\ L^1(B)\rightarrow \mathcal{P}_d(\rn)$ be
the natural projection
such that, for any $f\in L^1(B)$ and $Q\in\mathcal{P}_d(\rn)$,
$$
\int_{B}\pi_B(f)(x)Q(x)\,dx=\int_{B}f(x)Q(x)\,dx.
$$
It is well known (see, for example, \cite[p.\,51,\,(8.9)]{b} or \cite[p.\,54,\,Lemma 4.1]{l}) that
\begin{equation}\label{5.3}
\sup_{x\in B}|\pi_Bf(x)|\lesssim \frac{1}{|B|}\int_{B}|f(x)|\,dx.
\end{equation}

For any $r\in(1,\infty]$ and ball $B\subset\rn$, we define the \emph{closed subspace $L^r_0(B)$} of $L^r(B)$
by setting
$$
L^r_0(B): = \lf\{f\in L^r(B):\ \pi_Bf = 0\r\}.
$$
Notice that $L^r(B)$ is the subspace
of $L^r(\rn)$ consisting
of all measurable functions vanishing outside $B$.
Thus, if
$f\in L^r_0(B)$,
then
$\frac{|Q|^{\frac{1}{r }}}{\|\chi_Q\|_{(E_\Phi^q)_t(\rn)}}\|f\|_{L^r(\rn)}^{-1}f$
is an $((E_\Phi^q)_t(\rn),\ r,\ d)$-atom, where $Q$ is a cube, $Q\supset B$ and the side length of $Q$
equals to 2 times radius of $B$.

Suppose now $L \in((HE_\Phi^q)_t(\rn))^* = ((HE_\Phi^q)_t^{r,d}(\rn))^*$. By Lemma \ref{atom},
we know that, for any $f\in L^r_0(B)$,
\begin{equation}\label{810}
|L(f)| \lesssim\frac{\|\chi_Q\|_{(E_\Phi^q)_t(\rn)}}{|Q|^{\frac{1}{r }}}
\|L\|_{((HE_\Phi^q)_t(\rn))^*}\|f\|_{L^r(\rn)}.
\end{equation}
Therefore, $L$ provides a bounded linear functional on $L^r_0(B)$.
Thus, by the Hahn-Banach theorem (see, for example, \cite[p.\,106, Theorem 1]{y}),
we know that there exists a linear functional
$L_B$, which extends $L$ to the whole space $L^r(B)$ without increasing its
norm.

If $r\in(1,\infty)$, by the duality $(L^r(B))^* = L^{r'}(B)$,
we find that there exists $h_B \in L^{r'}(B)\subset L^{1}(B)
$ such that, for any $f\in L^r_0(B)$,
$$
L(f)=L_B(f)=\int_{B}f(x)h_B(x)\,dx.
$$
For the case $r =\infty$, let $\widetilde{r}\in(1,\infty)$.
By Theorems \ref{atom ch}, we know that $L \in((HE_\Phi^q)_t^{\infty,d}(\rn))^*$
implies $L\in((HE_\Phi^q)_t^{\widetilde{r},d}(\rn))^*$
without changing the norm of $L$. Thus, there exists
$h_B \in L^{\widetilde{r}'}(B)\subset L^{1}(B)$
such that, for any $f\in L^\infty_0(B)$, $L(f)=\int_{B}f(x)h_B(x)\,dx$.
Altogether, we find that,
for any $r\in(1,\infty]$,
there exists $h_B \in L^{r'}(B)$ such that, for any $f\in L^r_0(B)$,
$L(f)=\int_{B}f(x)h_B(x)\,dx.$

Next we show that such $h_B$ is unique modulo $\cp_d(\rn)$.
Indeed, assume that  $h_B'$ is another element of $L^{r'}(B)$
such that $L(f)=\int_{B}f(x)h_B'(x)\,dx$ for any $f\in L^r_0(B)$.
Then, for any $f\in L^\infty(B)$, we have $f-\pi_B(f)\in L^\infty_0 (B)$ and
\begin{align*}
0&=\int_B[f(x)-\pi_B(f)(x)][h_B(x)-h_B'(x)]\,dx\\
&=\int_Bf(x)[h_B(x)-h_B'(x)]\,dx-\int_B\pi_B(f)(x)\pi_B(h_B - h_B')(x)\,dx\\
&=\int_Bf(x)[h_B(x)-h_B'(x)]\,dx-\int_Bf(x)\pi_B(h_B-h_B')(x)\,dx\\
&=\int_B f(x)[h_B(x)-h_B'(x)-\pi_B(h_B - h_B')(x)]\,dx.
\end{align*}
The arbitrariness of $f$ implies that
$h_B(x)-h_B'(x) = \pi_B(h_B - h_B')(x)$ for almost every $x\in B$.
Therefore, after changing values
of $h_B$ (or $h_B'$) on a set of measure zero, we have $h_B - h_B'\in \cp_d(\rn)$.
Thus, the function
$h_B$ is unique up to a polynomial of degree at most $d$
regardless of the exponent
$r\in(1,\infty]$.

For any $\rho\in\mathbb{N}$, let $g_\rho$ be the unique element of $L^{r'}(B(\vec 0_n,\rho))$ such that
$L(f) =\int_{B(\vec 0_n,\rho)}f(x)g_\rho(x)\,dx$ for any $f\in L^r_0(B(\vec 0_n,\rho))$.
The preceding arguments show that $g_\rho|_{B(\vec 0_n,\ell)} = g_\ell$
for any $\ell\in\{1, \ldots, \rho\}$. Therefore, we can define a locally $L^{r'}(\rn)$
function $g$ by setting
$g(x):= g_\rho(x)$ whenever $x\in B(\vec{0}_n,\rho)$.
If $f$ is a finite linear combination of
$((E_\Phi^q)_t(\rn),\ r,\ d)$-atoms, then
$L(f) =\int_{\rn}f(x)g(x)\,dx$.
By \eqref{810}, for any ball $B\subset\rn$, the norm of $g$ as a linear functional on $L^r_0(B)$
satisfies
\begin{equation}\label{811}
\|g\|_{(L^r_0(B))^*}
\leq\frac{\|\chi_Q\|_{(E_\Phi^q)_t(\rn)}}{|Q|^{\frac{1}{r }}}
\|L\|_{((HE_\Phi^q)_t(\rn))^*}.
\end{equation}
It is known (see \cite[p.\,52, (8.12)]{b}) that
\begin{equation}\label{812}
\|g\|_{(L^r_0(B))^*}=\inf_{P\in \cp_d(\rn)}\|g-P\|_{L^{r'}(B)}.
\end{equation}
Combining \eqref{811}, \eqref{812} and Lemma \ref{bbbb}, we have
$$
\|g\|_{\mathcal{L}_{\Phi,t}^{q,r',d}(\rn)}\leq\sup_{B\subset\rn}
\frac{|B|^{\frac{1}{r}}}{\|\chi_{B}\|_{(E_\Phi^q)_t(\rn)}}\|g\|_{(L^r_0(B))^*}
\leq\|L\|_{((HE_\Phi^q)_t^{r,d}(\rn))^*}.
$$
This finishes the proof of (ii) and hence of Theorem \ref{dual2}.
\end{proof}

\section{Applications\label{s6}}

In this section, we first establish a criterion on the boundedness of sublinear operators from $(HE_\Phi^q)_t(\rn)$
into a quasi-Banach space as an application of the finite atomic characterizations of $(HE_\Phi^q)_t(\rn)$
from Theorem \ref{finite}.
Then we clarify the relationship between the atomic space $(\mathfrak{C}_r^q)_t$ introduced in
\cite{ap} and the Orlicz-slice Hardy space $(HE_\Phi^q)_t(\rn)$.
As an application of the above boundedness criterion, we obtain the boundedness
of $\delta$-type Calder\'on-Zygmund operators
on $(HE_\Phi^q)_t(\rn)$.

\subsection{Boundedness of sublinear operators\label{s5.1}}

The main purpose of this section is to establish a criterion on
the boundedness of sublinear operators from $(HE_\Phi^q)_t(\rn)$
into a quasi-Banach space.

Recall that a complete vector space is called a \emph{quasi-Banach space} $\mathcal{B}$
if its quasi-norm $\|\cdot\|_{\mathcal{B}}$
satisfies
\begin{itemize}
\item[\rm(i)]
$\|f\|_{\mathcal{B}}=0$ if and only if $f$ is the zero element of $\mathcal{B}$;

\item[\rm(ii)]
there exists a positive constant $C\in[1,\infty)$ such that, for any $f,\ g\in\mathcal{B}$,
$$\|f + g\|_{\mathcal{B}} \leq C(\|f \|_{\mathcal{B}} + \|g\|_{\mathcal{B}}).$$
\end{itemize}
Obviously, when $C=1$, a quasi-Banach space $\mathcal{B}$ is just a Banach space.
Next we recall the notion of $\gamma$-quasi-Banach spaces (see, for example,
\cite{k}, \cite{yll}, \cite{yz} and \cite{yz2}).

\begin{definition}Let $\gamma\in(0,1]$. A quasi-Banach space $\mathcal{B}_\gamma$ with the quasi-norm $\|\cdot\|_{\mathcal{B}_\gamma}$
called a \emph{$\gamma$-quasi-Banach space} if there exists a positive constant $\kappa\in[1,\infty)$ such that,
for any $m\in\mathbb{N}$ and $\{f_j\}_{j=1}^{m}\subset\mathcal{B}_\gamma$,
$$
\lf\|\sum_{j=1}^{m}f_j\r\|_{\mathcal{B}_\gamma}^\gamma
\le\kappa\sum_{j=1}^{m}\lf\|f_j\r\|^\gamma_{\mathcal{B}_\gamma}.
$$
\end{definition}

For any given $\gamma$-quasi-Banach space $\mathcal{B}_\gamma$, with $\gamma\in(0,1]$, and a linear space
$\mathcal{V}$, an operator $T$ from $\mathcal{V}$ to $\mathcal{B}_\gamma$ is said to be
\emph{$\mathcal{B}_\gamma$-sublinear} if there exists a positive constant $\kappa\in[1,\infty)$
such that
\begin{enumerate}
\item[{\rm(i)}] for any $f,\ g\in\mathcal{V}$,
    $\|T(f)-T(g)\|_{\mathcal{B}_\gamma}\le\kappa\|T(f-g)\|_{\mathcal{B}_\gamma}$;

\item[{\rm(ii)}] for any $m\in\mathbb{N}$, $\{f_j\}_{j=1}^{m}\subset\mathcal{V}$
 and $\{\lambda_j\}_{j=1}^{m}\subset\mathbb{C}$,
$$
\lf\|T\lf(\sum_{j=1}^{m}\lambda_jf_j\r)\r\|_{\mathcal{B}_\gamma}^\gamma\le
\kappa\sum_{j=1}^{m}|\lambda_j|^{\gamma}\|T(f_j)\|_{\mathcal{B}_\gamma}^\gamma.
$$
\end{enumerate}

\begin{theorem}\label{suanzi1}
Let $t,\ q\in(0,\infty)$, $\gamma\in(0,1]$, $\Phi$ be an Orlicz function with positive lower type $p_{\Phi}^-$ and positive upper type $p_{\Phi}^+$ and $\mathcal{B}_\gamma$ a $\gamma$-quasi-Banach space. Let $r\in(\max\{1,\ q,\ p_{\Phi}^+\},\infty]$, $s\in (0,\min\{p^-_{\Phi},q,1\})$  and $d\in\mathbb{Z}_+$ satisfying
$d\ge\lfloor n(\frac{1}{s}-1)\rfloor$.
If either of the following two statements holds true:
\begin{enumerate}
\item[{\rm(i)}] $r\in(\max\{1,\ q,\ p_{\Phi}^+\},\infty)$ and
$T:\ (HE_\Phi^{q,r,d})_t^{\fin}(\rn)\rightarrow\mathcal{B}_\gamma$ is a $\mathcal{B}_\gamma$-sublinear
operator satisfying that there exists a positive constant $C_1$ such that, for any $f\in(HE_\Phi^{q,r,d})_t^{\fin}(\rn)$
\begin{equation}\label{61}
\|T(f)\|_{\mathcal{B}_\gamma}\le C_1\|f\|_{(HE_\Phi^{q,r,d})_t^{\fin}(\rn)};
\end{equation}
\item[{\rm(ii)}] $T:\ (HE_\Phi^{q,\infty,d})_t^{\fin}(\rn)\cap\mathcal{C}(\rn)\rightarrow\mathcal{B}_\gamma$
is a $\mathcal{B}_\gamma$-sublinear
operator satisfying that there exists a positive constant $C_2$ such that, for any $f\in(HE_\Phi^{q,\infty,d})_t^{\fin}(\rn)\cap\mathcal{C}(\rn)$
$$
\|T(f)\|_{\mathcal{B}_\gamma}\le C_2\|f\|_{(HE_\Phi^{q,\infty,d})_t^{\fin}(\rn)},
$$
\end{enumerate}
then $T$ uniquely extends to a bounded $\mathcal{B}_\gamma$-sublinear operator from $(HE_\Phi^q)_t(\rn)$ into $\mathcal{B}_\gamma$.
Moreover, there exists a positive constant $\widetilde{C}$ such that, for any $f\in(HE_\Phi^q)_t(\rn)$,
$$
\|T(f)\|_{\mathcal{B}_\gamma}\le\widetilde{C}\|f\|_{(HE_\Phi^q)_t(\rn)}.
$$
\end{theorem}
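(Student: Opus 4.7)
The plan is to exploit the equivalence of quasi-norms from Theorem \ref{finite} to transfer the hypothesized boundedness from the finite-atomic quasi-norm to the genuine Hardy quasi-norm on a dense subspace, and then use the $\gamma$-quasi-Banach structure of $\mathcal{B}_\gamma$ to pass to the limit. More precisely, fix $f\in (HE_\Phi^q)_t(\rn)$. By Theorem \ref{atom ch} together with Proposition \ref{shoulian}(ii), there exist a sequence $\{a_j\}_{j\in\mathbb{N}}$ of $((E_\Phi^q)_t(\rn),r,d)$-atoms supported in cubes $\{Q_j\}_{j\in\mathbb{N}}$ and $\{\lambda_j\}_{j\in\mathbb{N}}\subset[0,\infty)$ such that, writing $f_N:=\sum_{j=1}^N\lambda_j a_j$, the sequence $\{f_N\}_{N\in\mathbb{N}}$ converges to $f$ not only in $\mathcal{S}'(\rn)$ but also in $(HE_\Phi^q)_t(\rn)$, with control
$$\lf\|\lf\{\sum_{j=1}^{\infty}\lf[\frac{\lambda_j}{\|\chi_{Q_j}\|_{(E_\Phi^q)_t(\rn)}}\r]^s\chi_{Q_j}\r\}^{1/s}\r\|_{(E_\Phi^q)_t(\rn)}\lesssim\|f\|_{(HE_\Phi^q)_t(\rn)}.$$
Each $f_N$ is a finite linear combination of $((E_\Phi^q)_t(\rn),r,d)$-atoms, hence belongs to $(HE_\Phi^{q,r,d})_t^{\fin}(\rn)$, and in case (i) I will exhibit the extension directly on such partial sums.

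For case (i), with $r\in(\max\{1,q,p_\Phi^+\},\infty)$, the $\gamma$-sublinearity of $T$ yields, for any $N,M\in\mathbb{N}$ with $N<M$,
$$\|T(f_M)-T(f_N)\|_{\mathcal{B}_\gamma}^\gamma\le\kappa\,\|T(f_M-f_N)\|_{\mathcal{B}_\gamma}^\gamma\le\kappa\,C_1^\gamma\,\|f_M-f_N\|_{(HE_\Phi^{q,r,d})_t^{\fin}(\rn)}^\gamma.$$
Since $f_M-f_N$ itself is a finite atomic combination, Theorem \ref{finite}(i) gives $\|f_M-f_N\|_{(HE_\Phi^{q,r,d})_t^{\fin}(\rn)}\lesssim\|f_M-f_N\|_{(HE_\Phi^q)_t(\rn)}\to0$, so that $\{T(f_N)\}_{N\in\mathbb{N}}$ is Cauchy in $\mathcal{B}_\gamma$. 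Completeness of $\mathcal{B}_\gamma$ then allows me to set $\widetilde T(f):=\lim_{N\to\infty}T(f_N)$, and Theorem \ref{finite}(i) combined with \eqref{61} yields $\|\widetilde T(f)\|_{\mathcal{B}_\gamma}\le\widetilde C\|f\|_{(HE_\Phi^q)_t(\rn)}$. A standard diagonal argument, again powered by the $\gamma$-subadditivity inequality applied to the difference of two candidate decompositions, shows that $\widetilde T(f)$ is independent of the chosen atomic decomposition and agrees with $T(f)$ when $f\in (HE_\Phi^{q,r,d})_t^{\fin}(\rn)$; sublinearity of the extension is inherited from that of $T$ by passing to the limit.

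For case (ii), the architecture is identical once one can produce an approximating sequence that lies inside $(HE_\Phi^{q,\infty,d})_t^{\fin}(\rn)\cap\mathcal{C}(\rn)$ rather than merely in $(HE_\Phi^{q,\infty,d})_t^{\fin}(\rn)$. I would first reduce to the case $f\in(HE_\Phi^q)_t(\rn)\cap L^\infty(\rn)$ via Proposition \ref{shoulian}(i), then approximate such an $f$ by a sequence $\{f_k\}_{k\in\mathbb{N}}\subset(HE_\Phi^q)_t(\rn)\cap\mathcal{C}(\rn)$ with compact supports (via truncation and mollification, checking convergence in the Hardy quasi-norm through its Lusin-area characterization in Theorem \ref{lusin} and the maximal function characterizations in Theorem \ref{mdj}). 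Since every $f_k$ is continuous with compact support, the construction in the proof of Theorem \ref{finite}(ii) produces a continuous finite atomic decomposition of $f_k$; the partial sums of such a decomposition lie in $(HE_\Phi^{q,\infty,d})_t^{\fin}(\rn)\cap\mathcal{C}(\rn)$, and Theorem \ref{finite}(ii) gives the same norm equivalence needed to drive the Cauchy estimate above. Defining $\widetilde T(f):=\lim_{k\to\infty}\widetilde T(f_k)$, completeness of $\mathcal{B}_\gamma$ again furnishes the extension, with independence of the approximating sequence following from the sublinear bound applied to differences.

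The main technical obstacle is the density step in case (ii): one must verify that $(HE_\Phi^{q,\infty,d})_t^{\fin}(\rn)\cap\mathcal{C}(\rn)$ is dense in $(HE_\Phi^q)_t(\rn)$, which reduces (via Proposition \ref{shoulian}(i)) to showing that continuous, compactly supported Hardy-space elements can be norm-approximated by bounded Hardy elements. The delicate point is controlling the Hardy quasi-norm (as opposed to the Orlicz-slice quasi-norm) of the mollification–minus–truncation errors; I expect to handle this by dominating the relevant maximal functions pointwise by the maximal function of the original datum and invoking the dominated convergence theorem in $(E_\Phi^q)_t(\rn)$, whose absolute continuity is already recorded in Lemma \ref{aboso}. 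Every other step is either routine from $\gamma$-sublinearity and completeness of $\mathcal{B}_\gamma$ or is a direct citation of Theorems \ref{atom ch}, \ref{finite} and Proposition \ref{shoulian}.
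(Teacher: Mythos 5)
Your part (i) is essentially the paper's argument: given $f\in (HE_\Phi^q)_t(\rn)$, approximate it by elements of $(HE_\Phi^{q,r,d})_t^{\fin}(\rn)$, transfer the finite-atomic bound to the Hardy quasi-norm via Theorem \ref{finite}(i), use $\gamma$-sublinearity to see that images of a Cauchy sequence are Cauchy in $\mathcal{B}_\gamma$, pass to the limit, and verify well-definedness. You realize the dense sequence concretely as partial sums of an atomic decomposition (invoking Theorem \ref{atom ch} and Proposition \ref{shoulian}(ii)), whereas the paper takes an arbitrary sequence from density; both are fine and equally rigorous.

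Part (ii) contains a genuine gap. You propose to reduce to $f\in (HE_\Phi^q)_t(\rn)\cap L^\infty(\rn)$ via Proposition \ref{shoulian}(i) and then approximate by ``truncation and mollification,'' deferring the error control to a dominated-convergence argument powered by Lemma \ref{aboso}. This route is both unnecessary and problematic. First, elements of $(HE_\Phi^q)_t(\rn)\cap L^\infty(\rn)$ need not have compact support, so some cutoff is required, but multiplying a Hardy-space element by a cutoff generically destroys the vanishing-moment structure and need not keep you in $(HE_\Phi^q)_t(\rn)$ at all; your proposal does not explain how to control this. Second, Lemma \ref{aboso} concerns absolute continuity of the Orlicz-slice quasi-norm $\|\cdot\|_{(E_\Phi^q)_t(\rn)}$, not the Hardy quasi-norm, so it cannot by itself justify $\|f-f_k\|_{(HE_\Phi^q)_t(\rn)}\to 0$; you would need an additional maximal-function estimate that you only gesture at. The paper avoids all of this by a shorter path: since $(HE_\Phi^{q,\infty,d})_t^{\fin}(\rn)$ is already dense in $(HE_\Phi^q)_t(\rn)$ (Theorem \ref{atom ch}), it suffices to show that $(HE_\Phi^{q,\infty,d})_t^{\fin}(\rn)\cap \mathcal{C}^\infty(\rn)$ is dense in $(HE_\Phi^{q,\infty,d})_t^{\fin}(\rn)$ in the Hardy quasi-norm. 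For $f\in (HE_\Phi^{q,\infty,d})_t^{\fin}(\rn)$ one has compact support and vanishing moments \emph{for free}, so no truncation is needed: take $\varphi\in\mathcal{S}(\rn)$ with $\supp\varphi\subset B(\vec 0_n,1)$ and $\int\varphi=1$; then $\varphi_\tau * f\in (HE_\Phi^{q,\infty,d})_t^{\fin}(\rn)\cap\mathcal{C}^\infty(\rn)$, and $f-\varphi_\tau * f$ retains the compact support and vanishing moments while $\|f-\varphi_\tau * f\|_{L^\delta(\rn)}\to 0$ for any $\delta\in(1,\infty)$, so that $f-\varphi_\tau * f$ is a vanishing scalar multiple of a single $((E_\Phi^q)_t(\rn),\delta,d)$-atom, hence $\|f-\varphi_\tau * f\|_{(HE_\Phi^q)_t(\rn)}\to 0$. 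This elementary observation replaces everything you flagged as the ``main technical obstacle,'' so you should rewrite part (ii) along these lines rather than attempt to repair the truncation step.
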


By Theorem \ref{suanzi1}, we easily obtain the following corollary, which is a variant of Meda et al.
\cite[Corollary 3.4]{msv} and Grafakos et al. \cite[Theorem 5.9]{gly} as well as Ky \cite[Theorem 3.5]{k}
(see also \cite[Theorem 1.6.9]{yll}), the details being omitted.

\begin{corollary}\label{suanzi}
Let $t\in(0,\infty)$, $q\in(0,1]$, $\gamma\in(0,1]$, $\Phi$ be an Orlicz function with positive lower type $p_{\Phi}^-$ and positive upper type $p_{\Phi}^+\in(0,1]$ and $\mathcal{B}_\gamma$ a $\gamma$-quasi-Banach space. Let $r\in(1,\infty]$, $s\in (0,\min\{p^-_{\Phi},q\})$  and $d\in\mathbb{Z}_+$ satisfying
$d\ge\lfloor n(\frac{1}{s}-1)\rfloor$.
If either of the following two statements holds true:
\begin{enumerate}
\item[{\rm(i)}] $r\in(1,\infty)$ and
$T:\ (HE_\Phi^{q,r,d})_t^{\fin}(\rn)\rightarrow\mathcal{B}_\gamma$ is a $\mathcal{B}_\gamma$-sublinear
operator satisfying
$$
A:=\sup\lf\{\|Ta\|_{\mathcal{B}_\gamma}:\ a\ is\ a\ ((E_\Phi^q)_t(\rn),\ r,\ d)\text{-atom}\r\}<\infty;
$$
\item[{\rm(ii)}] $T:\ (HE_\Phi^{q,\infty,d})_t^{\fin}(\rn)\bigcap\mathcal{C}(\rn)\rightarrow\mathcal{B}_\gamma$
is a $\mathcal{B}_\gamma$-sublinear
operator satisfying
$$
A:=\sup\lf\{\|Ta\|_{\mathcal{B}_\gamma}:\ a\ is\ a\ continuous\ ((E_\Phi^q)_t(\rn),\ \infty,\ d)\text{-atom}\r\}<\infty,
$$
\end{enumerate}
then $T$ uniquely extends to a bounded $\mathcal{B}_\gamma$-sublinear operator from $(HE_\Phi^q)_t(\rn)$ into $\mathcal{B}_\gamma$.
Moreover, there exists a positive constant $\widetilde{C}$ such that, for any $f\in(HE_\Phi^q)_t(\rn)$,
$$
\|T(f)\|_{\mathcal{B}_\gamma}\le\widetilde{C}\|f\|_{(HE_\Phi^q)_t(\rn)}.
$$
\end{corollary}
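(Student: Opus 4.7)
The plan is to extend $T$ by continuity from a dense subspace, relying on the finite atomic characterization (Theorem \ref{finite}) combined with the atomic decomposition (Theorem \ref{atom ch}) and the convergence result in Proposition \ref{shoulian}(ii). The underlying principle is standard: on the finite atomic subspace the hypothesis already provides the desired bound with respect to the Hardy quasi-norm, and the atomic decomposition furnishes the density needed to extend to all of $(HE_\Phi^q)_t(\rn)$.

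For case (i), I would first observe that Theorem \ref{finite}(i) yields
$$\|f\|_{(HE_\Phi^{q,r,d})_t^{\fin}(\rn)}\sim\|f\|_{(HE_\Phi^q)_t(\rn)}$$
on $(HE_\Phi^{q,r,d})_t^{\fin}(\rn)$, so that hypothesis \eqref{61} becomes $\|T(f)\|_{\mathcal{B}_\gamma}\lesssim\|f\|_{(HE_\Phi^q)_t(\rn)}$ for every $f$ in this finite atomic subspace (with a constant independent of $t$). Next, I would show that $(HE_\Phi^{q,r,d})_t^{\fin}(\rn)$ is dense in $(HE_\Phi^q)_t(\rn)$: for any $f\in(HE_\Phi^q)_t(\rn)$, Theorem \ref{atom ch} provides an atomic decomposition $f=\sum_{j=1}^{\infty}\lambda_j a_j$, and Proposition \ref{shoulian}(ii) ensures that this series converges to $f$ in $(HE_\Phi^q)_t(\rn)$; hence the partial sums $f_N:=\sum_{j=1}^{N}\lambda_j a_j$ belong to $(HE_\Phi^{q,r,d})_t^{\fin}(\rn)$ and approach $f$ in the Hardy quasi-norm. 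The $\mathcal{B}_\gamma$-sublinearity of $T$ combined with the bound $\|T(f_N-f_M)\|_{\mathcal{B}_\gamma}\lesssim\|f_N-f_M\|_{(HE_\Phi^q)_t(\rn)}$ then shows that $\{T(f_N)\}_{N\in\mathbb{N}}$ is a Cauchy sequence in the $\gamma$-quasi-Banach space $\mathcal{B}_\gamma$; I would define $\widetilde{T}(f)$ as its limit and check independence of the chosen decomposition by applying the same sublinearity estimate to the difference of two approximating sequences. The bound $\|\widetilde{T}(f)\|_{\mathcal{B}_\gamma}\lesssim\|f\|_{(HE_\Phi^q)_t(\rn)}$ then passes to the limit, with the equivalent constant independent of $t$.

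For case (ii), the same scheme applies, but the density step is more delicate since the hypothesis only controls $T$ on $(HE_\Phi^{q,\infty,d})_t^{\fin}(\rn)\cap\mathcal{C}(\rn)$. What is needed is the density of this continuous finite atomic subspace in $(HE_\Phi^q)_t(\rn)$. I would achieve it by smoothing each $L^\infty$-atom $a_j$ appearing in the decomposition via convolution with a smooth bump of sufficiently small radius, then subtracting a low-order polynomial supported on a slightly enlarged cube to restore the vanishing moment conditions; with care, the result is a continuous $((E_\Phi^q)_t(\rn),\infty,d)$-atom of comparable type, and the correction from $a_j$ has Hardy quasi-norm that can be made arbitrarily small. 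Telescoping such smoothings into the partial sums produces continuous finite atomic combinations converging to $f$ in $(HE_\Phi^q)_t(\rn)$, and the argument from case (i) then delivers the bounded extension.

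The main obstacle is the density step in case (ii): ensuring that the smoothing procedure preserves both the $L^\infty$-atom size condition and the vanishing moments while keeping the induced Hardy-quasi-norm correction under control. Apart from this technicality, which is in the spirit of the proof of Proposition \ref{shoulian}(i) and of analogous constructions in \cite{msv,gly}, the remainder of the argument is formal functional analysis built upon Theorems \ref{atom ch} and \ref{finite} and Proposition \ref{shoulian}.
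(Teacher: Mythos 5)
There is a genuine gap. Your plan treats the corollary's hypothesis (a uniform bound $A$ on $T$ applied to atoms) as though it "already provides the desired bound with respect to the Hardy quasi-norm," but that is precisely the step that has to be proved and is the whole content of the corollary. Atom boundedness does not, by itself, give $\|T(f)\|_{\mathcal{B}_\gamma}\lesssim\|f\|_{(HE_\Phi^{q,r,d})_t^{\fin}(\rn)}$ for a finite combination $f=\sum_{j=1}^m\lambda_j a_j$. To obtain hypothesis \eqref{61} of Theorem \ref{suanzi1}, one must invoke the $\mathcal{B}_\gamma$-sublinearity of $T$ to write
$\|T(f)\|_{\mathcal{B}_\gamma}^\gamma\le\kappa\sum_{j=1}^m\lambda_j^\gamma\|T(a_j)\|_{\mathcal{B}_\gamma}^\gamma\le\kappa A^\gamma\sum_{j=1}^m\lambda_j^\gamma$,
and then dominate the coefficient sum by the finite atomic quasi-norm; this is exactly where the restrictions $q\in(0,1]$ and $p_\Phi^+\in(0,1]$ enter, because the coefficient bound rests on Lemma \ref{quasi} (the concavity-based reverse triangle inequality for $(E_\Phi^q)_t(\rn)$) and the estimate \eqref{52}. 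None of this appears in your proposal, and Theorem \ref{finite} does not substitute for it: equivalence of $\|\cdot\|_{(HE_\Phi^{q,r,d})_t^{\fin}(\rn)}$ with $\|\cdot\|_{(HE_\Phi^q)_t(\rn)}$ is a statement about quasi-norms on the source space, not about $T$.

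Beyond that gap, your proposal is also misdirected in its overall structure. Once hypothesis \eqref{61} has been verified from the atom bound $A$, the conclusion follows by simply citing Theorem \ref{suanzi1}; this is what the paper does. Your density-and-Cauchy argument, including the smoothing of $L^\infty$-atoms to continuous ones in case (ii), is a reproduction of the proof of Theorem \ref{suanzi1} itself (the density of $(HE_\Phi^{q,\infty,d})_t^{\fin}(\rn)\cap\mathcal{C}^\infty(\rn)$ is established there using $\varphi_\tau\ast f$), so it is redundant rather than a new route to the corollary, and it does not repair the missing step identified above.
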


\begin{remark}\label{remax2}
Let $t\in(0,\infty)$, $q\in(0,1]$ and $\Phi(\tau):=\tau^q$ for any $\tau\in[0,\infty)$.
In this case, we have $(E_\Phi^q)_t(\rn)=L^q(\rn)$ and $(HE_\Phi^q)_t(\rn)=H^q(\rn)$
and Theorem \ref{suanzi} coincides with the well-known criterion on
the boundedness of sublinear operators from $H^q(\rn)$
into a quasi-Banach space except the case $r=1$ (see, for example, \cite[Theorem 3.5]{k},
\cite[Theorem 1.6.9]{yll} and \cite[Theorem 5.9]{gly} with $X:=\rn$).
Moreover, when $q=1$, Theorem \ref{suanzi} is just \cite[Corollary 3.4]{msv}.
\end{remark}

We now prove Theorem \ref{suanzi1}.

\begin{proof}[Proof of Theorem \ref{suanzi1}]
Suppose that assumption (i) holds true and $f\in(HE_\Phi^q)_t(\rn)$. Then, by the density of $(HE_\Phi^{q,r,d})_t^{\fin}(\rn)$ in $(HE_\Phi^q)_t(\rn)$, we know
that there exists a Cauchy sequence $\{f_k\}_{k\in\mathbb{N}}\subset(HE_\Phi^{q,r,d})_t^{\fin}(\rn)$ such that
$$
\lim_{k\rightarrow\infty}\|f_k-f\|_{(HE_\Phi^q)_t(\rn)}=0.
$$
By this, (\ref{61}) and Theorem \ref{finite}(i), we conclude that, for any $k,\ l\in\mathbb{N}$, as $k,\ l\rightarrow\infty$,
$$
\lf\|T(f_k)-T(f_l)\r\|_{\mathcal{B}_\gamma}\lesssim\|T(f_k-f_l)\|_{\mathcal{B}_\gamma}
\lesssim\|f_k-f_l\|_{(HE_\Phi^{q,r,d})_t^{\fin}(\rn)}\sim\|f_k-f_l\|_{(HE_\Phi^q)_t(\rn)}\rightarrow0,
$$
which implies that $\{T(f_k)\}_{k\in\mathbb{N}}$ is a Cauchy sequence in $\mathcal{B}_\gamma$. Therefore, by the completeness of $\mathcal{B}_\gamma$,
we know that there exists some $g\in\mathcal{B}_\gamma$ such that $g=\lim_{k\rightarrow\infty}T(f_k)$ in $\mathcal{B}_\gamma$. Then let $T(f):=g$.
From this, (\ref{61}) and Theorem \ref{finite}(i) again, it is easy to deduce that $T(f)$ is well defined and,
for any $f\in(HE_\Phi^q)_t(\rn)$,
\begin{align*}
\|T(f)\|_{\mathcal{B}_\gamma}^\gamma&\lesssim
\limsup_{k\rightarrow\infty}\lf[\|T(f)-T(f_k)\|_{\mathcal{B}_\gamma}^\gamma+\|T(f_k)\|_{\mathcal{B}_\gamma}^\gamma\r]
\lesssim
\limsup_{k\rightarrow\infty}\|T(f_k)\|_{\mathcal{B}_\gamma}^\gamma\\
&\lesssim\limsup_{k\rightarrow\infty}\|f_k\|_{(HE_\Phi^{q,r,d})_t^{\fin}(\rn)}^\gamma
\sim\lim_{k\rightarrow\infty}\|f_k\|_{(HE_\Phi^q)_t(\rn)}^\gamma\sim\|f\|_{(HE_\Phi^q)_t(\rn)}^\gamma,
\end{align*}
which completes the proof of (i).

Suppose that the assumption (ii) holds true. Similarly to the proof of (i), using Theorem \ref{finite}(ii), we also conclude that, for any $f\in(HE_\Phi^{q,\infty,d})_t^{\fin}(\rn)\cap\mathcal{C}{(\rn)}$,
$\|T(f)\|_{\mathcal{B}_\gamma}\lesssim\|f\|_{(HE_\Phi^q)_t(\rn)}$. To extend $T$ to the whole
$(HE_\Phi^q)_t(\rn)$, we only need to prove that $(HE_\Phi^{q,\infty,d})_t^{\fin}(\rn)\cap\mathcal{C}{(\rn)}$
is dense in $(HE_\Phi^q)_t(\rn)$. Observing that $(HE_\Phi^{q,\infty,d})_t^{\fin}(\rn)$ is dense in $(HE_\Phi^q)_t(\rn)$,
to show this, it suffices to prove that $(HE_\Phi^{q,\infty,d})_t^{\fin}(\rn)\cap\mathcal{C}{(\rn)}$ is dense
in $(HE_\Phi^{q,\infty,d})_t^{\fin}(\rn)$ in terms of the quasi-norm $\|\cdot\|_{(HE_\Phi^q)_t(\rn)}$.
Actually, we show that $(HE_\Phi^{q,\infty,d})_t^{\fin}(\rn)\cap\mathcal{C}^{\infty}{(\rn)}$
is dense in $(HE_\Phi^{q,\infty,d})_t^{\fin}(\rn)$.

To see this, let $f\in(HE_\Phi^{q,\infty,d})_t^{\fin}(\rn)$. Since $f$ is a finite linear combination
of functions with bounded supports, it follows that there exists $R\in(0,\infty)$ such that $\supp(f)\subset B(\vec{0}_n,R)$.
Take $\varphi\in\mathcal{S}(\rn)$ such that $\supp(\varphi)\subset B(\vec{0}_n,1)$ and
$\int_\rn\varphi(x)\,dx=1$. It is easy to see that $\supp(\varphi_\tau\ast f)\subset B(\vec{0}_n,2R)$ for
any $\tau\in(0,R)$ and $\varphi_\tau\ast f$ has vanishing moments up to order $d$, where
$\varphi_\tau(x):=\tau^{-n}\varphi(\tau^{-1}x)$ for any $x\in\rn$. Thus,
$\varphi_\tau\ast f\in (HE_\Phi^{q,\infty,d})_t^{\fin}(\rn)\cap\mathcal{C}^{\infty}{(\rn)}$.

Likewise, $\supp( f-\varphi_\tau\ast f)\subset B(\vec{0}_n,2R)$ for any
$\tau\in(0,R)$ and $\varphi_\tau\ast f$ has vanishing moments up to order $d$.
Moreover, taking any $\delta\in(1,\infty)$, we have
$$
\lf\|f-\varphi_\tau\ast f\r\|_{L^\delta(\rn)}\rightarrow0\quad\mathrm{as}\quad\tau\rightarrow0.
$$
Thus, $f-\varphi_\tau\ast f=c_\tau a_\tau$ for some $((E_\Phi^q)_t(\rn),\ \delta,\ d)$-atom $a_\tau$, and
some constant $c_\tau$ which satisfies that $c_\tau\rightarrow0$ as $\tau\rightarrow0$.
Thus, $\|f-\varphi_\tau\ast f\|_{(HE_\Phi^q)_t(\rn)}\rightarrow0$ as $\tau\rightarrow0$.
This finishes the proof of Theorem \ref{suanzi1}.
\end{proof}

\subsection{Boundedness of Calder\'on-Zygmund operators\label{s5.2}}

In \cite{ap}, Auscher and Prisuelos-Arribas obtained the boundedness on slice spaces
$(E_r^q)_t(\rn)$ of operators such as the
Hardy-Littlewood maximal operator, Calder\'on-Zygmund operators  etc. Based
on $(E_r^q)_t(\rn)$,  Auscher and Prisuelos-Arribas in \cite{ap} also introduced a
Hardy-type space $(\mathfrak{C}_r^q)_t(\rn)$
and proved the boundedness of Calder\'on-Zygmund operators on it.
In this section, we first obtain the relationship between the space $(\mathfrak{C}_r^q)_t(\rn)$
and the Orlicz-slice Hardy space $(HE_\Phi^q)_t(\rn)$.
Then, using the criterion for the boundedness of sublinear operators obtained in Theorem \ref{suanzi},
we establish the boundedness of $\delta$-type Calder\'on-Zygmund operators
from $(HE_\Phi^q)_t(\rn)$ to $(HE_\Phi^q)_t(\rn)$ [or to $(E_\Phi^q)_t(\rn)$]
with $\delta\in(0,1]$ and $\min\{p_{\Phi}^+,q\}\in(\frac{n}{n+\delta},1]$, respectively.

\begin{definition}\label{auscher}
Let $t\in(0,\infty)$, $r\in(1,\infty)$ and $q\in(\frac{n}{n+1},1]$. A function $a\in(E_r^q)_t(\rn)$ is called an
\emph{$(E_r^q)_t(\rn)$-atom} if it is supported on a ball $B$ of radius $\tau\in[t,\infty)$ and satisfies
$$
\|a\|_{L^r(\rn)}\le|B|^{\frac{1}{r}-\frac{1}{q}}.
$$
The \emph{Hardy-type space $(\mathfrak{C}_r^q)_t(\rn)$} is then defined to be the set of all measurable functions $f\in(E_r^q)_t(\rn)$
such that there exist a sequence of numbers, $\{\lambda_j\}_{j=1}^\infty\in \ell^q$, and a sequence of
$(E_r^q)_t(\rn)$-atoms, $\{a_j\}_{j=1}^\infty$, supported, respectively, on the balls
$\{B_j\}_{j=1}^\infty$, with $\int_{\rn}a_j(x)dx=0$ for any $j\in\mathbb{N}$
 so that $f=\sum_{j=1}^\infty\lambda_ja_j$ with convergence in
$(E_r^q)_t(\rn)$.
\end{definition}

\begin{proposition}\label{au}
Let $t\in(0,\infty)$, $r\in(1,\infty)$, $q\in(\frac{n}{n+1},1]$ and $s\in(0,q]$. Then
$$
(\mathfrak{C}_r^q)_t(\rn)\subset H^q(\rn)\subset H^q(\rn)\cup H^s(\rn)\subset(HE_s^q)_t(\rn),
$$
where the slice Hardy space $(HE_s^q)_t(\rn)$ is as in Definition \ref{dh}.
\end{proposition}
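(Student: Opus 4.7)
The plan is to establish the three inclusions in turn. The middle inclusion $H^q(\rn)\subset H^q(\rn)\cup H^s(\rn)$ is set-theoretically trivial, so the work concentrates on the two outer inclusions. For the third inclusion $H^q(\rn)\cup H^s(\rn)\subset(HE_s^q)_t(\rn)$, I would specialize Proposition \ref{ggg}(i) to the exponent $s\in(0,q]$, which yields $L^s(\rn)\cup L^q(\rn)\subset(E_s^q)_t(\rn)$ with the norm estimate $\|g\|_{(E_s^q)_t(\rn)}\le\min\{\|g\|_{L^s(\rn)},\|g\|_{L^q(\rn)}\}$. Applying this pointwise to $g:=M(f,\varphi)$ from Definition \ref{31} and combining it with the classical radial-maximal-function identity $\|M(f,\varphi)\|_{L^p(\rn)}\sim\|f\|_{H^p(\rn)}$ at both $p=q$ and $p=s$, I immediately embed each of $H^q(\rn)$ and $H^s(\rn)$ into $(HE_s^q)_t(\rn)$ with norm control.

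For the first inclusion $(\mathfrak{C}_r^q)_t(\rn)\subset H^q(\rn)$, the key observation is that the hypothesis $q\in(n/(n+1),1]$ forces $\lfloor n(1/q-1)\rfloor=0$, so the single vanishing-integral condition $\int_\rn a(x)\,dx=0$ in Definition \ref{auscher}, together with the size bound $\|a\|_{L^r(\rn)}\le|B|^{1/r-1/q}$, makes each $(E_r^q)_t(\rn)$-atom a classical $(q,r,0)$-atom for $H^q(\rn)$. Invoking the classical atomic characterization of $H^q(\rn)$ (see, e.g., \cite{l}) to the decomposition $\sum_j\lambda_j a_j$ with $\{\lambda_j\}_{j\in\nn}\in\ell^q$, I obtain a tempered distribution $\widetilde{f}:=\sum_j\lambda_j a_j$ in $H^q(\rn)$ with $\|\widetilde{f}\|_{H^q(\rn)}\ls(\sum_j|\lambda_j|^q)^{1/q}$, and the series converges both in $H^q(\rn)$ and in $\mathcal{S}'(\rn)$.

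The hard part is reconciling this $H^q$-limit $\widetilde{f}$ with the original function $f\in(\mathfrak{C}_r^q)_t(\rn)$, which is only known to be the limit of the same series in the $(E_r^q)_t(\rn)$-quasi-norm. My plan is to exploit the support restriction that every atomic ball $B_j$ has radius at least $t$: this gives $|B_j|\gtrsim t^n$ and, via H\"older's inequality together with $1-1/q\le 0$, a uniform bound $\|a_j\|_{L^1(\rn)}\le|B_j|^{1-1/q}\ls 1$. Since $\ell^q\subset\ell^1$ whenever $q\in(0,1]$ (as the tail of any $\ell^q$-sequence eventually satisfies $|\lambda_j|\le|\lambda_j|^q$), the series $\sum_j\lambda_j a_j$ converges absolutely in $L^1(\rn)$, forcing $\widetilde{f}\in L^1(\rn)$. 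On the other hand, convergence in $(E_r^q)_t(\rn)$ yields $L^r$-convergence of the partial sums on almost every ball of radius $t$ along a subsequence, and hence, via a covering argument, $L^1_\loc(\rn)$-convergence of that subsequence. Consequently the partial sums converge to both $f$ and $\widetilde{f}$ almost everywhere along a common subsubsequence, so $f=\widetilde{f}$ as tempered distributions and the $H^q(\rn)$-norm bound on $\widetilde{f}$ transfers to $f$. This identification of the two limits in different topologies is the only genuine difficulty.
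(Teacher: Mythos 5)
Your argument is correct and follows the same route as the paper: the third inclusion is obtained from Proposition \ref{ggg}(i) applied to the radial maximal function, and the first reduces to observing that, for $q\in(\tfrac{n}{n+1},1]$, each $(E_r^q)_t(\rn)$-atom in Definition \ref{auscher} is a classical $(q,r,0)$-atom. The paper dispatches the first inclusion with a single word (``obvious''), while you supply the missing detail --- identifying the $(E_r^q)_t(\rn)$-limit of the atomic series with its $H^q(\rn)$-limit, via the uniform $L^1$-bound on the atoms forced by the radius restriction $\tau\ge t$ --- which is a real subtlety worth making explicit.
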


\begin{proof}
By the definition of the space $(\mathfrak{C}_r^q)_t(\rn)$, it is obvious that the Hardy-type space
$(\mathfrak{C}_r^q)_t(\rn)$
is the subspace of the classical real Hardy space $H^q(\rn)$.
Furthermore, by Proposition \ref{ggg}(i), we know that
$(\mathfrak{C}_r^q)_t(\rn)\subset H^q(\rn)\subset H^q(\rn)\cup H^s(\rn)\subset(HE_s^q)_t(\rn)$,
which completes the proof of Proposition \ref{au}.
\end{proof}

\begin{remark}
Let $t\in(0,\infty)$, $r\in(1,\infty)$, $q\in(\frac{1}{2},1]$, $s\in(0,q]$ and $n=1$.
When $q\in(\frac{1}{2},1)$, the difference $\delta_1-\delta_{-1}$ of Dirac masses
lies in $H^q(\mathbb{R})$
but not in $(\mathfrak{C}_r^q)_t(\mathbb{R})$, because $\delta_1-\delta_{-1}$ is only a distribution,
not a function (see also, for example, \cite[p.\,129]{em}). This shows that
$(\mathfrak{C}_r^q)_t(\mathbb{R})\subsetneqq(HE_s^q)_t(\mathbb{R})$. When $q=1$
and $s\in(0,1),$ let $d=\lfloor s^{-1}-1\rfloor,$ the $d$-order derivative
$(\delta_1-\delta_{-1})^{(d)}$ of $\delta_1-\delta_{-1}$ lies in $H^s(\rr)$,
but not in $(\mathfrak{C}_r^1)_t(\mathbb{R})$; thus, in this case, we also have
$(\mathfrak{C}_r^1)_t(\mathbb{R})\subsetneqq(HE_s^1)_t(\mathbb{R})$.
\end{remark}

\begin{definition}\label{con}
Let $\delta\in(0,1]$, a \emph{convolutional $\delta$-type
Calder\'on-Zygmund operator} $\mathcal{T}$ is a
linear operator, which is bounded on $L^2(\rn)$ with kernel $k\in\mathcal{S}'(\rn)$
coinciding with a locally integrable function on
$\rn\setminus\{\vec{0}_n\}$ and satisfying that there exists a positive constant $C$, independent of
$f$, $x$ and $y$, such that, for any $x,\ y\in\rn$ with $|x|\ge|2y|$,
$$
|k(x-y)-k(x)|\le C\frac{|y|^\delta}{|x|^{n+\delta}}
$$
and, for any $f\in L^2(\rn)$, $\mathcal{T}(f):=k\ast f$.
\end{definition}

\begin{definition}\label{noncon}
Let $\delta\in(0,1]$. A \emph{ non-convolutional $\delta$-type
Calder\'on-Zygmund operator} is a linear operator which is bounded on $L^2(\rn)$ and satisfies that,
for any $f\in L^2(\rn)$ with compact support and $x\notin\supp(f)$,
$$
\mathcal{T}(f)(x):=\int_{\rn}K(x,y)f(y)\,dy,
$$
where $K$ denotes a measurable function on $(\rn\times\rn)\setminus\{(x,x):\ x\in\rn\}$
satisfying that there exists a positive constant $C$ such that, for any $x,\  y,\  z\in\rn$,
$$\lf|K(x,y)-K(x,z)\r|\le C\frac{|y-z|^\delta}{|x-y|^{n+\delta}}\quad \mathrm{when}\ |x-y|>2|y-z|.$$
\end{definition}

\begin{lemma}\label{czsuan}
Let $t,\ q\in(0,\infty)$ and $\Phi$ be an Orlicz function with positive lower type $p_{\Phi}^-$ and positive upper type $p_{\Phi}^+$. Assume that  $r\in(\max\{1,\ q,\ p_{\Phi}^+\},\infty]$, $s\in(0,\min\{p^-_{\Phi},q,1\})$. Let $\{\lambda_k\}_{k\in\mathbb{N}}\subset[0,\infty)$
and $\{Q_k\}_{k\in\mathbb{N}}$ be a sequence of cubes. Then, for any sequence
$\{a_k\}_{k\in\mathbb{N}}\subset L^r(\rn)$ such that, for any $k\in\mathbb{N}$, $\supp(a_k)\subset Q_k$,
$$
\lf\|a_k\r\|_{L^r(\rn)}\le\frac{|Q_k|^{\frac{1}{r}}}{\|\chi_{Q_k}\|_{(E_\Phi^{q})_t(\rn)}}
$$
and
$$
\lf\|\lf\{\sum_{k\in\mathbb{N}}\lf[\frac{\lambda_k}{\|\chi_{Q_k}\|_{(E_\Phi^{q})_t(\rn)}}\r]^s
\chi_{Q_k}\r\}^{\frac{1}{s}}\r\|_{(E_\Phi^{q})_t(\rn)}<\infty,
$$
it holds true that
$$
\lf\|\sum_{k\in\mathbb{N}}\lambda_ka_k\r\|_{(E_\Phi^{q})_t(\rn)}
\lesssim\lf\|\lf\{\sum_{k\in\mathbb{N}}
\lf[\frac{\lambda_k}{\|\chi_{Q_k}\|_{(E_\Phi^{q})_t(\rn)}}\r]^s\chi_{Q_k}\r\}^{\frac{1}{s}}\r\|_{(E_\Phi^{q})_t(\rn)},
$$
where the implicit positive constant is independent of $\{\lambda_k\}_{k\in\mathbb{N}}$,
$\{a_k\}_{k\in\mathbb{N}}$ and $t$.
\end{lemma}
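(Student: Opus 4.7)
The plan is to estimate $\|\sum_k\lambda_k a_k\|_{(E_\Phi^q)_t(\rn)}^s$ via the $\frac{1}{s}$-convexification of the Orlicz-slice space together with a duality argument against the Hardy-Littlewood maximal operator. Write $X:=(E_\Phi^q)_t(\rn)$. Since $s\in(0,\min\{p_\Phi^-,q,1\})$, the elementary inequality $|\sum_k t_k|^s\le\sum_k|t_k|^s$ gives the pointwise bound $|\sum_k\lambda_k a_k|^s\le\sum_k|\lambda_k|^s|a_k|^s$. Applying Definition \ref{convex}(i), so that $\|h\|_X^s=\||h|^s\|_{X^{1/s}}$, this reduces the problem to controlling $\|\sum_k|\lambda_k|^s|a_k|^s\|_{X^{1/s}}$. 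By Lemma \ref{sconvex}, $X^{1/s}$ is a (strictly 1-convex) Banach function space with the Fatou property inherited from Definition \ref{ball}(iii), hence its norm is realized by duality against nonnegative $g$ in the unit ball of $(X^{1/s})^\ast$.

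Next I would fix such a nonnegative $g$ and estimate each term $\int_{Q_k}|a_k|^sg\,dx$. Since $r\in(\max\{1,q,p_\Phi^+\},\infty]$ and $s<1<r$, we have $r/s>1$, so H\"older's inequality on $Q_k$ with exponents $r/s$ and $(r/s)'$, together with the size condition $\|a_k\|_{L^r(\rn)}\le|Q_k|^{1/r}/\|\chi_{Q_k}\|_X$, yields
\begin{align*}
\int_{Q_k}|a_k|^sg\,dx
&\le \|a_k\|_{L^r(\rn)}^s\,|Q_k|^{1/(r/s)'}\lf(\frac{1}{|Q_k|}\int_{Q_k}g^{(r/s)'}\r)^{1/(r/s)'}\\
&\le \frac{|Q_k|}{\|\chi_{Q_k}\|_X^s}\,\mathcal{M}^{((r/s)')}(g)(x)
\end{align*}
for every $x\in Q_k$, after using the trivial pointwise estimate of the average by the powered maximal function. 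Averaging in $x$ over $Q_k$ removes the dependence on $x$ and gives
$$
\int_{Q_k}|a_k|^sg\,dx\le\frac{1}{\|\chi_{Q_k}\|_X^s}\int_{Q_k}\mathcal{M}^{((r/s)')}(g)(x)\,dx.
$$

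Summing over $k$, weighting by $|\lambda_k|^s$, and interchanging sum and integral produces
$$
\sum_k|\lambda_k|^s\int|a_k|^sg\,dx\le\int\mathcal{M}^{((r/s)')}(g)(x)\sum_k\frac{|\lambda_k|^s}{\|\chi_{Q_k}\|_X^s}\chi_{Q_k}(x)\,dx.
$$
Now the standard Hölder-type inequality between $X^{1/s}$ and its dual bounds the right-hand side by $\|\mathcal{M}^{((r/s)')}(g)\|_{(X^{1/s})^\ast}$ times $\|\sum_k[|\lambda_k|/\|\chi_{Q_k}\|_X]^s\chi_{Q_k}\|_{X^{1/s}}$. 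Because $r$ and $s$ satisfy the hypotheses of Lemma \ref{man37}, the first factor is $\lesssim\|g\|_{(X^{1/s})^\ast}\le 1$; the second factor, by the definition of the $\frac{1}{s}$-convexification, equals $\|(\sum_k[|\lambda_k|/\|\chi_{Q_k}\|_X]^s\chi_{Q_k})^{1/s}\|_X^s$. Taking supremum over admissible $g$ and combining with the first paragraph yields the claimed inequality.

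The main technical point is the matching of exponents so that $|Q_k|^{s/r}\cdot|Q_k|^{1/(r/s)'}=|Q_k|$ and that Lemma \ref{man37} is applicable; apart from this bookkeeping, the estimate is a clean duality argument. A minor issue to verify is that $X^{1/s}$ indeed satisfies $\|h\|_{X^{1/s}}=\sup_g\int hg\,dx$ (over the unit ball of $(X^{1/s})^\ast$); this follows from Lemma \ref{sconvex} combined with the Fatou property of ball quasi-Banach function spaces given by Definition \ref{ball}(iii) and Lemma \ref{ballproof}.
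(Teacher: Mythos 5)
Your proof is correct and follows essentially the same route as the paper: the paper simply verifies the hypotheses of~\cite[Theorem~2.10]{ykds} via Lemmas~\ref{ballproof}, \ref{sconvex}, \ref{man28} and~\ref{man37} and invokes that theorem, and the argument you have written out (reduce to $X^{1/s}$, pair against a nonnegative dual element, apply H\"older on each $Q_k$ with exponents $r/s$ and $(r/s)'$ to introduce $\mathcal{M}^{((r/s)')}$, then use Lemma~\ref{man37} to control its dual norm) is precisely the proof of that cited theorem. The one point worth making explicit is why the norm of $X^{1/s}$ is realized by pairing against the associate-space unit ball: this needs the Fatou property (Definition~\ref{ball}(iii), inherited by $X^{1/s}$) together with the identification $X^{1/s}=(E_{\Phi_s}^{q/s})_t(\rn)$ being a Banach function space with nontrivial K\"{o}the dual, which is exactly the content of Lemma~\ref{sconvex} and Theorem~\ref{dual}; you flag this correctly, so the argument is complete.
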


\begin{proof}
By Lemmas \ref{ballproof}, \ref{sconvex}, \ref{man28} and \ref{man37}, we know that
$(E_\Phi^q)_t(\rn)$ satisfies all the assumptions of \cite[Theorem 2.10]{ykds}.
As a simple corollary of \cite[Theorem 2.10]{ykds}, we immediately obtain the
desired conclusion of Lemma \ref{czsuan}, which completes the proof of Lemma \ref{czsuan}.
\end{proof}

Via borrowing some ideas from the proof of Yan et al. \cite[Theorem 7.4]{yyyz} and applying the criterion
established in Theorem \ref{suanzi1}, we obtain the boundedness of convolutional $\delta$-type
 Calder\'on-Zygmund operators from $(HE_\Phi^{q})_t(\rn)$
to itself or to $(E_\Phi^{q})_t(\rn)$
(see Theorem \ref{cz} below), which extends the corresponding results of Fefferman and Stein \cite[Theorem 12]{fs72}
to the present setting.

\begin{theorem}\label{cz}
Let $t,\ q\in(0,\infty)$, $\delta\in(0,1]$ and $\Phi$ be an Orlicz function with positive
lower type $p_{\Phi}^-$ satisfying $\min\{p_{\Phi}^-,\ q\}\in(\frac{n}{n+\delta},1]$ and
positive upper type $p_{\Phi}^+$.
\begin{enumerate}
\item[{\rm(i)}] If $\mathcal{T}$ is  a convolutional $\delta$-type
 Calder\'on-Zygmund operator as in Definition \ref{con}, then there exists a positive constant
 $C$ such that, for any $f\in (HE_\Phi^q)_t(\rn)$,
 $$
 \|\mathcal{T}(f)\|_{(E_\Phi^{q})_t(\rn)}\le C\|f\|_{(HE_\Phi^{q})_t(\rn)}.
 $$
\item[{\rm(ii)}]If $\mathcal{T}$ is a convolutional $\delta$-type
 Calder\'on-Zygmund operator as in Definition \ref{con}, then there exists a positive constant
 $C$ such that, for any $f\in (HE_\Phi^q)_t(\rn)$,
 $$
 \|\mathcal{T}(f)\|_{(HE_\Phi^{q})_t(\rn)}\le C\|f\|_{(HE_\Phi^{q})_t(\rn)},
 $$
 where the positive constant $C$ is independent of $f$ and $t$.
\end{enumerate}
\end{theorem}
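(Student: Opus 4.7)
The plan is to reduce both parts to a single-atom estimate, using the atomic characterization (Theorem~\ref{atom ch}) combined with either Lemma~\ref{czsuan} (for (i)) or the molecular characterization (Theorem~\ref{molecular}) together with the sublinear extension criterion Theorem~\ref{suanzi1} (for (ii)). Since $\min\{p_\Phi^-,q\}\in(\tfrac{n}{n+\delta},1]$, I fix once and for all $s\in(\tfrac{n}{n+\delta},\min\{p_\Phi^-,q,1\})$, set $d:=0$ (which fulfils $d\ge\lfz n(\tfrac1s-1)\rfz=0$ because $n(\tfrac1s-1)<\delta\le1$), and pick $r\in(\max\{1,q,p_\Phi^+\},\infty)$. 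Classical Calder\'on--Zygmund theory guarantees that $\mathcal{T}$ is bounded on $L^r(\rn)$, which is the workhorse local estimate.

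For (i), let $a$ be an $((E_\Phi^q)_t(\rn),r,0)$-atom supported in a cube $Q$ with center $x_Q$, and decompose
$\mathcal{T}(a)=\mathcal{T}(a)\chi_{2\sqrt{n}Q}+\mathcal{T}(a)\chi_{\rn\setminus 2\sqrt{n}Q}$. The $L^r$-boundedness of $\mathcal{T}$ yields $\|\mathcal{T}(a)\chi_{2\sqrt{n}Q}\|_{L^r(\rn)}\ls|Q|^{1/r}/\|\chi_Q\|_{(E_\Phi^q)_t(\rn)}$, so the local part behaves like a constant multiple of an atom. The far part is controlled, for $x\notin 2\sqrt{n}Q$, by combining $\int_Q a\,dy=0$ with the H\"older regularity of $k$:
\begin{align*}
|\mathcal{T}(a)(x)|
&=\lf|\int_Q [k(x-y)-k(x-x_Q)]a(y)\,dy\r|
\ls\int_Q|a(y)|\frac{|y-x_Q|^\delta}{|x-x_Q|^{n+\delta}}\,dy\\
&\ls\frac{\mathcal{M}^{(\theta)}(\chi_Q)(x)}{\|\chi_Q\|_{(E_\Phi^q)_t(\rn)}},
\end{align*}
where $\theta:=n/(n+\delta)\in(0,s)$. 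Taking an atomic decomposition of $f$ via Theorem~\ref{atom ch}, summing the local pieces via Lemma~\ref{czsuan} and the far pieces via Lemma~\ref{man28} (the vector-valued boundedness of $\mathcal{M}^{(\theta)}$ with exponent $s$) produces the bound claimed in (i).

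For (ii), combining the $L^r$-estimate on $S_0(Q)=2Q$ with the pointwise bound above on each $S_j(Q)$ for $j\ge1$ gives $\|\chi_{S_j(Q)}\mathcal{T}(a)\|_{L^r(\rn)}\ls 2^{-j\tau}|Q|^{1/r}/\|\chi_Q\|_{(E_\Phi^q)_t(\rn)}$ for every $j\in\zz_+$, with $\tau:=n+\delta-n/r>n(\tfrac1s-\tfrac1r)$ (the strict inequality precisely uses $s>n/(n+\delta)$). This decay additionally ensures $\mathcal{T}(a)\in L^1(\rn)$; combined with $\widehat{a}(\vec{0}_n)=0$ and the boundedness of the Fourier multiplier $\widehat{k}$ inherited from the $L^2$-boundedness of $\mathcal{T}$, one concludes $\int_\rn\mathcal{T}(a)(x)\,dx=0$. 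Hence $\mathcal{T}(a)$ is a constant multiple of an $((E_\Phi^q)_t(\rn),r,0,\tau)$-molecule centered at $Q$. For a finite atomic combination $f\in(HE_\Phi^{q,r,0})_t^{\fin}(\rn)$, linearity, the molecular characterization (Theorem~\ref{molecular}) and Theorem~\ref{finite}(i) yield $\|\mathcal{T}(f)\|_{(HE_\Phi^q)_t(\rn)}\ls\|f\|_{(HE_\Phi^q)_t(\rn)}$. Invoking Theorem~\ref{suanzi1}(i) with the $s$-quasi-Banach space $\mathcal{B}_\gamma:=(HE_\Phi^q)_t(\rn)$, whose $s$-quasi-Banach property follows from Lemma~\ref{sconvex} combined with the radial maximal definition of the quasi-norm, extends the estimate to all of $(HE_\Phi^q)_t(\rn)$. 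The main technical obstacle is the rigorous justification of $\int_\rn\mathcal{T}(a)(x)\,dx=0$, which must be carried out via a distributional Fourier argument rather than a naive Fubini-type computation, since the kernel $k$ is merely a tempered distribution and the pointwise convolution is only available off $\supp(a)$.
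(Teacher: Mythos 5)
Your proposal is correct, but it takes a genuinely different route for part (ii). For (i) you match the paper's strategy (which is left implicit as ``by similarity''): split $\mathcal{T}(a)$ into a local piece handled by $L^r$-boundedness plus Lemma~\ref{czsuan}, and a far piece dominated by $[\mathcal{M}(\chi_Q)]^{(n+\delta)/n}/\|\chi_Q\|_{(E_\Phi^q)_t(\rn)}$ and summed via the vector-valued maximal inequality. For (ii), however, the paper does \emph{not} go through a molecular characterization or the vanishing moment of $\mathcal{T}(a)$. Instead it exploits the convolutional structure directly: for each dilation $\tau$, the kernel $k^{(\tau)}:=k*\varphi_\tau$ satisfies the same H\"ormander-type bound as $k$, so the radial maximal function $M(\mathcal{T}(a_j),\varphi)(x)=\sup_\tau|k^{(\tau)}*a_j(x)|$ is estimated pointwise off $4\sqrt{n}Q_j$ \emph{exactly as in (i)}, with no additional cancellation on $\mathcal{T}(a_j)$ needed; the on-diagonal piece uses $L^r$-boundedness of $\mathcal{M}\circ\mathcal{T}$. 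This bypasses completely the delicate issue you flag --- the distributional verification of $\int_\rn \mathcal{T}(a)=0$. Your molecular route is valid and more ``modular'' (indeed it is closer in spirit to what the paper does for the $\beta$-order, non-convolutional operators in Theorems~\ref{cz2} and~\ref{cz3}), and your Fourier argument for $\widehat{\mathcal{T}(a)}(\vec 0_n)=0$ goes through once $\mathcal{T}(a)\in L^1(\rn)$ is secured from the molecular decay, but it costs an extra technical step that the paper's $k^{(\tau)}$ trick avoids. One could also note that where you invoke Theorem~\ref{suanzi1} with $\mathcal{B}_\gamma=(HE_\Phi^q)_t(\rn)$ being an $s$-quasi-Banach space, the paper implicitly uses the same Theorem~\ref{finite}/\ref{suanzi1} reduction but then passes to the dense subclass $f\in(HE_\Phi^{q,r,d})^{\fin}_t(\rn)\cap L^r(\rn)$ and works with the $L^r$-convergent series directly.
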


\begin{proof}
By similarity, we only prove (ii). Let $\mathcal{T}$ be a convolutional $\delta$-type
Calder\'on-Zygmund operator as in Definition \ref{con}.
Let $r\in(\max\{1,\ q,\ p_{\Phi}^+\},\infty)$ and $f\in(HE_\Phi^{q,r,d})_t^{\fin}(\rn)$.
Then, without loss of generality, we
may assume that $\|f\|_{(HE_\Phi^{q})_t(\rn)}=1$. Thus, to prove (ii),
by Theorem \ref{suanzi1}(i), we only need to show that
\begin{equation}\label{h2}
\|\mathcal{T}(f)\|_{(HE_\Phi^{q})_t(\rn)}\lesssim1.
\end{equation}
Noticing that $f\in(HE_\Phi^{q})_t(\rn)\bigcap L^r(\rn)$,
by the proof of \cite[Theorem 3.7]{ykds}, we know that there exist a sequence of
$\{\lambda_j\}_{j=1}^{\infty}\subset[0,\infty)$ and $\{a_j\}_{j=1}^{\infty}$ of
$((E_\Phi^q)_t(\rn),\ r,\ 0)$-atoms
supported, respectively, on the cubes $\{Q_j\}_{j=1}^{\infty}:=\{Q(x_j,r_j)\}_{j=1}^{\infty}\subset\mathcal{Q}$ such that
$f=\sum_{j=1}^{\infty}\lambda_j a_j$
converges in $L^r(\rn)$ and
$$
\lf\|\lf\{\sum_{j=1}^{\infty} \lf[\frac{\lambda_j}
{\|\chi_{Q_j}\|_{(E_\Phi^q)_t(\rn)}} \r]^s\chi_{Q_j} \r\}
^{\frac{1}{s}}\r\|_{(E_\Phi^q)_t(\rn)}\lesssim\|f\|_{(HE_\Phi^q)_t(\rn)}\lesssim1.
$$
From the fact that
$\mathcal{T}$ is bounded on $L^r(\rn)$ (see, for example, \cite[Theorem 5.1]{d}),
we deduce that
\begin{equation*}
\mathcal{T}(f)=\sum_{j=1}^{\infty}\lambda_j \mathcal{T}(a_j)
\end{equation*}
converges in $L^r(\rn)$.
Using this and Theorem \ref{mdj}, we have
\begin{align*}
\lf\|\mathcal{T}(f)\r\|_{(HE_\Phi^{q})_t(\rn)}&\sim\lf\|M\lf(\mathcal{T}(f),\varphi\r)\r\|_{(E_\Phi^q)_t(\rn)}\\
&\lesssim\lf\|\sum_{j=1}^{\infty}\lambda_jM\lf(\mathcal{T}(a_j),\varphi\r)
\chi_{4\sqrt{n}Q_j}\r\|_{(E_\Phi^q)_t(\rn)}+
\lf\|\sum_{j=1}^{\infty}\lambda_jM\lf(\mathcal{T}(a_j),\varphi\r)\chi_{\rn\setminus 4\sqrt{n}Q_j}\r\|_{(E_\Phi^q)_t(\rn)}\\
&=:\mathrm{I}+\mathrm{II},
\end{align*}
where $\varphi\in\mathcal{S}(\rn)$ satisfies $\int_{\rn}\varphi(x)\,dx\neq0$
and $M(\mathcal{T}(f),\varphi)$ is as in Definition \ref{31}.

For $\mathrm{I}$, by the boundedness of $\mathcal{T}$ on $L^r(\rn)$ and the fact that $M(\mathcal{T}(a),\varphi)\lesssim\mathcal{M}(\mathcal{T}(a))$, we conclude that,
for any $j\in\mathbb{N}$,
$$
\lf\|M\lf(\mathcal{T}(a_j),\varphi\r)\chi_{4\sqrt{n}Q_j}\r\|_{L^r(\rn)}
\lesssim\lf\|\mathcal{M}\lf(\mathcal{T}(a_j)\r)\r\|_{L^r(\rn)}\lesssim\lf\|\mathcal{T}(a_j)\r\|_{L^r(\rn)}
\lesssim\lf\|a_j\r\|_{L^r(\rn)}\lesssim\frac{|Q_j|^{\frac{1}{r}}}{\|\chi_{Q_j}\|_{(E_\Phi^q)_t(\rn)}},
$$
which, combined with Lemma \ref{czsuan}, implies that
$$
\mathrm{I}
\lesssim\lf\|\lf\{\sum_{j=1}^{\infty}\lf[\lambda_jM(\mathcal{T}(a_j),\varphi)
\chi_{4\sqrt{n}Q_j}\r]^s\r\}^{\frac{1}{s}}\r\|_{(E_\Phi^q)_t(\rn)}
\lesssim\lf\|\lf\{\sum_{j=1}^{\infty} \lf[\frac{\lambda_j}
{\|\chi_{Q_j}\|_{(E_\Phi^q)_t(\rn)}} \r]^s\chi_{Q_j} \r\}
^{\frac{1}{s}}\r\|_{(E_\Phi^q)_t(\rn)}
\lesssim1.
$$
This is a desired estimate.

As for $\mathrm{II}$, for any $\tau\in(0,\infty)$, let $k^{(\tau)}:=k\ast\varphi_\tau$
with $\varphi_\tau(\cdot):=\frac{1}{\tau^n}\varphi(\frac{\cdot}{\tau})$. By the proof
of \cite[Theorem 7.4]{yyyz}, we find that
$k^{(\tau)}$ satisfies the same conditions as $k$.

Now, by the vanishing moment condition of $a_j$ and the H\"older inequality, we know that, for any
$x\notin 4\sqrt{n}Q_j$,
\begin{align*}
\lf|M\lf(\mathcal{T}(a_j),\varphi\r)(x)\r|
&=\sup_{\tau\in(0,\infty)}\lf|\varphi_\tau\ast (k \ast a_j)(x)\r|
=\sup_{\tau\in(0,\infty)}\lf|k^{(\tau)} \ast a_j(x)\r|\\
&=\sup_{\tau\in(0,\infty)}\lf|\int_{\rn} \lf[k^{(\tau)}(x-y)-k^{(\tau)}(x-x_j) \r]a_j(y)\,dy\r|\\
&\lesssim\int_{\rn} \frac{|y-x_j|^\delta}{|x-x_j|^{n+\delta}}\lf|a_j(y)\r|\,dy
\lesssim\frac{r_j^\delta}{|x-x_j|^{n+\delta}}\|a_j\|_{L^r(\rn)}\lf|Q_j\r|^{\frac{1}{r'}}\\
&\lesssim\frac{r_j^{n+\delta}}{|x-x_j|^{n+\delta}}\frac{1}{\|\chi_{Q_j}\|_{(E_\Phi^{q})_t(\rn)}}
\lesssim\lf[\mathcal{M}\lf(\chi_{Q_j}\r)(x)
\r]^{\frac{n+\delta}{n}}\frac{1}{\|\chi_{Q_j}\|_{(E_\Phi^{q})_t(\rn)}},
\end{align*}
which implies that, for any $x\notin 4\sqrt{n}Q_j$,
$$
\lf|M\lf(\mathcal{T}(a_j),\varphi\r)(x)\r|\chi_{\rn\setminus 4\sqrt{n}Q_j}(x)
\lesssim\lf[\mathcal{M}\lf(\chi_{Q_j}\r)(x)\r]^{\frac{n+\delta}{n}}\frac{1}{\|\chi_{Q_j}\|_{(E_\Phi^{q})_t(\rn)}}.
$$
Therefore, we have
\begin{align*}
\mathrm{II}
\lesssim\lf\|\sum_{j=1}^{\infty}\frac{\lambda_j}{\|\chi_{Q_j}\|_{(E_\Phi^{q})_t(\rn)}}
\lf[\mathcal{M}\lf(\chi_{Q_j}\r)\r]^{\frac{n+\delta}{n}}\r\|_{(E_\Phi^q)_t(\rn)}.
\end{align*}
Let $u:=\frac{n}{n+\delta}$ and $\Phi_u(\tau):=\Phi(\sqrt[u]{\tau})$. Since $\min\{p_{\Phi}^-,\ q\}\in(\frac{n}{n+\delta},1]$, it follows that $\Phi_u$ is of upper type
$\frac{p_{\Phi}^+}{u}$ and of lower type $\frac{p_{\Phi}^-}{u}$, and $\frac{p_{\Phi}^-}{u}$, $\frac{q}{u}\in(1,\infty)$.
By this and Theorem \ref{main}, we further conclude that
\begin{align}\label{gg}
\mathrm{II}
&\lesssim\lf\| \lf\{ \sum_{j=1}^{\infty} \frac{\lambda_j}{\|\chi_{Q_j}\|_{(E_\Phi^{q})_t(\rn)}}
\lf[\mathcal{M}(\chi_{Q_j})\r]^{\frac{1}{u}}  \r\}^{u} \r\|_{(E_{\Phi_u}^{q/u})_t(\rn)}^{\frac{1}{u}}
\lesssim\lf\| \lf\{ \sum_{j=1}^{\infty} \frac{\lambda_j\chi_{Q_j}}{\|\chi_{Q_j}\|_{(E_\Phi^{q})_t(\rn)}}
\r\}^{u} \r\|_{(E_{\Phi_u}^{q/u})_t(\rn)}^{\frac{1}{u}}\\ \noz
&\lesssim\lf\|\lf\{\sum_{j=1}^{\infty} \lf[\frac{\lambda_j}
{\|\chi_{Q_j}\|_{(E_\Phi^q)_t(\rn)}} \r]^s\chi_{Q_j} \r\}
^{\frac{1}{s}}\r\|_{(E_\Phi^q)_t(\rn)}
\lesssim1.
\end{align}

Combining the estimates for $\mathrm{I}$ and $\mathrm{II}$,
we obtain \eqref{h2},
which completes the proof of (ii) and hence of Theorem \ref{cz}.
\end{proof}

We recall the notion of $\beta$-order
Calder\'on-Zygmund operators as follows (see, for example, \cite{yyyz}).

\begin{definition}
For any given $\beta\in(0,\infty)\setminus\mathbb{N}$, a linear operator $\mathcal{T}$ is called a \emph{$\beta$-order
Calder\'on-Zygmund operator} if $\mathcal{T}$ is bounded on $L^2(\rn)$ and its kernel
$$
k:\ (\rn\times\rn)\setminus\lf\{(x,x):\ x\in\rn\r\}\rightarrow\mathbb{C}
$$
satisfies that there exists a positive constant $C$ such that, for any $\alpha\in\mathbb{Z}_+^n$ with $|\alpha|\le\lfloor\beta\rfloor$
and $x,\ y,\ z\in\rn$ with $|x-y|>2|y-z|$,
\begin{equation}\label{71}
\lf|\partial_x^\alpha k(x,y)-\partial_x^\alpha k(x,z)\r|\le C\frac{|y-z|^{\beta-\lfloor\beta\rfloor}}{|x-y|^{n+\beta}}
\end{equation}
and, for any $f\in L^2(\rn)$ having compact support and $x\notin \supp f$,
$$
\mathcal{T}(f)(x)=\int_{\supp f}k(x,y)f(y)\,dy.
$$
\end{definition}
Here and hereafter, for any $\alpha:=(\alpha_1, \ldots, \alpha_n)\in\zz_+^n$,
$\partial_x^{\alpha}:=(\frac{\partial}{\partial x_1})
^{\alpha_1}\cdots(\frac{\partial}{\partial x_n})^{\alpha_n}$.

Next, we establish the boundedness of the $\beta$-order
Calder\'on-Zygmund operator $\mathcal{T}$ from the Orlicz-slice Hardy space $(HE_\Phi^{q})_t(\rn)$
to itself (see Theorem \ref{cz2}) or to $(E_\Phi^{q})_t(\rn)$ (see Theorem \ref{cz3}).
Recall that, for any $l\in\mathbb{N}$, an operator $\mathcal{T}$ is said to have the
\emph{vanishing moment condition up to order $l$} if, for any $a\in L^2(\rn)$ with
compact support and satisfying that, for any $\gamma\in\zz_+^n$ with $|\gamma|\leq l$,
$\int_{\rn}x^\gamma a(x)\,dx=0$, it holds true that $\int_{\rn}x^{\gamma}\mathcal{T}(a)(x)\,dx=0$.

\begin{theorem}\label{cz2}
Let $t\in(0,\infty)$, $q\in(0,2)$, $\beta\in(0,\infty)\setminus\mathbb{N}$ and $\Phi$ be an Orlicz function with positive lower type $p_{\Phi}^-$ satisfying $\min\{p_{\Phi}^-,\ q\}\in(\frac{n}{n+\beta},\frac{n}{n+\lfloor\beta\rfloor}]$
and positive upper type $p_{\Phi}^+\in(0,2)$. Let $\mathcal{T}$ be a $\beta$-order
Calder\'on-Zygmund operator and have the vanishing moment condition up to order $\lfloor\beta\rfloor$.
Then $\mathcal{T}$ has a unique extension on $(HE_\Phi^{q})_t(\rn)$ and, for any $f\in(HE_\Phi^{q})_t(\rn)$,
$$
\lf\|\mathcal{T}(f)\r\|_{(HE_\Phi^{q})_t(\rn)}\le C\|f\|_{(HE_\Phi^{q})_t(\rn)},
$$
where $C$ is positive constant independent of $f$ and $t$.
\end{theorem}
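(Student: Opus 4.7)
The strategy is to invoke Theorem \ref{suanzi1}(i) with target $\mathcal{B}_\gamma:=(HE_\Phi^q)_t(\rn)$, so that the problem reduces to establishing the uniform bound $\|\mathcal{T}(f)\|_{(HE_\Phi^q)_t(\rn)}\lesssim\|f\|_{(HE_\Phi^{q,r,d})_t^{\fin}(\rn)}$ on a suitable finite atomic space; the target being a $\gamma$-quasi-Banach space for $\gamma:=s\in(0,\min\{p_\Phi^-,q,1\})$ is a consequence of the atomic characterization Theorem \ref{atom ch} combined with the $s$-convexity of $(E_\Phi^q)_t(\rn)$ from Lemma \ref{sconvex}. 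The key technical claim to verify is that for each $((E_\Phi^q)_t(\rn),r,d)$-atom $a_j$ supported on a cube $Q_j=Q(x_j,r_j)$, the function $\mathcal{T}(a_j)/C_0$ is an $((E_\Phi^q)_t(\rn),r,d,\tau)$-molecule centered at $Q_j$ in the sense of Definition \ref{demole}, for some uniform constant $C_0$. Once this is established, applying Theorem \ref{molecular} to the decomposition $\mathcal{T}(f)=\sum_j\lambda_j\mathcal{T}(a_j)$ will deliver the required estimate.

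First I choose the parameters. Since $\max\{p_\Phi^+,q\}<2$, I take $r:=2\in(\max\{1,q,p_\Phi^+\},\infty)$ and $d:=\lfloor\beta\rfloor$. Using the hypothesis $\min\{p_\Phi^-,q\}\in(\frac{n}{n+\beta},\frac{n}{n+\lfloor\beta\rfloor}]$ I can select
\begin{equation*}
s\in\lf(\tfrac{n}{n+\beta},\min\{p_\Phi^-,q,1\}\r),
\end{equation*}
so that the inequality $n(\tfrac{1}{s}-1)<\beta$ guarantees $d\ge\lfloor n(\tfrac{1}{s}-1)\rfloor$. The molecular decay exponent $\tau:=n+\beta-\tfrac{n}{r}=n+\beta-\tfrac{n}{2}$ then satisfies $\tau>n(\tfrac{1}{s}-\tfrac{1}{r})$, matching exactly the admissibility range of Theorem \ref{molecular}.

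Verification that $\mathcal{T}(a_j)/C_0$ is a molecule breaks into three parts. The moment condition $\mathcal{T}(a_j)\bot\mathcal{P}_d(\rn)$ is immediate from the imposed vanishing moment property of $\mathcal{T}$ applied to $a_j\in L^2(\rn)$, since $a_j$ has compact support and vanishing moments up to order $d=\lfloor\beta\rfloor$. The near-field estimate on $S_0(Q_j)=2Q_j$ uses the $L^2$-boundedness of $\mathcal{T}$ together with the atomic size bound:
\begin{equation*}
\lf\|\chi_{S_0(Q_j)}\mathcal{T}(a_j)\r\|_{L^2(\rn)}\le\lf\|\mathcal{T}(a_j)\r\|_{L^2(\rn)}\lesssim\lf\|a_j\r\|_{L^2(\rn)}\le\frac{|Q_j|^{1/2}}{\|\chi_{Q_j}\|_{(E_\Phi^q)_t(\rn)}}.
\end{equation*}
The principal difficulty, and the main obstacle, lies in the far-field estimate. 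I will write, for $x\notin 4Q_j$,
\begin{equation*}
\mathcal{T}(a_j)(x)=\int_{Q_j}\lf[k(x,y)-\sum_{|\alpha|\le\lfloor\beta\rfloor}\frac{(y-x_j)^\alpha}{\alpha!}\partial_y^\alpha k(x,x_j)\r]a_j(y)\,dy,
\end{equation*}
invoking the vanishing moments of $a_j$ to insert the Taylor polynomial for free, and then control the bracketed remainder by $|y-x_j|^\beta|x-x_j|^{-(n+\beta)}$ via the H\"older-type regularity hypothesis \eqref{71}. This should yield the pointwise bound $|\mathcal{T}(a_j)(x)|\lesssim r_j^{n+\beta}|x-x_j|^{-(n+\beta)}\|\chi_{Q_j}\|_{(E_\Phi^q)_t(\rn)}^{-1}$, and integration over the annular shell $S_k(Q_j)$ (of volume comparable to $2^{kn}|Q_j|$) will then produce the desired molecular decay
\begin{equation*}
\lf\|\chi_{S_k(Q_j)}\mathcal{T}(a_j)\r\|_{L^2(\rn)}\lesssim 2^{-k\tau}\frac{|Q_j|^{1/2}}{\|\chi_{Q_j}\|_{(E_\Phi^q)_t(\rn)}}.
\end{equation*}
The delicate technical point is the Taylor-expansion-plus-regularity step, which requires interpreting \eqref{71} correctly to bound the remainder in the $y$-variable; once handled, assembling the pieces via Theorem \ref{molecular} and Theorem \ref{suanzi1}(i) completes the argument.
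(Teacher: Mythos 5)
Your proposal is correct in outline, but it follows a genuinely different route from the paper. The paper estimates the radial maximal function $M(\mathcal{T}(a_j),\varphi)$ directly outside $4\sqrt{n}Q_j$: it inserts a Taylor polynomial of $\varphi$ at $\frac{x-x_j}{\tau}$, splits the integral into the three annular regions $|y-x_j|<2r_j$, $2r_j\le|y-x_j|<|x-x_j|/2$, and $|y-x_j|\ge|x-x_j|/2$, and bounds each piece separately (the quantities $\mathrm{II}_1,\mathrm{II}_2,\mathrm{II}_3$). You instead prove that $\mathcal{T}(a_j)$ itself is an $((E_\Phi^q)_t(\rn),r,d,\tau)$-molecule and feed it into Theorem \ref{molecular}, letting the already-established molecular-to-maximal-function machinery do the work. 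The pointwise far-field estimate $|\mathcal{T}(a_j)(x)|\lesssim r_j^{n+\beta}|x-x_j|^{-(n+\beta)}\|\chi_{Q_j}\|_{(E_\Phi^q)_t(\rn)}^{-1}$ you obtain is exactly the one the paper derives in the proof of Theorem \ref{cz3}; so in effect you observe that the argument for Theorem \ref{cz3} plus Theorem \ref{molecular} gives Theorem \ref{cz2} for free, avoiding the threefold decomposition entirely. Both approaches buy something: the paper's direct attack is self-contained and does not rely on Theorem \ref{molecular}, while yours is substantially shorter and uses less ad hoc manipulation of the maximal function.

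Two minor points to be careful about. First, your check that $(HE_\Phi^q)_t(\rn)$ is a $\gamma$-quasi-Banach space for the specific $\gamma=s$ is more than Theorem \ref{suanzi1} requires; the criterion only needs some $\gamma\in(0,1]$, and this holds because $(HE_\Phi^q)_t(\rn)$ is quasi-Banach (Aoki--Rolewicz). Second, about the variable in which you differentiate the kernel: you write $\partial_y^\alpha k(x,x_j)$, whereas the paper's hypothesis \eqref{71} is stated in terms of $\partial_x^\alpha$. You have flagged this yourself as the delicate step; note that the paper's own Theorem \ref{cz3} proof also Taylor-expands in the second slot while invoking \eqref{71} with $\partial_x^\alpha$, so the ambiguity lies in the paper's notation for the kernel regularity condition, not in your argument. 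Taking \eqref{71} to mean regularity of the second-slot derivatives, as both you and the paper implicitly do, your molecular far-field bound goes through, the exponent $\tau=n+\beta-n/r$ with $r=2$ satisfies $\tau>n(1/s-1/r)$ for any $s\in(\tfrac{n}{n+\beta},\min\{p_\Phi^-,q\})$, and the choice $d=\lfloor\beta\rfloor$ indeed equals $\lfloor n(1/s-1)\rfloor$ under your constraints on $s$, so Theorems \ref{atom ch} and \ref{molecular} apply.
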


\begin{proof}
Let $\{\lambda_j\}_{j=1}^{\infty}$ and $\{a_j\}_{j=1}^{\infty}$ be the same as
in the proof of Theorem \ref{cz}. By an argument similar to that used in the proof of Theorem \ref{cz}, we know that, to prove Theorem \ref{cz2},
it suffices to show that
\begin{equation}\label{yuan}
\lf\|\sum_{j=1}^{\infty}\lambda_jM\lf(\mathcal{T}(a_j),\varphi\r)\r\|_{(E_\Phi^q)_t(\rn)}\lesssim1,
\end{equation}
where $M(\mathcal{T}(a_j),\varphi)$ is as in Definition \ref{31}.

To this end, it is easy to see that
\begin{align*}
\lf\|\sum_{j=1}^{\infty}\lambda_jM\lf(\mathcal{T}(a_j),\varphi\r)\r\|_{(E_\Phi^q)_t(\rn)}
&\lesssim\lf\|\sum_{j=1}^{\infty}\lambda_jM\lf(\mathcal{T}(a_j),\varphi\r)
\chi_{4\sqrt{n}Q_j}\r\|_{(E_\Phi^q)_t(\rn)}\\
&\quad+\lf\|\sum_{j=1}^{\infty}\lambda_jM\lf(\mathcal{T}(a_j),\varphi\r)
\chi_{\rn\setminus 4\sqrt{n}Q_j}\r\|_{(E_\Phi^q)_t(\rn)}
=:\mathrm{I}+\mathrm{II},
\end{align*}
where, for any $j\in\mathbb{N}$, $Q_j:=Q(x_j,r_j)$ is the same as in the proof of Theorem \ref{cz}.

For $\mathrm{I}$, by an argument similar to that used in the proof
 of Theorem \ref{cz}, we conclude that $\mathrm{I}\lesssim1$.

Next, we deal with $\mathrm{II}$. To this end,
from the vanishing moment condition of $\mathcal{T}$ and the fact that $\lfloor\beta\rfloor\le n(\frac{1}{\min\{q,p_{\Phi}^-\}}-1)$ implies $\lfloor\beta\rfloor\le d$, it follows that, for any $j\in\mathbb{N}$,
 $\tau\in(0,\infty)$ and $x\notin4Q_j$,
\begin{align}\label{79}
\lf|\varphi_\tau\ast\mathcal{T}(a_j)(x)\r|
&=\frac{1}{\tau^n}\lf|\int_{\rn}
\varphi\lf(\frac{x-y}{\tau}\r)\mathcal{T}(a_j)(y)\,dy\r|\\\noz
&\le\frac{1}{\tau^n}\int_{\rn}\lf|\varphi\lf(\frac{x-y}{\tau}\r)-
\sum_{|\alpha|\le\lfloor\beta\rfloor}\frac{\partial^\alpha\varphi
(\frac{x-x_j}{\tau})}{\alpha!}\lf(\frac{y-x_j}{\tau}\r)^{\alpha}\r|
\lf|\mathcal{T}(a_j)(y)\r|\,dy\\\noz
&=\frac{1}{\tau^n}\lf(\int_{|y-x_j|<2r_j}+\int_{2r_j\le|y-x_j|<
\frac{|x-x_j|}{2}}+\int_{|y-x_j|\ge\frac{|x-x_j|}{2}}\r)\\\noz
&\quad\times\lf|\varphi\lf(\frac{x-y}{\tau}\r)-
\sum_{|\alpha|\le\lfloor\beta\rfloor}\frac{\partial^\alpha
\varphi(\frac{x-x_j}{\tau})}{\alpha!}\lf(\frac{y-x_j}{\tau}\r)^{\alpha}\r|
\lf|\mathcal{T}(a_j)(y)\r|\,dy\\\noz
&=:\mathrm{II}_1+\mathrm{II}_2+\mathrm{II}_3,
\end{align}
where $\varphi\in\mathcal{S}(\rn)$ satisfying $\int_{\rn}\varphi(x)\,dx\neq0$.

For $\mathrm{II}_1$, by the Taylor remainder theorem, the H\"older inequality and the fact that $\mathcal{T}$ is bounded on $L^2(\rn)$, similarly to the estimation of \cite[(7.9)]{yyyz},
we find that, for any $\tau\in(0,\infty)$ and $x\notin 4Q_j$,
\begin{align*}
\mathrm{II}_1&\lesssim\frac{1}{\tau^n}\int_{|y-x_j|<2r_j}
\frac{\tau^{n+\lfloor\beta\rfloor+1}}{|x-x_j|^{n+\lfloor\beta\rfloor+1}}
\frac{|y-x_j|^{\lfloor\beta\rfloor+1}}{\tau^{\lfloor\beta\rfloor+1}}\lf|\mathcal{T}(a_j)(y)\r|\,dy\\
&\lesssim\frac{r_j^{\lfloor\beta\rfloor+1}}{|x-x_j|^{n+\lfloor\beta\rfloor+1}}\lf\|
\mathcal{T}a_j\r\|_{L^2(\rn)}\lf|Q_j\r|^{\frac{1}{2}}
\lesssim\frac{r_j^{n+\lfloor\beta\rfloor+1}}{|x-x_j|^{n+\lfloor\beta\rfloor+1}}
\frac{1}{\|\chi_{Q_j}\|_{(E_\Phi^{q})_t(\rn)}}.
\end{align*}

For $\mathrm{II}_2$, by the Taylor remainder theorem, the vanishing moments of $a_j$, the fact that $\lfloor\beta\rfloor\le n(\frac{1}{\min\{q,p_{\Phi}^-\}}-1)\le d$,
(\ref{71}) and the H\"older inequality, similarly to the estimation of \cite[(7.10)]{yyyz}, we conclude that,
for any
$\tau\in(0,\infty)$ and $x\notin4Q_j$,
\begin{align*}
\mathrm{II}_2
&\lesssim\frac{1}{|x-x_j|^{n+\lfloor\beta\rfloor+1}}\int_{2r_j\le|y-x_j|<\frac{|x-x_j|}{2}}
|y-x_j|^{\lfloor\beta\rfloor+1}\int_{Q_j} |a_j(z)|
\frac{|z-x_j|^\beta}{|y-x_j|^{n+\beta}}\,dz\,dy\\
&\lesssim\frac{r_j^{\beta}}{|x-x_j|^{n+\lfloor\beta\rfloor+1}}  \int_{2r_j\le|y-x_j|<\frac{|x-x_j|}{2}}
\frac{1}{|y-x_j|^{n+\beta-\lfloor\beta\rfloor-1}}\,dy\|a_j\|_{L^2(\rn)}|Q_j|^{\frac{1}{2}}\\
&\lesssim\frac{r_j^{n+\beta}}{|x-x_j|^{n+\beta}}\frac{1}{\|\chi_{Q_j}\|_{(E_\Phi^{q})_t(\rn)}}.
\end{align*}

For $\mathrm{II}_3$, by the vanishing moments of $a_j$, the fact that $\lfloor\beta\rfloor\le n(\frac{1}{\min\{q,p_{\Phi}^-\}}-1)\le d$,
(\ref{71}) and the H\"older inequality,
similarly to the estimation of \cite[(7.11)]{yyyz}, we know that, for any
$\tau\in(0,\infty)$ and $x\notin4Q_j$,
\begin{align*}
\mathrm{II}_3
&\lesssim\int_{|y-x_j|\ge\frac{|x-x_j|}{2}}
|\varphi_\tau(x-y)|\int_{Q_j} |a_j(z)|\frac{|z-x_j|^\beta}{|y-x_j|^{n+\beta}}\,dz\,dy\\
&\quad +\int_{|y-x_j|\ge\frac{|x-x_j|}{2}}
\lf|\frac{1}{\tau^n}\sum_{|\alpha|\le\lfloor\beta\rfloor}
\frac{\partial^\alpha\varphi(\frac{x-x_j}{\tau})}{\alpha!}
\lf(\frac{y-x_j}{\tau}\r)^{\alpha}\r|
\int_{Q_j} |a_j(z)|\frac{|z-x_j|^\beta}{|y-x_j|^{n+\beta}}\,dz\,dy\\
&\lesssim\frac{|r_j|^\beta}{|x-x_j|^{n+\beta}}\|a_j\|_{L^2(\rn)}|Q_j|^{\frac{1}{2}}
\int_{|y-x_j|\ge\frac{|x-x_j|}{2}}|\varphi_\tau(x-y)|\,dy\\
&\quad +\sum_{|\alpha|\le\lfloor\beta\rfloor} |r_j|^\beta \|a_j\|_{L^2(\rn)} |Q|^{\frac{1}{2}}
\int_{|y-x_j|\ge\frac{|x-x_j|}{2}}
\frac{1}{\tau^n}\frac{\tau^{n+\alpha}}{|x-x_j|^{n+\alpha}}
\frac{|y-x_j|^{\alpha}}{\tau^\alpha}\frac{1}{|y-x_j|^{n+\beta}}\,dy\\
&\lesssim
\frac{1}{\|\chi_{Q_j}\|_{(E_\Phi^{q})_t(\rn)}}\frac{r_j^{n+\beta}}{|x-x_j|^{n+\beta}}.
\end{align*}

Combining (\ref{79}) and the estimates of $\mathrm{II}_1$, $\mathrm{II}_2$ and
$\mathrm{II}_3$, we conclude that, for any $x\notin4Q_j$,
\begin{align*}
M(\mathcal{T}(a_j),\varphi)(x)
&=\sup_{\tau\in(0,\infty)}|\varphi_\tau\ast \mathcal{T}(a_j)(x)|\\
&\lesssim\frac{r_j^{n+\beta}}{|x-x_j|^{n+\beta}}
\frac{1}{\|\chi_{Q_j}\|_{(E_\Phi^{q})_t(\rn)}}\lesssim\lf[\mathcal{M}(\chi_{Q_j})(x)\r]^{\frac{n+\delta}{n}}
\frac{1}{\|\chi_{Q_j}\|_{(E_\Phi^{q})_t(\rn)}},
\end{align*}
which further implies that, for any $x\in\rn$,
$$
M(\mathcal{T}(a_j),\varphi)(x)\chi_{4Q_j}(x)\lesssim\lf[\mathcal{M}(\chi_{Q_j})(x)\r]^{\frac{n+\delta}{n}}
\frac{1}{\|\chi_{Q_j}\|_{(E_\Phi^{q})_t(\rn)}}.
$$
Then, by an argument similar to that used in the proof of Theorem \ref{cz}, we know that
\eqref{yuan} holds true,
which completes the proof of Theorem \ref{cz2}.
\end{proof}

\begin{theorem}\label{cz3}
Let $t\in(0,\infty)$, $q\in(0,2)$, $\beta\in(0,\infty)\setminus\mathbb{N}$ and $\Phi$ be an Orlicz function with positive lower type $p_{\Phi}^-$ satisfying $\min\{p_{\Phi}^-,\ q\}\in(\frac{n}{n+\beta},\frac{n}{n+\lfloor\beta\rfloor}]$ and positive upper type
$p_{\Phi}^+\in(0,2)$.
Let $\mathcal{T}$ be a $\beta$-order
Calder\'on-Zygmund operator.
Then $\mathcal{T}$ has a unique extension from $(HE_\Phi^{q})_t(\rn)$ to $(E_\Phi^{q})_t(\rn)$
and, for any $f\in(HE_\Phi^{q})_t(\rn)$,
$$
\|\mathcal{T}(f)\|_{(E_\Phi^{q})_t(\rn)}\le C\|f\|_{(HE_\Phi^{q})_t(\rn)},
$$
where $C$ is positive constant independent of $f$ and $t$.
\end{theorem}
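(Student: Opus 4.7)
The plan is to follow the template of Theorem \ref{cz}, replacing the first-order kernel regularity used there by the $\lfloor\beta\rfloor$-th order Taylor expansion of the kernel $K(x,\cdot)$ around the center of each atom. Since the target space is $(E_\Phi^q)_t(\rn)$ rather than $(HE_\Phi^q)_t(\rn)$, we do not need a maximal-function characterization of the image, nor any preservation of vanishing moments by $\mathcal{T}$; the nonlocal estimate is performed directly on $|\mathcal{T}(a_j)|$.

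First, choose $s\in(0,\min\{p_\Phi^-,q,1\})$ close enough to $\min\{p_\Phi^-,q\}$ so that, using the hypothesis $\min\{p_\Phi^-,q\}\le n/(n+\lfloor\beta\rfloor)$, we have $n(\tfrac{1}{s}-1)\ge\lfloor\beta\rfloor$; then pick $d\in\mathbb{Z}_+$ with $d\ge\lfloor n(\tfrac{1}{s}-1)\rfloor\ge\lfloor\beta\rfloor$, and fix $r\in(\max\{1,q,p_\Phi^+\},2]$ (this is allowed since $p_\Phi^+,q\in(0,2)$). For $f\in(HE_\Phi^{q,r,d})_t^{\fin}(\rn)\cap L^r(\rn)$ with $\|f\|_{(HE_\Phi^q)_t(\rn)}=1$, invoke the atomic decomposition from the proof of Theorem \ref{atom ch} (via \cite[Theorem 3.7]{ykds}) to write $f=\sum_{j=1}^\infty\lambda_j a_j$ in $L^r(\rn)$, where each $a_j$ is an $((E_\Phi^q)_t(\rn),r,d)$-atom supported in a cube $Q_j=Q(x_j,r_j)$, and
\[
\Big\|\Big\{\sum_{j=1}^{\infty}\big[\lambda_j/\|\chi_{Q_j}\|_{(E_\Phi^q)_t(\rn)}\big]^s\chi_{Q_j}\Big\}^{1/s}\Big\|_{(E_\Phi^q)_t(\rn)}\lesssim 1.
\]
Since $\mathcal{T}$ is bounded on $L^r(\rn)$ (by the standard Calder\'on-Zygmund theory, which applies to $\beta$-order operators via condition \eqref{71}), the sum $\sum_j\lambda_j\mathcal{T}(a_j)$ converges in $L^r(\rn)$ to $\mathcal{T}(f)$.

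Next I would split the quasi-norm by separating each $\mathcal{T}(a_j)$ into a local part $\chi_{4\sqrt n Q_j}$ and a tail part $\chi_{\rn\setminus 4\sqrt n Q_j}$. For the local part, $L^r$-boundedness of $\mathcal{T}$ combined with the atomic size estimate yields
\[
\big\|\chi_{4\sqrt n Q_j}\mathcal{T}(a_j)\big\|_{L^r(\rn)}\lesssim\|a_j\|_{L^r(\rn)}\le|Q_j|^{1/r}/\|\chi_{Q_j}\|_{(E_\Phi^q)_t(\rn)},
\]
so by Lemma \ref{czsuan} this part is dominated by the atomic quasi-norm, hence by $\lesssim 1$. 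For the nonlocal part, the key pointwise estimate, for any $x\notin 4\sqrt n Q_j$, is obtained by using the vanishing moments of $a_j$ up to order $d\ge\lfloor\beta\rfloor$ together with the Taylor remainder of $y\mapsto K(x,y)$ at $x_j$: decomposing the domain of integration exactly as in \eqref{79} (into $|y-x_j|<2r_j$, $2r_j\le|y-x_j|<|x-x_j|/2$, and $|y-x_j|\ge|x-x_j|/2$) and applying \eqref{71} in the middle region plus H\"older with $\|a_j\|_{L^2}|Q_j|^{1/2}$ (or $\|a_j\|_{L^1}$ via H\"older) yields, verbatim as in the estimates for $\mathrm{II}_1,\mathrm{II}_2,\mathrm{II}_3$ in the proof of Theorem \ref{cz2},
\[
|\mathcal{T}(a_j)(x)|\lesssim\frac{r_j^{n+\beta}}{|x-x_j|^{n+\beta}}\cdot\frac{1}{\|\chi_{Q_j}\|_{(E_\Phi^q)_t(\rn)}}\lesssim\big[\mathcal{M}(\chi_{Q_j})(x)\big]^{(n+\beta)/n}\frac{1}{\|\chi_{Q_j}\|_{(E_\Phi^q)_t(\rn)}}.
\]

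Finally, with $u:=n/(n+\beta)$ and $\Phi_u(\tau):=\Phi(\tau^{1/u})$, the condition $\min\{p_\Phi^-,q\}>n/(n+\beta)$ ensures $p_\Phi^-/u,q/u\in(1,\infty)$, so the vector-valued Fefferman-Stein inequality (Theorem \ref{main}) applies on $(E_{\Phi_u}^{q/u})_t(\rn)$. This gives, exactly as in \eqref{gg},
\[
\Big\|\sum_{j=1}^\infty\lambda_j\mathcal{T}(a_j)\chi_{\rn\setminus 4\sqrt n Q_j}\Big\|_{(E_\Phi^q)_t(\rn)}\lesssim\Big\|\Big\{\sum_{j=1}^{\infty}\big[\lambda_j/\|\chi_{Q_j}\|_{(E_\Phi^q)_t(\rn)}\big]^s\chi_{Q_j}\Big\}^{1/s}\Big\|_{(E_\Phi^q)_t(\rn)}\lesssim 1,
\]
completing the estimate on the dense subspace $(HE_\Phi^{q,r,d})_t^{\fin}(\rn)$. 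Extension to all of $(HE_\Phi^q)_t(\rn)$ is then immediate from Theorem \ref{suanzi1}(i) applied with $\mathcal{B}_\gamma=(E_\Phi^q)_t(\rn)$ (taking $\gamma$ below $\min\{p_\Phi^-,q,1\}$ so that $(E_\Phi^q)_t(\rn)$ is $\gamma$-quasi-Banach, which follows from Lemma \ref{sconvex} and its proof).

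The main obstacle is the nonlocal pointwise estimate: one must carefully split the integral into the three regions and exploit both the vanishing moments of the atom (to insert the Taylor polynomial of $K$) and the kernel smoothness \eqref{71} (to control the remainder), making sure $d\ge\lfloor\beta\rfloor$, which forces the lower bound $\min\{p_\Phi^-,q\}\le n/(n+\lfloor\beta\rfloor)$ in the hypothesis to be matched to $\lfloor\beta\rfloor$. Everything else — the Fefferman-Stein step and the assembly via Lemma \ref{czsuan} — is essentially a replay of the convolutional argument in Theorem \ref{cz}.
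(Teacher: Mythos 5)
Your overall framework is right and matches the paper: reduce to $f\in(HE_\Phi^{q,r,d})_t^{\fin}(\rn)$ with $\|f\|_{(HE_\Phi^q)_t(\rn)}=1$, decompose $f=\sum_j\lambda_ja_j$ atomically, split $\sum_j\lambda_j\mathcal{T}(a_j)$ into a local part on $4\sqrt n Q_j$ and a tail, handle the local part by $L^r$-boundedness of $\mathcal{T}$ together with Lemma \ref{czsuan}, handle the tail by the pointwise bound $|\mathcal{T}(a_j)(x)|\lesssim r_j^{n+\beta}|x-x_j|^{-(n+\beta)}\|\chi_{Q_j}\|_{(E_\Phi^q)_t(\rn)}^{-1}$ feeding into Theorem \ref{main} at the scale $u=n/(n+\beta)$, and extend by Theorem \ref{suanzi1}. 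The parameter bookkeeping for $s$, $d$ and $u$ is also correct; in fact the paper uses $L^2$-boundedness for the local piece, but your $L^r$-boundedness (with $r\in(\max\{1,q,p_\Phi^+\},2]$) is equally fine.

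The one genuine misstep is in how you claim to derive the key tail pointwise estimate. You propose ``decomposing the domain of integration exactly as in \eqref{79}'' into three regions and then proceeding ``verbatim as in the estimates for $\mathrm{II}_1,\mathrm{II}_2,\mathrm{II}_3$ in the proof of Theorem \ref{cz2}.'' That machinery does not transfer: \eqref{79} estimates the radial maximal function $M(\mathcal{T}(a_j),\varphi)(x)$ through $\varphi_\tau*\mathcal{T}(a_j)(x)$, whose integral ranges over all of $\rn$ against $\mathcal{T}(a_j)(y)$, and the three regions (and the resulting $\mathrm{II}_1,\mathrm{II}_2,\mathrm{II}_3$) arise from Taylor-expanding $\varphi$, not $K$. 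In Theorem \ref{cz3}, by contrast, you must estimate $|\mathcal{T}(a_j)(x)|=\lvert\int K(x,z)a_j(z)\,dz\rvert$ directly, and the integration variable is already confined to $\supp a_j\subset Q_j$; your ``middle'' and ``outer'' regions are empty, so there is nowhere to ``apply \eqref{71} in the middle region.'' What is actually needed, and what the paper does, is much simpler and involves no region splitting: insert the degree-$(\lfloor\beta\rfloor-1)$ Taylor polynomial of $K(x,\cdot)$ at $x_j$ against the vanishing moments of $a_j$ (using $d\ge\lfloor\beta\rfloor$), write the remainder in Lagrange form as $\sum_{|\alpha|=\lfloor\beta\rfloor}\frac{1}{\alpha!}[\partial^\alpha K(x,x_j)-\partial^\alpha K(x,\xi(z))](z-x_j)^\alpha$ with $\xi(z)\in Q_j$, apply the kernel smoothness condition \eqref{71} once with $y=x_j$ and $z=\xi(z)$ (the separation condition holds because $|x-x_j|>2|x_j-\xi(z)|$ whenever $x\notin4\sqrt n Q_j$), and finish with one H\"{o}lder step. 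So you hit the right endpoint, but the path you gesture at is borrowed from the wrong proof; you should replace it by this direct Taylor-remainder argument.
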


\begin{proof}
Let $\{\lambda_j\}_{j=1}^{\infty}$ and $\{a_j\}_{j=1}^{\infty}$ be the same as
in the proof of Theorem \ref{cz}. By an argument similar to that used in the proof of Theorem \ref{cz}, we know that, to prove Theorem \ref{cz3},
it suffices to show that
\begin{equation}\label{yuan2}
\lf\|\sum_{j=1}^{\infty}\lambda_j\mathcal{T}(a_j)\r\|_{(E_\Phi^q)_t(\rn)}\lesssim1.
\end{equation}
To this end, it is easy to see that
\begin{align*}
\lf\|\sum_{j=1}^{\infty}\lambda_j\mathcal{T}(a_j)\r\|_{(E_\Phi^q)_t(\rn)}
&\lesssim\lf\|\sum_{j=1}^{\infty}\lambda_j\mathcal{T}(a_j)
\chi_{4\sqrt{n}Q_j}\r\|_{(E_\Phi^q)_t(\rn)}\\
&\quad+\lf\|\sum_{j=1}^{\infty}\lambda_j\mathcal{T}(a_j)
\chi_{\rn\setminus 4\sqrt{n}Q_j}\r\|_{(E_\Phi^q)_t(\rn)}
=:\mathrm{I}+\mathrm{II},
\end{align*}
where, for any $j\in\mathbb{N}$, $Q_j:=Q(x_j,r_j)$ is the same as in the proof of Theorem \ref{cz}.

For $\mathrm{I}$, by the boundedness of $\mathcal{T}$ on $L^2(\rn)$, we conclude that,
for any $j\in\mathbb{N}$,
$$
\|\mathcal{T}(a_j)\|_{L^2(\rn)}
\lesssim\|a_j\|_{L^2(\rn)}\lesssim\frac{|Q_j|^{\frac{1}{2}}}{\|\chi_{Q_j}\|_{(E_\Phi^q)_t(\rn)}},
$$
which, together with Lemma \ref{czsuan}, implies that
$$
\mathrm{I}
\lesssim\lf\|\lf\{\sum_{j=1}^{\infty}\lf[\lambda_j\mathcal{T}(a_j)
\chi_{4\sqrt{n}Q_j}\r]^s\r\}^{\frac{1}{s}}\r\|_{(E_\Phi^q)_t(\rn)}
\lesssim\lf\|\lf\{\sum_{j=1}^{\infty} \lf[\frac{\lambda_j}
{\|\chi_{Q_j}\|_{(E_\Phi^q)_t(\rn)}} \r]^s\chi_{Q_j} \r\}
^{\frac{1}{s}}\r\|_{(E_\Phi^q)_t(\rn)}
\lesssim1.
$$
This is a desired estimate.

Next, we deal with $\mathrm{II}$. To this end,
from the Taylor remainder theorem,
vanishing moments of $a_j$, the fact that $\lfloor\beta\rfloor\le n(\frac{1}{\min\{q,p_{\Phi}^-\}}-1)$ implies $\lfloor\beta\rfloor\le d$ and the H\"older inequality, it follows that,
for any $j\in\mathbb{N}$ and $z\in Q_j$, there exists
$\xi(z)\in Q_j$ such that, for any
$x\notin4\sqrt{n}Q_j$,
\begin{align*}
|\mathcal{T}(a_j)(x)|
&\leq\int_{Q_j}|a(z)||k(x,z)|\,dz\\
&=\int_{Q_j}|a(z)|\lf|k(x,z)-\sum_{|\alpha|<\lfloor\beta\rfloor}\frac{\partial_x^\alpha k(x,x_j)
}{\alpha!}(z-x_j)^\alpha\r|\,dz\\
&\sim\int_{Q_j}|a(z)|\lf|\sum_{|\alpha|=\lfloor\beta\rfloor}\frac{\partial_x^\alpha
k(x,x_j)-\partial_x^\alpha k(x,\xi(z))
}{\alpha!}(z-x_j)^\alpha\r|\,dz\\
&\lesssim\int_{Q_j}|a(z)|\frac{r_j^\beta}{|x-x_j|^{n+\beta}}\,dz
\lesssim\frac{r_j^\beta}{|x-x_j|^{n+\beta}}\|a_j\|_{L^2(\rn)}|Q_j|^{\frac{1}{2}}\\
&\lesssim\frac{r_j^{n+\beta}}{|x-x_j|^{n+\beta}}\frac{1}{\|\chi_{Q_j}\|_{(E_\Phi^{q})_t(\rn)}}
\lesssim\lf[\mathcal{M}(\chi_{Q_j})(x)\r]^{\frac{n+\delta}{n}}
\frac{1}{\|\chi_{Q_j}\|_{(E_\Phi^{q})_t(\rn)}}.
\end{align*}
Then, by an argument similar to that used in the proof of Theorem \ref{cz}, we know that
\eqref{yuan2} holds true,
which completes the proof of Theorem \ref{cz3}.
\end{proof}

\begin{remark}
\begin{itemize}
\item[(i)] Let $t$, $\Phi$ be as in Theorem \ref{cz2}.
Notice that, when $\beta:=\delta\in(0,1)$, the operators $\mathcal{T}$
in Theorems \ref{cz2} and \ref{cz3} is just
a non-convolutional $\delta$-type Calder\'on-Zygmund operator.
Thus, the operators in Theorems \ref{cz2} and \ref{cz3} include the
non-convolutional $\delta$-type Clader\'on-Zygmund operators as special cases.
Observe that, differently from Theorem \ref{cz}, in Theorems \ref{cz2}
and \ref{cz3}, we have a restriction on the ranges of $q$ and $p_\Phi^+$,
namely, $q,\ p_\Phi^+\in (0, 2)$, which is caused by the fact that the $\beta$-order
Calder\'on-Zygmund operator is only known bounded on $L^r(\rn)$ for any $r\in (1,2]$
(see, for example, \cite[Theorem 5.10]{d}). Thus, by \cite[Theorem 5.10]{d} again,
if we further assume that the kernel $k$ of $\mathcal{T}$ satisfies
(5.11) of \cite[Theorem 5.10]{d}, we can then remove this restriction.

\item[(ii)] Let $t\in(0,\infty,),\ r\in(1,\infty),\ \delta\in(0,1]$ and
$q\in(\frac{n}{n+\delta},1].$
Recall that Auscher and Prisuelos-Arribas \cite[Proposition 8.4]{ap} proved that
the non-convolutional $\delta$-type Clader\'on-Zygmund operators
are bounded from $(\mathfrak C_r^q)_t(\rn)$ to $(E_r^q)_t(\rn)$ and from
$(\mathfrak C_r^q)_t(\rn)$ to $(\mathfrak C_r^q)_t(\rn)$.

In Theorems \ref{cz2} and \ref{cz3}, if let $\beta:=\delta\in(0,1],\ s\in(\frac{n}{n+\delta},q]$
and $\Phi(\tau):=\tau^s$ for any $\tau\in[0,\infty)$, then we know that
the non-convolutional $\delta$-type
Clader\'on-Zygmund operators are bounded from
$(HE_s^q)_t(\rn)$ to $(E_s^q)_t(\rn)$ and from $(HE_s^q)_t(\rn)$ to $(HE_s^q)_t(\rn).$
By Propositions \ref{au} and \ref{ggg}, we know that $(\mathfrak C_r^q)_t(\rn)\subsetneqq
(HE_s^q)_t(\rn)$ and $(E_r^q)_t(\rn)\subsetneqq (E_s^q)_t(\rn)$ and hence
\cite[Proposition 8.4]{ap} and
Theorems \ref{cz2} and \ref{cz3} in this article can not cover each other.

\item[(iii)]When $t,\ q\in(0,\infty)$ and $\Phi(\tau):=\tau^q$ for any $\tau\in[0,\infty)$,
$(HE_\Phi^q)_t(\rn)$ and $(E_\Phi^q)_t(\rn)$ respectively become the classical
Hardy space $H^q(\rn)$ and Lebesgue space $L^q(\rn)$.
In this case, we know that, if $\delta\in (0,1]$ and $q\in(\frac{n}{n+\delta}, 1]$,
then Theorems \ref{cz2} and \ref{cz3} and (ii) of this remark give the boundedness of the
classical $\delta$-type Clader\'on-Zygmund operator from $H^q(\rn)$ to
$L^q(\rn)$ and from $H^q(\rn)$ to itself,
which is well known (see, for example, \cite[Theorem 1.1]{am1},
\cite[p.\,115, Theorem 4]{em}, \cite[p.\,109, Theorem 4.1 and p.\,119, Theorem 4.5]{l}).
\end{itemize}
\end{remark}

\bigskip

\noindent  Yangyang Zhang, Dachun Yang (Corresponding author) and Wen Yuan

\smallskip

\noindent  Laboratory of Mathematics and Complex Systems
(Ministry of Education of China),
School of Mathematical Sciences, Beijing Normal University,
Beijing 100875, People's Republic of China

\smallskip

\noindent {\it E-mails}: \texttt{yangyzhang@mail.bnu.edu.cn} (Y. Zhang)

\noindent\phantom{{\it E-mails:}} \texttt{dcyang@bnu.edu.cn} (D. Yang)

\noindent\phantom{{\it E-mails:}} \texttt{wenyuan@bnu.edu.cn} (W. Yuan)

\bigskip

\noindent Songbai Wang

\smallskip

\noindent College of Mathematics and Statistics, Hubei Normal University,
Huangshi 435002, People's Republic of China

\smallskip

\noindent{\it E-mail}: \texttt{haiyansongbai@163.com}

\end{document}